\documentclass[opre,sglanonrev]{informs4}

\RequirePackage{tgtermes}
\RequirePackage{newtxtext}
\RequirePackage{newtxmath}
\RequirePackage{bm}
\RequirePackage{endnotes}
\usepackage{float}

\OneAndAHalfSpacedXII

\usepackage{graphicx}
\usepackage{comment}
\usepackage{url}
\usepackage{mathtools}
\usepackage{amsfonts}
\usepackage{amssymb}
\usepackage[dvipsnames,svgnames,x11names]{xcolor}
\usepackage{enumitem}
\usepackage[ruled,vlined]{algorithm2e}

\usepackage{natbib}
\bibpunct[, ]{(}{)}{,}{a}{}{,}%
\def\bibfont{\small}%
\usepackage[colorlinks,citecolor=black,linkcolor=black]{hyperref}

\EquationsNumberedThrough

\TheoremsNumberedThrough
\ECRepeatTheorems

\MANUSCRIPTNO{}

\begin{document}



\RUNTITLE{Online Bipartite Matching with Reusable Capacity under Non-Stationary Rewards}

\TITLE{Online Bipartite Matching with Reusable Capacity under Non-Stationary Rewards}

\ARTICLEAUTHORS{%
 \AUTHOR{Xi Chen\footnotemark[1]}
 \AFF{Leonard N. Stern School of Business, New York University, New York, NY 10012, USA, \EMAIL{xc13@stern.nyu.edu}}  \AUTHOR{Shixin Wang\footnotemark[1]}
 \AFF{H. Milton Stewart School of Industrial and Systems Engineering, Georgia Institute of Technology, Atlanta, GA 30332, USA, \EMAIL{shixin.wang@isye.gatech.edu}} \AUTHOR{Bingkun Zhou\footnotemark[1]}
 \AFF{Qiuzhen College, Tsinghua University, Beijing 100084, China, \EMAIL{zbk23@mails.tsinghua.edu.cn}}
 \AUTHOR{Yuan Zhou\footnotemark[1]}
 \AFF{Yau Mathematical Sciences Center \& Department of Mathematical  Sciences, Tsinghua University, Beijing 100084, China; Beijing Institute of Mathematical Sciences and Applications, Beijing 101408, China \EMAIL{yuan-zhou@tsinghua.edu.cn}}
}
\renewcommand{\thefootnote}{\fnsymbol{footnote}}
\footnotetext[1]{Author names listed in alphabetical order.}

\ABSTRACT{%
We study online bipartite matching with reusable server capacity and non-stationary rewards. Jobs arrive sequentially, reveal compatible servers, reward rates, and processing durations, and must be accepted or rejected irrevocably. An accepted job occupies one unit of server capacity only during its processing interval, so an assignment may displace an unknown sequence of future jobs. Existing guarantees are typically calibrated by a global reward range, which can become arbitrarily large when rewards drift over a long horizon. We instead impose a locally bounded reward condition: reward rates of jobs that can compete for the same server within a relevant time window differ by at most a factor $\delta$. Under this condition, we develop two BALANCE-type algorithms with time-aware opportunity-cost losses. TS-BAL maximizes cumulative blocking losses over feasible reuse schedules and achieves a competitive ratio of $2\ln(\delta D)+\mathcal O(\ln\ln(\delta\vee D))$. GR-BAL uses a greedy relaxation of this loss and achieves $\ln(\delta D)+\mathcal O(\ln\ln(\delta\vee D))$, matching a lower bound of $\ln(\delta D)$ in the leading term. Numerical experiments demonstrate robust performance under substantial global reward drift and favorable finite-capacity performance.
}%



\KEYWORDS{online bipartite matching, reusable resources, non-stationary rewards, competitive analysis, online algorithms}

\maketitle


\section{Introduction}\label{sec:intro}
The real-time allocation of limited supply to incoming demand is central to the operation of many online platforms. Classical applications, including display advertising and online retail, often model an accepted request as permanently consuming the corresponding inventory or budget. In many modern service systems, however, capacity is reusable. A cloud server, vehicle, or other service unit is occupied only for the duration of an accepted job and becomes available again after the service is completed. The allocation decision must therefore account not only for current capacity scarcity, but also for how capacity will be released and reused over time.

Online job assignment, also known as online bipartite matching with reusable servers, provides a general model for allocating reusable resource capacity. A platform manages a fixed collection of capacitated servers, while jobs arrive sequentially over time. Upon a job's arrival, the platform observes its compatible servers and the reward rate and processing duration associated with each compatible assignment. The platform must then immediately and irrevocably assign the job to an available compatible server or reject it. An accepted job occupies one unit of capacity for its processing duration and releases that capacity upon completion. The platform seeks to maximize its total reward without knowing future arrivals.

A motivating application arises in managed GPU-cloud systems. A centralized scheduler manages multiple types of GPU servers and receives jobs such as fine-tuning, batch-inference, and embedding-generation requests. Compatibility may depend on GPU architecture, memory requirements, software environments, data locality, and other operational constraints. A job's processing duration may vary across compatible servers, while its reward rate may reflect its priority, service tier, latency requirement, or the prevailing economic value of GPU capacity. These assignment characteristics are revealed only when the job arrives, and the scheduler must decide immediately whether and where to process it.

\citet{feng2025online} study a general online job-assignment model that captures reusable server capacity together with assignment-specific rewards and processing durations. They assume that all reward rates over the entire horizon lie within a common global range, whose maximum-to-minimum ratio is denoted by \(R\), and that the ratio between the maximum and minimum processing durations is bounded by \(D\). Under these assumptions, their Forward-Looking BALANCE (FLB) algorithm achieves the optimal leading-order competitive ratio \(\ln(RD)\) in the large-capacity regime.

In this paper, we consider settings in which the economic conditions of a long-lived platform evolve over time. In such settings, the global reward range may be too large to yield meaningful performance guarantees. Jobs arriving close in time and competing for the same server capacity may have comparable reward rates, while the prevailing reward scale may drift substantially across temporally separated operating periods. Even modest changes across successive periods can accumulate into an arbitrarily large global reward range. Consequently, a competitive guarantee calibrated by this global range may deteriorate with the operating horizon, even when reward variation among jobs that directly compete for capacity remains limited.

We address this issue through a locally bounded reward condition. Informally, processing durations are bounded by \(D\), and the reward rates of two jobs compatible with the same server differ by at most a factor \(\delta\) whenever their arrival times are within \(D\) of each other. Because no job occupies capacity for more than \(D\) time units, this condition controls reward variation within the temporal windows in which direct capacity conflicts can arise. On the other hand, the local parameter \(\delta\) can remain small even when the global reward range becomes arbitrarily large.

In this paper, we study the online job assignment problem under locally bounded reward variation. Our goal is to design online algorithms with asymptotically optimal competitive ratios, even when rewards vary substantially over the full horizon.

\subsection{Our Results}

We establish asymptotically tight competitive-ratio bounds for online job assignment under local reward variation. For integer-valued processing durations and in the large-capacity regime, our main algorithm achieves an asymptotic competitive ratio of $\ln(\delta D)+\ln\ln(\delta\vee D)+\mathcal{O}(1)$ (Theorem~\ref{tm:main_theorem_refine}).
Complementing this upper bound, we prove that every online algorithm has competitive ratio at least $\ln(\delta D)$ (Theorem~\ref{tm:lb_main}). Thus, the upper and lower bounds have the same leading term \(\ln(\delta D)\), and our algorithm attains the optimal leading coefficient.  We also extend our algorithm to real-valued processing durations and obtain an asymptotic competitive ratio of
$\ln(\delta D)+2\ln\ln(\delta\vee D)+\mathcal{O}(1)$, which also attains the optimal leading coefficient (Theorem~\ref{thm:real-duration}). Our analysis additionally provides finite-capacity guarantees, and the stated asymptotic bounds are recovered whenever the minimum server capacity $c_{\min}\gg\ln^2(\delta D)$. Our numerical experiments further show that our algorithms perform robustly under substantial global reward drift and achieve favorable performance at finite capacities relative to the benchmark policies we consider.

Comparing our results with those of \cite{feng2025online}, we note that, under the global reward bound \(R\), their Forward-Looking BALANCE algorithm achieves
$\ln(RD)+\ln\ln(R\vee D)+\mathcal{O}(1)$
for integer-valued durations and
$\ln(RD)+3\ln\ln(R\vee D)+\mathcal{O}(1)$
for real-valued durations. Their global condition implies our local condition with \(\delta=R\). Consequently, when specialized to globally bounded rewards, our guarantee recovers their asymptotic upper bound and slightly improves it for real-valued durations. The main distinction arises when rewards drift over time. The local parameter \(\delta\) may remain small while the global reward ratio \(R\) becomes arbitrarily large, causing a guarantee calibrated by $R$ to deteriorate with the operating horizon. In contrast, our guarantee depends only logarithmically on the local reward variation and remains independent of the reward variation accumulated over the full horizon.

\subsection{Technical Challenges}

Our algorithms build on the BALANCE framework, which has been widely used in prior works that studied online bipartite matching with adversarial arrivals \citep{mehta2007adwords,golrezaei2014real,goyal2025asymptotically,huang2024online,feng2025online}. Upon the arrival of a job, a BALANCE-type algorithm evaluates each compatible server by subtracting an assignment-specific loss from the immediate reward. It selects the server with the largest positive adjusted reward and rejects the job if every adjusted reward is non-positive. The loss represents the opportunity cost of occupying capacity, namely, the future reward that may be forgone by accepting the current job.

The main challenge is to construct an opportunity-cost loss that simultaneously captures capacity reusability and local reward variation. Because capacity is reusable, preserving one unit of capacity may enable a sequence of future assignments rather than a single assignment. A current job may therefore block multiple future jobs along an unknown reuse schedule, and the loss must aggregate the corresponding blocking losses. Moreover, the reward of each potentially blocked job depends on its arrival time, while the relevant capacity state also changes over time as previously accepted jobs complete. The loss must consequently combine time-aware estimates of future rewards and capacity availability and aggregate them over an unknown feasible reuse schedule. More detailed discussion on these challenges can be found in Section~\ref{sec:algorithm-balance-framework-challenge}.

\citet{feng2025online} address reusable capacity under globally bounded rewards by evaluating projected congestion at a prescribed collection of future reuse times. A direct extension of this approach is insufficient under local reward variation. We show that incorporating time-aware reward estimates while retaining the unit-spaced reuse schedule used by FLB algorithm can still result in an \(\Omega(\delta)\) competitive ratio (see the discussion in Remark~\ref{rem:comparison_feng} in Section~\ref{subsec:TS-BALANCE}). Thus, achieving a logarithmic dependence on \(\delta\) requires a new opportunity-cost loss construction rather than merely a local recalibration of the existing loss.

\subsection{Technical Contributions}

\noindent\underline{\bf A time-aware schedule-based loss.}
We first develop the \emph{Time-aware Schedule-based BALANCE} algorithm (TS-BAL). For each candidate assignment, TS-BAL constructs a blocking loss at every future time during the job's processing interval. This loss combines the history-based time-aware estimates of the reward rate and the server capacity available at that time. TS-BAL then maximizes the cumulative blocking loss over all feasible reuse schedules within the processing interval. The resulting loss directly captures the largest opportunity cost that the current assignment may impose on a reusable unit of capacity and can be computed through dynamic programming.

We prove that TS-BAL achieves an asymptotic competitive ratio of $2\ln(\delta D)+2\ln\ln(\delta\vee D)+\mathcal{O}(1)$ in Theorem~\ref{tm:main_theorem}.
This guarantee establishes the desired logarithmic dependence on local reward variation bound, but it contains an additional factor of two in the leading term.

\noindent\underline{\bf A greedy relaxation with the optimal leading coefficient.}
To remove this factor of two, we develop the \emph{Greedy-Relaxation BALANCE} algorithm (GR-BAL). GR-BAL partitions each processing interval into unit-length subintervals and independently selects the largest blocking loss from each subinterval. Because a feasible reuse schedule contains at most one reuse time in each subinterval, the resulting greedy-relaxation loss upper-bounds the schedule-based loss. It also admits a simpler computation based on the reward order statistics of previously accepted jobs that remain active. In Section~\ref{subsec:GR-BAL}, we prove that GR-BAL eliminates the extra factor of two and achieves the optimal leading coefficient in its asymptotic competitive ratio.

\noindent\underline{\bf A primal-dual charging argument based on loss increments.}
Building on the configuration-LP benchmark of \cite{feng2025online}, we develop new dual constructions and charging arguments tailored to our opportunity-cost losses. For each offline configuration, we charge its blocking losses to loss increments generated by online assignments whose processing intervals overlap the configuration. The main analytical task is to aggregate these local charges across disjoint time intervals and thereby establish dual feasibility. 

This aggregation step is precisely where the greedy relaxation provides its key advantage. The loss increments associated with TS-BAL satisfy only weak subadditivity: locally worst reuse schedules over disjoint intervals cannot necessarily be combined into a single feasible reuse schedule, and partitioning them into two feasible schedules introduces the factor of two. In contrast, the loss increments associated with GR-BAL are fully subadditive, allowing charges from different portions of an offline schedule to be aggregated without the factor-of-two overhead. 

\noindent\underline{\bf An additive lower-bound construction.}
Our lower-bound construction captures the joint difficulty created by reward and duration heterogeneity. It combines increasing reward rates, which produce the \(\ln\delta\) term, with heterogeneous processing durations, which produce the harmonic term \(H_D\). A common prefix-instance argument yields the additive lower bound
$H_D+\ln\delta\geq \ln(\delta D)$,
showing that these two sources of uncertainty accumulate.

\subsection{Operations Insights}
Our algorithm design and analysis highlights how a platform should price a reusable resource with changing rewards: a reusable resource should be priced by the value of the feasible future service trajectory that a current decision may displace, rather than by its current scarcity or any single anticipated future use. Each potential future use should be valued according to the reward and capacity conditions prevailing when that use would occur, and these values should be aggregated only along operationally feasible reuse trajectories. Because the future trajectory is unknown at the time of decision, a robust capacity price should reflect the most costly feasible continuation that may be disrupted.

\section{Related Works}
Our work is closely related to the online bipartite matching literature. In this section, we review the relevant work and position our contribution within this literature. Throughout, we adopt the standard terminology: offline vertices are referred to as \emph{resources} or \emph{servers}, corresponding to the servers in our model, whereas online vertices represent arriving jobs or requests.

\noindent\underline{\textbf{Non-reusable resources.}}
Online allocation of non-reusable resources under adversarial arrivals is a foundational model for sequential resource management. In classical unweighted online bipartite matching, \citet{karp1990optimal} establish the optimal competitive ratio of $e/(e-1)$. Subsequent work extends this framework to vertex-weighted matching \citep{aggarwal2011online}, edge-weighted display advertising with free disposal \citep{feldman2009displayads}, online matching, assortment, and pricing with multiple prices \citep{ma2020algorithms}, and AdWords with unknown budgets \citep{udwani2024adwords}. For a broader survey of online matching and its generalizations, we refer readers to \citet{huang2024online}.

Among them, a particularly relevant line develops BALANCE-type algorithms. \citet{kalyanasundaram2000optimal} introduce BALANCE for online bipartite $b$-matching in the large-capacity regime; \citet{mehta2007adwords} extend this principle to AdWords under the small-bid assumption; and \citet{golrezaei2014real} develop an inventory-balancing policy for real-time assortment optimization with large capacities. \citet{ma2020algorithms} further introduce Multi-price Balance, whereas \citet{feng2025batching} develop regularized BALANCE algorithms for fractional allocations with batch arrivals. These studies motivate our general design principle of evaluating each assignment by its immediate reward net of a capacity-dependent loss. Their loss terms price irreversible inventory or budget depletion through fill levels, bid prices, or stage-dependent regularizers. In contrast, our loss must capture the time-aware opportunity cost of occupying reusable capacity over potential future reuse schedule under non-stationary rewards.

Complementary methodological work develops general primal-dual approaches to online optimization. \citet{buchbinder2009online} establish a unified primal-dual framework for online covering and packing with linear objectives, and \citet{azar2016online} extend this framework to convex objectives. Although these models do not explicitly represent reusable capacity-time constraints, their methodologies motivate our primal-dual analysis.

Reward heterogeneity constitutes another closely related theme. \citet{ball2009toward} study single-resource online booking with multiple fare classes and characterize protection-level and order-quantity-control policies whose guarantees depend logarithmically on the global fare ratio $R$. Their model captures globally specified reward heterogeneity for non-reusable inventory rather than temporal reward drift. Nevertheless, it motivates the study of heterogeneity-dependent guarantees. Our model instead permits substantial global drift while controlling only the local reward variation among jobs.

\noindent\underline{\bf Reusable resources and stationary rewards or reward rates.} 
A substantial literature studies online allocation of reusable resources under stochastic arrivals and stationary reward structures \citep{levi2010provably,owen2018price,dickerson2021allocation,rusmevichientong2020DARP,baek2022bifurcating}. More recent work derives sharp large-capacity guarantees \citep{feng2022near,aminian2026bayesian} and studies learning under unknown, time-varying arrival distributions \citep{zhang2025nonstasto}. These models share the reusable-capacity feature of ours but differ in their arrival models and information assumptions. In particular, their nonstationarity, when present, concerns demand rather than assignment-specific reward rates.

The literature under adversarial arrivals is more closely related, but largely assumes stationary rewards or reward rates. Two especially relevant large-capacity results are \cite{feng2019linear} and \cite{goyal2025asymptotically}. The former analyze a BALANCE-type policy and establish a competitive ratio of $\frac{e^2}{(e-1)^2}$, whereas the latter develop a primal-dual-inspired framework for reusable online matching and attain the optimal asymptotic ratio of $\frac{e}{e-1}$ under known, resource-dependent usage-time distributions. These works motivate our use of balancing and primal-dual ideas. Their capacity adjustments, however, are calibrated to stationary reward scales and resource-dependent usage laws, and therefore do not capture the reuse schedule and the time-varying blocking values induced by reusable resources with nonstationary reward rates.

A complementary stream derives capacity-independent guarantees under stationary rewards or reward rates. \cite{gong2022online,baek2026leveraging} analyze the Greedy policy, whereas \cite{delong2022online} develop reranking policies for fixed deterministic reuse times. \cite{gong2022online} further note that their analysis extends to nonstationary rewards, yielding an $\mathcal{O}(R)$ competitive ratio for arbitrary capacities, where $R$ denotes the global maximum-to-minimum reward ratio. Such guarantees depend linearly on reward heterogeneity and are therefore far from optimal in our setting. In contrast, we seek a near-optimal algorithm tailored to locally nonstationary reward rates.

\noindent\underline{\bf Reusable resources and heterogeneous reward rates and durations.}
The closest work to ours is \citet{feng2025online}, which has been discussed in Section~\ref{sec:intro}. Other related work examines more structured models. \citet{huo2022onlineReusableMultiClass} consider a single pool of identical reusable units and customer classes with class-specific reward rates and durations, obtaining protection-level policies under divisibility and monotonicity conditions. \citet{nekouyan2026online} develop relax-and-round algorithms for $k$ identical units, achieving an optimal guarantee when durations are identical and reward rates vary, and an order-optimal guaranty when durations vary but reward rates are identical. \citet{simchi2025greedy} analyze greedy-like policies in a unified adversarial framework with reusable network resources and decaying rewards, whereas \citet{liu2026unified} allow combinatorial resource bundles whose rewards, activation probabilities, and service times deteriorate with cumulative usage. \citet{aminian2026OJS} allow non-resumable preemption with partial rewards and develop index policies that balance an active request's reward rate against its remaining value. These papers study special cases, like decay or endogenous deterioration, while our non-stationarity model arises from those special cases.

\noindent\underline{\bf Reusable resources with other extensions.}
One extension incorporates exogenous replenishment in addition to endogenous resource returns. \citet{feng2025robustness} study online assortment optimization with fixed usage durations and external replenishment, whereas \citet{kang2025replenishment} develop a black-box reduction that transfers guarantees for online allocation algorithms to settings with exogenous replenishment. Related work studies dynamic pricing and control for reusable resources through dynamic programming \citep{besbes2019static,jia2024pricing,willMa2026dynamic}. More broadly, \citet{udwani2024submodular} introduce a unified framework for online submodular and submodular-order welfare maximization with stochastic outcomes, which accommodates reusable-resource models whose induced objectives are generally non-submodular.

\section{Problem Formulation}

We study an online job assignment problem with adversarial arrival, reusable server capacity and non-stationary rewards. There is a finite set of servers, indexed by $[n] := \{1,2,\ldots,n\}$.
Each server $i\in[n]$ has an integral capacity $c_i\in\mathbb{N}$, which represents the maximum number of jobs that can be processed simultaneously on server $i$. Jobs arrive sequentially over a continuous-time horizon. We index jobs by their order of revelation, and denote their arrival times by $0 \le t_1 \le t_2 \le \dots \le t_m$.

The number of jobs, their arrival times, and their characteristics are not known to the online algorithm in advance (that could be set by an adversary). If multiple jobs arrive at the same time, they are still revealed one after another according to their indices. In particular, when the algorithm makes the decision for job $j$, it has not yet observed the information of job $j+1$, even if $t_j=t_{j+1}$. The compatibility between servers and jobs is represented by an edge set $ E \subseteq [n]\times[m] $. Upon the arrival of job $j$, the algorithm observes the set of compatible servers $\mathcal N(j)\coloneqq \{i\in[n]:(i,j)\in E\}$, as well as the reward rate and processing duration associated with each compatible assignment:$\{(r_{ij},d_{ij})\}_{i\in \mathcal N(j)}$.
Here $r_{ij}>0$ denotes the reward rate earned per unit time if job $j$ is processed on server $i$, and $d_{ij}>0$ denotes the corresponding processing duration.

At time $t_j$, the algorithm must immediately and irrevocably either assigns job $j$ to one of its compatible servers with available capacity, or makes no assignment. If job $j$ is assigned to server $i\in \mathcal N(j)$, then it occupies one unit of capacity on server $i$ over the interval $[t_j,t_j+d_{ij})$ and generates total reward $r_{ij}d_{ij}$. An online policy is feasible if, for every server $i\in[n]$ and every time $t$,
\[
\sum_{j:\, j \text{ is assigned to } i}
\mathbb{I}[t_j\le t<t_j+d_{ij}]
\le c_i ,\qquad \forall i\in[n],\qquad \forall t\ge 0.
\]

We next impose a local regularity condition that captures the non-stationary reward structure studied in this paper. We normalize the minimum processing duration to one.

\begin{assumption}[Locally  bounded reward]
\label{assumption:local-bounded-heterogeneity}
There exist parameters $D\ge 1$ and $\delta\ge 1$ such that the following conditions hold.
First, every compatible assignment $(i, j) \in E$ has processing duration bounded by
$1 \le d_{ij} \le D$.
Second, for every server $i\in[n]$ and every pair of jobs $(j,j^{\prime})$ that are both compatible with server $i$, if their arrival times differ by at most $D$, then their reward rates on server $i$ differ by at most a multiplicative factor $\delta$:
\begin{equation}
\frac{1}{\delta}
\le
\frac{r_{ij}}{r_{ij'}}
\le
\delta,
\qquad
\forall i\in \mathcal N(j)\cap \mathcal N(j')
, |t_j-t_{j'}|\le D .
\label{eq:local-reward-ratio}
\end{equation}
We assume throughout the whole procedure the online algorithm knows the parameters $\delta$ and $D$.
\end{assumption}

Assumption~\ref{assumption:local-bounded-heterogeneity} allows reward rates to drift substantially over the full horizon, but restricts reward variation among jobs that can locally compete for the same server capacity. The condition is both server-specific and time-local: jobs that are far apart in time, or jobs that do not share a compatible server, are not required to have comparable rewards.

We evaluate online algorithms through the worst-case competitive analysis. For a problem instance $I$, let $\mathrm{ALG}(I)$ denote the total reward collected by an online algorithm ALG:
\begin{equation}
\mathrm{ALG}(I)
:=
\sum_{j:\ j \text{ is assigned by ALG to some } i}
r_{ij}d_{ij}.
\label{eq:alg-reward}
\end{equation}
Let $\mathrm{OPT}(I)$ denote the maximum total reward attainable by an offline decision maker that knows the entire arrival sequence and all job characteristics in advance, while being subject to the same compatibility and capacity constraints. For fixed $\delta,D\ge 1$ and capacity level $\{c_i\}_{i\in[n]}$, let $\mathcal{I}(\delta,D,\{c_i\})$ denote the set of all finite problem instances satisfying Assumption~\ref{assumption:local-bounded-heterogeneity} with server capacities $\{c_i\}$:
\begin{equation}
\mathcal{I}(\delta,D,\{c_i\})
\coloneqq
\left\{
I:
I \text{ satisfies Assumption~\ref{assumption:local-bounded-heterogeneity}}
\text{ and server } i \text{ has capacity } c_i
\right\}.
\label{eq:instance-class}
\end{equation}

\begin{definition}[Asymptotic competitive ratio]
\label{definition:asymptotic-competitive-ratio}
A possibly randomized online algorithm ALG is asymptotically $\Gamma(\delta,D)$-competitive if
\begin{equation}
\limsup_{\min c_i\to\infty}
\
\sup_{I\in\mathcal{I}(\delta,D,\{c_i\})}
\frac{\mathrm{OPT}(I)}
{\mathbb{E}[\mathrm{ALG}(I)]}
\le
\Gamma(\delta,D),
\label{eq:asymptotic-cr}
\end{equation}
where the expectation is taken over the internal randomness of ALG.
\end{definition}

Our objective is to design online algorithms with near-optimal asymptotic competitive ratios. In the algorithmic analysis, we fix an arbitrary instance $I$ and establish competitive-ratio guarantees that are independent of $I$. When the instance is clear from context, we write \(\mathrm{OPT}\coloneqq \mathrm{OPT}(I)\) and \(\mathrm{ALG}\coloneqq \mathrm{ALG}(I)\).

\subsection{A configuration LP benchmark}\label{se:configuration_LP}
Following \cite{feng2025online}, we use a configuration linear program (LP) that relaxes the offline benchmark. The key idea is to represent the complete schedule followed by a single unit of server capacity as a configuration. For a server $i$, a configuration is a set of compatible jobs whose processing intervals on server $i$ are pairwise disjoint.

Formally, a subset $S\subseteq [m]$ is a feasible configuration for server $i$ if $(i,j)\in E$ for every job $j\in S$, and for every pair of distinct jobs $j,j^{\prime}\in S$, it holds that $
[t_{j^{\prime}},t_{j^{\prime}}+d_{ij^{\prime}})\cap[t_j,t_j+d_{ij})=\emptyset$. Let $\mathcal S_i\subseteq 2^{[m]}$ denote the collection of all feasible configurations for server $i\in[n]$. Consider the following linear program and its dual:

\begin{tabular}{p{0.5\textwidth}|p{0.5\textwidth}}
{Primal}:~~~~$
P_{\mathrm{OPT}}\coloneqq$ & {Dual}:~~~~$
D_{\mathrm{OPT}}\coloneqq$\\
$
\begin{aligned}
\refstepcounter{equation} \label{eq:config-lp}
\max_{\boldsymbol x\ge 0}\quad
& \sum_{i\in[n]}\sum_{S\in\mathcal S_i}
    \left(
        \sum_{j\in S} r_{ij}d_{ij}
    \right)x(i,S) \nonumber \\
\text{s.t.}\quad
& \sum_{i\in[n]}\sum_{\substack{S\in\mathcal S_i:\\ j\in S}}
    x(i,S)
    \le 1,
     ~~~\forall j\in[m], &\text{(\ref{eq:config-lp}a)} \\
& \sum_{S\in\mathcal S_i} x(i,S)
    \le c_i,~~~~~~~~~
     \forall i\in[n]. & \text{(\ref{eq:config-lp}b)} 
\end{aligned}
$
&
$
\begin{aligned}
\min_{\boldsymbol\lambda,\boldsymbol\theta\ge 0}\quad
& \sum_{j\in[m]}\lambda_j
  +
  \sum_{i\in[n]} c_i\theta_i  \\
\text{s.t.}\quad
& \sum_{j\in S}\lambda_j+\theta_i
    \ge
    \sum_{j\in S} r_{ij}d_{ij},\\
    &\qquad\qquad\qquad\qquad \forall i\in[n],\; S\in\mathcal S_i .
\end{aligned}
$
\end{tabular}

In the primal program, the variable $x(i,S)$ represents the weight assigned to configuration $S$ on server $i$. In an integral solution, $x(i,S)$ can be interpreted as the number of capacity units of server $i$ that follow schedule $S$. The objective maximizes the total reward generated by the selected configurations. Constraint~(\ref{eq:config-lp}a) ensures that each job is assigned at most once across all servers and configurations, while Constraint~(\ref{eq:config-lp}b) ensures that no more than $c_i$ capacity units of server $i$ are used. The next claim states that this configuration LP is a valid relaxation of the offline benchmark; its proof is deferred to Section~\ref{se:app_pf_claim_benchmark}.
\begin{claim}\label{claim:benchmark} 
    The optimum of the configuration LP upper bounds the offline benchmark: 
    $
        P_{\mathrm{OPT}}\ge \operatorname{OPT}$. 
\end{claim}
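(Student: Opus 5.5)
The plan is to take an optimal offline integral assignment and build from it a feasible primal solution of the configuration LP whose objective equals $\mathrm{OPT}$. Since $P_{\mathrm{OPT}}$ is the maximum over all feasible solutions, this gives $P_{\mathrm{OPT}}\ge \mathrm{OPT}$. Concretely, fix an optimal offline policy. For each server $i$, let $J_i$ be the set of jobs it assigns to $i$. By feasibility, at every time $t$ at most $c_i$ of the intervals $[t_j,t_j+d_{ij})$, $j\in J_i$, contain $t$. (We may assume the instance is finite, so OPT is attained by some assignment.)

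The key step is to split $J_i$ into at most $c_i$ sets of pairwise disjoint half-open intervals, i.e., to color an interval family of maximum clique size at most $c_i$ with $c_i$ colors. I would do this greedily. Process the jobs of $J_i$ in nondecreasing order of $t_j$, breaking ties by index. Maintain $c_i$ ``units''. Assign each job to any unit whose last assigned interval has ended by $t_j$, meaning its end time is at most $t_j$.

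The main obstacle is to show that such a unit always exists. Suppose, for job $j$, all $c_i$ units have a last interval with end time $>t_j$. Each of those intervals starts at or before $t_j$ by the processing order, so each contains $t_j$. Together with $[t_j,t_j+d_{ij})$, this gives $c_i+1$ intervals containing $t_j$, which contradicts the capacity constraint at time $t_j$. Half-openness is what makes an interval ending exactly at $t_j$ harmless. Within each unit, intervals are pairwise disjoint: for consecutive jobs the earlier one ends by the later start, and disjointness then propagates along the unit's chain because start times are nondecreasing. Hence each nonempty unit $S_{i,k}$ is a feasible configuration in $\mathcal S_i$, since its jobs are all compatible with $i$.

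Finally, define $x(i,S)$ as the number of units $k$ of server $i$ with $S_{i,k}=S$, and set $x(i,S)=0$ otherwise. Constraint (\ref{eq:config-lp}b) holds because $\sum_S x(i,S)\le c_i$. Constraint (\ref{eq:config-lp}a) holds because each job is assigned to at most one server and lies in exactly one unit there. The sets $S_{i,k}$ partition $\bigcup_i J_i$, so the objective equals $\sum_i\sum_{j\in J_i} r_{ij}d_{ij}=\mathrm{OPT}$. This proves $P_{\mathrm{OPT}}\ge\mathrm{OPT}$.
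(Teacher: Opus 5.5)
Your proof is correct and follows the same route as the paper: split each server's offline schedule into at most $c_i$ capacity units with pairwise disjoint processing intervals, and set $x(i,S)$ to the number of units that follow schedule $S$. Your greedy pass in nondecreasing order of $t_j$, together with the count of $c_i+1$ intervals containing $t_j$, simply spells out what the paper only asserts, namely that an idle unit is always available when a job starts.
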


\section{Our Algorithms and the Main Theorems}
\label{sec:algorithm_main_theorem}

\subsection{The BALANCE Algorithm Framework}\label{sec:algorithm-balance-framework-challenge}

Our work follows the BALANCE framework, which has been widely used in prior works that studied online bipartite matching with adversarial arrivals \citep{mehta2007adwords,golrezaei2014real,goyal2025asymptotically,huang2024online,feng2025online}. As specified in Line~\ref{line:alg-balance-i-star} of Algorithm~\ref{alg:BALANCE}, upon the arrival of job $j$, a BALANCE-type policy selects a compatible server $i$ that
maximizes the job's immediate reward $r_{ij}d_{ij}$ net of an assignment-specific loss $\mathcal{L}_{ij}$,
provided that the largest adjusted reward $(r_{ij}d_{ij} - \mathcal{L}_{ij})$ is positive. Otherwise, job $j$ is
left unassigned.

\LinesNumbered
\begin{algorithm}[t]
\caption{The BALANCE Algorithm Framework}\label{alg:BALANCE}
\For{each job $j = 1$ \KwTo $m$}{
    observe the arrival of job $j$ with type $\bigl(\mathcal N(j),\{r_{ij},d_{ij}\}_{i\in \mathcal N(j)}\bigr)$\;
    \If{
    $\displaystyle
    \max_{i\in \mathcal N(j)}
    \left\{
        r_{ij}d_{ij}
        -
        \mathcal{L}_{ij}
    \right\}
    >0$
    }{
        choose $\displaystyle
        i_j^*
        \leftarrow \underset{i\in \mathcal N(j)}{\arg\max}
        \left\{
            r_{ij}d_{ij}
            -
            \mathcal{L}_{ij}
        \right\}$ and 
        assign job $j$ to server $i_j^*$\; \label{line:alg-balance-i-star}
    }
    \Else{
        leave job $j$ unassigned\;
    }
}
\end{algorithm}

The key of designing a BALANCE-type policy is to construct the assignment-specific loss $\mathcal{L}_{ij}$. In standard non-reusable models with stationary server-side rewards (i.e., every assignment to server $i$ yields reward $r_i$), the loss often takes the form
\begin{align}\label{eq:loss-ordinary-balance}
    \mathcal{L}_{ij}=r_i(e^{1-\alpha_{i,j}}-1)/(e-1),
\end{align}
where $\alpha_{i,j}$ denotes the fraction of server $i$'s capacity remaining when job $j$ arrives~\citep{mehta2007adwords,golrezaei2014real,huang2024online}. At a high level, this loss admits an opportunity-cost interpretation: assigning job $j$ to server $i$ consumes one unit of server $i$'s capacity and may prevent that unit from serving a future job that it could otherwise accommodate. We say that a future assignment is \emph{blocked} by assignment $(i,j)$ , and results a \emph{blocking loss}. In
Eq.~\eqref{eq:loss-ordinary-balance}, the blocking loss is represented by the
reward $r_i$ of the potentially foregone assignment, multiplied by a congestion-dependent discount factor. This factor decreases with the
remaining-capacity fraction: when more capacity remains, a capacity shortage, and hence a blocking event, is less likely, so the potential future reward is discounted more heavily.

\medskip
\noindent\underline{\bf Challenges in extending BALANCE to our setting.} Extending the BALANCE framework to reusable server capacity and non-stationary rewards raises two challenges.

\emph{First, reusability turns a single blocking loss into an aggregate of blocking losses along an unknown reuse schedule.} In a non-reusable model, if the platform refrains from making assignment $(i,j)$, the preserved capacity unit can serve at most one future job. Thus, making assignment $(i,j)$ can block at most one future assignment. In our model, however, the preserved unit may be \emph{reused} by several future
jobs in sequence. Assignment $(i,j)$ may therefore block some or all of the assignments along such a \emph{reuse schedule}, and $\mathcal{L}_{ij}$ must aggregate the resulting blocking losses. This aggregation is particularly challenging because the reuse schedule selected by the offline optimum is unknown to the online policy.

\emph{Second, each blocking loss becomes time dependent.} In a non-reusable model with stationary server-side rewards, once $i$ and $j$
are fixed, the blocking loss in Eq.~\eqref{eq:loss-ordinary-balance} is
determined by a fixed reward scale and remaining-capacity level and therefore does not depend on when the blocked future job arrives. In our model with non-stationary rewards, the reward rate of a future job may vary with its arrival time. Reusability also makes the projected available capacity time dependent, because jobs accepted
before $j$ may complete and release capacity at different future times. Consequently, constructing the blocking loss associated with a future assignment requires time-aware estimates of both its reward rate and the projected capacity state at its arrival time.

\subsection{The Time-Aware Schedule-Based BALANCE Algorithm}
\label{subsec:TS-BALANCE}

We now introduce our first algorithm, which addresses the two challenges described above.

\medskip
\noindent\underline{\bf Aggregating blocking losses over reuse schedules.} To address the first challenge, i.e., aggregating multiple blocking losses along an unknown reuse schedule, we define $\mathcal{L}_{ij}$ as the \emph{maximum} total blocking loss incurred by future jobs over all possible reuse schedules. In other words, the ``worst'' reuse schedule is the one that maximizes the cumulative blocking loss. To formalize this idea, we first define the set of possible reuse schedules.
\begin{definition}[Reuse Schedule] \label{def:feasible-dp-path}
Fix $T_1<T_2$. A \emph{reuse schedule} over the time interval $[T_1,T_2)$ is a finite ordered sequence $\boldsymbol{\tau}= (\tau_1,\ldots,\tau_s)$ such that
$T_1\le\tau_1<\cdots<\tau_s<T_2$, and $\tau_{z+1}-\tau_z\ge1$ holds for every $z=1,\ldots,s-1$.
By convention, the empty sequence is also a valid reuse schedule We denote by $\mathfrak{S}[T_1,T_2)$ the set of all reuse schedules over $[T_1,T_2)$.
\end{definition}
In words, a reuse schedule, as defined in  Definition~\ref{def:feasible-dp-path}, represents a possible sequence of starting times at which a single unit of a server's capacity is used to serve future jobs. In particular, the separation condition $\tau_{z+1}-\tau_z\ge1$ follows from the lower bound $d_{ih}\ge 1$ in Assumption~\ref{assumption:local-bounded-heterogeneity}: once a capacity unit begins serving a job at time $\tau_z$, it cannot be reused before time $\tau_z+1$.

For a candidate assignment $(i,j)$, we define its associated loss $\mathcal{L}_{ij}$ as the total blocking losses incurred by future jobs under the worst reuse schedule in $\mathfrak{S}[t_j, t_j + d_{ij})$. The interval \([t_j,t_j+d_{ij})\) naturally corresponds to the processing period of job \(j\) on server \(i\). More generally, we may define the blocking loss over any subinterval of this processing period. In the definition below, \(L_{i,j,\tau}\) denotes the blocking loss incurred when assigning job \(j\) to server \(i\) prevents a future job arriving at time \(\tau\) from being assigned to server \(i\). The precise form of \(L_{i,j,\tau}\) will be specified shortly.
\begin{definition}[Time-Aware Schedule-Based Loss]
\label{df:dp_price}
Fix a compatible server-job pair $(i,j)\in E$, a time interval $[T_1, T_2) \subseteq [t_j, t_j + d_{ij})$, and a collection of nonnegative blocking losses $L= \{L_{i,j,\tau}\}$. The \emph{time-aware schedule-based loss} over \([T_1,T_2)\) is defined as
\[
 \mathcal{L}_{ij}^{\operatorname{TS}}([T_1,T_2);L)
 \coloneqq
 \max_{\boldsymbol{\tau}\in\mathfrak S[T_1,T_2)}
 \left\{\sum_{\tau\in\boldsymbol{\tau}}L_{i,j,\tau}
 \right\}.
\]
\end{definition}

\noindent\underline{\bf Designing time-aware blocking losses.} We now specify the precise form of the blocking loss \(L_{i,j,\tau}\), thereby addressing the second challenge described in the preceding subsection. Our construction has the same high-level structure as that in Eq.~\eqref{eq:loss-ordinary-balance}, but both of its components are time-aware:
\begin{align}\label{eq:time-aware-blocking-loss}
L_{i,j,\tau} \coloneqq \hat r_{i,j,\tau}\,\Psi\!\left(\alpha_{i,j-1\to \tau}\right),
\end{align}
where $\hat r_{i,j,t}$ is a time-aware estimate of the reward associated with a future job arriving at time $\tau$, and $\alpha_{i,j-1\to \tau}$ is a time-aware estimate of the fraction of server $i$'s capacity available at time $\tau$, based on the system state before the arrival of job $j$. We use the exponential discount function introduced by \citet{feng2025online}:
\[
\Psi(x)\coloneqq \eta\bigl(\beta^{1-x}-1\bigr), \qquad x \in [0,1],
\]
where \(\eta>0\) and \(\beta>1\) are tuning parameters to be calibrated in the performance analysis. It is straightforward to verify that \(\Psi\) is nonnegative and decreasing. Thus, a larger estimated available-capacity fraction results in a smaller blocking loss.

It remains to define the time-aware reward estimate $\hat r_{i,j,t}$ and capacity estimate $\alpha_{i,j-1\to \tau}$ in Eq.~\eqref{eq:time-aware-blocking-loss}. We first define the time-aware reward estimate.

\begin{definition}[Time-Aware Reward Estimate]\label{def:future_reward_estimator}
Fix a compatible server-job pair $(i,j)\in E$. For each $t\in[t_j,t_j+D)$, define \[
\hat r_{i,j,t} \coloneqq \min_{k\in\mathcal K_{i,j}(t)} r_{ik}, \qquad \text{where}\qquad 
    \mathcal K_{i,j}(t)\coloneqq \left\{ k\le j:i\in N(k),\; t_k\ge t-D\right\}.
\]
At a high level, the set $\mathcal K_{i,j}(t) \supseteq \{j\}$ collects the jobs revealed through job $j$ whose reward rates on server $i$ remain locally informative about a job arriving at time $t$ (based on Eq.~\eqref{eq:local-reward-ratio} in Assumption~\ref{assumption:local-bounded-heterogeneity}). 

According to Definition~\ref{def:future_reward_estimator}, if a future job $h$ compatible with server $i$ arrives at time $t_h=t$, then Assumption~\ref{assumption:local-bounded-heterogeneity} implies $r_{ih}\le \delta\hat r_{i,j,t}$. Moreover, the adversary may choose a job \(h\) satisfying \(r_{ih}=\delta\hat r_{i,j,t}\) without violating the assumption. Therefore, \(\hat r_{i,j,t}\) provides a history-based estimate of the future reward rate up to the factor \(\delta\), and the resulting upper bound is tight under adversarial arrivals.
\end{definition}

We finally define the time-aware capacity estimate, which is also used in \cite{feng2025online}.
\begin{definition}[Time-Aware Capacity Estimate]
\label{df:available_capa}
Fix a server $i$, a job $k$, and a future time $\tau\ge t_k$. The estimated fraction of server $i$'s capacity available at time $\tau$, based on the assignment decisions through job $k$, is defined as
\[
    \alpha_{i,k\to \tau}\coloneqq
    1-\frac{1}{c_i}
    \sum_{h:\,h\le k,\ \mathcal N(h)\ni i}
    \mathbb{I}[ h~\text{is assigned to}~i  ~\text{and}~  t_h+d_{ih}>\tau].
\]
\end{definition}
In words, \(\alpha_{i,k\to\tau}\) is the projected fraction of server \(i\)'s capacity available at time \(\tau\), computed solely from the assignment decisions made through job \(k\), under the assumption that no additional jobs are assigned to server \(i\) after job \(k\). Additionally, define $\alpha_{i,0\to t} \coloneqq 1$ for any non-negative $t$.

\medskip
\noindent\underline{\bf The time-aware schedule-based BALANCE algorithm.} We now formally define our time-aware schedule-based BALANCE algorithm (TS-BAL for short), as follows.

\LinesNotNumbered
\begin{algorithm}[H]
\caption{The Time-aware Schedule-based BALance Algorithm  (TS-BAL)}\label{alg:main_d_integer_accurate}
Instantiate the BALANCE framework (Algorithm~\ref{alg:BALANCE}) with
\[
\mathcal{L}_{ij} = \mathcal{L}_{ij}^{\operatorname{TS}}\coloneqq \mathcal{L}_{ij}^{\operatorname{TS}}([t_j,t_j+d_{ij});L)=\max_{\boldsymbol{\tau}\in\mathfrak S[t_j,t_j+d_{ij})}
 \left\{\sum_{\tau\in\boldsymbol{\tau}}\hat r_{i,j,\tau}\,\Psi\!\left(\alpha_{i,j-1\to \tau}\right)
 \right\} .
\]
\end{algorithm}

\begin{remark}\label{rem:oc_dp}
The quantity $\mathcal{L}^{\operatorname{TS}}_{ij}$ can be computed efficiently. Observe that \(\hat r_{i,j,\cdot}\) and \(\alpha_{i,j\to\cdot}\) change at only finitely many points over the interval \([t_j,t_j+d_{ij})\). Let \({\tau_1,\ldots,\tau_q}\) denote all points in this interval at which either quantity changes. We define the dynamic-programming state space as \[
        S^{\operatorname{DP}}_{ij}=\{\{t_j+k:k\in\mathbb N\}\cup\{\tau_1+k:k\in\mathbb{N}\}\cup\dots\cup\{\tau_q+k:k\in\mathbb{N}\}\}\cap [t_j,t_j+d_{ij}) .
    \]
Using this state space, \(\mathcal{L}^{\operatorname{TS}}_{ij}\) can be computed by the following dynamic program:
\[
V(t) = \max\left\{0,\max_{\substack{\tilde\tau\in S^{\operatorname{DP}}_{ij},\
\tilde\tau\ge t}}
\left\{
V(\tilde\tau+1)
+
\hat r_{i,j,\tilde\tau}
\Psi\left(\alpha_{i,j-1\to\tilde\tau}\right)
\right\}\right\},
\qquad
t\in S^{\operatorname{DP}}_{ij},
\]
with terminal condition \(V(t)=0\) for \(t\ge t_j+d_{ij}\). The desired time-aware schedule-based loss is then given by
$
\mathcal{L}^{\operatorname{TS}}_{ij}=V(t_j)$.
\end{remark}

\begin{remark}\label{rem:comparison_feng}
\citet{feng2025online} study a related online matching problem with reusable servers and non-stationary rewards. However, they impose a global reward bound, rather than the locally bounded reward condition in Assumption~\ref{assumption:local-bounded-heterogeneity}. In particular, their Forward-Looking BALANCE (FLB) algorithm assumes that \(r_{ij}\in[1,R]\) for every \((i,j)\in E\) and uses the assignment-specific loss
\(
\mathcal{L}^{\mathrm{FLB}}_{ij}
=
\sum_{\tau\in\{t_j+k:k\in\mathbb N,\ k<d_{ij}\}}
\Psi\!\left(\alpha_{i,j-1\to\tau}\right)
\).
Compared with our time-aware schedule-based loss (TS loss), the FLB loss can be viewed as a special case in two respects. First, the FLB loss does not incorporate a time-aware reward estimate. Under their globally bounded reward assumption, the uniform lower bound \(r_{ij}\ge 1\) provides a natural reward estimate, which corresponds to setting \(\hat r_{i,j,\tau}\equiv 1\) in our TS-loss construction. Second, the FLB loss evaluates the blocking loss along the specific reuse schedule
\(
\boldsymbol{\tau}
=
\{t_j+k:k\in\mathbb N,\ k<d_{ij}\}
\),
whereas our TS loss maximizes over all feasible reuse schedules and therefore accounts for the worst possible sequence of reuse times.
We further emphasize that maximizing over all feasible reuse schedules is essential for achieving an \(\mathcal{O}(\ln(\delta D))\) competitive ratio (as proved in Theorem~\ref{tm:main_theorem}). Consider instead the assignment-specific loss
\(
\mathcal{L}_{ij}
=
\sum_{\tau\in\{t_j+k:k\in\mathbb N,\ k<d_{ij}\}}
\hat r_{i,j,\tau}
\Psi\!\left(\alpha_{i,j-1\to\tau}\right)
\),
which incorporates the time-aware reward estimate while retaining the specific reuse schedule used in the FLB loss. As shown in Section~\ref{app:failed_candidates}, the resulting policy can have a competitive ratio of \(\Omega(\delta)\).
\end{remark}

\medskip
\noindent\underline{\bf Competitive ratio guarantee.} The following guarantee for Algorithm~\ref{alg:main_d_integer_accurate} is established in Section~\ref{se:pf_main_tm}, which also derives the parameter choices $\eta^*$ and $\beta^*$ that attain the stated competitive ratio.

\begin{theorem}[Competitive ratio of the TS-BAL algorithm]
\label{tm:main_theorem}
Suppose that Assumption~\ref{assumption:local-bounded-heterogeneity} holds. Then there exist parameters
$(\eta^*,\beta^*)$ such that, as $c_{\min}:=\min_i c_i\to\infty$, 
Algorithm~\ref{alg:main_d_integer_accurate} achieves an asymptotic competitive ratio of 
$2\ln(\delta D)+2\ln\ln(\delta\vee D)+\mathcal O(1)$.
\end{theorem}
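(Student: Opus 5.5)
We follow a primal-dual (online dual-fitting) argument against the configuration LP of Claim~\ref{claim:benchmark}. By that claim, it suffices to build a feasible dual $(\boldsymbol\lambda,\boldsymbol\theta)$ with
\[
\sum_j\lambda_j+\sum_i c_i\theta_i\le \Gamma\cdot \mathrm{ALG},\qquad \Gamma=2\ln(\delta D)+2\ln\ln(\delta\vee D)+\mathcal O(1).
\]

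\textbf{Dual construction and the objective bound.}
\begin{itemize}
\item For each job $j$ assigned to $i^*_j$, set $\lambda_j = r_{i^*_jj}d_{i^*_jj}-\mathcal L^{\operatorname{TS}}_{i^*_jj}$, the adjusted reward, which is positive. Set $\lambda_j=0$ otherwise.
\item For $\theta_i$, charge the loss increments. Let $\Phi_i^{(k)}$ be a potential aggregating $c_i\int \hat r\,\Psi(\alpha_{i,k\to\tau})$ over time. When job $j$ is placed on $i$, the capacity profile drops by $1/c_i$ on $[t_j,t_j+d_{ij})$, which raises $\Psi(\alpha_{i,\cdot\to\tau})$ there.
\item Because $\Psi(x)=\eta(\beta^{1-x}-1)$ is exponential, the increment at each $\tau$ is roughly $\frac{\ln\beta}{c_i}(\Psi+\eta)$ up to a factor $1+\mathcal O(\ln\beta/c_i)$. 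We therefore define $\theta_i$ as a sum of these per-unit increments, that is, the total blocking-loss increase generated by online assignments to $i$.
\item Bounding $\lambda_j$ plus $c_i$ times the increment by roughly $(1+\eta\ln\beta)$ times $r_{ij}d_{ij}$ shows the dual objective is at most $\Gamma\cdot\mathrm{ALG}$, with $\Gamma\approx 1+\eta\ln\beta$ (times the factor $2$ below).
\item This step uses $\hat r_{i,j,\tau}\le \delta\, r_{ij}$ from Assumption~\ref{assumption:local-bounded-heterogeneity}, and $d_{ij}\le D$ to bound how many unit-spaced reuse points fall in the processing interval.
\end{itemize}

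\textbf{Dual feasibility.} Fix a configuration $S\in\mathcal S_i$. For each $h\in S$ we need $\lambda_h$ plus its share of $\theta_i$ to cover $r_{ih}d_{ih}$. There are two cases.
\begin{itemize}
\item \emph{$h$ was assigned or its adjusted reward on $i$ was nonpositive.} The BALANCE rule gives $\lambda_h\ge r_{ih}d_{ih}-\mathcal L^{\operatorname{TS}}_{ih}$. It then remains to cover $\mathcal L^{\operatorname{TS}}_{ih}$, evaluated on the state at $h$'s arrival, by $\theta_i$.
\item \emph{Final-state accounting.} The loss at $h$ is at most a quantity computed from the final state. Each term $\hat r_{i,h,\tau}\Psi(\alpha_{i,h-1\to\tau})$ is built from increments by online jobs $k<h$ whose intervals cover $\tau$. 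Those jobs have $t_k\ge \tau-D$, so $r_{ik}\ge \hat r/\delta\cdot(\ldots)$ is comparable to $\hat r$ within a factor $\delta$.
\end{itemize}
The parameters are chosen to balance the two regimes. We take $\beta\approx \delta D\ln(\delta\vee D)$ and $\eta\approx 1/\ln\ln(\delta\vee D)$-type calibrations, so that $\Psi(0)=\eta(\beta-1)$ dominates $\delta D$ times the reward. This makes heavily congested times unprofitable and yields the $\ln(\delta D)+\ln\ln$ terms, as in \citet{feng2025online}. The finite-capacity error $\mathcal O(\ln^2\beta/c_{\min})$ vanishes as $c_{\min}\to\infty$.

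\textbf{Main obstacle: aggregation across the configuration.} The intervals of jobs in $S$ are disjoint. For each $h\in S$, the worst reuse schedule realizing $\mathcal L^{\operatorname{TS}}_{ih}$ lives in $[t_h,t_h+d_{ih})$. We must charge the union of these schedules to $\theta_i$, which is itself defined through maximal schedules over intervals of online jobs. The difficulty is that locally optimal schedules in adjacent intervals may violate the $\ge1$ spacing at their junctions, so TS losses are only weakly subadditive.

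Our plan is to split the concatenated reuse times into odd- and even-indexed pieces. Each piece is then a genuine feasible reuse schedule: drop one point near each boundary, or alternate between the schedules. Each piece is dominated by a single schedule-based increment. This gives $\sum_{h\in S}\mathcal L^{\operatorname{TS}}_{ih}\le 2\,\theta_i$-type bounds, which is exactly the source of the factor $2$ in the leading term.

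We would first verify this splitting lemma carefully, by showing that the union of two interleaved feasible schedules partitions into two feasible schedules. Then we would plug it into the dual objective bound to obtain $2\ln(\delta D)+2\ln\ln(\delta\vee D)+\mathcal O(1)$.
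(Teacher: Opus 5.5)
Your overall architecture matches the paper's: configuration-LP dual fitting, $\lambda_j$ equal to the positive part of the best adjusted reward, $\theta_i$ built from schedule-based loss increments, and a factor $2$ from a parity split. However, the step that makes dual feasibility lossless in $\delta$ is missing, and what you propose in its place would lose exactly that factor.

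To cover $\mathcal{L}_{ij}^{\operatorname{TS}}$ for a configuration job $j$, take a worst schedule $\boldsymbol\tau^\star$ for it. At every $\tau\in\boldsymbol\tau^\star$ you need
\[
\hat r_{i,j,\tau}\Psi(\alpha_{i,j-1\to\tau})\le\sum_{h\in\mathcal C_i(j)}\hat r_{i,h,\tau}\bigl[\Psi(\alpha_{i,h\to\tau})-\Psi(\alpha_{i,h-1\to\tau})\bigr].
\]
This holds exactly, for two reasons:
\begin{itemize}
\item Only jobs in $\mathcal C_i(j)$ change the capacity profile on $[t_j,t_j+d_{ij})$, so with $\hat r_{i,j,\tau}$ in place of $\hat r_{i,h,\tau}$ the right side telescopes to $\hat r_{i,j,\tau}[\Psi(\alpha_{i,j-1\to\tau})-\Psi(1)]$, and $\Psi(1)=0$.
\item $\hat r_{i,h,\tau}\ge\hat r_{i,j,\tau}$ for $h<j$, because $\mathcal K_{i,h}(\tau)\subseteq\mathcal K_{i,j}(\tau)$.
\end{itemize}
Because $\Delta_{ih}$ is a maximum over reuse schedules, $\boldsymbol\tau^\star\cap[t_j,(t_j+d_{ij})\wedge(t_h+d_{ih}))$ is a valid witness for each $h$; later points carry zero increment for $h$. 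This gives the local charging bound that your parity split then aggregates.

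Your ``final-state accounting'' instead compares rewards only ``within a factor $\delta$.'' Any such slack multiplies the required $\theta_i$, and hence $\Gamma$, by $\delta$. Your own observation $t_k\ge\tau-D$ already places $k$ in $\mathcal K_{i,j}(\tau)$ and so gives an exact inequality. The final state is also not a usable upper bound: $\hat r_{i,k,\tau}$ is nonincreasing in $k$ while $\Psi(\alpha_{i,k\to\tau})$ is nondecreasing.

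This also pins down $\theta_i$. The increments must be weighted by $\hat r_{i,h,\tau}$ and maximized over reuse schedules, i.e.\ $\theta_i=2\sum_h\Delta_{ih}[t_h,t_h+d_{ih})$. An integral potential will not do, and neither will weights $r_{ih}$: the latter costs $\delta$ in the objective step, since $\hat r_{i,h,\tau}$ can be as small as $r_{ih}/\delta$.

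Your objective step also uses the wrong inequalities. You cite $\hat r_{i,j,\tau}\le\delta r_{ij}$ and $d_{ij}\le D$. Either one inflates the residual $\eta\ln\beta\max_{\boldsymbol\tau}\sum_{\tau\in\boldsymbol\tau}\hat r_{i,j,\tau}$ by $\delta$ or $D$ relative to $r_{ij}d_{ij}$. Absorbing that would force $\eta=\mathcal O(1/\delta)$ (or $\mathcal O(1/D)$), hence a larger $\beta$, which changes the leading term. The correct facts are:
\begin{itemize}
\item $\hat r_{i,j,\tau}\le r_{ij}$, since $j\in\mathcal K_{i,j}(\tau)$;
\item $|\boldsymbol\tau|\le\lceil d_{ij}\rceil\le 2d_{ij}$, since $d_{ij}\ge1$.
\end{itemize}
You must also keep the main term $\ln\beta\cdot\mathcal{L}_{ij}^{\operatorname{TS}}\le\ln\beta\cdot r_{ij}d_{ij}$, which your $\Gamma\approx1+\eta\ln\beta$ drops. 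The right per-arrival bound is $1+2(1+2\eta)\beta^{1/c_{\min}}\ln\beta$.

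The parameters $\delta$ and $D$ enter only through capacity feasibility. Whenever $(i,j)$ is chosen, $\mathcal{L}_{ij}^{\operatorname{TS}}\ge\hat r_{i,j,t_j}\Psi(\alpha_{i,j-1\to t_j})\ge(r_{ij}/\delta)\Psi(\alpha_{i,j-1\to t_j})$, which gives $\Psi(\alpha_{i,j-1\to t_j})<\delta D\le\eta(\beta-1)=\Psi(0)$.

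Your calibration $\eta\approx1/\ln\ln(\delta\vee D)$ leaves a residual $4\eta\ln\beta\approx4\ln(\delta D)/\ln\ln(\delta\vee D)$, which is not $\mathcal O(\ln\ln(\delta\vee D))$. You need $\eta=1/\ln(\delta\vee D)$ and $\beta=1+\delta D\ln(\delta\vee D)$.

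For the parity split itself, split the overlap \emph{intervals}, not individual reuse times. Same-parity intervals are separated by a whole configuration interval of length at least $1$, so each parity class of local worst schedules is already feasible, and no boundary points need to be dropped.
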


To complement the upper bound in Theorem~\ref{tm:main_theorem}, the following result establishes a competitive-ratio lower bound for any online algorithm.
\begin{theorem}[Competitive ratio lower bound]\label{tm:lb_main}
Under Assumption~\ref{assumption:local-bounded-heterogeneity}, every online algorithm has a competitive ratio of at least
$H_D+\ln\delta \geq \ln (\delta D)$, where $ H_D\coloneqq\sum_{k=1}^{\lfloor D\rfloor} (1/k)$. This lower bound holds even when fractional assignments are permitted and processing durations \(\{d_{ij}\}\) are integers.
\end{theorem}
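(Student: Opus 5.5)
We prove the bound with a single-server instance in which every job arrives at time $0$. The adversary reveals a nested family of prefix instances, and the online algorithm cannot tell which prefix is the final one. Since all arrival times coincide, Assumption~\ref{assumption:local-bounded-heterogeneity} reduces to requiring all reward rates to lie in a window of ratio $\delta$ and all durations to lie in $[1,D]$. We work with fractional assignments, which covers randomized algorithms by taking expectations of the assignment indicators. Because the argument works for every capacity $c$, it also gives the asymptotic bound of Definition~\ref{definition:asymptotic-competitive-ratio}.

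\emph{Construction.} Fix one server with capacity $c$ and write $K=\lfloor D\rfloor$. The instance has two phases, all at $t=0$.
\begin{itemize}
\item Phase A has stages $k=1,\dots,K$. Stage $k$ reveals $c$ jobs with duration $k$ and reward rate $1$, so each has total value $k$.
\item Phase B follows and reveals, for reward rates $r$ increasing from $1$ to $\delta$ (a fine discretization, later taken to its limit), batches of $c$ jobs with duration $K$ and rate $r$, each of value $Kr$.
\end{itemize}
Every job occupies capacity at time $0$, so the online algorithm's total assigned mass is at most $c$. The offline optimum of the prefix ending at stage $k$ of Phase A is $ck$, obtained by taking the $c$ jobs of that stage. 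The optimum of the prefix ending at rate $r$ of Phase B is $cKr$. Both are attained by integral solutions, so by Claim~\ref{claim:benchmark} they coincide with the configuration-LP value.

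\emph{Deriving the bound.} Let $v$ denote the value per unit of capacity of a job. It runs through $1,2,\dots,K$ in Phase A and then continuously through $[K,K\delta]$ in Phase B. Let $G(v)$ be the online reward collected up to and including value level $v$. If the algorithm is $\Gamma$-competitive, then stopping at every prefix forces
\[
G(v)\ \ge\ c\,v/\Gamma \qquad\text{for every level } v.
\]
Reward collected at level $v$ uses capacity equal to the reward increment divided by $v$, so the total capacity used is
\[
\int \frac{dG(v)}{v}.
\]
By Abel summation (integration by parts), replacing $G$ with the pointwise smallest feasible profile $cv/\Gamma$ can only decrease this integral, since levels at which mass is spent early count with larger weight $1/v$. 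Therefore
\[
c\ \ge\ \frac{c}{\Gamma}\Bigl(1+\sum_{k=2}^{K}\frac{1}{k}+\int_{K}^{K\delta}\frac{dv}{v}\Bigr)=\frac{c}{\Gamma}\bigl(H_D+\ln\delta\bigr).
\]
In this sum, the jump at $v=1$ contributes $1$, each jump from $k-1$ to $k$ contributes $1/k$, and the continuous part contributes $\ln\delta$. Hence $\Gamma\ge H_D+\ln\delta$.

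\emph{Comparison with $\ln(\delta D)$.} It remains to check $H_D\ge\ln(\lfloor D\rfloor+1)\ge\ln D$.

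\emph{Main obstacle.} The delicate step is the Abel-summation exchange. We must verify rigorously that it is valid for an arbitrary fractional online profile, including mass placed partway through a stage, and that the discretization of Phase B converges to the integral $\ln\delta$ with an error that vanishes as the grid is refined. I would handle both by discretizing Phase B geometrically with ratio $1+\epsilon$, summing the finitely many prefix constraints with the explicit weights $1/v_\ell-1/v_{\ell+1}$ (and weight $1/v_{\max}$ on the last constraint), and then letting $\epsilon\to0$.
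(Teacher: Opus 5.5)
Your proof is correct and is essentially the paper's argument. Both use a single server with pairwise-overlapping batches of nondecreasing value and a prefix-instance bound. Your Abel weights $1/v_\ell-1/v_{\ell+1}$ (plus $1/v_{\max}$ on the last prefix) are, up to the factor $v_0$, exactly the distribution $p_\ell$ used in Lemma~\ref{lem:prefix-instance-distribution}. A harmonic duration phase and a geometric reward phase then contribute $H_D$ and $\ln\delta$.

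The differences are cosmetic. You run the duration phase before the reward phase, put all arrivals at $t=0$, and handle randomization by taking expectations directly instead of invoking Yao's principle. Your appeal to Claim~\ref{claim:benchmark} is also unnecessary, since the ratio is measured against the integral $\mathrm{OPT}$, which you compute directly.
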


Comparing Theorem~\ref{tm:main_theorem} and Theorem~\ref{tm:lb_main}, we conclude that the competitive ratio achieved by Algorithm~\ref{alg:main_d_integer_accurate}  is asymptotically optimal up to a factor of $2$.

\subsection{Tight Competitive Ratio via Greedy Relaxation Loss}\label{subsec:GR-BAL}

In this subsection, we present a refined algorithm that removes the factor \(2\) from the leading term of the competitive-ratio bound in Theorem~\ref{tm:main_theorem}. The algorithm continues to follow the BALANCE framework, but its assignment-specific loss \(\mathcal{L}_{ij}\) is constructed through a greedy relaxation of the time-aware schedule-based (TS) loss introduced in Section~\ref{subsec:TS-BALANCE}. Specifically, rather than optimizing over feasible reuse schedules as in Definition~\ref{df:dp_price}, we partition \([T_1,T_2)\) using a unit-length grid and independently select the reuse time with the largest blocking loss from each subinterval. Although the resulting collection of reuse times need not constitute a feasible reuse schedule, its total blocking loss provides an upper bound on the TS loss. This construction yields what we call the \emph{greedy relaxation loss}.

For ease of exposition, we restrict attention in this subsection to integer-valued processing durations. The extension to real-valued processing durations follows the approach of \citet{feng2025online}; the corresponding modification of the algorithm is deferred to Section~\ref{app:real-duration}, where Theorem~\ref{thm:real-duration} establishes that the modified algorithm, $\gamma$GR-BAL (Algorithm~\ref{alg:main_d_realvalue}), achieves an asymptotic competitive ratio of $\ln(\delta D) + 2\ln \ln (\delta \vee D) + \mathcal{O}(1) $ as $c_{\min} \to \infty$.

\medskip
\noindent\underline{\bf The greedy relaxation loss.} We now derive an upper bound on the TS loss via greedy relaxation. For any time interval $[T_1, T_2)$, We define the unit-length grid 
\[
\mathcal{T}[T_1,T_2)\coloneqq \{T_1,T_1+1,\dots,T_1+\lceil T_2-T_1-1\rceil\}.
\]
Then $\{[\tau, \tau + 1\wedge T_2)\}_{\tau \in \mathcal{T}[T_1, T_2)}$ forms a partition of $[T_1, T_2)$. Fix a candidate assignment $(i,j)$, a time interval $[T_1, T_2) \subseteq [t_j, t_j + d_{ij})$, and a reuse schedule $\boldsymbol{\tau} \in \mathfrak S[T_1, T_2)$. For any two distinct elements $\tau', \tau'' \in \boldsymbol{\tau}$, Definition~\ref{def:feasible-dp-path} implies that $|\tau' - \tau''| \geq 1$. Consequently, $\tau'$ and $\tau''$ belong to distinct intervals in the above partition. Each partition interval therefore contains at most one element of \(\boldsymbol{\tau}\), and hence $\sum_{\tau \in \boldsymbol{\tau}} L_{i,j,\tau} \leq \sum_{\tau \in \mathcal{T}[T_1, T_2)} \max_{t\in[\tau,\tau+1\wedge T_2)} L_{i,j,t}$, Taking the maximum over all reuse schedules yields
\begin{align}
\mathcal{L}_{ij}^{\operatorname{TS}}([T_1,T_2);L) \leq \sum_{\tau \in \mathcal{T}[T_1, T_2)} \max_{t\in[\tau,\tau+1\wedge T_2)} L_{i,j,t}. \label{eq:blocking-loss-upper-bound-1}
\end{align}
On the right-hand side of Eq~\eqref{eq:blocking-loss-upper-bound-1}, we greedily select the largest blocking loss from each subinterval in the partition of $[T_1, T_2)$. The sum of these selected losses provides an upper bound, or relaxation, of the TS loss.

To derive the final form of the greedy relaxation loss, we further relax the right-hand side of Eq~\eqref{eq:blocking-loss-upper-bound-1}. For any time \(t \in [t_j, t_j + d_{ij})\), let \(\tilde{\boldsymbol r}_{i,j,t}\) denote the multiset of reward rates of the jobs that were assigned to server \(i\) before the arrival of job \(j\) and remain active at time \(t\). We use \(\tilde r_{i,j,t}^{(\ell)}\) to denote the \(\ell\)-th largest element of this multiset, and define \(\tilde r_{i,j,t}^{(0)} \coloneqq +\infty\) for convenience. Fix any $t \in [\tau, \tau+1\wedge T_2)$, and write $\alpha_{i,j-1\to t} = 1 - \ell/{c_i}$, where $\ell = |\tilde{\boldsymbol r}_{i,j,t}|$ is the number of jobs that were assigned to server \(i\) before the arrival of job \(j\) and remain active at time \(t\). We then have
\begin{align*}
L_{i,j,t} = \hat{r}_{i,j,t} \Psi(\alpha_{i,j-1\to t}) \leq \left(\tilde{r}_{i,j,t}^{(\ell)} \wedge r_{ij}\right) \Psi(1 - {\ell}/{c_i}) \leq \left(\tilde{r}_{i,j,\tau}^{(\ell)} \wedge r_{ij}\right)  \Psi(1 - {\ell}/{c_i}) .
\end{align*}
The last inequality follows because every job that arrives before $\tau$ and remains active at time \(t\ge \tau\) is also active at time \(\tau\). Consequently,
\begin{align}
\max_{t\in[\tau,\tau+1\wedge T_2)} L_{i,j,t} \leq  \max_{\ell \in \{0\} \cup [|\tilde{\boldsymbol{r}}_{i,j,\tau}|]} \left(\tilde{r}_{i,j,\tau}^{(\ell)} \wedge r_{ij}\right) \Psi(1 - \ell / c_i) .  \label{eq:blocking-loss-upper-bound-2}
\end{align}
Combining Eq.~\eqref{eq:blocking-loss-upper-bound-1} and Eq.~\eqref{eq:blocking-loss-upper-bound-2}, we derive
\begin{align}
\mathcal{L}_{ij}^{\operatorname{TS}}([T_1,T_2);L) \leq \mathcal{L}_{ij}^{\operatorname{GR}}[T_1,T_2)  \coloneqq \sum_{\tau \in \mathcal{T}[T_1, T_2)} \max_{\ell \in \{0\} \cup [|\tilde{\boldsymbol{r}}_{i,j,\tau}|]} \left(\tilde{r}_{i,j,\tau}^{(\ell)} \wedge r_{ij}\right) \Psi(1 - \ell / c_i) . \label{eq:def-greedy-relaxation-loss}
\end{align}
We refer to the quantity on the right-hand side of Eq.~\eqref{eq:def-greedy-relaxation-loss} as the \emph{greedy relaxation loss}.

\medskip
\noindent\underline{\bf The greedy-relaxation BALANCE algorithm and its competitive ratio guarantee.} We formally define our refined greedy-relaxation BALANCE algorithm (GR-BAL for short), as follows.

\LinesNotNumbered
\begin{algorithm}[H]
\caption{The Greedy-Relaxation BALance Algorithm  (GR-BAL)}\label{alg:main_d_integer_active}
Instantiate the BALANCE framework (Algorithm~\ref{alg:BALANCE}) with
\[
\mathcal{L}_{ij} = \mathcal{L}_{ij}^{\operatorname{GR}}\coloneqq L_{ij}^{\operatorname{GR}}
 [t_j,t_j+d_{ij}) .
\]
\end{algorithm}
The following theorem presenting the guarantee for Algorithm~\ref{alg:main_d_integer_active} is established in Section~\ref{se:pf_main_tm_refine}, which also derives the parameter choices $\eta^*$ and $\beta^*$ that attain the stated competitive ratio.

\begin{theorem}[Competitive ratio of the GR-BAL algorithm under integer-valued durations]
\label{tm:main_theorem_refine}
Suppose Assumption~\ref{assumption:local-bounded-heterogeneity} holds and all durations are integer-valued. There exist parameters
$(\eta^*,\beta^*)$ such that, as $c_{\min}:=\min_i c_i\to\infty$,
Algorithm~\ref{alg:main_d_integer_active} achieves an asymptotic competitive ratio of
\(
\ln(\delta D)+\ln\ln(\delta\vee D)+\mathcal \mathcal{O}(1) \).
\end{theorem}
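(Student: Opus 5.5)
The argument is a primal--dual analysis against the configuration LP. By Claim~\ref{claim:benchmark} it suffices to build dual variables $(\boldsymbol\lambda,\boldsymbol\theta)$ from the run of GR-BAL, show they are approximately feasible, and bound their objective by $\Gamma\cdot\mathrm{ALG}$.

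\textbf{Dual variables.} When job $j$ is assigned to $i^*_j$, set $\lambda_j := r_{i^*_j j}d_{i^*_j j}-\mathcal L^{\operatorname{GR}}_{i^*_j j}$, the adjusted reward; otherwise set $\lambda_j:=0$. The server variable $\theta_i$ is $c_i^{-1}$ times a potential: the total increment of greedy-relaxation losses on server $i$ produced by all assignments to $i$. Formally,
\[
\theta_i := \frac{1}{c_i}\sum_{\tau\in\text{grid}}\Bigl(\text{final value} - \text{initial value of the per-cell term } \textstyle\max_\ell (\tilde r^{(\ell)}\wedge\cdot)\,\Psi(1-\ell/c_i)\Bigr),
\]
suitably scaled.

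\textbf{Step 1 (objective bound).} For each assignment $(i,j)$, show that the increase in $c_i\theta_i$ caused by adding job $j$ is at most $\kappa\cdot r_{ij}d_{ij}$. Adding $j$ raises $\ell$ by one on each of its $d_{ij}$ unit cells and inserts $r_{ij}$ into the order statistics. The change of $\Psi$ per unit of $\ell$ is $\approx \eta\ln\beta\,\beta^{1-\alpha}/c_i$. Using $\tilde r^{(\ell)}\wedge r_{ij}\le r_{ij}$, each cell's increment is at most $r_{ij}\bigl(\Psi(\alpha-1/c_i)-\Psi(\alpha)\bigr)c_i \approx r_{ij}\ln\beta\,(\Psi(\alpha)+\eta)$. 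Summing over cells gives $c_i\Delta\theta_i\le \ln\beta\,(\mathcal L^{\operatorname{GR}}_{ij}+\eta r_{ij}d_{ij})(1+o(1))$ as $c_{\min}\to\infty$. Together with $\lambda_j=r d-\mathcal L$, the dual objective is at most $(1+\ln\beta\cdot\max(1,\eta))\,\mathrm{ALG}$ up to $1+o(1)$. The choices $\beta\approx \delta D\ln(\delta\vee D)$ and $\eta\approx 1/\ln\beta$ keep this $\ln(\delta D)+\ln\ln+\mathcal O(1)$.

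\textbf{Step 2 (dual feasibility), the main obstacle.} Fix $i$ and a configuration $S\in\mathcal S_i$. For every $h\in S$, either $\lambda_h\ge r_{ih}d_{ih}-\mathcal L^{\operatorname{GR}}_{ih}$ holds at $h$'s arrival (by the BALANCE choice, whether $h$ went elsewhere or was rejected), or $h$ was assigned to $i$. So it suffices to show
\[
\textstyle\sum_{h\in S}\mathcal L^{\operatorname{GR}}_{ih}\le\theta_i+\varepsilon\sum_{h\in S} r_{ih}d_{ih}.
\]
Since durations are integers and $t_h$ may be arbitrary, the intervals of $S$ are disjoint. Each $\mathcal L^{\operatorname{GR}}_{ih}$ is a sum over its unit cells of the per-cell term at time $t_h$. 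That term is bounded by the end-of-horizon value (monotone, since $\ell$ only grows with assignments) minus the start value, which is where the local factor $\delta$ enters. For a cell at $\tau$, the per-cell term at $\tau$ charges to the increments made by online jobs active at $\tau$. Those jobs arrived within $D$ of $h$, so their rates are within $\delta$ of $r_{ih}$, and $\hat r\le\delta$ times their rates. The crucial point is \emph{full subadditivity}: the greedy loss decomposes cell-by-cell over the unit grid. Charges from disjoint intervals of $S$ therefore land on disjoint grid cells of $\theta_i$'s potential and can be added without overlap, unlike the TS-BAL case, where combining local worst schedules costs the factor $2$. The residual term from $\ell=0$ ($\Psi(1)=0$), together with the $\eta$ baseline, handles cells with little congestion. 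It also explains why $\beta$ must exceed $\delta D$: a job $h$ with no competitors must have $r_{ih}d_{ih}$ at most $\approx\Psi(0)\cdot$(reward ratio) in order to be rejected. Carefully aligning grids is delicate, since the cells of $h$ start at $t_h$, not at integers. I would therefore define the potential on the continuous time axis and use integer durations so that cell counts match.

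\textbf{Step 3 (conclusion).} Scaling $(\lambda,\theta)$ by $1/(1-\varepsilon)$ gives a feasible dual. Weak duality and Claim~\ref{claim:benchmark} then yield $\mathrm{OPT}\le(1+o(1))\,\Gamma\,\mathrm{ALG}$. Optimizing $\eta,\beta$ gives $\ln(\delta D)+\ln\ln(\delta\vee D)+\mathcal O(1)$.
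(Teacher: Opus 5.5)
Your outer architecture matches the paper's: a primal--dual argument against the configuration LP, with $\lambda_j$ the positive part of the best adjusted reward, $\theta_i$ built from GR-loss increments, the convexity bound on $\Psi$ for the objective, and cellwise subadditivity for feasibility. But the step you flag as the main obstacle is where the proof actually lives, and your plan for it does not go through.

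\emph{The potential.} A potential of the form ``final minus initial per-cell term'' on one grid is not well defined. The cap $\wedge r_{ij}$ is job-specific, and $\mathcal{L}^{\operatorname{GR}}_{ij}$ is sampled on $j$'s own grid $t_j,t_j+1,\dots$. So no single grid carries both the losses of all $j\in S$ and the increments that make up $\theta_i$.

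\emph{Continuous time.} Passing to continuous time does not fix this. $\mathcal{L}^{\operatorname{GR}}_{ij}$ reads the pre-$j$ order statistics at the left endpoint of each of $j$'s cells. At $\tau=t_j$, the counted jobs may be simultaneously active only on a short window around $t_j$ (some just arrived, others about to finish). A time-integrated potential therefore need not dominate these point samples, and since $\Psi$ is exponential, averaging can lose far more than a constant.

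\emph{Where $\delta$ enters.} Your remark that $\delta$ enters the charging is also a warning sign. If the charging used that nearby rates are within $\delta$, feasibility would only hold after scaling $\theta_i$ by $\delta$, which destroys the $\ln(\delta D)$ leading term.

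The missing idea is to keep increments per assigned job, each on its own grid. Set $\tilde\theta_i=\sum_h\tilde\Delta_{ih}[t_h,t_h+d_{ih})$, where $h$'s increment is sampled at $t_h,t_h+1,\dots$ from the state just before $h$; this sum does not telescope to a potential. Feasibility then rests on two facts.

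\emph{Exact local charging at a common time $\tau$.} Fix a cell $\tau$ of $j$ and any $\ell$. Let $h_1<\dots<h_\ell$ be the $\ell$ highest-reward jobs assigned before $j$ and active at $\tau$. Since $h_q$ already saw $h_1,\dots,h_{q-1}$, we have $\tilde r^{(q-1)}_{i,h_q,\tau}\wedge r_{ih_q}\ge\tilde r^{(\ell)}_{i,j,\tau}$. The increments $\Psi(1-q/c_i)-\Psi(1-(q-1)/c_i)$ then telescope to exactly $\tilde r^{(\ell)}_{i,j,\tau}\Psi(1-\ell/c_i)$, with no $\delta$.

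\emph{Left-shift monotonicity.} The map $\tau\mapsto\tilde\Delta_{ih}[\tau,\tau+1)$ is nonincreasing for $\tau\ge t_h$, because both the pre-$h$ active set and the indicator $\tau<t_h+d_{ih}$ shrink. This moves each sample point $\tau$ of $j$ to $t_h+\lfloor\tau-t_h\rfloor\in[t_h,\tau]$, which is a cell of $h$'s own grid. Integer durations are used exactly here: they make the images of different $j\in S$ disjoint sets of $h$'s cells. (Disjointness of the intervals themselves already comes from $S\in\mathcal S_i$.) This gives $\tilde\vartheta_{ih}\ge\sum_{j\in S:\,h\in\tilde{\mathcal C}_i(j)}\tilde\Delta_{ih}[t_j,(t_j+d_{ij})\wedge(t_h+d_{ih}))$ exactly, so your $\varepsilon$-slack and rescaling are unnecessary.

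Two smaller points remain.

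\emph{Capacity feasibility.} You never prove it, yet Step 1 needs it, since $\Psi(\alpha-1/c_i)$ requires $\alpha\ge1/c_i$. The right condition is $\eta(\beta-1)\ge\delta D$, not $\beta>\delta D$. When $j$ is assigned to $i$, the cell $t_j$ with $\ell$ equal to the current occupancy gives $\mathcal{L}^{\operatorname{GR}}_{ij}\ge(r_{ij}/\delta)\Psi(\alpha_{i,j-1\to t_j})$, because every active job arrived within $D$ of $t_j$. This is the only place $\delta$ enters. With $\eta\approx1/\ln\beta$, your $\beta\approx\delta D\ln(\delta\vee D)$ can miss this condition by a factor near $2$; the paper takes $\eta=1/\ln(\delta\vee D)$ and $\beta=1+\delta D\ln(\delta\vee D)$.

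\emph{Step 1 bound.} Replacing $\tilde r^{(\ell)}\wedge r_{ij}$ by $r_{ij}$ before comparing with $\mathcal{L}^{\operatorname{GR}}_{ij}$ loses up to a factor $\delta$. Instead, apply the convexity bound termwise inside the maximum over $\ell$, and use $\le r_{ij}$ only in the $\eta$-term.
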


Compared with Theorem~\ref{tm:main_theorem}, the key analytical reason why Theorem~\ref{tm:main_theorem_refine}  achieves an asymptotically tight competitive-ratio bound is a stronger structural subadditivity property. The TS-loss increments associated with Algorithm~\ref{alg:main_d_integer_accurate}, formally defined in Definition~\ref{def:pathoc-increment}, can only be shown to satisfy a weak subadditivity inequality over disjoint time intervals; see Lemma~\ref{le:pf_dp_additive} in Section~\ref{se:pf_main_tm}. This weaker inequality introduces the factor \(2\). By contrast, the greedy-relaxation-loss increments are fully subadditive over disjoint time intervals; see Lemma~\ref{le:pf_dp_additive_refine} in Section~\ref{se:pf_main_tm_refine}. This stronger property is the key to removing the factor \(2\) from the leading term of the competitive-ratio bound. Nevertheless, it remains unclear whether a more refined analysis could establish an asymptotically tight competitive ratio for Algorithm~\ref{alg:main_d_integer_accurate}. We leave this as an interesting open question.

\section{Proof of Theorem~\ref{tm:main_theorem}}\label{se:pf_main_tm}
The proof is divided into three parts. First, in Section~\ref{se:pf_capa_fea}, we explore the capacity availability of Algorithm~\ref{alg:main_d_integer_accurate}. Specifically, we define an admissible parameter set and illustrate that when $\beta,\eta$ lie in this set, for every $i$ and $j$, the remaining capacity of server $i$ is larger than $0$ if Algorithm~\ref{alg:main_d_integer_accurate} assigns $i$ to $j$. The main result of this part is Proposition~\ref{prop:capa_fea}. Second, in Section~\ref{se:pf_value_LP}, we give a competitive ratio of Algorithm~\ref{alg:main_d_integer_accurate} for any admissible parameter pair $(\beta,\eta)$ through a primal-dual analysis of the configuration LP. The derived competitive ratio relies on $(\beta,\eta)$, and the main result of this part is Proposition~\ref{prop:pf_value_LP}.
Finally, in Section~\ref{se:pf_choice_beta_eta}, we combine the results in Section~\ref{se:pf_capa_fea} and Section~\ref{se:pf_value_LP}. Specifically, we select explicit admissible parameters and substitutes them into the bound derived in Proposition~\ref{prop:pf_value_LP}, thereby completing the proof.

\subsection{Capacity Feasibility of Algorithm~\ref{alg:main_d_integer_accurate}}\label{se:pf_capa_fea}

Recall that the first step in proving Theorem 1 is to establish the capacity feasibility of Algorithm~\ref{alg:main_d_integer_accurate}. To this end, Definition~\ref{df:capa_fea} introduces an admissible parameter set $\Theta$, and Proposition~\ref{prop:capa_fea} subsequently proves that every parameter pair $(\beta,\eta)\in\Theta$ guarantees capacity feasibility.

\begin{definition}[Admissible Parameter Set]\label{df:capa_fea}
For the function $\Psi(x)=\eta(\beta^{1-x}-1)$, define
\[
    \Theta \coloneqq
    \left\{
    (\beta,\eta):
    \delta D\le \eta\left(\beta-1\right),\;
    \beta>1,\;
    \eta>0
    \right\}.
\]
\end{definition}

We next establish that this parameter condition is sufficient for capacity feasibility.

\begin{proposition}[Feasibility of Algorithm~\ref{alg:main_d_integer_accurate}]\label{prop:capa_fea}
Fix $\eta$ and $\beta$. If $(\beta,\eta)\in\Theta$, then Algorithm~\ref{alg:main_d_integer_accurate} never violates the server-capacity constraints. Equivalently, whenever the algorithm assigns job $j$ to server $i$, server $i$ has strictly positive remaining capacity immediately before the assignment.
\end{proposition}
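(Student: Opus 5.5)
We argue by contraposition. Suppose server $i$ is full when job $j$ arrives, meaning that at time $t_j$ exactly $c_i$ previously assigned jobs on $i$ are still active. We will show that $r_{ij}d_{ij}-\mathcal{L}^{\operatorname{TS}}_{ij}\le 0$. Then the BALANCE rule in Algorithm~\ref{alg:BALANCE} never selects $i$: the selected server must have strictly positive adjusted reward, so job $j$ is either sent elsewhere or left unassigned.

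The first step concerns the capacity estimate at time $t_j$. By Definition~\ref{df:available_capa}, all $c_i$ active jobs $h<j$ assigned to $i$ satisfy $t_h\le t_j<t_h+d_{ih}$, so they are counted in $\alpha_{i,j-1\to t_j}$. This gives $\alpha_{i,j-1\to t_j}=0$, and therefore $\Psi(\alpha_{i,j-1\to t_j})=\eta(\beta-1)$.

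The second step concerns the reward estimate. Take the single-element reuse schedule $\boldsymbol\tau=(t_j)$, which lies in $\mathfrak S[t_j,t_j+d_{ij})$. This yields
\[
\mathcal{L}^{\operatorname{TS}}_{ij}\ge \hat r_{i,j,t_j}\,\eta(\beta-1).
\]
By Definition~\ref{def:future_reward_estimator}, $\hat r_{i,j,t_j}=\min_{k\in\mathcal K_{i,j}(t_j)} r_{ik}$, where the minimum runs over jobs $k\le j$ compatible with $i$ and satisfying $t_k\ge t_j-D$. Every such $k$ also satisfies $t_k\le t_j$, so $|t_k-t_j|\le D$. Assumption~\ref{assumption:local-bounded-heterogeneity} then gives $r_{ik}\ge r_{ij}/\delta$. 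Since $j\in\mathcal K_{i,j}(t_j)$, the minimum is attained over a nonempty set, and we conclude $\hat r_{i,j,t_j}\ge r_{ij}/\delta$.

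The third step combines these with $(\beta,\eta)\in\Theta$, which means $\eta(\beta-1)\ge \delta D$:
\[
\mathcal{L}^{\operatorname{TS}}_{ij}\ge \frac{r_{ij}}{\delta}\,\delta D = r_{ij}D\ge r_{ij}d_{ij}.
\]
The last inequality uses $d_{ij}\le D$. Hence the adjusted reward $r_{ij}d_{ij}-\mathcal{L}^{\operatorname{TS}}_{ij}$ is nonpositive, and $i$ is never chosen while it is full.

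The only delicate points are bookkeeping. One must check that "full at time $t_j$" coincides with "$\alpha_{i,j-1\to t_j}=0$", including simultaneous arrivals with $t_h=t_j$, $h<j$; these are counted, since the indicator $t_h+d_{ih}>t_j$ holds for them. One must also confirm that the local-ratio bound applies to all of $\mathcal K_{i,j}(t_j)$ and not only to the nearby jobs. Both follow directly from the definitions. An induction over jobs then shows that occupancy never exceeds $c_i$ at any time, because occupancy can only increase at assignment instants.
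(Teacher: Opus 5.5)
Your proof is correct and follows the paper's argument. The paper packages the same singleton-schedule bound $\mathcal{L}^{\operatorname{TS}}_{ij}\ge \hat r_{i,j,t_j}\Psi(\alpha_{i,j-1\to t_j})$, together with $\hat r_{i,j,t_j}\ge r_{ij}/\delta$, into a lemma stating that $\delta d_{ij}>\Psi(\alpha_{i,j-1\to t_j})$ whenever $j$ is assigned to $i$. It then reaches a contradiction with $\eta(\beta-1)\ge \delta D$ at $\alpha_{i,j-1\to t_j}=0$, which is the contrapositive form of your direct argument, and your bookkeeping on simultaneous arrivals and on the induction is slightly more explicit than the paper's.
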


\proof{Proof.}
We first state a key inequality implied by the assignment rule and used in the feasibility argument. Specifically, Lemma~\ref{le:capa_fea_lb_integral} compares the duration scale $\delta d_{ij}$ with the scarcity cost at the arrival time $t_j$ when Algorithm~\ref{alg:main_d_integer_accurate} makes an assignment. Its proof is deferred to Section~\ref{sec:app_pf_le_capa_fea_integral}.

\begin{lemma}\label{le:capa_fea_lb_integral}
    When Algorithm~\ref{alg:main_d_integer_accurate} assigns server $i$ to job $j$, we have that $\delta d_{ij}> \Psi\!(\alpha_{i,j-1\to t_j})$.
\end{lemma}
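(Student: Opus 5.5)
The plan is to compare the immediate reward with a lower bound on the loss. Since the algorithm assigns $j$ to $i$ only when $r_{ij}d_{ij}-\mathcal{L}^{\operatorname{TS}}_{ij}>0$, any explicit lower bound of the form $\mathcal{L}^{\operatorname{TS}}_{ij}\ge c\,\Psi(\alpha_{i,j-1\to t_j})$ yields $r_{ij}d_{ij}> c\,\Psi(\alpha_{i,j-1\to t_j})$. The aim is to find such a bound with $c$ a multiple of $r_{ij}/\delta$; the factor $r_{ij}$ then cancels and leaves a comparison of $\delta d_{ij}$ with $\Psi$.

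The simplest lower bound comes from the singleton reuse schedule $\boldsymbol\tau=(t_j)\in\mathfrak S[t_j,t_j+d_{ij})$, which is valid because $d_{ij}\ge 1>0$. This gives
\[
\mathcal{L}^{\operatorname{TS}}_{ij}\ \ge\ \hat r_{i,j,t_j}\,\Psi(\alpha_{i,j-1\to t_j}).
\]
Next, by Definition~\ref{def:future_reward_estimator}, $\hat r_{i,j,t_j}=\min_{k\in\mathcal K_{i,j}(t_j)} r_{ik}$. Every $k$ in this set satisfies $k\le j$, $i\in\mathcal N(k)$, and $t_j-D\le t_k\le t_j$, so $|t_k-t_j|\le D$. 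Assumption~\ref{assumption:local-bounded-heterogeneity}, through Eq.~\eqref{eq:local-reward-ratio}, then gives $r_{ik}\ge r_{ij}/\delta$. Since $j\in\mathcal K_{i,j}(t_j)$ and the set is nonempty, $\hat r_{i,j,t_j}\ge r_{ij}/\delta$.

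Combining these bounds, the assignment condition gives
\[
r_{ij}d_{ij}>\mathcal{L}^{\operatorname{TS}}_{ij}\ge (r_{ij}/\delta)\Psi(\alpha_{i,j-1\to t_j}).
\]
Dividing by $r_{ij}/\delta>0$ yields $\delta d_{ij}>\Psi(\alpha_{i,j-1\to t_j})$, which is the claim. Nonnegativity of $\Psi$ on $[0,1]$ is needed only so that the singleton-schedule bound is meaningful. If $\alpha_{i,j-1\to t_j}$ could fall below $0$, then $\Psi$, extended by the same formula, only grows, and the inequality still holds.

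I do not expect a real obstacle. The one point needing care is checking that every job in $\mathcal K_{i,j}(t_j)$ is within distance $D$ of $t_j$, so that the local ratio bound applies. This holds because $t_k\le t_j$ by the arrival ordering and $t_k\ge t_j-D$ by the definition of the set.
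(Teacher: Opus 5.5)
Your proposal is correct and follows the paper's proof essentially step for step. The singleton schedule $(t_j)$ gives $\mathcal{L}^{\operatorname{TS}}_{ij}\ge \hat r_{i,j,t_j}\Psi(\alpha_{i,j-1\to t_j})$, the local reward condition gives $\hat r_{i,j,t_j}\ge r_{ij}/\delta$, and positivity of the adjusted reward completes the argument. One small correction to your remark on $\Psi$: nonnegativity of $\Psi$ is what justifies replacing $\hat r_{i,j,t_j}$ by the smaller quantity $r_{ij}/\delta$, not the validity of the singleton bound; it holds automatically because $\alpha_{i,j-1\to t_j}\le 1$ by definition.
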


We prove Proposition~\ref{prop:capa_fea} by induction over arrivals. Suppose the process is feasible before job $j$ arrives. It remains to show that, when $(\beta,\eta)\in\Theta$, Algorithm~\ref{alg:main_d_integer_accurate} can not assign job $j$ to a server $i$ with no available capacity at time $t_j$, that is, with $\alpha_{i,j-1\to t_j}=0$. Assume, for contradiction, that such an assignment is made. By Lemma~\ref{le:capa_fea_lb_integral}, we have that \(
    \delta D\ge\delta d_{ij}>  \Psi\!\left(\alpha_{i,j-1\to t_j}\right)=\Psi\!\left(0\right)=\eta\left(\beta-1\right) \),
which contradicts with the choice that $(\beta,\eta)\in\Theta$. Therefore, the induction closes, and the proposition follows.
\Halmos\endproof

\subsection{A Primal-Dual Bound for Fixed Admissible Parameters}\label{se:pf_value_LP}

Having established capacity feasibility for every $(\beta,\eta)\in\Theta$, we now bound the competitive ratio of Algorithm~\ref{alg:main_d_integer_accurate} as a function of these parameters.
\begin{proposition}\label{prop:pf_value_LP}
    Suppose that $(\beta,\eta)\in\Theta$. Then the competitive ratio of Algorithm~\ref{alg:main_d_integer_accurate} is upper bounded by $[1+2(1+2\eta)\beta^{1/c_{\min}}\ln \beta]$.
\end{proposition}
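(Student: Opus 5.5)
We would prove Proposition~\ref{prop:pf_value_LP} by an online primal-dual argument on the configuration LP. By Claim~\ref{claim:benchmark}, it suffices to build a dual solution $(\boldsymbol\lambda,\boldsymbol\theta)$ whose objective is at most $[1+2(1+2\eta)\beta^{1/c_{\min}}\ln\beta]\cdot\mathrm{ALG}$ and which is approximately feasible.

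\textbf{Dual construction.}
Whenever TS-BAL assigns job $j$ to $i=i_j^*$, we set $\lambda_j\coloneqq r_{ij}d_{ij}-\mathcal L^{\operatorname{TS}}_{ij}>0$; otherwise $\lambda_j=0$. For each server $i$, we define the loss increment generated by an assignment $(i,k)$ at future time $\tau$ as
\[
\hat r_{i,k,\tau}\Psi(\alpha_{i,k\to\tau})-\hat r_{i,k-1,\tau}\Psi(\alpha_{i,k-1\to\tau}).
\]
We then set $c_i\theta_i$ to be (a constant multiple of) the total of these TS-loss increments aggregated over worst reuse schedules, in the spirit of Definition~\ref{def:pathoc-increment}.

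\textbf{Dual objective.}
The key computation is that assigning $k$ to $i$ lowers $\alpha_{i,\cdot\to\tau}$ by $1/c_i$ only on $[t_k,t_k+d_{ik})$. There the increment of $\Psi$ is at most
\[
\tfrac{1}{c_i}\,\eta\ln\beta\,\beta^{1-\alpha+1/c_i}\le \tfrac{1}{c_i}\beta^{1/c_{\min}}\ln\beta\,(\Psi+\eta).
\]
Multiplying by $\hat r\le r_{ik}$, using $\Psi\le \delta D$-type bounds, and summing over a schedule of length $d_{ik}$ bounds $c_i\Delta\theta_i$ by $(1+2\eta)\beta^{1/c_{\min}}\ln\beta\cdot r_{ik}d_{ik}$. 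The factor $(1+2\eta)$ absorbs the $\mathcal L$ and $\eta\cdot$(number of reuse times) terms. A second factor $2$ comes from feasibility, so the dual objective is at most $[1+2(1+2\eta)\beta^{1/c_{\min}}\ln\beta]\,\mathrm{ALG}$.

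\textbf{Dual feasibility.}
For each configuration $S\in\mathcal S_i$ and each $h\in S$, we use the BALANCE rule:
\[
\lambda_h\ge r_{ih}d_{ih}-\mathcal L_{ih}^{\operatorname{TS}},
\]
either because $h$ was assigned to $i$, or because its best adjusted reward was at least this, or this quantity is nonpositive. Hence we must show
\[
\sum_{h\in S}\mathcal L^{\operatorname{TS}}_{ih}\le\theta_i .
\]
Each $\mathcal L^{\operatorname{TS}}_{ih}$ equals the blocking loss along some schedule in $[t_h,t_h+d_{ih})$, with intervals disjoint across $h\in S$. We write $\hat r_{i,h,\tau}\Psi(\alpha_{i,h-1\to\tau})$ as a telescoping sum of increments produced by earlier online assignments $k<h$ whose intervals cover $\tau$. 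We handle the reward factor using $\hat r_{i,h,\tau}\le\hat r_{i,k,\tau}$ for $k<h$, since the estimate is a minimum over a growing set once the window $t_k\ge\tau-D$ is respected. We also need to justify that $\Psi(\alpha_{i,0\to\tau})=0$.

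\textbf{Main obstacle: weak subadditivity.}
The hard step is aggregating these charges. The per-$h$ worst schedules, restricted to the portion charged to a fixed $k$, are pieces on disjoint intervals. Their union need not be a single feasible reuse schedule for $k$'s loss, because consecutive pieces may sit closer than $1$ apart at boundaries. We would prove Lemma~\ref{le:pf_dp_additive}: the sum of TS-loss increments over disjoint subintervals is at most twice the TS increment over their union. The proof splits the pieces into alternating (odd/even) classes, each of which concatenates into a feasible schedule. This factor $2$ is exactly the $2$ in the bound, and summing over $k$ then yields $\sum_{h\in S}\mathcal L_{ih}^{\operatorname{TS}}\le\theta_i$.
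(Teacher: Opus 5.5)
Your architecture is the paper's. You use the same dual: $\lambda_j$ is the positive part of the best adjusted reward, and $\theta_i$ is twice the sum of max-over-reuse-schedule increments. You use the same per-arrival objective bound from convexity of $\Psi$. Your feasibility proof is the same local charging followed by the odd/even weak sub-additivity lemma, which is where the factor $2$ comes from. Several details, however, are stated in a form that would not go through, and each must be replaced by the paper's version.

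\textbf{The increment.} The increment generated by assigning $k$ to $i$ must freeze the reward estimate, i.e.\ be $\hat r_{i,k,\tau}\bigl[\Psi(\alpha_{i,k\to\tau})-\Psi(\alpha_{i,k-1\to\tau})\bigr]$ as in Definition~\ref{def:pathoc-increment}. Your expression $\hat r_{i,k,\tau}\Psi(\alpha_{i,k\to\tau})-\hat r_{i,k-1,\tau}\Psi(\alpha_{i,k-1\to\tau})$ has three problems. It is not defined in general, since $\hat r_{i,k-1,\tau}$ may be a minimum over an empty set. It can be negative: an assigned job with small $r_{ik}$ can lower the estimate by up to a factor $\delta$, while $\Psi$ moves only by $\mathcal O(1/c_i)$. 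Most importantly, it does not dominate $\hat r_{i,h,\tau}\bigl[\Psi(\alpha_{i,k\to\tau})-\Psi(\alpha_{i,k-1\to\tau})\bigr]$. That termwise comparison is exactly what your charging step uses when it replaces $\hat r_{i,h,\tau}$ by $\hat r_{i,k,\tau}$.

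\textbf{The normalization.} It must be on $\theta_i$ itself. If $c_i\theta_i$ is a constant multiple of $\sum_k\Delta_{ik}$, then $\theta_i$ is too small by a factor $c_i$ for $\theta_i\ge\sum_{h\in S}\mathcal L^{\operatorname{TS}}_{ih}$. Your own objective computation in fact treats $\theta_i$ as the sum of increments.

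\textbf{The objective bound.} The term $\sum_{\tau\in\boldsymbol\tau}\hat r_{i,k,\tau}\Psi(\alpha_{i,k-1\to\tau})$ is at most $\mathcal L^{\operatorname{TS}}_{ik}$ by definition of the loss, and then at most $r_{ik}d_{ik}$ by the acceptance rule. A pointwise bound such as $\Psi(\alpha_{i,k-1\to\tau})<\delta d_{ik}$, summed over up to $2d_{ik}$ reuse times, would lose a factor of order $\delta D$. The $\eta$-term uses $\hat r_{i,k,\tau}\le r_{ik}$ and $|\boldsymbol\tau|\le\lceil d_{ik}\rceil\le 2d_{ik}$.

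\textbf{Use of the hypothesis.} The condition $(\beta,\eta)\in\Theta$ enters only through Proposition~\ref{prop:capa_fea}. You should invoke it so that TS-BAL is a capacity-feasible policy in the first place.

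With these corrections your argument coincides with the paper's proof.
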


Fix $(\beta,\eta)\in\Theta$. By Proposition~\ref{prop:capa_fea}, Algorithm~\ref{alg:main_d_integer_accurate} is capacity feasible, and hence $\alpha_{i,j\to t}\in[0,1]$ throughout the process. With this fact, it remains to prove the claimed competitive-ratio bound. To this end, we construct a feasible dual solution to the configuration LP and compare its objective value with the reward collected by Algorithm~\ref{alg:main_d_integer_accurate}. Specifically, we carry out this primal-dual analysis in three steps. First, we construct dual variables $\{\hat\lambda_j,\hat\theta_i\}$ from the trajectory of Algorithm~\ref{alg:main_d_integer_accurate}. Second, in Proposition~\ref{prop:pf_value_LP_relate_objectivedual_to_alg}, we compare the dual objective value induced by $\{\hat\lambda_j,\hat\theta_i\}$ to the total reward collected by the algorithm. Finally, in Proposition~\ref{prop:pf_value_LP_dual_fea}, we prove that the constructed dual variables $\{\hat\lambda_j,\hat\theta_i\}$ are feasible; weak duality then yields the desired competitive-ratio bound.

\noindent\underline{\bf Construction of dual variables.}
We first construct dual variables
$\{\hat{\lambda}_j,\hat{\theta}_i\}$ from the trajectory of
Algorithm~\ref{alg:main_d_integer_accurate}. For each job $j$, set the job-side variable to be 
\[
    \hat\lambda_j\coloneqq\max\left(0,\max_{i\in \mathcal N(j)}
    \left\{
        r_{ij}d_{ij}
        -
       \mathcal{L}_{ij}^{\operatorname{TS}}\right\}\right).
\]
At a high level, $\hat\lambda_j$ is the positive part of the largest reduced reward available to job $j$.

We next construct the server-side variables $\hat \theta_i$. For each server $i$, let 
\[
\hat{\theta}_i:=\sum_{j=1}^{m}\hat\vartheta_{ij},
\]
where $\hat\vartheta_{ij}$ is the incremental component corresponding to the duration when the algorithm processes job $j$. To specify these incremental components, we first introduce the following definition.
\begin{definition}[Capacity-Induced Loss Increment]
\label{def:pathoc-increment}
Fix a compatible pair $(i,j)\in E$ and a time interval  $[T_1, T_2) \subseteq [t_j, t_j+D)$, define
\begin{align}\label{eq:pf_df_delta_relate_increment_1}
\Delta_{ij}\left[T_1,T_2\right)\coloneqq\max_{\boldsymbol{\tau}\in\mathfrak{S}[T_1,T_2)}\sum_{\tau\in\boldsymbol{\tau}} \hat r_{i,j,\tau}\left[\Psi(\alpha_{i,j\to\tau})-\Psi(\alpha_{i,j-1\to\tau})\right].
\end{align}
{Additionally, for convenience, we define $\Delta_{ij}[T_1,T_2) \coloneqq 0$ for $(i,j)\notin E$ and any time points $T_1<T_2$}. At a high level, $\Delta_{ij}$ captures the increase in the {schedule-based loss} caused by updating the projected-capacity trajectory from
$\alpha_{i,j-1\to\cdot}$ to $\alpha_{i,j\to\cdot}$, while holding the reward rate estimates fixed.
\end{definition}

We then define the incremental components $\hat\vartheta_{ij}$ based on the capacity-induced loss increment:
\[
    \hat\vartheta_{ij}\coloneqq 2\Delta_{ij}[t_j,t_j+d_{ij})
        \cdot \mathbb{I}[\text{Algorithm~\ref{alg:main_d_integer_accurate} assigns job $j$ to server $i$}]=2\Delta_{ij}[t_j,t_j+d_{ij}),
\]
where the last equality due to the fact that $\alpha_{i,j-1\to\cdot}=\alpha_{i,j\to\cdot}$ and $\Delta_{ij}=0$ when Algorithm~\ref{alg:main_d_integer_accurate} does not assign job $j$ to server $i$. Since $\alpha_{i,j\to t}\le \alpha_{i,j-1\to t}$ and $\Psi$ is decreasing, every $\hat\vartheta_{ij}$ is nonnegative. Hence,
$\hat{\lambda}_j\ge 0$ and $\hat{\theta}_i\ge 0$ for all $j$ and $i$.
The resulting variables $\{\hat{\lambda}_j,\hat{\theta}_i\}$ form the candidate dual solution induced by the trajectory of Algorithm~\ref{alg:main_d_integer_accurate}.

\noindent\underline{\bf Objective bound and feasibility of $\{\hat\lambda_j,\hat\theta_i\}$.}
Let $\mathrm{ALG}$ denote the total reward collected by Algorithm~\ref{alg:main_d_integer_accurate}, and let $\mathrm{Dual}(\boldsymbol{\hat{\lambda}},\boldsymbol{\hat{\theta}})$ denote the dual objective evaluated at the constructed variables $\boldsymbol{\hat \lambda}=\{\hat \lambda_j\}_j$ and $\boldsymbol{\hat \theta}=\{\hat \theta_i\}_i$. The following Proposition~\ref{prop:pf_value_LP_relate_objectivedual_to_alg} bounds $\mathrm{Dual}(\boldsymbol{\hat{\lambda}},\boldsymbol{\hat{\theta}})$ in terms of $\mathrm{ALG}$, and Proposition~\ref{prop:pf_value_LP_dual_fea} establishes the feasibility for $\{\hat\lambda_j,\hat\theta_i\}$. Their proofs are provided in Sections~\ref{subse:pf_pf_value_LP_1} and Section~\ref{subse:pf_pf_value_LP_2}, respectively.
\begin{proposition}[Objective value]
\label{prop:pf_value_LP_relate_objectivedual_to_alg}
$\mathrm{Dual}(\boldsymbol{\hat{\lambda}},\boldsymbol{\hat{\theta}})
\le
\left[1+2(1+2\eta)\beta^{1/c_{\min}}\ln\beta\right]\mathrm{ALG}$.
\end{proposition}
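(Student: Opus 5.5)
The plan is to bound the two parts of the dual objective separately. The job-side part $\sum_j\hat\lambda_j$ should be at most $\mathrm{ALG}$. The server-side part $\sum_i c_i\hat\theta_i$ should be charged assignment by assignment: each accepted pair $(i,j)$ is charged at most $2(1+2\eta)\beta^{1/c_{\min}}\ln\beta\cdot r_{ij}d_{ij}$. Summing the two parts over all accepted jobs then gives the stated bound.

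\emph{Job-side terms.} If job $j$ is left unassigned, the rejection rule gives $\max_i\{r_{ij}d_{ij}-\mathcal L^{\operatorname{TS}}_{ij}\}\le 0$, so $\hat\lambda_j=0$. If it is assigned to $i=i_j^*$, then $\hat\lambda_j=r_{ij}d_{ij}-\mathcal L^{\operatorname{TS}}_{ij}\le r_{ij}d_{ij}$, since the loss is nonnegative. Summing over jobs gives $\sum_j\hat\lambda_j\le\mathrm{ALG}$. I will keep the exact identity $\mathcal L^{\operatorname{TS}}_{ij}=r_{ij}d_{ij}-\hat\lambda_j<r_{ij}d_{ij}$, because it is needed in the server-side step.

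\emph{Server-side terms.} Fix an assignment of $j$ to $i$. Since $\hat\theta_i=\sum_j 2\Delta_{ij}[t_j,t_j+d_{ij})$, it suffices to bound $c_i\Delta_{ij}[t_j,t_j+d_{ij})$.

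First, on $[t_j,t_j+d_{ij})$ we have $\alpha_{i,j\to\tau}=\alpha_{i,j-1\to\tau}-1/c_i$. By Proposition~\ref{prop:capa_fea}, both values lie in $[0,1]$. Hence
\[
\Psi(\alpha_{i,j\to\tau})-\Psi(\alpha_{i,j-1\to\tau})=\eta\beta^{1-\alpha_{i,j-1\to\tau}}(\beta^{1/c_i}-1)\le \frac{\beta^{1/c_{\min}}\ln\beta}{c_i}\,\eta\beta^{1-\alpha_{i,j-1\to\tau}},
\]
using $\beta^{x}-1\le x\beta^{x}\ln\beta$.

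Second, take the maximizing schedule $\boldsymbol\tau$ in the definition of $\Delta_{ij}$ and rewrite $\eta\beta^{1-\alpha}=\Psi(\alpha)+\eta$. This yields
\[
c_i\Delta_{ij}\le\beta^{1/c_{\min}}\ln\beta\Bigl[\sum_{\tau\in\boldsymbol\tau}\hat r_{i,j,\tau}\Psi(\alpha_{i,j-1\to\tau})+\eta\sum_{\tau\in\boldsymbol\tau}\hat r_{i,j,\tau}\Bigr].
\]
The first sum is the blocking loss of a feasible reuse schedule, so it is at most $\mathcal L^{\operatorname{TS}}_{ij}<r_{ij}d_{ij}$. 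For the second sum, note that $j\in\mathcal K_{i,j}(\tau)$, so $\hat r_{i,j,\tau}\le r_{ij}$. Also, a reuse schedule in an interval of length $d_{ij}$ has at most $\lceil d_{ij}\rceil\le 2d_{ij}$ points, because $d_{ij}\ge1$. Together these give $c_i\Delta_{ij}\le(1+2\eta)\beta^{1/c_{\min}}\ln\beta\, r_{ij}d_{ij}$, and hence $c_i\hat\vartheta_{ij}\le 2(1+2\eta)\beta^{1/c_{\min}}\ln\beta\, r_{ij}d_{ij}$.

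Summing over all assignments and adding the job-side bound gives the proposition. The main obstacle is the server-side step: the increment must be re-expressed through $\Psi+\eta$ so that the algorithm's acceptance condition $\mathcal L^{\operatorname{TS}}_{ij}<r_{ij}d_{ij}$ can absorb the schedule sum. The remaining care lies in controlling the number of schedule points and the reward estimate $\hat r_{i,j,\tau}$ by $r_{ij}$.
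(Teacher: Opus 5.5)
Your proposal is correct and follows essentially the same route as the paper. You bound $\hat\lambda_j\le r_{ij}d_{ij}$ and bound the increment $\Psi(\alpha-1/c_i)-\Psi(\alpha)$ by $\beta^{1/c_i}\ln\beta\,(\Psi(\alpha)+\eta)/c_i$; you get this via the exact identity and $\beta^x-1\le x\beta^x\ln\beta$, while the paper uses convexity of $\Psi$. You then absorb the $\Psi$-part into $\mathcal L^{\operatorname{TS}}_{ij}<r_{ij}d_{ij}$ and control the residual by $\hat r_{i,j,\tau}\le r_{ij}$ and at most $\lceil d_{ij}\rceil\le 2d_{ij}$ schedule points. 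Summing the job-side and server-side parts separately rather than per arrival is only a bookkeeping difference.
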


\begin{proposition}[Dual feasibility]\label{prop:pf_value_LP_dual_fea}
The constructed dual variables $\{\hat\lambda_j,\hat\theta_i\}$ are feasible.
\end{proposition}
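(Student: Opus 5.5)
To prove Proposition~\ref{prop:pf_value_LP_dual_fea}, I would fix a server $i$ and a configuration $S\in\mathcal S_i$, and show
\[
\sum_{j\in S}\hat\lambda_j+\hat\theta_i\ \ge\ \sum_{j\in S} r_{ij}d_{ij}.
\]
By the definition of $\hat\lambda_j$ we have $\hat\lambda_j\ge r_{ij}d_{ij}-\mathcal{L}^{\operatorname{TS}}_{ij}$ for every $j\in S$. It therefore suffices to prove
\[
\sum_{j\in S}\mathcal{L}^{\operatorname{TS}}_{ij}\ \le\ \hat\theta_i\ =\ 2\sum_{k}\Delta_{ik}[t_k,t_k+d_{ik}).
\]
For each $j\in S$, I write $I_j:=[t_j,t_j+d_{ij})$ and fix a maximizing reuse schedule $\boldsymbol\tau^{(j)}\in\mathfrak S(I_j)$, so that $\mathcal{L}^{\operatorname{TS}}_{ij}=\sum_{\tau\in\boldsymbol\tau^{(j)}}\hat r_{i,j,\tau}\Psi(\alpha_{i,j-1\to\tau})$.

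\emph{Step 1: telescoping.} Since $\alpha_{i,0\to\tau}=1$ and $\Psi(1)=0$, for every $\tau\ge t_j$ I would write
\[
\Psi(\alpha_{i,j-1\to\tau})=\sum_{k\le j-1}\bigl[\Psi(\alpha_{i,k\to\tau})-\Psi(\alpha_{i,k-1\to\tau})\bigr].
\]
Each term is nonnegative, and it is nonzero only if $k$ is assigned to $i$ and $t_k+d_{ik}>\tau$. Because $t_k\le t_j\le\tau$, such a $\tau$ lies in $I^{\mathrm{on}}_k:=[t_k,t_k+d_{ik})$.

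\emph{Step 2: replacing the reward estimate.} For $k<j$ we have $\mathcal K_{i,k}(\tau)\subseteq\mathcal K_{i,j}(\tau)$, so $\hat r_{i,j,\tau}\le\hat r_{i,k,\tau}$. Substituting this and exchanging the order of summation gives
\[
\sum_{j\in S}\mathcal{L}^{\operatorname{TS}}_{ij}\le\sum_{k\text{ assigned to }i}\ \sum_{\tau\in\mathcal U\cap I^{\mathrm{on}}_k}\hat r_{i,k,\tau}\bigl[\Psi(\alpha_{i,k\to\tau})-\Psi(\alpha_{i,k-1\to\tau})\bigr],
\qquad
\mathcal U:=\bigcup_{j\in S}\boldsymbol\tau^{(j)}.
\]
Here each $\tau\in\boldsymbol\tau^{(j)}$ is charged only to online jobs $k<j$, and this is compatible with $I^{\mathrm{on}}_k\subseteq[t_k,t_k+D)$, so the bracketed terms are exactly those appearing in Definition~\ref{def:pathoc-increment}.

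\emph{Step 3: splitting the union into two reuse schedules.} This is the main obstacle, and it is the source of the factor $2$. The set $\mathcal U$ is in general not a reuse schedule: the intervals $I_j$ are disjoint, but the last point of $\boldsymbol\tau^{(j)}$ and the first point of the next schedule can be closer than $1$. I would order $S$ by arrival time and split $\mathcal U$ into $\mathcal U_{\mathrm{odd}}$ and $\mathcal U_{\mathrm{even}}$, the unions of the $\boldsymbol\tau^{(j)}$ over odd-indexed and even-indexed jobs of $S$.

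Within one parity class, two consecutive jobs are separated by an entire interval $I_{j'}$ of length $d_{ij'}\ge1$. The largest point of $\boldsymbol\tau^{(j)}$ is below $t_j+d_{ij}\le t_{j'}$, and the smallest point of the next schedule in the same class is at least $t_{j''}\ge t_{j'}+d_{ij'}$, so the gap is at least $1$. Within a single $\boldsymbol\tau^{(j)}$ the spacing is at least $1$ by Definition~\ref{def:feasible-dp-path}. Hence $\mathcal U_{\mathrm{odd}}$ and $\mathcal U_{\mathrm{even}}$ are reuse schedules.

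Restricting a reuse schedule to the subinterval $I^{\mathrm{on}}_k$ again yields an element of $\mathfrak S(I^{\mathrm{on}}_k)$. Therefore, for each $k$, each parity class contributes at most $\Delta_{ik}[t_k,t_k+d_{ik})$, by the maximum in Definition~\ref{def:pathoc-increment}. Summing the two classes over all $k$ gives $\sum_{j\in S}\mathcal{L}^{\operatorname{TS}}_{ij}\le 2\sum_k\Delta_{ik}[t_k,t_k+d_{ik})=\hat\theta_i$, which yields dual feasibility. Nonnegativity of $\hat\lambda_j$ and $\hat\theta_i$ was already noted.
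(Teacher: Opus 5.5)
Your proof is correct. It uses the same three facts as the paper's proof:
\begin{itemize}
\item the telescoping identity for $\Psi(\alpha_{i,j-1\to\tau})$ over earlier assignments to server $i$;
\item the monotonicity $\hat r_{i,j,\tau}\le\hat r_{i,k,\tau}$ for $k<j$;
\item an odd/even split in which same-parity pieces are separated by an entire configuration interval of length at least $1$.
\end{itemize}
You organize these differently from the paper.

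The paper passes through the intermediate quantities $\Delta_{ih}[t_j,(t_j+d_{ij})\wedge(t_h+d_{ih}))$. Lemma~\ref{le:pf_dp_charging} charges each $\mathcal{L}^{\operatorname{TS}}_{ij}$ to these quantities, re-maximizing over reuse schedules on each overlap interval. Lemma~\ref{le:pf_dp_additive} then performs the parity split separately for each online job $h$, over the overlap intervals of $\{j\in S: h\in\mathcal C_i(j)\}$. This requires checking that only the last of those intervals can be truncated by $t_h+d_{ih}$, so that the lemma's spacing hypothesis holds.

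You instead keep the fixed maximizers $\boldsymbol\tau^{(j)}$, exchange the order of summation directly, and split $S$ by parity once, globally and independently of $k$. Because $\mathcal U_{\mathrm{odd}}$ and $\mathcal U_{\mathrm{even}}$ are themselves reuse schedules, their restrictions to every $I^{\mathrm{on}}_k$ are automatically admissible in the maximum defining $\Delta_{ik}$. No intermediate maximization or truncation bookkeeping is needed.

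Your route is shorter. The paper's modular route isolates the factor $2$ in a single statement about $\Delta_{ih}$ alone. That statement is what the paper later upgrades to full sub-additivity (Lemma~\ref{le:pf_dp_additive_refine}) for GR-BAL, whose loss is a sum of per-grid-point maxima rather than the value of one maximizing schedule. Your exchange-of-sums argument would not carry over verbatim to that setting.
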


Combining Claim~\ref{claim:benchmark}, Propositions~\ref{prop:pf_value_LP_relate_objectivedual_to_alg} and ~\ref{prop:pf_value_LP_dual_fea}, we establish the desired competitive-ratio bound: \begin{align*}
    \left[1+2(1+2\eta)\beta^{1/c_{\min}}\ln\beta\right]\cdot \operatorname{ALG}\ge\mathrm{Dual}(\boldsymbol{\hat{\lambda}},\boldsymbol{\hat{\theta}})\ge P_{\mathrm{OPT}}\ge \operatorname{OPT}.
\end{align*}
This finishes the proof of Proposition~\ref{prop:pf_value_LP}.

\subsubsection{Proof of Proposition~\ref{prop:pf_value_LP_relate_objectivedual_to_alg}} \label{subse:pf_pf_value_LP_1}
To prove Proposition~\ref{prop:pf_value_LP_relate_objectivedual_to_alg}, we compare, arrival by arrival, the increase in the dual objective with the reward collected by Algorithm~\ref{alg:main_d_integer_accurate}. Specifically, for each job $j$, let $\mathrm{Dual}_j=\hat\lambda_j+\sum_i c_i\hat\vartheta_{ij}$ denote the dual-objective increment associated with processing arrival $j$, let $\operatorname{ALG}_j=\sum_{i\in \mathcal N(j)}r_{ij}d_{ij}\cdot\mathbb{I}[\text{Algorithm~\ref{alg:main_d_integer_accurate} assigns $j$ to $i$}]$ denote the increase of the total reward caused by the assignment of job $j$. Let $\Gamma=1+2(1+2\eta)\beta^{1/c_{\min}}\ln\beta$. Since $\operatorname{Dual}(\boldsymbol{\hat \lambda},\boldsymbol{\hat \theta})=\sum_j\operatorname{Dual}_j$ and $\operatorname{ALG}=\sum_j\operatorname{ALG}_j$, it is sufficient to show that, for every arriving job $j$, it holds that \begin{equation}\label{eq:pf_competitive_ratio_dual_alg}
    \operatorname{Dual}_j\le \Gamma\cdot \operatorname{ALG}_j.
\end{equation}

The rest of this subsection is devoted to the proof of Eq.~\eqref{eq:pf_competitive_ratio_dual_alg}. Note that if $j$ is rejected, we have $\operatorname{Dual}_j=\operatorname{ALG}_j=0$ and Eq.~\eqref{eq:pf_competitive_ratio_dual_alg} holds directly. Hence, it remains to consider the case in which Algorithm~\ref{alg:main_d_integer_accurate} assigns job $j$ to some server $i$. In this case, since it holds directly that $\hat\lambda_j\le r_{ij}d_{ij}$, the only nontrivial task is to control the server-side increment $\hat \vartheta_{ij}$, for which we follow a two-step argument. First, using the convexity of $\Psi$, we bound $\hat \vartheta_{ij}$ in terms of {$\mathcal{L}_{ij}^{\operatorname{TS}}$ and a simple residual term. Second, we use $\mathcal{L}_{ij}^{\operatorname{TS}}<r_{ij}d_{ij}$ to complete the per-arrival comparison}.

\noindent\underline{\bf Step 1: Bounding the server-side increment.} For every $\alpha\in[1/c_i,1]$, the convexity and monotonicity of $\Psi$ imply that \begin{align}\label{eq:pf_psi_increment_bound}
    \Psi\!\left(\alpha-\frac{1}{c_i}\right)-\Psi\!\left(\alpha\right)\le \left|\Psi^{\prime}\!\left(\alpha-\frac{1}{c_i}\right)\right|\frac{1}{c_i} =\beta^{1/c_i}\left[\Psi\!\left(\alpha\right)+\eta\right]\frac{\ln\beta}{c_i}.
\end{align}

Since job $j$ is assigned to server $i$, for every $\tau\in[t_j,t_j+d_{ij})$, we have that $\alpha_{i,j\to\tau}=\alpha_{i,j-1\to\tau}-\frac{1}{c_i}$. {For each $\tau\in[t_j,t_j+d_{ij})$, recall that $(\beta,\eta)\in\Theta$ implies that $\alpha_{i,j\to\tau}\ge 0$, which indicates that $\alpha_{i,j-1\to\tau}\ge\frac{1}{c_i}$ in this case}. Therefore, applying Eq.~\eqref{eq:pf_psi_increment_bound} to the definition of $\hat{\vartheta}_{ij}$ gives \begin{align}\label{eq:pf_value_LP_dual_increment_bound}
    \nonumber c_i\hat{\vartheta}_{ij}& = 2c_i\underset{\boldsymbol{\tau}\in\mathfrak{S}[t_j,t_j+d_{ij})}{\max}\left\{\sum_{\tau\in\boldsymbol{\tau}}\hat r_{i,j,\tau}\left[\Psi\!\left(\alpha_{i,j-1\to\tau}-\frac{1}{c_i}\right)-\Psi\!(\alpha_{i,j-1\to\tau})\right]\right\}\\
        \nonumber& \leq 2c_i\underset{\boldsymbol{\tau}\in\mathfrak{S}[t_j,t_j+d_{ij})}{\max}\left\{\sum_{\tau\in\boldsymbol{\tau}}\hat r_{i,j,\tau}\left[\beta^{1/c_i}\left(\Psi\!\left(\alpha_{i,j-1\to\tau}\right)+\eta\right)\frac{\ln\beta}{c_i}\right]\right\}\\
        & \le 2\beta^{1/c_i}\ln\beta\cdot\mathcal{L}_{ij}^{\operatorname{TS}}+2\eta\beta^{1/c_i}\ln\beta\underset{\boldsymbol{\tau}\in\mathfrak{S}[t_j,t_j+d_{ij})}{\max}\left\{\sum_{\tau\in\boldsymbol{\tau}}\hat r_{i,j,\tau}\right\}.
\end{align}

Then, we upper bound the residual term $\underset{\boldsymbol{\tau}\in\mathfrak{S}[t_j,t_j+d_{ij})}{\max}\left\{\sum_{\tau\in\boldsymbol{\tau}}\hat r_{i,j,\tau}\right\}$. By Definition~\ref{def:future_reward_estimator}, we have $\hat r_{i,j,\tau}\le r_{ij}$ for every $\tau\in[t_j,t_j+d_{ij})$. Moreover, every feasible path in this interval contains at most $\lceil d_{ij}\rceil<d_{ij}+1$ points. Since
$d_{ij}\ge 1$, we have that
\[
    \max_{\boldsymbol{\tau}\in\mathfrak{S}[t_j,t_j+d_{ij})}\sum_{\tau\in\boldsymbol{\tau}}\hat r_{i,j,\tau}
    \le r_{ij}\max_{\boldsymbol{\tau}\in\mathfrak{S}[t_j,t_j+d_{ij})}|\boldsymbol{\tau}|
    \le r_{ij}(d_{ij}+1)\le2r_{ij}d_{ij}.
\]
Substituting this bound into Eq.~\eqref{eq:pf_value_LP_dual_increment_bound} yields that \begin{equation}\label{eq:pf_vartheta_pathoc_bound}
    c_i\hat{\vartheta}_{ij} \le 2\beta^{1/c_i}\ln\beta
    \left(\mathcal{L}_{ij}^{\operatorname{TS}}+2\eta r_{ij}d_{ij}
\right).
\end{equation}

\noindent\underline{\bf Step 2: Completing the per-arrival bound.}
Next, we use Eq.~\eqref{eq:pf_vartheta_pathoc_bound} to prove Eq.~\eqref{eq:pf_competitive_ratio_dual_alg} by deriving an upper bound for $\mathcal{L}_{ij}^{\operatorname{TS}}$. Since job $j$ is assigned to server $i$, the assignment rule in Algorithm~\ref{alg:main_d_integer_accurate} implies that $r_{ij}d_{ij}\ge \mathcal{L}_{ij}^{\operatorname{TS}}$. Combining this inequality with Eq.~\eqref{eq:pf_vartheta_pathoc_bound}, we obtain that
\begin{align*}
    \operatorname{Dual}_j=\hat{\lambda}_j+c_i\hat{\vartheta}_{ij} &\le
    r_{ij}d_{ij}+2\beta^{1/c_i}\ln\beta\left(\mathcal{L}_{ij}^{\operatorname{TS}}+2\eta r_{ij}d_{ij}\right)\\
    &\le \left[1+2(1+2\eta)\beta^{1/c_i}\ln\beta\right]r_{ij}d_{ij}=\Gamma r_{ij}d_{ij}=\Gamma\cdot\operatorname{ALG}_j,
\end{align*}
where the last inequality follows from $c_i\ge c_{\min}$ and $\beta>1$. This proves Eq.~\eqref{eq:pf_competitive_ratio_dual_alg} holds for every job $j$, and therefore concludes the proof of Proposition~\ref{prop:pf_value_LP_relate_objectivedual_to_alg}. \Halmos

\subsubsection{Proof of Proposition~\ref{prop:pf_value_LP_dual_fea}} \label{subse:pf_pf_value_LP_2}
To establish the feasibility of $\{\hat\lambda_j,\hat\theta_i\}$, we only need to prove that, for every server $i$ and every $S\in\mathcal S_i$, we have \begin{equation}\label{eq:dual_fea_thetai_lb_total}
    \sum_{j\in S}\hat\lambda_j+\hat\theta_i\ge \sum_{j\in S} r_{ij}d_{ij}.
\end{equation}

We first present a sufficient condition for Eq.~\eqref{eq:dual_fea_thetai_lb_total}. By the definition of $\hat\lambda_j$, for every $j\in[m]$ and every $i\in \mathcal N(j)$, we have that $\hat\lambda_j\geq r_{ij}d_{ij}-\mathcal{L}_{ij}^{\operatorname{TS}}$.
Therefore, it suffices to prove that, for every server $i$ and every feasible configuration $S\in\mathcal S_i$, it holds that \begin{equation}\label{eq:dual_fea_thetai_lb}
    \hat\theta_i\geq \sum_{j\in S}\mathcal{L}_{ij}^{\operatorname{TS}}.
\end{equation}

Fix a server $i$ and a feasible configuration $S\in\mathcal S_i$. We now prove Eq.~\eqref{eq:dual_fea_thetai_lb} in two steps. First, for each $j\in S$, we charge $\mathcal{L}_{ij}^{\operatorname{TS}}$ to local increments contributing to $\hat{\theta}_i$. Second, we aggregate these charges and show that the summary of them does not exceed $\hat{\theta}_i$.

\noindent\underline{\bf Step 1: Local charging.}
We first associate each $\mathcal{L}_{ij}^{\operatorname{TS}}$ with loss increments generated by jobs that were previously assigned to server $i$ and remain active at time $t_j$. Formally, we define \[
    \mathcal C_i(j)\coloneqq \left\{ h:h<j,\; \text{ Algorithm~\ref{alg:main_d_integer_accurate} assigns }h \text{ to } i,\;t_j<t_h+d_{ih}\right\}.
\]
For each $h\in\mathcal C_i(j)$, we consider only the increment accumulated over the overlap between the service intervals of jobs $j$ and $h$. The following lemma shows that these local increments collectively cover $\mathcal{L}_{ij}^{\operatorname{TS}}$.

\begin{lemma}[Local charging]\label{le:pf_dp_charging}
    Fix a job $j\in S$. We have $\displaystyle
        \sum_{h\in \mathcal C_i(j)}\Delta_{ih}[t_j,\left(t_j+d_{ij}\right)\wedge \left(t_{h}+d_{ih}\right))\ge \mathcal{L}_{ij}^{\operatorname{TS}}$.
\end{lemma}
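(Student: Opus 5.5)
The plan is to prove Lemma~\ref{le:pf_dp_charging} by telescoping the capacity term $\Psi(\alpha_{i,j-1\to\tau})$ over the jobs previously assigned to server $i$. Each piece of the telescoped sum is then charged to the corresponding increment $\Delta_{ih}$. The argument fixes one maximizing schedule for $\mathcal{L}_{ij}^{\operatorname{TS}}$ and restricts it to each overlap interval.

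\emph{Step 1 (telescoping the capacity term).} Fix $\tau\in[t_j,t_j+d_{ij})$. Since $\alpha_{i,0\to\tau}=1$ and $\Psi(1)=0$, we can write
\[
\Psi(\alpha_{i,j-1\to\tau})=\sum_{h=1}^{j-1}\bigl[\Psi(\alpha_{i,h\to\tau})-\Psi(\alpha_{i,h-1\to\tau})\bigr].
\]
The $h$-th summand is nonzero only if $h$ is assigned to $i$ and $t_h+d_{ih}>\tau$, because otherwise $\alpha_{i,h\to\tau}=\alpha_{i,h-1\to\tau}$. Any such $h$ satisfies $t_h\le t_j\le\tau<t_h+d_{ih}$, so $h\in\mathcal C_i(j)$ and $\tau\in[t_j,(t_j+d_{ij})\wedge(t_h+d_{ih}))$. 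Every summand is nonnegative because $\Psi$ is decreasing and $\alpha$ only decreases as jobs are added.

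\emph{Step 2 (monotonicity of the reward estimate).} For $h<j$ and every $\tau$ in the overlap, we have $\mathcal K_{i,h}(\tau)\subseteq\mathcal K_{i,j}(\tau)$, since the defining conditions are the same and only the range $k\le h$ versus $k\le j$ differs. The minimum over the larger set is smaller, so $\hat r_{i,j,\tau}\le\hat r_{i,h,\tau}$. The overlap interval lies inside $[t_h,t_h+d_{ih})\subseteq[t_h,t_h+D)$, so $\Delta_{ih}$ is well defined on it and $\hat r_{i,h,\tau}$ is defined there.

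\emph{Step 3 (restricting the schedule).} Let $\boldsymbol\tau^*\in\mathfrak S[t_j,t_j+d_{ij})$ attain $\mathcal{L}_{ij}^{\operatorname{TS}}$. Combining Steps 1 and 2 and exchanging the order of summation gives
\[
\mathcal{L}_{ij}^{\operatorname{TS}}\le\sum_{h\in\mathcal C_i(j)}\ \sum_{\tau\in\boldsymbol\tau^*\cap[t_j,(t_j+d_{ij})\wedge(t_h+d_{ih}))}\hat r_{i,h,\tau}\bigl[\Psi(\alpha_{i,h\to\tau})-\Psi(\alpha_{i,h-1\to\tau})\bigr].
\]
Here, replacing $\hat r_{i,j,\tau}$ by $\hat r_{i,h,\tau}$ is legitimate because each bracket is nonnegative. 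A subsequence of a reuse schedule that lies in a subinterval is still a reuse schedule on that subinterval, since the ordering and the unit-separation condition are inherited. So each inner sum is at most $\Delta_{ih}[t_j,(t_j+d_{ij})\wedge(t_h+d_{ih}))$, which proves the lemma.

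The only delicate points are the nonnegativity of the increments and the direction of the reward comparison in Step 2. Neither should be a real obstacle. The factor-of-two difficulty arises only later, when these local charges are aggregated across the jobs of $S$.
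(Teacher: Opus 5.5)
Your proposal is correct and follows essentially the same route as the paper: telescope $\Psi(\alpha_{i,j-1\to\tau})$ over the jobs in $\mathcal C_i(j)$, use $\mathcal K_{i,h}(\tau)\subseteq\mathcal K_{i,j}(\tau)$ to replace $\hat r_{i,j,\tau}$ by $\hat r_{i,h,\tau}$, and restrict one fixed maximizing schedule $\boldsymbol{\tau}^{\star}$ to each overlap interval. The differences are cosmetic. You telescope over all $h\le j-1$, which absorbs the $s=0$ case automatically, and you intersect $\boldsymbol{\tau}^{\star}$ with each overlap interval directly. The paper instead treats $s=0$ separately and truncates $\boldsymbol{\tau}^{\star}$ before extending it back using vanishing increments.
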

The proof of Lemma~\ref{le:pf_dp_charging} is provided in Section~\ref{se:app_pf_dp_charging}. Applying Lemma~\ref{le:pf_dp_charging} to every $j\in S$ and summing over $j\in S$ yields 
\begin{align}
    \sum_{j\in S}\mathcal{L}_{ij}^{\operatorname{TS}}&\le\sum_{j\in S}\sum_{h\in\mathcal C_i(j)}\Delta_{ih}\left[t_j,\left(t_j+d_{ij}\right)\wedge \left(t_{h}+d_{ih}\right)\right) 
    \le\sum_{h=1}^m\sum_{j\in S:\mathcal C_i(j)\ni h} \Delta_{ih}\left[t_j,\left(t_j+d_{ij}\right)\wedge \left(t_{h}+d_{ih}\right)\right). \label{eq:local-charging-sum}
\end{align}

\noindent\underline{\bf Step 2: Aggregation via weak sub-additivity.}
It remains to aggregate the loss increments over multiple time intervals on the
right-hand side of Eq.~\eqref{eq:local-charging-sum}. We show that,
for every $h\in[m]$, we have
\begin{equation}\label{eq:pf_dp_additive_111}
    \sum_{j\in S:\mathcal C_i(j)\ni h} \Delta_{ih}\left[t_j,\left(t_j+d_{ij}\right)\wedge \left(t_{h}+d_{ih}\right)\right)\le \hat\vartheta_{ih}. 
\end{equation}
Once this bound is established, Eq.~\eqref{eq:local-charging-sum} and
the definition of $\hat\theta_i$ imply
\begin{align*}
    \hat \theta_i=\sum_{h=1}^m\hat\vartheta_{ih}
    \ge \sum_{h=1}^m\sum_{j\in S:\mathcal C_i(j)\ni h}\Delta_{ih}[t_j,\left(t_j+d_{ij}\right)\wedge \left(t_{h}+d_{ih}\right))
    \ge \sum_{j\in S}\mathcal{L}_{ij}^{\operatorname{TS}}.
\end{align*}
This proves Eq.~\eqref{eq:dual_fea_thetai_lb}, which implies that the constructed dual variables $\{\hat\lambda_j,\hat\theta_i\}$ are feasible.

It therefore remains to establish Eq.~\eqref{eq:pf_dp_additive_111}, for which we introduce the following weak sub-additivity property, the proof of which is provided in Section~\ref{se:app_pf_dp_additive}.
\begin{lemma}[Weak sub-additivity of cost increments]\label{le:pf_dp_additive}
    Fix $h\in [m]$ and any sequence $
        t_h\le \xi_1<\xi_2\le \xi_3<\xi_4\le\dots\le\xi_{2k-1}<\xi_{2k}\le t_h+d_{ih}$.
    Suppose that $\xi_{2u}-\xi_{2u-1}\ge 1$ for any $u\in[k-1]$, we have that $
        \hat\vartheta_{ih}=2\Delta_{ih}[t_h,t_h+d_{ih}) \ge \sum_{u=1}^{k}\Delta_{ih}[\xi_{2u-1},\xi_{2u})$.
\end{lemma}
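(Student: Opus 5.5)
The plan is to split the $k$ intervals into two interleaved families and turn each family into a single feasible reuse schedule. The intervals $[\xi_{2u-1},\xi_{2u})$ with odd $u$ form one family, and those with even $u$ form the other. Each family is then dominated by $\Delta_{ih}[t_h,t_h+d_{ih})$, which gives the factor $2$.

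If $(i,h)\notin E$, every term is $0$ by the convention in Definition~\ref{def:pathoc-increment}, so assume $(i,h)\in E$. Write
\[
g(\tau)\coloneqq \hat r_{i,h,\tau}\bigl[\Psi(\alpha_{i,h\to\tau})-\Psi(\alpha_{i,h-1\to\tau})\bigr].
\]
Then $\Delta_{ih}[T_1,T_2)=\max_{\boldsymbol\tau\in\mathfrak S[T_1,T_2)}\sum_{\tau\in\boldsymbol\tau}g(\tau)$ for every subinterval of $[t_h,t_h+d_{ih})$. The key observation is that $g$ depends only on $\tau$ and not on the interval, and that $g\ge 0$. Nonnegativity holds because $\alpha_{i,h\to\tau}\le\alpha_{i,h-1\to\tau}$ and $\Psi$ is decreasing. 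For each $u\in[k]$, fix a maximizing schedule $\boldsymbol\tau^{(u)}\in\mathfrak S[\xi_{2u-1},\xi_{2u})$, so that $\Delta_{ih}[\xi_{2u-1},\xi_{2u})=\sum_{\tau\in\boldsymbol\tau^{(u)}}g(\tau)$.

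The main step is to show that
\[
\boldsymbol\tau^{\mathrm{odd}}\coloneqq\bigcup_{u\ \mathrm{odd}}\boldsymbol\tau^{(u)}
\quad\text{and}\quad
\boldsymbol\tau^{\mathrm{even}}\coloneqq\bigcup_{u\ \mathrm{even}}\boldsymbol\tau^{(u)},
\]
each listed in increasing order, belong to $\mathfrak S[t_h,t_h+d_{ih})$. Both unions lie in $[t_h,t_h+d_{ih})$ because $t_h\le\xi_1$ and $\xi_{2k}\le t_h+d_{ih}$. Within each $\boldsymbol\tau^{(u)}$, consecutive points are already at least $1$ apart. So it remains to check consecutive points coming from $\boldsymbol\tau^{(u)}$ and $\boldsymbol\tau^{(u+2)}$. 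The last point of $\boldsymbol\tau^{(u)}$ is $<\xi_{2u}$, and the first point of $\boldsymbol\tau^{(u+2)}$ is $\ge\xi_{2u+3}$. The intervening interval has index $u+1\le k-1$, so the hypothesis gives $\xi_{2u+2}-\xi_{2u+1}\ge 1$. Combined with the ordering of the $\xi$'s, this yields
\[
\xi_{2u+3}\ge\xi_{2u+2}\ge\xi_{2u+1}+1\ge\xi_{2u}+1.
\]
Hence the gap between the two points exceeds $1$. This separation check is the only place the hypothesis is used. It is also why the hypothesis is needed only for $u\in[k-1]$: the last interval never sits between two others, and adjacent intervals are never combined into the same family.

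Finally, since the intervals are disjoint, the sets $\boldsymbol\tau^{(u)}$ are pairwise disjoint. Using the optimality of $\Delta_{ih}[t_h,t_h+d_{ih})$ over $\mathfrak S[t_h,t_h+d_{ih})$,
\[
\sum_{u\ \mathrm{odd}}\Delta_{ih}[\xi_{2u-1},\xi_{2u})=\sum_{\tau\in\boldsymbol\tau^{\mathrm{odd}}}g(\tau)\le\Delta_{ih}[t_h,t_h+d_{ih}),
\]
and the same bound holds for the even family. Adding the two inequalities gives
\[
\sum_{u=1}^k\Delta_{ih}[\xi_{2u-1},\xi_{2u})\le 2\Delta_{ih}[t_h,t_h+d_{ih})=\hat\vartheta_{ih},
\]
which is the claim.

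I do not expect a serious obstacle. The only care needed is the index bookkeeping in the separation check, and the observation that $g$ does not depend on the interval, so the per-interval optima can be merged directly.
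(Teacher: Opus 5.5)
Your proof is correct and is essentially the paper's argument. The paper also splits the per-interval maximizing schedules by the parity of $u$, verifies that each union is a feasible reuse schedule over $[t_h,t_h+d_{ih})$ via $\xi_{2u+3}-\xi_{2u}\ge \xi_{2u+2}-\xi_{2u+1}\ge 1$, and adds the two resulting bounds. The one small difference is that the paper checks separation for any two same-parity indices $u<u'$ rather than only $u$ and $u+2$, which also covers the case where an intermediate schedule is empty; your argument extends to that case immediately.
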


Next, we apply Lemma~\ref{le:pf_dp_additive} to prove Eq.~\eqref{eq:pf_dp_additive_111}. For notational convenience, we fix $h\in[m]$, and write \[
        \{j_1,\dots,j_w\}=\{j\in S:h\in\mathcal C_i(j)\},\qquad j_1<j_2<\dots<j_w.
\]
Noting that when $w=0$, Eq.~\eqref{eq:pf_dp_additive_111} holds directly. Suppose that $w\ge 1$, we define $b_u\coloneqq \left(t_{j_u}+d_{ij_u}\right)\wedge \left(t_{h}+d_{ih}\right)$ for $u\in[w]$, and show that $\{t_{j_1},b_{j_1},\dots,t_{j_w},b_{j_w}\}$ satisfies the conditions in Lemma~\ref{le:pf_dp_additive}.

By the definition of the set $\{j_1,\dots,j_w\}$, for each $u\in[w-1]$, we have that $
    b_{u}=\left(t_{j_u}+d_{ij_u}\right)\wedge \left(t_{h}+d_{ih}\right)\le t_{j_w}<t_h+d_{ih}$,
which yields that $
    b_{u}=t_{j_u}+d_{i{j_u}}$, for all $u\in[w-1]$.
Since $S$ is a feasible configuration, the processing time intervals of
its jobs on server $i$ are pairwise disjoint. Hence, for every $u\in[w-1]$, we have that 
\[
    t_h\le t_{j_1}<b_{1}\le\dots\le t_{j_w}<b_{w}\le t_h+d_{ih}.
\]
Since Assumption~\ref{assumption:local-bounded-heterogeneity} guaranties that $d_{ij}\ge 1$ for any $i,j$ and any $u \in [w-1]$, it holds that 
\[
    b_{u}-t_{j_u}=t_{j_u}+d_{ij_{u}}-t_{j_u}\ge 1.
\]
Therefore, we can apply Lemma~\ref{le:pf_dp_additive} to the time points $\{t_{j_1},b_{1},\dots,t_{j_w},b_{w}\}$,
which yields that \begin{align*}
    \hat\vartheta_{ih}=2\Delta_{ih}[t_h,t_h+d_{ih})\ge \sum_{j\in S:\mathcal C_i(j)\ni h}\Delta_{ih}[t_j,\left(t_{j}+d_{ij}\right)\wedge \left(t_{h}+d_{ih}\right)).
\end{align*}
This proves Eq.~\eqref{eq:pf_dp_additive_111}, and we finish the proof of Proposition~\ref{prop:pf_value_LP_dual_fea}. \Halmos

\subsection{Putting Things Together}\label{se:pf_choice_beta_eta}

We now complete the proof of Theorem~\ref{tm:main_theorem}.
When $\ln(\max\{\delta,D\})\geq e-1$, we take
\[
    \eta=\eta^*=\frac{1}{\ln(\delta\vee D)}, \qquad \text{and} \qquad 
    \beta=\beta^*=1+\delta D\ln(\delta\vee D),
\]
so that $(\beta,\eta)\in\Theta$. Hence, Proposition~\ref{prop:capa_fea} guarantees capacity feasibility. By Proposition~\ref{prop:pf_value_LP}, the competitive ratio  of Algorithm~\ref{alg:main_d_integer_accurate}  is upper bounded by
\begin{align}\nonumber
&1+2(1+2\eta)\beta^{1/c_{\min}}\ln\beta =1+2\left(1+\frac{2}{\ln(\delta\vee D)}\right)\ln\left(1+\delta D\ln(\delta \vee D)\right)\beta^{1/c_{\min}} \\ \nonumber
&\qquad \le 1+2\left(1+\frac{2}{\ln(\delta\vee D)}\right)\ln\left(\frac{10}{9}\delta D\ln(\delta \vee D)\right)\beta^{1/c_{\min}} \\ 
&\qquad \leq 1+ 2\left(\mathcal O(1)+\ln \left(\delta D\right)+\ln\ln\left(\delta \vee D\right)\right)\beta^{1/c_{\min}} ,\label{eq:com_ratio_fin}
\end{align}
where the first inequality is due to $e^{e-1}(e-1)>9$.

Equation~\eqref{eq:com_ratio_fin} gives a finite-capacity guarantee and therefore does not require taking the large-capacity limit $c_{\min}\to\infty$. This bound also recovers the asymptotic guarantee under the mild scaling $c_{\min}\gg \ln^2(\delta D)$. Indeed, in this regime, $\ln^2(\delta D)/c_{\min}=o(1)$. Since $\ln\beta\le 2\ln(\delta D)$, we have \[
    \beta^{1/c_{\min}}=\exp\!\left(\frac{\ln\beta}{c_{\min}}\right) \le \exp\!\left(\frac{2\ln(\delta D)}{c_{\min}}\right)\le 1+ \frac{4\ln(\delta D)}{c_{\min}}=1+o\left(\frac{1}{\ln(\delta D)}\right).
\]
Substituting this estimate into Eq.~\eqref{eq:com_ratio_fin} shows that Algorithm~\ref{alg:main_d_integer_accurate} achieves a competitive ratio of $
    (2\ln(\delta D) + 2 \ln\ln\left(\delta \vee D\right) + \mathcal{O}(1))$.

\begin{remark}\label{rem:factor_2_main_alg}
The extra factor $2$ in the leading term $2\ln(\delta D)$ in the above competitive ratio bound stems from Lemma~\ref{le:pf_dp_additive}, where the {``worst'' reuse schedule} of disjoint time intervals cannot combine into a {reuse schedule} of the whole time interval. Instead, we can only obtain a weak sub-additivity property of $\Delta_{ih}[a,b)$ over disjoint time intervals. In Algorithm~\ref{alg:main_d_integer_active}, we design a refined {loss} that guarantees the sub-additivity of $\Delta_{ih}[a,b)$ over disjoint time intervals.
\end{remark}

We next consider the remaining case where $L\coloneqq\ln(\delta\vee D)<e-1$. In this regime, we use a constant-regime parameter choice. We set
$\beta=\beta^*=2(\delta D+1)$ and
$\eta=\eta^*=1$
so that $(\beta,\eta)\in\Theta$.
By Proposition~\ref{prop:pf_value_LP}, the competitive ratio of Algorithm~\ref{alg:main_d_integer_accurate} is upper bounded by
$
1+2(1+2\eta)\beta^{1/c_{\min}}\ln\beta
=
1+2\left[\ln 2+2\ln(\delta D+1)\right]\beta^{1/c_{\min}}$.
Letting $c_{\min}\to\infty$, since $\ln(\delta\vee D)<e-1$, we can conclude that Algorithm~\ref{alg:main_d_integer_accurate} is asymptotically $\mathcal O(1)$-competitive in this case. \Halmos

\section{Proof of Theorem~\ref{tm:main_theorem_refine}}\label{se:pf_main_tm_refine}

The proof of Theorem~\ref{tm:main_theorem_refine} mirrors the primal-dual proof of Theorem~\ref{tm:main_theorem}. Since these two arguments are largely parallel, we omit steps that carry over unchanged and focus on the modifications required by Algorithm~\ref{alg:main_d_integer_active}. 
The only substantive departure lies in the refined loss increment as defined in Definition~\ref{def:pathoc-increment_refine}. As discussed in Remark~\ref{rem:factor_2_main_alg}, Lemma~\ref{le:pf_dp_additive} provides only a weak sub-additivity bound for the increment associated with Algorithm~\ref{alg:main_d_integer_accurate}. By contrast, the refined increment $\tilde \Delta_{ih}[a,b)$ fits the form of greedy relaxation loss in Algorithm~\ref{alg:main_d_integer_active} and is sub-additive across disjoint time intervals, which removes the factor-of-two loss in the dual-feasibility argument.

\subsection{Capacity Feasibility of Algorithm~\ref{alg:main_d_integer_active}}\label{se:pf_capa_fea_refine}

We first establish the corresponding capacity-feasibility result for Algorithm~\ref{alg:main_d_integer_active}.

\begin{proposition}[Feasibility of Algorithm~\ref{alg:main_d_integer_active}]\label{prop:capa_fea_refine}
Recall that $\Theta$ is the admissible parameter set in Definition~\ref{df:capa_fea}. For any $(\beta,\eta)\in\Theta$,  Algorithm~\ref{alg:main_d_integer_active} never violates the server-capacity constraints.
\end{proposition}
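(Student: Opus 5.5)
We follow the same induction over arrivals as in the proof of Proposition~\ref{prop:capa_fea}. Assume the assignments made before job $j$ are feasible, and suppose for contradiction that Algorithm~\ref{alg:main_d_integer_active} assigns job $j$ to a server $i$ with no free capacity at time $t_j$. Then the number $\ell_0:=|\tilde{\boldsymbol r}_{i,j,t_j}|$ of previously assigned jobs still active at $t_j$ equals $c_i$, so $\alpha_{i,j-1\to t_j}=0$. The goal is to show that the greedy relaxation loss is then already at least $r_{ij}d_{ij}$, which contradicts the acceptance rule $r_{ij}d_{ij}-\mathcal L_{ij}^{\operatorname{GR}}>0$.

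\textbf{Step 1 (lower bound on the first summand).} The grid $\mathcal T[t_j,t_j+d_{ij})$ starts at $\tau=t_j$, and every term in \eqref{eq:def-greedy-relaxation-loss} is nonnegative. Hence $\mathcal L_{ij}^{\operatorname{GR}}$ is at least the $\tau=t_j$ summand. Inside that summand we choose $\ell=\ell_0=c_i$, which yields
\[
\mathcal L_{ij}^{\operatorname{GR}}\ \ge\ \bigl(\tilde r^{(c_i)}_{i,j,t_j}\wedge r_{ij}\bigr)\,\Psi(0)=\bigl(\tilde r^{(c_i)}_{i,j,t_j}\wedge r_{ij}\bigr)\,\eta(\beta-1).
\]
(If $c_i$ were $0$ the server is never compatible, so this case is vacuous.)

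\textbf{Step 2 (relating the order statistic to $r_{ij}$).} This step uses the local reward assumption and is the step that needs care. Every job $h$ counted in $\tilde{\boldsymbol r}_{i,j,t_j}$ is compatible with $i$ and active at $t_j$. It therefore satisfies $t_h\le t_j<t_h+d_{ih}\le t_h+D$, so $|t_j-t_h|\le D$. Assumption~\ref{assumption:local-bounded-heterogeneity} then gives $r_{ih}\ge r_{ij}/\delta$. In particular the smallest element, $\tilde r^{(c_i)}_{i,j,t_j}$, is at least $r_{ij}/\delta$, and so $\tilde r^{(c_i)}_{i,j,t_j}\wedge r_{ij}\ge r_{ij}/\delta$.

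\textbf{Step 3 (conclusion).} Combining the two steps with $(\beta,\eta)\in\Theta$, i.e.\ $\eta(\beta-1)\ge\delta D$, and with $d_{ij}\le D$:
\[
\mathcal L_{ij}^{\operatorname{GR}}\ \ge\ \frac{r_{ij}}{\delta}\,\delta D\ \ge\ r_{ij}d_{ij}.
\]
So the adjusted reward $r_{ij}d_{ij}-\mathcal L^{\operatorname{GR}}_{ij}$ is nonpositive, and the algorithm cannot select server $i$ for job $j$, a contradiction. The induction therefore closes.

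An alternative route is to note that $\mathcal L^{\operatorname{GR}}_{ij}\ge\mathcal L^{\operatorname{TS}}_{ij}$ by \eqref{eq:def-greedy-relaxation-loss} and reuse the argument of Lemma~\ref{le:capa_fea_lb_integral} for TS-BAL. The direct argument above is self-contained and avoids needing that lemma's proof.
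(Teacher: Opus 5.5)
Your proof is correct and is essentially the paper's argument. The paper proves Lemma~\ref{le:capa_fea_lb_integral_refine} (that $\delta d_{ij}>\Psi(\alpha_{i,j-1\to t_j})$ whenever GR-BAL assigns $j$ to $i$) by lower-bounding the $\tau=t_j$ summand of $\mathcal{L}^{\operatorname{GR}}_{ij}$ through the local reward bound on jobs active at $t_j$, and then reuses the contradiction $\delta D\ge\delta d_{ij}>\Psi(0)=\eta(\beta-1)$ from Proposition~\ref{prop:capa_fea}; you simply inline that lemma and specialize it directly to the saturated case $\ell=c_i$.
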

The only ingredient in the proof of Proposition~\ref{prop:capa_fea_refine} that differs from the proof of Proposition~\ref{prop:capa_fea} is the following analogue of Lemma~\ref{le:capa_fea_lb_integral}, the proof of which is provided in Section~\ref{se:app_pf_capa_fea_lb_integral_refine}. Given Lemma~\ref{le:capa_fea_lb_integral_refine}, the induction argument in the proof of Proposition~\ref{prop:capa_fea} applies verbatim, and Proposition~\ref{prop:capa_fea_refine} holds accordingly.

\begin{lemma}\label{le:capa_fea_lb_integral_refine}
    When Algorithm~\ref{alg:main_d_integer_active} assigns server $i$ to job $j$, we have that
 $ \delta d_{ij}> \Psi\!(\alpha_{i,j-1\to t_j})$.
\end{lemma}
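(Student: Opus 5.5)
The plan is to lower-bound the greedy relaxation loss $\mathcal{L}_{ij}^{\operatorname{GR}}$ by a single term, the one for the first grid point $\tau=t_j$ with the index $\ell$ equal to the number of jobs active at time $t_j$. We then combine this with the assignment rule $r_{ij}d_{ij}>\mathcal{L}_{ij}^{\operatorname{GR}}$, which holds whenever Algorithm~\ref{alg:main_d_integer_active} assigns $j$ to $i$.

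\emph{Step 1: isolate one term.} Every summand in \eqref{eq:def-greedy-relaxation-loss} is nonnegative, because $\Psi\ge 0$ on $[0,1]$ and the rewards are positive. The grid $\mathcal{T}[t_j,t_j+d_{ij})$ contains $t_j$. Hence
\[
\mathcal{L}_{ij}^{\operatorname{GR}}\ \ge\ \max_{\ell\in\{0\}\cup[|\tilde{\boldsymbol r}_{i,j,t_j}|]}\bigl(\tilde r^{(\ell)}_{i,j,t_j}\wedge r_{ij}\bigr)\Psi(1-\ell/c_i).
\]
Let $\ell^*=|\tilde{\boldsymbol r}_{i,j,t_j}|$. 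By Definition~\ref{df:available_capa}, $\alpha_{i,j-1\to t_j}=1-\ell^*/c_i$, and this is a valid capacity fraction because the process is feasible before job $j$ (the induction hypothesis). Choosing $\ell=\ell^*$ gives
\[
\mathcal{L}_{ij}^{\operatorname{GR}}\ \ge\ \bigl(\tilde r^{(\ell^*)}_{i,j,t_j}\wedge r_{ij}\bigr)\Psi(\alpha_{i,j-1\to t_j}).
\]
When $\ell^*=0$, we have $\tilde r^{(0)}=+\infty$, so the factor is simply $r_{ij}$.

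\emph{Step 2: compare rewards via Assumption~\ref{assumption:local-bounded-heterogeneity}.} Consider any job $k$ in the multiset at $t_j$. It was assigned to $i$ before $j$ and is still active at $t_j$, so $t_k\le t_j<t_k+d_{ik}\le t_k+D$, giving $|t_j-t_k|\le D$. Both $j$ and $k$ are compatible with $i$, so \eqref{eq:local-reward-ratio} yields $r_{ik}\ge r_{ij}/\delta$. In particular the smallest element $\tilde r^{(\ell^*)}_{i,j,t_j}$ satisfies this bound, and therefore $\tilde r^{(\ell^*)}_{i,j,t_j}\wedge r_{ij}\ge r_{ij}/\delta$.

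\emph{Step 3: conclude.} Combining the two steps,
\[
r_{ij}d_{ij}>\mathcal{L}_{ij}^{\operatorname{GR}}\ge \frac{r_{ij}}{\delta}\Psi(\alpha_{i,j-1\to t_j}).
\]
Dividing by $r_{ij}/\delta>0$ gives $\delta d_{ij}>\Psi(\alpha_{i,j-1\to t_j})$.

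The only delicate point is Step 2. There we must check that the jobs in the active multiset at $t_j$ really lie within the $D$-window of $t_j$, so that the local ratio condition applies. We must also handle the edge case $\ell^*=0$ through the convention $\tilde r^{(0)}=+\infty$. The remaining steps are immediate from nonnegativity of the summands.
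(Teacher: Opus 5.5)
Your proof is correct and follows the paper's argument: keep only the grid point $\tau=t_j$, use Assumption~\ref{assumption:local-bounded-heterogeneity} to show that every job active at $t_j$ has rate at least $r_{ij}/\delta$, and combine this with the strictly positive adjusted reward. The only cosmetic difference is that you choose $\ell=\ell^*$ directly, whereas the paper lower-bounds the term for every $\ell$ and then uses that $\Psi$ is decreasing to identify the maximum as $\Psi(\alpha_{i,j-1\to t_j})$. Your appeal to the induction hypothesis is also unnecessary, because $\alpha_{i,j-1\to t_j}=1-\ell^*/c_i$ holds by definition.
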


\subsection{A Primal-Dual Bound for Algorithm~\ref{alg:main_d_integer_active} under Fixed Admissible Parameters}\label{se:pf_value_LP_refine}

We next derive a competitive-ratio bound for Algorithm~\ref{alg:main_d_integer_active} under fixed admissible parameters.
\begin{proposition}\label{prop:pf_value_LP_refine}
    Suppose that $(\beta,\eta)\in\Theta$ and all processing durations $d_{ij}$ are integer-valued. The competitive ratio of Algorithm~\ref{alg:main_d_integer_active} is upper bounded by $1+(1+\eta)\beta^{1/c_{\min}}\ln \beta$.
\end{proposition}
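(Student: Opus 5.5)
We mirror the primal-dual proof of Proposition~\ref{prop:pf_value_LP}, changing only the loss increment. Fix $(\beta,\eta)\in\Theta$. By Proposition~\ref{prop:capa_fea_refine} the algorithm is capacity feasible, so every $\alpha_{i,j\to t}$ lies in $[0,1]$.

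\textbf{Dual construction.} Set $\hat\lambda_j\coloneqq\max\bigl(0,\max_{i\in\mathcal N(j)}\{r_{ij}d_{ij}-\mathcal L^{\operatorname{GR}}_{ij}\}\bigr)$ and $\hat\theta_i\coloneqq\sum_h\tilde\Delta_{ih}[t_h,t_h+d_{ih})$, with no factor $2$. Define the greedy increment over a grid-aligned interval $[T_1,T_2)\subseteq[t_h,t_h+d_{ih})$ as
\[
\tilde\Delta_{ih}[T_1,T_2)\coloneqq\sum_{\tau\in\mathcal T[T_1,T_2)}\Bigl(G_{ih,\tau}-G_{i(h-1),\tau}\Bigr),
\]
where $G_{ih,\tau}$ is the per-cell maximum $\max_{\ell}\bigl(\tilde r^{(\ell)}\wedge r_{i\cdot}\bigr)\Psi(1-\ell/c_i)$ computed from the active multiset after, respectively before, job $h$ is inserted. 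The paper's Definition~\ref{def:pathoc-increment_refine} presumably fixes the reward cap appropriately.

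\textbf{Objective bound.} For each assigned pair $(i,j)$ we compare cell by cell. Inserting $r_{ij}$ into the active multiset shifts the order statistics. Since $r_{ij}\wedge\tilde r^{(\ell)}$ is capped by $r_{ij}$, the new maximum over $\ell$ is at most the old maximum with $\ell$ replaced by $\ell+1$. The increment in each cell is therefore at most $r_{ij}\bigl[\Psi(1-(\ell+1)/c_i)-\Psi(1-\ell/c_i)\bigr]$ at a maximizing $\ell$, or a similar term bounded through the old cell value. Applying \eqref{eq:pf_psi_increment_bound}, each cell contributes at most $\beta^{1/c_i}\frac{\ln\beta}{c_i}(\text{old cell value}+\eta r_{ij})$. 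Summing over the $d_{ij}$ cells (integer durations, so exactly $d_{ij}$ cells with no $+1$ slack) gives
\[
c_i\tilde\Delta_{ij}\le\beta^{1/c_i}\ln\beta\,\bigl(\mathcal L^{\operatorname{GR}}_{ij}+\eta r_{ij}d_{ij}\bigr).
\]
Together with $\mathcal L^{\operatorname{GR}}_{ij}<r_{ij}d_{ij}$ and $\hat\lambda_j\le r_{ij}d_{ij}$, this yields $\mathrm{Dual}_j\le[1+(1+\eta)\beta^{1/c_{\min}}\ln\beta]\mathrm{ALG}_j$. Integrality removes the $d_{ij}+1\le2d_{ij}$ loss present in the TS proof.

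\textbf{Dual feasibility.} It suffices to show $\hat\theta_i\ge\sum_{j\in S}\mathcal L^{\operatorname{GR}}_{ij}$. This uses two ingredients.

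\emph{(a) Local charging, the analogue of Lemma~\ref{le:pf_dp_charging}.} We want $\mathcal L^{\operatorname{GR}}_{ij}\le\sum_{h\in\mathcal C_i(j)}\tilde\Delta_{ih}[t_j,(t_j+d_{ij})\wedge(t_h+d_{ih}))$. The argument is a telescoping one in each cell $\tau$. The cell value at job $j$ is built up by insertions of the active jobs $h\in\mathcal C_i(j)$, starting from $G=0$ when the multiset is empty, since $\Psi(1)=0$. Each cell value is nondecreasing in $h$, and its total growth over insertions is the sum of increments. A job $h$ contributes to cell $\tau$ only while it is active, which is exactly the overlap interval. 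The reward cap must be handled: the cap $r_{ij}$ versus $r_{ih}$ means one needs a monotonicity statement, namely that the per-cell value at $j$ with cap $r_{ij}$ is at most the sum of increments with caps $r_{ih}$. Justifying this is the main obstacle. I would first try to define $\tilde\Delta$ with the cap coming from the $\tilde r^{(\ell)}$ terms themselves (equivalently, no cap in the increment), so that the telescoping is exact. I would then verify separately that the objective bound still holds, using that the newly inserted element $r_{ih}$ bounds the relevant order statistic at the shifted index.

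\emph{(b) Full subadditivity, Lemma~\ref{le:pf_dp_additive_refine}.} Because $\tilde\Delta_{ih}$ is a plain sum over grid cells, for disjoint integer-aligned intervals we get $\sum_u\tilde\Delta_{ih}[\xi_{2u-1},\xi_{2u})\le\tilde\Delta_{ih}[t_h,t_h+d_{ih})$, since cell increments are nonnegative and the cells are distinct. With integer durations and arrival times, overlaps align to the grid of $h$. For non-integer arrivals, each interval $[t_j,b)$ maps to grid cells of $h$ via the bound \eqref{eq:blocking-loss-upper-bound-2}. Disjoint configuration intervals of length at least $1$ could share a boundary cell, and I would handle this by charging a cell to the job whose interval contains its left endpoint. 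Combining (a) and (b) as in Step 2 of Proposition~\ref{prop:pf_value_LP_dual_fea}, weak duality and Claim~\ref{claim:benchmark} give the claimed ratio.
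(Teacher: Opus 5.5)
Your objective bound (via the insertion-position case analysis) and your exactly telescoping local charging (a) both go through once the cap is dropped, as you propose. The gap is in step (b), and it is genuine.

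Arrival times are arbitrary reals; only durations are integers. So the cells $t_j+k$ of a configuration job are generally not on $h$'s grid $t_h+\mathbb Z$. Moving them there needs $h$'s per-cell increment to be nonincreasing in $\tau\ge t_h$, so that a cell $\tau$ can be replaced by $t_h+\lfloor\tau-t_h\rfloor$. The bound \eqref{eq:blocking-loss-upper-bound-2} gives this monotonicity for blocking losses, not for increments. For your uncapped difference-of-maxima increment it is false.

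Here is a counterexample. Write $\psi_\ell\coloneqq\Psi(1-\ell/c_i)$. Let job $g$ (rate $A$, duration $1$) be accepted at time $0$ and job $h$ (rate $x$, duration $2$) at time $1/2$, with $x\psi_2\le A\psi_1$. This is possible when $\delta>2$ and $c_i$ is large, and then both jobs are accepted.
\begin{itemize}
\item At $\tau=1/2$ your increment is $\max\{A\psi_1,x\psi_2\}-A\psi_1=0$.
\item For every $\tau\ge 1$, after $g$ has expired, it equals $x\psi_1$.
\item Hence $h$ contributes only $\tilde\Delta_{ih}[1/2,5/2)=x\psi_1$ to $\hat\theta_i$.
\item A configuration job arriving at time $1$ with duration $2$ charges $h$ the amount $\tilde\Delta_{ih}[1,5/2)=2x\psi_1$.
\end{itemize}
This violates \eqref{eq:pf_dp_additive_111_refine}. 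Assigning shared boundary cells does not help. After flooring, integer durations already keep the shifted intervals disjoint; the failure is in the values, not in collisions.

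The missing idea is the form of the increment. In Definition~\ref{def:pathoc-increment_refine}, the paper takes $h$'s increment at cell $\tau$ to be a maximum of marginal terms over the pre-$h$ active multiset. While $h$ is active this is $\max_\ell\bigl(\tilde r^{(\ell)}_{i,h,\tau}\wedge r_{ih}\bigr)\bigl[\Psi(1-(\ell+1)/c_i)-\Psi(1-\ell/c_i)\bigr]$, not a difference of maxima. For each fixed $\ell$ this term can only decrease as $\tau$ grows, because the active pre-$h$ multiset shrinks. In the example it equals $x(\psi_2-\psi_1)\ge x\psi_1$ at $\tau=1/2$. This monotonicity is Lemma~\ref{le:pf_dp_additive_refine_delta_de}. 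It lets each $[\xi_{2u-1},\xi_{2u})$ be floored onto $h$'s grid and yields Lemma~\ref{le:pf_dp_additive_refine}.

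The price is that local charging is no longer an exact telescoping. In Lemma~\ref{le:app_pf_dp_charging_refine}, fix $\ell$ and take the $\ell$ top-reward active jobs in arrival order, $h_1<\dots<h_\ell$. Use index $q-1$ in $h_q$'s maximum; then $\tilde r^{(q-1)}_{i,h_q,\tau}\wedge r_{ih_q}\ge\tilde r^{(\ell)}_{i,j,\tau}$, and the sum telescopes to $\tilde r^{(\ell)}_{i,j,\tau}\Psi(1-\ell/c_i)$. The cap $r_{ih_q}$ is harmless there, so the obstacle you flagged disappears. The objective bound then follows by applying \eqref{eq:pf_psi_increment_bound} termwise, with no insertion-position case analysis.
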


Noting that $(\beta,\eta)\in\Theta$ yields that $\alpha_{i,j\to t}\in[0,1]$ throughout the process, we now prove the claimed competitive-ratio bound, following the similar steps in Section~\ref{se:pf_value_LP}.

\noindent\underline{\bf Construction of dual variables.} 
The dual variables $\{\tilde\lambda_j,\tilde\theta_i\}$ are constructed in a manner similar to those in Section~\ref{se:pf_value_LP}, with several components modified to reflect {the greedy relaxation loss}. For each job $j$ and each server $i$, we set
\[\tilde\lambda_j:=\max\left(0,\max_{i\in\mathcal N(j)}
    \left\{
        r_{ij}d_{ij}
        -
       \mathcal{L}_{ij}^{\operatorname{GR}}\right\}\right), \qquad \text{and} \qquad 
\tilde{\theta}_i:=\sum_{j=1}^{m}\tilde\vartheta_{ij},
\]
where $\tilde\vartheta_{ij}$ is the incremental component corresponding to the duration when the algorithm processes job $j$. To specify these incremental components, we now introduce the following definition.
\begin{definition}[Refined Capacity-Induced Loss Increment]
\label{def:pathoc-increment_refine}
Fix a compatible pair $(i,j)\in E$ and a time interval satisfying $ t_j\le T_1<T_2$, define $\tilde\Delta_{ij}[T_1,T_2)\coloneqq$
\begin{align*}
\sum_{\tau\in\mathcal T[T_1,T_2)}\max_{\ell \in \{0\} \cup [|\tilde{\boldsymbol{r}}_{i,j,\tau}|]}\Biggl\{\left(\tilde{r}_{i,j,\tau}^{(\ell)} \wedge r_{ij}\right) \times\Biggl[\Psi\!\left(1-\frac{\ell}{c_i}-\bigl(\alpha_{i,j-1\to\tau}-\alpha_{i,j\to\tau}\bigr)\right)-\Psi\!\left(1-\frac{\ell}{c_i}\right)\Biggr]\Biggr\}.
\end{align*}
{For $(i,j)\notin E$, define $\tilde{\Delta}_{ij}[T_1,T_2)\coloneqq 0$ for any $T_1<T_2$.}
\end{definition}

The remaining part is the same as that in Section~\ref{se:pf_value_LP}. We then define the incremental components $\tilde\vartheta_{ij}$ based on the {refined capacity-induced loss increment}:
\[
    \tilde\vartheta_{ij}\coloneqq \tilde\Delta_{ij}[t_j,t_j+d_{ij})
        \cdot \mathbb{I}[\text{Algorithm~\ref{alg:main_d_integer_active} assigns job $j$ to server $i$}]=\tilde\Delta_{ij}[t_j,t_j+d_{ij}).
\]
Since $\alpha_{i,j\to t}\le \alpha_{i,j-1\to t}$ and $\Psi$ is decreasing, every $\tilde\vartheta_{ij}$ is nonnegative. Hence,
$\tilde{\lambda}_j\ge 0$ and $\tilde{\theta}_i\ge 0$ for all $j$ and $i$.
The resulting variables $\{\tilde{\lambda}_j,\tilde{\theta}_i\}$ form the candidate dual solution induced by the trajectory of Algorithm~\ref{alg:main_d_integer_active}.

\noindent\underline{\bf Objective bound and feasibility of $\{\tilde\lambda_j,\tilde\theta_i\}$.}
Let $\widetilde{\mathrm{ALG}}$ denote the total reward collected by Algorithm~\ref{alg:main_d_integer_active}, and let $\mathrm{Dual}(\boldsymbol{\tilde\lambda},\boldsymbol{\tilde\theta})$ denote the dual objective evaluated at the constructed variables, where $\boldsymbol{\tilde \lambda}=\{\tilde \lambda_j\}_j$ and $\boldsymbol{\tilde \theta}=\{\tilde \theta_i\}_i$. The following Proposition~\ref{prop:pf_value_LP_relate_objectivedual_to_alg_refine} bounds $\mathrm{Dual}(\boldsymbol{\tilde\lambda},\boldsymbol{\tilde\theta})$ in terms of $\widetilde{\mathrm{ALG}}$, and Proposition~\ref{prop:pf_value_LP_dual_fea_refine} establishes the feasibility for $\{\tilde\lambda_j,\tilde\theta_i\}$. Their proofs are provided in Sections~\ref{subse:pf_pf_value_LP_1_refine} and Section~\ref{subse:pf_pf_value_LP_2_refine}, respectively.

\begin{proposition}[Objective value]
\label{prop:pf_value_LP_relate_objectivedual_to_alg_refine}
$\mathrm{Dual}(\boldsymbol{\tilde{\lambda}},\boldsymbol{\tilde{\theta}})
\le
\left[1+(1+\eta)\beta^{1/c_{\min}}\ln\beta\right]\cdot\widetilde{\mathrm{ALG}}$.
\end{proposition}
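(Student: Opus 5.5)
We follow the per-arrival argument used for Proposition~\ref{prop:pf_value_LP_relate_objectivedual_to_alg}. Define $\mathrm{Dual}_j=\tilde\lambda_j+\sum_i c_i\tilde\vartheta_{ij}$ and $\widetilde{\mathrm{ALG}}_j$ analogously, and set $\tilde\Gamma=1+(1+\eta)\beta^{1/c_{\min}}\ln\beta$. Since both $\mathrm{Dual}(\boldsymbol{\tilde\lambda},\boldsymbol{\tilde\theta})$ and $\widetilde{\mathrm{ALG}}$ decompose as sums over arrivals, it suffices to show $\mathrm{Dual}_j\le\tilde\Gamma\,\widetilde{\mathrm{ALG}}_j$ for every $j$. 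If $j$ is rejected, both sides are zero. If $j$ is assigned to $i$, then $\tilde\lambda_j\le r_{ij}d_{ij}$, and only $c_i\tilde\vartheta_{ij}=c_i\tilde\Delta_{ij}[t_j,t_j+d_{ij})$ needs to be controlled.

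\textbf{Step 1: a termwise bound.} When $j$ is assigned to $i$, we have $\alpha_{i,j-1\to\tau}-\alpha_{i,j\to\tau}=1/c_i$ for every grid point $\tau\in\mathcal T[t_j,t_j+d_{ij})$. For each such $\tau$ and each admissible $\ell$, apply Eq.~\eqref{eq:pf_psi_increment_bound} with $\alpha=1-\ell/c_i$:
\[
\Psi\!\left(1-\tfrac{\ell+1}{c_i}\right)-\Psi\!\left(1-\tfrac{\ell}{c_i}\right)\le \beta^{1/c_i}\tfrac{\ln\beta}{c_i}\left[\Psi(1-\ell/c_i)+\eta\right].
\]
This requires $1-\ell/c_i\ge 1/c_i$, i.e.\ $\ell\le c_i-1$. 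Here $\ell\le|\tilde{\boldsymbol r}_{i,j,\tau}|$, the number of jobs active at $\tau\ge t_j$ that were assigned before $j$, and this count is at most $c_i-1$ by Proposition~\ref{prop:capa_fea_refine}, since $j$ itself was accepted. Should the boundary case $\Psi(0)$ still cause trouble, we would treat it with the convention used in the definition.

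Multiply by $(\tilde r^{(\ell)}_{i,j,\tau}\wedge r_{ij})$, take the maximum over $\ell$, and split the maximum of the sum into the sum of the maxima. This gives
\[
c_i\tilde\Delta_{ij}\le \beta^{1/c_i}\ln\beta\sum_{\tau}\Big[\max_\ell (\tilde r^{(\ell)}\wedge r_{ij})\Psi(1-\ell/c_i)+\eta\max_\ell(\tilde r^{(\ell)}\wedge r_{ij})\Big].
\]
The first sum is exactly $\mathcal L^{\operatorname{GR}}_{ij}$. The second is at most $\eta\, r_{ij}\,|\mathcal T[t_j,t_j+d_{ij})|$.

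\textbf{Step 2: the grid count and the obstacle.} This is where the improvement from $2\eta$ to $\eta$ must come from. Because $d_{ij}$ is an integer, $|\mathcal T[t_j,t_j+d_{ij})|=\lceil d_{ij}\rceil=d_{ij}$ exactly. The TS-BAL analysis instead used the crude bound $d_{ij}+1\le 2d_{ij}$, which cost a factor of two. Integrality therefore yields a residual of at most $\eta r_{ij}d_{ij}$.

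\textbf{Step 3: conclusion.} The acceptance rule gives $\mathcal L^{\operatorname{GR}}_{ij}<r_{ij}d_{ij}$. Hence
\[
\mathrm{Dual}_j\le r_{ij}d_{ij}+\beta^{1/c_i}\ln\beta\,(1+\eta)\,r_{ij}d_{ij}\le\tilde\Gamma\, r_{ij}d_{ij},
\]
where the last step uses $c_i\ge c_{\min}$ and $\beta>1$. Note that the factor $2$ that appeared in $\hat\vartheta_{ij}$ for TS-BAL is absent here, because $\tilde\vartheta_{ij}$ carries no multiplier. Summing over $j$ proves the proposition.
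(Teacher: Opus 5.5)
Your proposal is correct and follows the paper's proof essentially step for step. The steps match: the per-arrival reduction, the termwise convexity bound from Eq.~\eqref{eq:pf_psi_increment_bound} at $\alpha=1-\ell/c_i$, splitting the maximum over $\ell$, the exact grid count $|\mathcal T[t_j,t_j+d_{ij})|=d_{ij}$ for integer durations, and the acceptance rule $\mathcal L^{\operatorname{GR}}_{ij}<r_{ij}d_{ij}$. You also check explicitly, via Proposition~\ref{prop:capa_fea_refine}, that $\ell\le|\tilde{\boldsymbol r}_{i,j,\tau}|\le c_i-1$, a detail the paper leaves implicit in this step.
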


\begin{proposition}[Dual feasibility]\label{prop:pf_value_LP_dual_fea_refine}
The constructed dual variables $\{\tilde\lambda_j,\tilde\theta_i\}$ are feasible.
\end{proposition}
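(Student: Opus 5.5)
The plan is to follow the proof of Proposition~\ref{prop:pf_value_LP_dual_fea}. By definition, $\tilde\lambda_j\ge r_{ij}d_{ij}-\mathcal L^{\operatorname{GR}}_{ij}$ for every $i\in\mathcal N(j)$. Hence it suffices to show that, for every server $i$ and every $S\in\mathcal S_i$,
\[
\tilde\theta_i\ \ge\ \sum_{j\in S}\mathcal L^{\operatorname{GR}}_{ij}.
\]
With $\mathcal C_i(j)$ defined as in Section~\ref{subse:pf_pf_value_LP_2} (now for Algorithm~\ref{alg:main_d_integer_active}), I would prove two lemmas. The first is \emph{local charging}:
\[
\sum_{h\in\mathcal C_i(j)}\tilde\Delta_{ih}\bigl[t_j,(t_j+d_{ij})\wedge(t_h+d_{ih})\bigr)\ \ge\ \mathcal L^{\operatorname{GR}}_{ij}.
\]
The second is \emph{full subadditivity}. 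Let $h$ be assigned to $i$, and let $[\xi_1,\xi_2),\dots,[\xi_{2k-1},\xi_{2k})$ be disjoint intervals inside $[t_h,t_h+d_{ih})$ whose grid points are pairwise at distance at least $1$. Then $\sum_u\tilde\Delta_{ih}[\xi_{2u-1},\xi_{2u})\le\tilde\Delta_{ih}[t_h,t_h+d_{ih})=\tilde\vartheta_{ih}$. Swapping the order of summation exactly as in Eq.~\eqref{eq:local-charging-sum} then finishes the argument.

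\textbf{Local charging.} Every charging interval starts at $t_j$, so all of them share the grid points $\tau\in\mathcal T[t_j,t_j+d_{ij})$. For the interval associated with $h$, only the points $\tau<t_h+d_{ih}$ survive, and these are exactly the points at which $h$ is active. I would therefore argue grid point by grid point. Fix $\tau$, and let $\ell^*$ attain the maximum in the $\tau$-term of $\mathcal L^{\operatorname{GR}}_{ij}$; if $\ell^*=0$ the term is $\Psi(1)=0$ and there is nothing to prove. Otherwise, let $A$ be the set of $\ell^*$ jobs active at $\tau$ with the largest rewards; each has $r\ge\tilde r^{(\ell^*)}_{i,j,\tau}$. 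List $A$ in arrival order $a_1<\dots<a_{\ell^*}$, so that each $a_k\in\mathcal C_i(j)$.

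In the $\tau$-term of $\tilde\Delta_{ia_k}$, choose $\ell=k-1$. Three facts make this work:
\begin{itemize}
\item $\alpha_{i,a_k-1\to\tau}-\alpha_{i,a_k\to\tau}=1/c_i$, because $a_k$ is active at $\tau$.
\item The multiset $\tilde{\boldsymbol r}_{i,a_k,\tau}$ contains $a_1,\dots,a_{k-1}$, so its $(k-1)$-th largest element is at least $\tilde r^{(\ell^*)}_{i,j,\tau}$.
\item $r_{ia_k}\ge\tilde r^{(\ell^*)}_{i,j,\tau}$.
\end{itemize}
Hence the $\tau$-term of $\tilde\Delta_{ia_k}$ is at least $\tilde r^{(\ell^*)}_{i,j,\tau}\bigl[\Psi(1-k/c_i)-\Psi(1-(k-1)/c_i)\bigr]$. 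Summing over $k$ telescopes to $\tilde r^{(\ell^*)}_{i,j,\tau}\Psi(1-\ell^*/c_i)$, which dominates the $\tau$-term of $\mathcal L^{\operatorname{GR}}_{ij}$. All other summands are nonnegative.

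\textbf{Subadditivity.} This is the step where integrality is needed. For fixed $h$, take the jobs $j_1<\dots<j_w$ of $S$ charging to $h$, with $b_u=(t_{j_u}+d_{ij_u})\wedge(t_h+d_{ih})$; as in Section~\ref{subse:pf_pf_value_LP_2}, these intervals are disjoint and ordered. Because $d_{ij_u}$ is an integer, the last grid point of $[t_{j_u},b_u)$ for $u<w$ is $b_u-1$, and the next interval starts at or after $b_u$. So all grid points from all intervals are pairwise at least $1$ apart, and each unit cell $[\tau,\tau+1)$ of $h$'s grid $\mathcal T[t_h,t_h+d_{ih})$ contains at most one of them. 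This grid covers $[t_h,t_h+d_{ih})$ exactly, since $d_{ih}$ is an integer.

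It then remains to show that the term at a grid point $\tau'$ in the cell of $\tau$ is at most the term at $\tau$. Two facts give this:
\begin{itemize}
\item The $\alpha$-difference at $\tau'$ and at $\tau$ is $1/c_i$ in both cases, since $h$ is active at both points.
\item Every job that arrived before $h$ and is active at $\tau'\ge\tau$ is also active at $\tau$, so $\tilde{\boldsymbol r}_{i,h,\tau'}\subseteq\tilde{\boldsymbol r}_{i,h,\tau}$. Consequently its $\ell$-th order statistics are no larger, and its range of admissible $\ell$ is no larger.
\end{itemize}
Summing over cells gives the full subadditivity.

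I expect the telescoping charging step, specifically the choice of the top-$\ell^*$ set $A$ in arrival order, to be the main obstacle. Ordering all active jobs by arrival would break the reward lower bound, because the minimum reward among early arrivals can fall below $\tilde r^{(\ell^*)}_{i,j,\tau}$; restricting to $A$ is what keeps every coefficient at least $\tilde r^{(\ell^*)}_{i,j,\tau}$.
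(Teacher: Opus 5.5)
Your proposal is correct and matches the paper's proof essentially step for step. It uses the same reduction to $\tilde\theta_i\ge\sum_{j\in S}\mathcal L^{\operatorname{GR}}_{ij}$, the same per-grid-point telescoping charge using the top-$\ell$ active jobs listed in arrival order (Lemma~\ref{le:app_pf_dp_charging_refine}), and the same subadditivity step (Lemma~\ref{le:pf_dp_additive_refine}). The only cosmetic difference is in that last step: the paper shifts whole charging intervals onto $h$'s grid via Lemma~\ref{le:pf_dp_additive_refine_delta_de}, whereas you send each grid point to its unit cell, and both rest on the same facts (an $\alpha$-increment of $1/c_i$ while $h$ is active, and a shrinking active multiset).
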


Combining Claim~\ref{claim:benchmark}, Propositions~\ref{prop:pf_value_LP_relate_objectivedual_to_alg_refine} and ~\ref{prop:pf_value_LP_dual_fea_refine}, we establish the desired competitive-ratio bound: \begin{align*}
    \left[1+(1+\eta)\beta^{1/c_{\min}}\ln\beta\right]\cdot \widetilde{\operatorname{ALG}}\ge\mathrm{Dual}(\boldsymbol{\tilde{\lambda}},\boldsymbol{\tilde{\theta}})\ge P_{\mathrm{OPT}}\ge \operatorname{OPT}.
\end{align*}
This finishes the proof of Proposition~\ref{prop:pf_value_LP_refine}.

\subsubsection{Proof of Proposition~\ref{prop:pf_value_LP_relate_objectivedual_to_alg_refine}} \label{subse:pf_pf_value_LP_1_refine}
The proof of Proposition~\ref{prop:pf_value_LP_relate_objectivedual_to_alg_refine} follows the similar lines in Section~\ref{subse:pf_pf_value_LP_1}. Specifically, let $\widetilde{\operatorname{ALG}}_j=\sum_{i\in \mathcal N(j)}r_{ij}d_{ij}\cdot\mathbb{I}[\text{Algorithm~\ref{alg:main_d_integer_active} assigns $j$ to $i$}]$, $\widetilde{\operatorname{Dual}}_j=\tilde\lambda_j+\sum_ic_i\tilde\vartheta_{ij}$, and $\widetilde\Gamma=1+(1+\eta)\beta^{1/c_{\min}}\ln\beta$. It suffices to show that, for every arriving job $j$, we have
\begin{equation}\label{eq:pf_competitive_ratio_dual_alg_refine}
    \widetilde{\operatorname{Dual}}_j\le \widetilde\Gamma\cdot \widetilde{\operatorname{ALG}}_j.
\end{equation}

Similar with the discussions in Section~\ref{subse:pf_pf_value_LP_1}, to prove Eq.~\eqref{eq:pf_competitive_ratio_dual_alg_refine}, we only need to consider the case when Algorithm~\ref{alg:main_d_integer_active} assigns job $j$ to some server $i$, and the only nontrivial task is to control the server-side increment $\tilde \vartheta_{ij}$. We continue to use a two-step argument, with the calculation details adapted to {the greedy relaxation loss}.

\noindent\underline{\bf Step 1: Bounding the server-side increment.}
Since job $j$ is assigned to server $i$, for every $\tau\in[t_j,t_j+d_{ij})$, we have that $\alpha_{i,j\to\tau}=\alpha_{i,j-1\to\tau}-\frac{1}{c_i}$. Applying Eq.~\eqref{eq:pf_psi_increment_bound} to the definition of $\tilde{\vartheta}_{ij}$ gives \begin{align}\label{eq:pf_value_LP_dual_increment_bound_refine}
    \nonumber c_i\tilde{\vartheta}_{ij}& = c_i\sum_{\tau\in \mathcal{T}[t_j,t_j+d_{ij})}
        \max_{\ell \in \{0\} \cup [|\tilde{\boldsymbol{r}}_{i,j,\tau}|]}\left\{\left(\tilde{r}_{i,j,\tau}^{(\ell)} \wedge r_{ij}\right)\left[\Psi\!\left(1-\frac{\ell}{c_i}-\frac{1}{c_i}\right)-\Psi\!\left(1-\frac{\ell}{c_i}\right)\right]\right\}\\
        \nonumber&\le c_i\sum_{\tau\in \mathcal{T}[t_j,t_j+d_{ij})}
        \max_{\ell \in \{0\} \cup [|\tilde{\boldsymbol{r}}_{i,j,\tau}|]}\left\{\left(\tilde{r}_{i,j,\tau}^{(\ell)} \wedge r_{ij}\right)\left[\beta^{1/c_i}\left(\Psi\!\left(1-\frac{\ell}{c_i}\right)+\eta\right)\frac{\ln\beta}{c_i}\right]\right\}\\
        \nonumber& \le \beta^{1/c_i}\ln\beta\cdot \mathcal{L}_{ij}^{\operatorname{GR}}+\eta\beta^{1/c_i}\ln\beta \sum_{\tau\in \mathcal{T}[t_j,t_j+d_{ij})}\max_{\ell \in \{0\} \cup [|\tilde{\boldsymbol{r}}_{i,j,\tau}|]}\left\{\tilde{r}_{i,j,\tau}^{(\ell)} \wedge r_{ij}\right\}\\
        & \le \beta^{1/c_i}\ln\beta\cdot \mathcal{L}_{ij}^{\operatorname{GR}}+\eta\beta^{1/c_i} r_{ij}d_{ij} \ln\beta,
\end{align}
where the last inequality due to that $\tilde{r}_{i,j,\tau}^{(\ell)} \wedge r_{ij}\le r_{ij}$ and $|\mathcal{T}[t_j,t_j+d_{ij})|=d_{ij}$ when $d_{ij}$ is integral.

\noindent\underline{\bf Step 2: Completing the per-arrival bound.}
Next, we use Eq.~\eqref{eq:pf_value_LP_dual_increment_bound_refine} to prove Eq.~\eqref{eq:pf_competitive_ratio_dual_alg_refine}. Since $r_{ij}d_{ij}\ge \mathcal L_{ij}^{\mathrm{GR}}$, we obtain that
\begin{align*}
    \widetilde{\operatorname{Dual}}_j=\tilde{\lambda}_j+c_i\tilde{\vartheta}_{ij} &\le
    r_{ij}d_{ij}+\beta^{1/c_i}\ln\beta\left(\mathcal{L}_{ij}^{\operatorname{GR}}+\eta r_{ij}d_{ij}\right)\\
    &\le \left[1+(1+\eta)\beta^{1/c_i}\ln\beta\right]r_{ij}d_{ij}=\widetilde\Gamma r_{ij}d_{ij}=\widetilde\Gamma\cdot\widetilde{\operatorname{ALG}}_j,
\end{align*}
where the last inequality follows from $c_i\ge c_{\min}$ and $\beta>1$. This proves Eq.~\eqref{eq:pf_competitive_ratio_dual_alg_refine} holds for every job $j$, and therefore concludes the proof of Proposition~\ref{prop:pf_value_LP_relate_objectivedual_to_alg_refine}. \Halmos

\subsubsection{Proof of Proposition~\ref{prop:pf_value_LP_dual_fea_refine}} \label{subse:pf_pf_value_LP_2_refine}
Following the similar discussions in Section~\ref{subse:pf_pf_value_LP_2}, to establish the dual feasibility of $\{\tilde\lambda_j,\tilde\theta_i\}$, it suffices to prove that, for every server $i$ and every feasible configuration $S\in\mathcal S_i$, it holds that  
\[
\tilde\theta_i\geq \sum_{j\in S}\mathcal{L}_{ij}^{\operatorname{GR}}.
\]
Fix a server $i$ and a feasible configuration $S\in\mathcal S_i$, we prove this inequality. The proof follows a similar two-step argument as in  Section~\ref{subse:pf_pf_value_LP_2}, where the key difference is the new sub-additivity lemma (Lemma~\ref{le:pf_dp_additive_refine}) in Step 2, thanks to {the greedy relaxation losses} and the corresponding refined loss increment.

\noindent\underline{\bf Step 1: Local charging.}
Following Section~\ref{subse:pf_pf_value_LP_2}, we first define a charging set \[
    \tilde{\mathcal C}_i(j)\coloneqq \{h: \text{Algorithm~\ref{alg:main_d_integer_active} assigns } h \text{ to } i,h< j, t_j<t_h+d_{ih} \}.
\]
The only ingredient in this step that differs from that in Step~1 in Section~\ref{subse:pf_pf_value_LP_2} is the following analogue of Lemma~\ref{le:pf_dp_charging}, whose proof is provided in Section~\ref{se:app_pf_dp_charging_refine}. 
\begin{lemma}[Local charging]\label{le:app_pf_dp_charging_refine}
    Fix a server $i$ and a job $j\in S\subseteq \mathcal{S}_i$, we have that \begin{align*}
        \sum_{h\in \tilde{\mathcal C}_i(j)}\tilde \Delta_{ih}[t_j,\left(t_j+d_{ij}\right)\wedge\left(t_{h}+d_{ih}\right)) \ge \mathcal{L}_{ij}^{\operatorname{GR}}.
    \end{align*}
\end{lemma}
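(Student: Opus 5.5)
The plan is to prove Lemma~\ref{le:app_pf_dp_charging_refine} one grid point at a time, with a telescoping argument. Both sides are sums over the unit grid starting at $t_j$, and a per-point telescoping sum of the increments $\tilde\Delta_{ih}$ recovers each term of $\mathcal{L}_{ij}^{\operatorname{GR}}$.

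\textbf{Aligning the grids.} Each interval $[t_j,(t_j+d_{ij})\wedge(t_h+d_{ih}))$ starts at $t_j$, and all durations are integers. So $\mathcal T[t_j,(t_j+d_{ij})\wedge(t_h+d_{ih}))$ is exactly the set of points $\tau\in\mathcal T[t_j,t_j+d_{ij})=\{t_j,\dots,t_j+d_{ij}-1\}$ with $\tau<t_h+d_{ih}$, i.e., the grid points at which $h$ is still active. Exchanging the two sums, it suffices to prove, for each fixed $\tau\in\mathcal T[t_j,t_j+d_{ij})$,
\[
\sum_{h\in\tilde{\mathcal C}_i(j):\,t_h+d_{ih}>\tau} \tilde\Delta_{ih}\text{-term at }\tau \;\ge\; \max_{\ell}\bigl(\tilde r^{(\ell)}_{i,j,\tau}\wedge r_{ij}\bigr)\Psi(1-\ell/c_i).
\]
Here the ``$\tilde\Delta_{ih}$-term at $\tau$'' is the summand of Definition~\ref{def:pathoc-increment_refine} indexed by $\tau$.

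\textbf{Identifying the active sets.} For $h<j$ we have $t_h\le t_j\le\tau$. Hence a job assigned to $i$ before $j$ is active at $\tau$ if and only if it belongs to $\tilde{\mathcal C}_i(j)$ and $t_h+d_{ih}>\tau$. Call this set $A_\tau$; the multiset $\tilde{\boldsymbol r}_{i,j,\tau}$ is exactly the rewards of $A_\tau$. By the same reasoning, for $h\in A_\tau$ the multiset $\tilde{\boldsymbol r}_{i,h,\tau}$ consists of the rewards of jobs in $A_\tau$ arriving before $h$. Moreover, since $h$ is assigned and active at $\tau$, we have $\alpha_{i,h-1\to\tau}-\alpha_{i,h\to\tau}=1/c_i$.

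\textbf{Telescoping.} Let $\ell^*$ be the maximizer on the right-hand side; the case $\ell^*=0$ is trivial because $\Psi(1)=0$. Set $R^*=\tilde r^{(\ell^*)}_{i,j,\tau}$. Then $A_\tau$ contains at least $\ell^*$ jobs with $r_{ih}\ge R^*$. List them in arrival order as $g_1<g_2<\cdots$, and keep only $g_1,\dots,g_{\ell^*}$.

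For $g_k$, choose $\ell=k-1$ in the inner maximum of $\tilde\Delta_{ig_k}$. The jobs $g_1,\dots,g_{k-1}$ all lie in $\tilde{\boldsymbol r}_{i,g_k,\tau}$ and have reward at least $R^*$, so $\tilde r^{(k-1)}_{i,g_k,\tau}\ge R^*$; this is $+\infty$ when $k=1$. Also $r_{ig_k}\ge R^*$, so the reward factor is at least $R^*\ge R^*\wedge r_{ij}$. The $\Psi$-bracket equals $\Psi(1-k/c_i)-\Psi(1-(k-1)/c_i)\ge0$. Summing over $k=1,\dots,\ell^*$ telescopes to $(R^*\wedge r_{ij})\,\Psi(1-\ell^*/c_i)$. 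The remaining jobs in $A_\tau$ contribute nonnegative terms, because $\Psi$ is decreasing. Summing over $\tau$ then completes the proof.

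\textbf{Main obstacle.} The delicate step is choosing which active jobs to charge and the index $\ell$ for each. Ordering by arrival, rather than by reward, is what ensures that job $g_k$ sees the previously listed $g_1,\dots,g_{k-1}$ in its own history multiset. That in turn yields the reward bound $\tilde r^{(k-1)}_{i,g_k,\tau}\ge R^*$ needed for the telescoping.
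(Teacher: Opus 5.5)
Your proof is correct and follows essentially the same route as the paper's. Both reduce to a per-grid-point inequality and restrict to the jobs assigned before $j$ that are active at $\tau$. Both then charge index $q-1$ to the $q$-th of $\ell$ high-reward active jobs listed in arrival order, and telescope $\Psi$. The only cosmetic differences are that you fix the maximizer $\ell^*$ rather than proving the bound for every $\ell$, and you drop inactive jobs through grid alignment rather than through $\alpha_{i,h-1\to\tau}=\alpha_{i,h\to\tau}$.
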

Lemma~\ref{le:app_pf_dp_charging_refine} Applying Lemma~\ref{le:app_pf_dp_charging_refine} to every $j\in S$ and summing over $j\in S$ yields 
\begin{align}
    \sum_{j\in S}\mathcal{L}_{ij}^{\operatorname{GR}}&\le\sum_{j\in S}\sum_{h\in\tilde{\mathcal C}_i(j)}\tilde\Delta_{ih}\left[t_j,\left(t_j+d_{ij}\right)\wedge\left(t_{h}+d_{ih}\right)\right)\le\sum_{h=1}^m\sum_{j\in S:\tilde{\mathcal C}_i(j)\ni h} \tilde\Delta_{ih}\left[t_j,\left(t_j+d_{ij}\right)\wedge\left(t_{h}+d_{ih}\right)\right). \label{eq:local-charging-sum_refine}
\end{align}

\noindent\underline{\bf Step 2: Aggregation via sub-additivity.}
As discussed in Step~2 in Section~\ref{subse:pf_pf_value_LP_2}, given Eq.~\eqref{eq:local-charging-sum_refine}, the dual-feasibility constraints follow once we establish that
\begin{equation}\label{eq:pf_dp_additive_111_refine}
    \sum_{j\in S:\tilde{\mathcal C}_i(j)\ni h} \tilde\Delta_{ih}\left[t_j,\left(t_j+d_{ij}\right)\wedge\left(t_{h}+d_{ih}\right)\right)\le \tilde\vartheta_{ih},\qquad \forall h\in[m]. 
\end{equation}

It therefore remains to establish Eq.~\eqref{eq:pf_dp_additive_111_refine}, for which we introduce the following sub-additivity property. Its proof is provided in Section~\ref{se:app_pf_dp_additive_refine}.
\begin{lemma}[{Sub-additivity of refined loss increments}]\label{le:pf_dp_additive_refine}
    Fix job $h$ and any sequence $
        t_h\le \xi_1<\xi_2\le \xi_3<\xi_4\le\dots\le\xi_{2k-1}<\xi_{2k}\le t_h+d_{ih}$.
    Suppose that $\xi_{2u}-\xi_{2u-1}$ are all positive integers for any $u\in[k-1]$, we have that $
        \tilde\vartheta_{ih}=\tilde\Delta_{ih}[t_h,t_h+d_{ih}) \ge \sum_{u=1}^{k}\tilde\Delta_{ih}[\xi_{2u-1},\xi_{2u})$.
\end{lemma}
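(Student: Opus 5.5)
The plan is to reduce both sides of Lemma~\ref{le:pf_dp_additive_refine} to sums of a single per-time function, and then compare them through an order-preserving injection between unit grids. Fix $h$ assigned to $i$; otherwise both sides vanish. For every $\tau\in[t_h,t_h+d_{ih})$ we have $\alpha_{i,h-1\to\tau}-\alpha_{i,h\to\tau}=1/c_i$. Hence, by Definition~\ref{def:pathoc-increment_refine}, each grid point $\tau$ of any subinterval $[T_1,T_2)\subseteq[t_h,t_h+d_{ih})$ contributes the same quantity
\[
f(\tau)\coloneqq \max_{\ell\in\{0\}\cup[|\tilde{\boldsymbol r}_{i,h,\tau}|]}\left(\tilde r^{(\ell)}_{i,h,\tau}\wedge r_{ih}\right)\left[\Psi\!\left(1-\tfrac{\ell+1}{c_i}\right)-\Psi\!\left(1-\tfrac{\ell}{c_i}\right)\right]\ge 0 .
\]
This quantity depends only on $\tau$, not on the interval. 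So $\tilde\Delta_{ih}[T_1,T_2)=\sum_{\tau\in\mathcal T[T_1,T_2)}f(\tau)$.

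\textbf{Step 1 (monotonicity).} I will show that $f$ is nonincreasing on $[t_h,t_h+d_{ih})$. Every job in $\tilde{\boldsymbol r}_{i,h,\tau'}$ was assigned before $h$, so it arrived before $\tau\le\tau'$. If it is active at $\tau'$, it is therefore active at $\tau$. Thus the multiset at $\tau'$ is a sub-multiset of the one at $\tau$. It follows that the admissible range of $\ell$ can only shrink, and that each order statistic satisfies $\tilde r^{(\ell)}_{i,h,\tau'}\le\tilde r^{(\ell)}_{i,h,\tau}$. The bracketed $\Psi$-difference depends only on $\ell$ and is nonnegative. Hence every candidate value of the maximum at $\tau'$ is dominated by a candidate at $\tau$, which gives $f(\tau')\le f(\tau)$.

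\textbf{Step 2 (injection of grids).} Since $d_{ih}$ is an integer, $\mathcal T[t_h,t_h+d_{ih})=\{t_h+n:0\le n\le d_{ih}-1\}$. For the $u$-th interval, define the map $\phi_u(\xi_{2u-1}+k)\coloneqq t_h+\lfloor\xi_{2u-1}-t_h\rfloor+k$ on its grid points, $k=0,\dots,\lceil\xi_{2u}-\xi_{2u-1}\rceil-1$. Then $\phi_u(\tau)\le\tau$, so Step~1 gives $f(\phi_u(\tau))\ge f(\tau)$.

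Each image lies in the big grid. Indeed, $\xi_{2u-1}+k<\xi_{2u}\le t_h+d_{ih}$, so $\lfloor \xi_{2u-1}+k-t_h\rfloor\le d_{ih}-1$ because $d_{ih}$ is an integer.

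Injectivity across different $u$ uses the hypothesis that $\xi_{2u}-\xi_{2u-1}$ is an integer for $u<k$. Under it, the largest image of interval $u$ is $t_h+\lfloor\xi_{2u}-t_h\rfloor-1$. The smallest image of interval $u+1$ is $t_h+\lfloor\xi_{2u+1}-t_h\rfloor\ge t_h+\lfloor\xi_{2u}-t_h\rfloor$. The last interval may have non-integer length, but nothing follows it, so no such condition is needed there. Injectivity within a single interval is immediate.

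\textbf{Conclusion and main obstacle.} Combining the two steps with $f\ge0$ gives
\[
\sum_u\tilde\Delta_{ih}[\xi_{2u-1},\xi_{2u})=\sum_u\sum_{\tau}f(\tau)\le\sum_u\sum_\tau f(\phi_u(\tau))\le\sum_{\tau\in\mathcal T[t_h,t_h+d_{ih})}f(\tau)=\tilde\vartheta_{ih}.
\]
The main obstacle is the grid misalignment: the grids of the subintervals start at arbitrary real points $\xi_{2u-1}$, not on the lattice $t_h+\mathbb Z$. Monotonicity of $f$ is exactly what lets us shift each point down to the lattice. The integrality hypotheses on $d_{ih}$ and on the lengths $\xi_{2u}-\xi_{2u-1}$ are what keep the shifted images distinct and inside the big grid. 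I will therefore verify Step~1 carefully, in particular that the $\wedge r_{ih}$ cap and the range $\ell\le|\tilde{\boldsymbol r}_{i,h,\tau}|$ behave correctly under the sub-multiset relation.
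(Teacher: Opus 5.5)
Your proposal is correct and is essentially the paper's proof. Your Step~1 is the pointwise content of the shift-monotonicity Lemma~\ref{le:pf_dp_additive_refine_delta_de}: it uses the same sub-multiset comparison of active jobs, simplified by restricting to $[t_h,t_h+d_{ih})$, where the capacity gap is constantly $1/c_i$. Your map $\phi_u$ is exactly the paper's shift of $[\xi_{2u-1},\xi_{2u})$ to $[\rho_{2u-1},\rho_{2u})$ with $\rho_{2u-1}=t_h+\lfloor\xi_{2u-1}-t_h\rfloor$, after which the paper uses the same disjointness-and-containment argument for the shifted unit grids inside $\mathcal T[t_h,t_h+d_{ih})$, relying on the integer lengths for $u<k$ just as you do.
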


Next, we apply Lemma~\ref{le:pf_dp_additive_refine} to prove Eq.~\eqref{eq:pf_dp_additive_111_refine}. For notational convenience, we fix $h\in[m]$, and write $
        \{j_1,\dots,j_w\}=\{j\in S:h\in\tilde{\mathcal C}_i(j)\}$, where $j_1<j_2<\dots<j_w$.
Noting that when $w=0$, Eq.~\eqref{eq:pf_dp_additive_111_refine} holds directly. Suppose that $w\ge 1$, we define $b_u\coloneqq \left(t_{j_u}+d_{ij_u}\right)\wedge\left(t_{h}+d_{ih}\right)$ for $u\in[w]$, and show that $\{t_{j_1},b_{1},\dots,t_{j_w},b_{w}\}$ satisfies the conditions in Lemma~\ref{le:pf_dp_additive_refine}.
As in Step~2 in Section~\ref{subse:pf_pf_value_LP_2}, it holds that $b_{u}=t_{j_u}+d_{i{j_u}}$ for any $u\in[w-1]$, and we have that $
    t_h\le t_{j_1}<b_{1}\le\dots\le t_{j_w}<b_{w}\le t_h+d_{ih}$.
Since $d_{ij}$ are integers for any $i,j$, for each $u\in[w-1]$, it holds that $    b_{u}-t_{j_u}=t_{j_u}+d_{ij_{u}}-t_{j_u}=d_{ij_{u}}\in\mathbb{Z}_{>0}$.
Therefore, we can apply Lemma~\ref{le:pf_dp_additive_refine} to the time points $\{t_{j_1},b_{1},\dots,t_{j_w},b_{w}\}$,
which yields that $
    \tilde\vartheta_{ih}=\tilde\Delta_{ih}[t_h,t_h+d_{ih})\ge \sum_{j\in S:\tilde{\mathcal C}_i(j)\ni h}\tilde\Delta_{ih}[t_j,\left(t_{j}+d_{ij}\right)\wedge\left(t_{h}+d_{ih}\right))$.
This proves Eq.~\eqref{eq:pf_dp_additive_111_refine}, and we finish the proof of Proposition~\ref{prop:pf_value_LP_dual_fea_refine}. \Halmos

\subsection{Putting Things Together}\label{se:pf_choice_beta_eta_refine}
We now complete the proof of Theorem~\ref{tm:main_theorem_refine}.
When $\ln(\max\{\delta,D\})\geq e-1$, we take
$
    \eta=\eta^*=\frac{1}{\ln(\delta\vee D)}$, and
$    \beta=\beta^*=1+\delta D\ln(\delta\vee D)$,
so that $(\beta,\eta)\in\Theta$. Hence, Proposition~\ref{prop:capa_fea_refine} guarantees capacity feasibility. By Proposition~\ref{prop:pf_value_LP_refine} and following the similar calculation in Section~\ref{se:pf_choice_beta_eta}, we upper bound the competitive ratio  of Algorithm~\ref{alg:main_d_integer_active} by
$1+(1+\eta)\beta^{1/c_{\min}}\ln\beta  \leq 1+ \left(\mathcal O(1)+\ln \left(\delta D\right)+\ln\ln\left(\delta \vee D\right)\right)\beta^{1/c_{\min}}$. When $c_{\min}\gg \ln^2(\delta D)$,  we have $
\beta^{1/c_{\min}}=1+o\left(\frac{1}{\ln(\delta D)}\right)$ and  Algorithm~\ref{alg:main_d_integer_active} achieves a competitive ratio of $
    (\ln(\delta D) +  \ln\ln\left(\delta \vee D\right) + \mathcal{O}(1))$.
For the remaining case where $L\coloneqq\ln(\delta\vee D)<e-1$, we set
$\beta=\beta^*=2(\delta D+1)$ and $
\eta=\eta^*=1$, and 
conclude that Algorithm~\ref{alg:main_d_integer_active} is asymptotically $\mathcal O(1)$-competitive following the same argument as in Section~\ref{se:pf_choice_beta_eta}. \Halmos

\section{Numerical Experiments}\label{sec:experiments}
We conduct numerical experiments to compare TS-BAL and GR-BAL with Forward-Looking Balance (FLB) and Greedy. The experiments cover three settings. First, we construct two families of challenging instances: one designed to expose the limitations of FLB and Greedy under globally drifting rewards, and another motivated by our lower-bound construction to challenge TS-BAL and GR-BAL. Second, we evaluate the algorithms in randomly generated nonstationary environments with stochastic arrivals, time-varying reward rates, assignment-specific processing durations, and a compatibility structure that induces local competition for reusable capacity. Third, we examine the small-capacity performance of the algorithms by varying the server capacity from $2$ to $40$.

Detailed experimental settings and results are deferred to Appendix~\ref{app:experiments-details} due to space constraints. Overall, the results demonstrate that TS-BAL and GR-BAL perform robustly across the tested settings. On the artificially designed hard instances, they substantially outperform FLB and Greedy when rewards exhibit substantial global drift, while remaining competitive with FLB on the instance family motivated by our lower bound. In the randomly generated environments, TS-BAL and GR-BAL consistently attain the lowest average competitive ratios, with performance that remains stable as the horizon increases. After a brief initial non-monotonic phase, their competitive ratios generally decline as capacity increases. Although Greedy can be competitive at very small capacities in some settings, TS-BAL and GR-BAL eventually outperform it as capacity grows and substantially outperform FLB throughout the tested capacity range. These findings support the theoretical robustness of our algorithms and indicate that their advantages remain meaningful at finite capacity. We also observe that, although the current analysis yields a competitive-ratio bound for TS-BAL whose leading term is twice that of GR-BAL, TS-BAL performs slightly better than GR-BAL in the randomly generated environments, which further underscores the question raised at the end of Section~\ref{sec:algorithm_main_theorem}: whether the factor of $2$ in the leading term of TS-BAL’s competitive-ratio bound can be eliminated.

\section{Conclusion}

We study online bipartite matching with reusable server capacity under non-stationary rewards. Our locally bounded reward condition restricts reward variation only among jobs that can compete for the same capacity within a relevant time window, thereby accommodating substantial reward drift over long horizons. We develop two BALANCE-type algorithms that price an assignment by the future service trajectories it may displace. TS-BAL aggregates time-aware blocking losses over feasible reuse schedules, whereas GR-BAL applies a greedy relaxation with stronger sub-additivity and attains the asymptotically tight leading term $\ln(\delta D)$. Both algorithms perform robustly under globally drifting rewards and favorably at finite capacities.

These results show that opportunity costs for reusable capacity should reflect the most consequential feasible continuation that a current assignment may block. An interesting direction is to determine whether TS-BAL can attain the optimal leading coefficient without greedy relaxation.
%

\bibliographystyle{informs2014}
 \let\oldbibliography\thebibliography
 \renewcommand{\thebibliography}[1]{%
    \oldbibliography{#1}%
    \baselineskip14pt 
    \setlength{\itemsep}{10pt}
 }
\bibliography{refs}

@article{feng2025online,
  title={Online job assignment},
  author={Ekbatani, Farbod and Feng, Yiding and Kash, Ian and Niazadeh, Rad},
  journal={arXiv preprint arXiv:2506.06893},
  year={2025}
}

@article{goyal2025asymptotically,
  title={Asymptotically optimal competitive ratio for online allocation of reusable resources},
  author={Goyal, Vineet and Iyengar, Garud and Udwani, Rajan},
  journal={Operations Research},
  volume={73},
  number={4},
  pages={1897--1915},
  year={2025},
  publisher={INFORMS}
}

@article{huang2024online,
  title={Online matching: A brief survey},
  author={Huang, Zhiyi and Tang, Zhihao Gavin and Wajc, David},
  journal={ACM SIGecom Exchanges},
  volume={22},
  number={1},
  pages={135--158},
  year={2024},
  publisher={ACM New York, NY, USA}
}

@article{huo2022onlineReusableMultiClass,
  title={Online reusable resource allocations with multi-class arrivals},
  author={Huo, Tianming and Cheung, Wang Chi},
  journal={Available at SSRN 4320423},
  year={2022}
}

@inproceedings{karp1990optimal,
  title={An optimal algorithm for on-line bipartite matching},
  author={Karp, Richard M and Vazirani, Umesh V and Vazirani, Vijay V},
  booktitle={Proceedings of the twenty-second annual ACM symposium on Theory of computing},
  pages={352--358},
  year={1990}
}

@article{kalyanasundaram2000optimal,
  title={An optimal deterministic algorithm for online b-matching},
  author={Kalyanasundaram, Bala and Pruhs, Kirk R},
  journal={Theoretical Computer Science},
  volume={233},
  number={1-2},
  pages={319--325},
  year={2000},
  publisher={Elsevier}
}

@article{mehta2007adwords,
  title={Adwords and generalized online matching},
  author={Mehta, Aranyak and Saberi, Amin and Vazirani, Umesh and Vazirani, Vijay},
  journal={Journal of the ACM (JACM)},
  volume={54},
  number={5},
  pages={22--es},
  year={2007},
  publisher={ACM New York, NY, USA}
}

@article{gong2022online,
  title={Online assortment optimization with reusable resources},
  author={Gong, Xiao-Yue and Goyal, Vineet and Iyengar, Garud N and Simchi-Levi, David and Udwani, Rajan and Wang, Shuangyu},
  journal={Management Science},
  volume={68},
  number={7},
  pages={4772--4785},
  year={2022},
  publisher={INFORMS}
}

@inproceedings{nekouyan2026online,
  title={Online Rounding and Pricing Schemes for k-Rental Problems},
  author={Nekouyan, Hossein and Sun, Bo and Boutaba, Raouf and Tan, Xiaoqi},
  booktitle={Proceedings of the ACM Web Conference 2026},
  pages={201--212},
  year={2026}
}

@article{rusmevichientong2020DARP,
  title={Dynamic assortment optimization for reusable products with random usage durations},
  author={Rusmevichientong, Paat and Sumida, Mika and Topaloglu, Huseyin},
  journal={Management Science},
  volume={66},
  number={7},
  pages={2820--2844},
  year={2020},
  publisher={INFORMS}
}

@article{ma2020algorithms,
  title={Algorithms for online matching, assortment, and pricing with tight weight-dependent competitive ratios},
  author={Ma, Will and Simchi-Levi, David},
  journal={Operations Research},
  volume={68},
  number={6},
  pages={1787--1803},
  year={2020},
  publisher={INFORMS}
}

@article{aminian2026OJS,
  title  = {Online Job Selection: Reward Rate vs. Remaining Value},
  author = {Aminian, Mohammad Reza and Ma, Will and Xin, Linwei},
  year   = {2026},
  journal   = {Available at SSRN: https://ssrn.com/abstract=4644495 or http://dx.doi.org/10.2139/ssrn.4644495}
}

@article{simchi2025greedy,
  title={On greedy-like policies in online matching with reusable network resources and decaying rewards},
  author={Simchi-Levi, David and Zheng, Zeyu and Zhu, Feng},
  journal={Management Science},
  volume={71},
  number={10},
  pages={8908--8926},
  year={2025},
  publisher={INFORMS}
}

@article{golrezaei2014real,
  title={Real-time optimization of personalized assortments},
  author={Golrezaei, Negin and Nazerzadeh, Hamid and Rusmevichientong, Paat},
  journal={Management Science},
  volume={60},
  number={6},
  pages={1532--1551},
  year={2014},
  publisher={INFORMS}
}

@article{baek2026leveraging,
  title={Leveraging reusability: Improved competitive ratio of greedy for reusable resources},
  author={Baek, Jackie and Wang, Shixin},
  journal={Operations Research Letters},
  pages={107480},
  year={2026},
  publisher={Elsevier}
}

@inproceedings{delong2022online,
  title={Online bipartite matching with reusable resources},
  author={Delong, Steven and Farhadi, Alireza and Niazadeh, Rad and Sivan, Balasubramanian},
  booktitle={Proceedings of the 23rd ACM Conference on Economics and Computation},
  pages={962--963},
  year={2022}
}

@inproceedings{yao1977probabilistic,
  title={Probabilistic Computations: Toward a Unified Measure of Complexity},
  author={Yao, Andrew Chi-Chih},
  booktitle={Proceedings of the 18th Annual Symposium on Foundations of Computer Science},
  pages={222--227},
  year={1977},
  organization={IEEE}
}

@book{borodin1998online,
  title={Online Computation and Competitive Analysis},
  author={Borodin, Allan and El-Yaniv, Ran},
  year={1998},
  publisher={Cambridge University Press}
}

@article{liu2026unified,
  title={A Unified Framework for Online Combinatorial Allocation of Reusable Resources under Endogenous Deterioration},
  author={Liu, Qingsong and Hajiesmaili, Mohammad},
  journal={Proceedings of the ACM on Measurement and Analysis of Computing Systems},
  volume={10},
  number={2},
  pages={1--47},
  year={2026},
  publisher={ACM New York, NY, USA}
}

@article{aminian2026bayesian,
  title={Optimal Bayesian Online Allocation of Reusable Resources},
  author={Aminian, Mohammad Reza and Niazadeh, Rad and Nuti, Pranav},
  journal={Available at SSRN 6384158},
  year={2026}
}

@inproceedings{feng2022near,
  title={Near-optimal Bayesian online assortment of reusable resources},
  author={Feng, Yiding and Niazadeh, Rad and Saberi, Amin},
  booktitle={Proceedings of the 23rd ACM Conference on Economics and Computation},
  pages={964--965},
  year={2022}
}

@inproceedings{kang2025replenishment,
  title={A black-box approach for exogenous replenishment in online resource allocation},
  author={Kang, Suho and Liu, Ziyang and Udwani, Rajan},
  booktitle={International Conference on Integer Programming and Combinatorial Optimization},
  pages={326--340},
  year={2025},
  organization={Springer}
}

@article{udwani2024submodular,
  title={Optimality of Nonadaptive Algorithms in Online Submodular Welfare Maximization with Stochastic Outcomes},
  author={Udwani, Rajan},
  journal={Operations Research},
  year={2026},
  publisher={INFORMS}
}

@article{jia2024pricing,
  title={Online learning and pricing for service systems with reusable resources},
  author={Jia, Huiwen and Shi, Cong and Shen, Siqian},
  journal={Operations Research},
  volume={72},
  number={3},
  pages={1203--1241},
  year={2024},
  publisher={Informs}
}

@article{zhang2025nonstasto,
  title={Online Allocation of Reusable Resources in Nonstationary Environments},
  author={Zhang, Xilin and Cheung, Wang Chi},
  journal={Mathematics of Operations Research},
  year={2025},
  publisher={INFORMS}
}

@article{udwani2024adwords,
  title={Adwords with unknown budgets and beyond},
  author={Udwani, Rajan},
  journal={Management Science},
  volume={71},
  number={2},
  pages={1009--1026},
  year={2025},
  publisher={INFORMS}
}

@inproceedings{feldman2009displayads,
  title={Online ad assignment with free disposal},
  author={Feldman, Jon and Korula, Nitish and Mirrokni, Vahab and Muthukrishnan, Shanmugavelayutham and P{\'a}l, Martin},
  booktitle={International workshop on internet and network economics},
  pages={374--385},
  year={2009},
  organization={Springer}
}

@article{ball2009toward,
  title={Toward robust revenue management: Competitive analysis of online booking},
  author={Ball, Michael O and Queyranne, Maurice},
  journal={Operations Research},
  volume={57},
  number={4},
  pages={950--963},
  year={2009},
  publisher={INFORMS}
}

@article{buchbinder2009online,
  title={Online primal-dual algorithms for covering and packing},
  author={Buchbinder, Niv and Naor, Joseph},
  journal={Mathematics of Operations Research},
  volume={34},
  number={2},
  pages={270--286},
  year={2009},
  publisher={INFORMS}
}

@article{levi2010provably,
  title={Provably near-optimal LP-based policies for revenue management in systems with reusable resources},
  author={Levi, Retsef and Radovanovi{\'c}, Ana},
  journal={Operations Research},
  volume={58},
  number={2},
  pages={503--507},
  year={2010},
  publisher={INFORMS}
}

@inproceedings{aggarwal2011online,
  title={Online vertex-weighted bipartite matching and single-bid budgeted allocations},
  author={Aggarwal, Gagan and Goel, Gagan and Karande, Chinmay and Mehta, Aranyak},
  booktitle={Proceedings of the twenty-second annual ACM-SIAM symposium on Discrete Algorithms},
  pages={1253--1264},
  year={2011},
  organization={SIAM}
}

@inproceedings{azar2016online,
  title={Online algorithms for covering and packing problems with convex objectives},
  author={Azar, Yossi and Buchbinder, Niv and Chan, TH Hubert and Chen, Shahar and Cohen, Ilan Reuven and Gupta, Anupam and Huang, Zhiyi and Kang, Ning and Nagarajan, Viswanath and Naor, Joseph and others},
  booktitle={2016 IEEE 57th Annual Symposium on Foundations of Computer Science (FOCS)},
  pages={148--157},
  year={2016},
  organization={IEEE}
}

@article{owen2018price,
  title={Price and assortment optimization for reusable resources},
  author={Owen, Zachary and Simchi-Levi, David},
  journal={Available at SSRN 3070625},
  year={2018}
}

@article{feng2019linear,
  title={Linear programming based online policies for real-time assortment of reusable resources},
  author={Feng, Yiding and Niazadeh, Rad and Saberi, Amin},
  journal={Chicago Booth Research Paper},
  number={20-25},
  year={2019}
}

@inproceedings{besbes2019static,
  title={Static pricing: Universal guarantees for reusable resources},
  author={Besbes, Omar and Elmachtoub, Adam N and Sun, Yunjie},
  booktitle={Proceedings of the 2019 ACM Conference on Economics and Computation},
  pages={393--394},
  year={2019}
}

@article{dickerson2021allocation,
  title={Allocation problems in ride-sharing platforms: Online matching with offline reusable resources},
  author={Dickerson, John P and Sankararaman, Karthik A and Srinivasan, Aravind and Xu, Pan},
  journal={ACM Transactions on Economics and Computation (TEAC)},
  volume={9},
  number={3},
  pages={1--17},
  year={2021},
  publisher={ACM New York, NY, USA}
}

@article{feng2025robustness,
  title={Robustness of online inventory balancing to inventory shocks},
  author={Feng, Yiding and Niazadeh, Rad and Saberi, Amin},
  journal={arXiv preprint arXiv:2511.16044},
  year={2025}
}

@article{baek2022bifurcating,
  title={Bifurcating constraints to improve approximation ratios for network revenue management with reusable resources},
  author={Baek, Jackie and Ma, Will},
  journal={Operations Research},
  volume={70},
  number={4},
  pages={2226--2236},
  year={2022},
  publisher={INFORMS}
}

@article{feng2025batching,
  title={Batching and optimal multistage bipartite allocations},
  author={Feng, Yiding and Niazadeh, Rad},
  journal={Management Science},
  volume={71},
  number={5},
  pages={4108--4130},
  year={2025},
  publisher={INFORMS}
}

@article{willMa2026dynamic,
  title={Dynamic pricing for reusable resources: The power of two prices},
  author={Balseiro, Santiago R and Ma, Will and Zhang, Wenxin},
  journal={Operations Research},
  volume={74},
  number={3},
  pages={1476--1495},
  year={2026},
  publisher={INFORMS}
}

\newpage

\begin{center}
    {\Large \bf Supplementary Materials}
\end{center}
\begin{APPENDICES}

\medskip

\section{Omitted Proof(s) in Section~\ref{se:configuration_LP}}
\subsection{Proof of Claim~\ref{claim:benchmark}}\label{se:app_pf_claim_benchmark}
We now explain why the configuration LP is a relaxation of the offline benchmark. 
Consider any feasible offline assignment. For each server $i$, represent its 
capacity $c_i$ by $c_i$ labeled identical capacity units. Since the offline 
assignment is capacity feasible, whenever a job assigned to server $i$ starts, 
there is at least one capacity unit of server $i$ that is idle at that time. 
Assign the job to one such idle unit and keep it on that unit throughout its 
processing interval.

For each capacity unit $w\in[c_i]$ of server $i$, let $S_{iw}$ be the set of jobs 
processed by this unit in the above construction. By construction, the processing 
intervals of jobs in $S_{iw}$ are pairwise non-overlapping, and every job in 
$S_{iw}$ is compatible with server $i$. Hence $S_{iw}\in\mathcal S_i$. We then 
construct a primal solution by setting
\[
    x(i,S)=\bigl|\{w\in[c_i]: S_{iw}=S\}\bigr|,
    \qquad \forall i\in[n],\; S\in\mathcal S_i,
\]
and setting all other variables to zero. This solution satisfies the job 
constraints because each job selected by the offline assignment is placed on 
exactly one capacity unit, and it satisfies the server constraints because server 
$i$ uses exactly $c_i$ capacity units. Moreover, the objective value of this 
primal solution is exactly the total reward collected by the offline assignment.

Applying this construction to an optimal offline assignment yields
\[
    \operatorname{OPT}\le P_{\mathrm{OPT}}.
\]
Thus, the configuration LP provides a valid upper bound on the offline optimum. \Halmos

\section{Omitted Proof(s) in Section~\ref{se:pf_main_tm}}
\subsection{Proof of Lemma~\ref{le:capa_fea_lb_integral}}\label{sec:app_pf_le_capa_fea_integral}
Suppose that Algorithm~\ref{alg:main_d_integer_accurate} assigns server $i$ to job $j$. Noting that $\{t_j\}\in \mathfrak{S}[t_j,t_j+d_{ij})$, we have that \[
    \mathcal{L}_{ij}^{\operatorname{TS}}=\max_{\boldsymbol{\tau}\in\mathfrak S[t_j,t_j+d_{ij})}
 \left\{\sum_{\tau\in\boldsymbol{\tau}}\hat r_{i,j,\tau}\,\Psi\!\left(\alpha_{i,j-1\to \tau}\right)
 \right\}\ge \hat r_{i,j,t_j} \Psi\!(\alpha_{i,j-1\to t_j}).
\]
Recall that $\hat r_{i,j,t_j}=\min\left\{ r_{ik}: k\le j,\; i\in N(k),\;t_k\ge t_j-D\right\}$. For every job $k$ in this set, both $j$ and $k$ are compatible with server $i$, and $D \ge |t_j-t_k|$. By Assumption~\ref{assumption:local-bounded-heterogeneity}, it follows that $r_{ik}\ge r_{ij}/\delta$. Consequently, we have that $\hat r_{i,j,t_j}\ge r_{ij}/\delta$.

Since whenever Algorithm~\ref{alg:main_d_integer_accurate} assigns $i$ to job $j$, the corresponding adjusted reward is positive, we obtain that \[   
    r_{ij}d_{ij}>\hat r_{i,j,t_j} \Psi\!(\alpha_{i,j-1\to t_j})\ge \frac{1}{\delta}r_{ij}\Psi\!(\alpha_{i,j-1\to t_j}).\]
Since $r_{ij}>0$, we can conclude that
$
    \delta d_{ij}> \Psi\!(\alpha_{i,j-1\to t_j})$,
which proves the lemma. \Halmos

\subsection{Proof of Lemma~\ref{le:pf_dp_charging}}\label{se:app_pf_dp_charging}
To begin with, we use $h_1,\dots,h_s$ to denote the elements in $\mathcal C_i(j)$: \[
    \mathcal C_i(j)=\{h_1,\ldots,h_s\}, \qquad h_1<\cdots<h_s,
\]
and write $I_j\coloneqq [t_j,t_j+d_{ij})$. 

If $s=0$, no job assigned to server $i$ before job $j$ is active
at $t_j$. Since a completed job cannot become active again, we have
\[
\alpha_{i,j-1\to t}=1,
\qquad t\in I_j.
\]
Thus, $\mathcal{L}_{ij}^{\operatorname{TS}}=0$, and the result follows.
Suppose henceforth that $s\ge1$. In this case, we present the proof proceeds in the following three steps.

\noindent\underline{\bf Step 1: Characterizing the capacity recursion on $I_j$.}
For each $w\in[s]$, denote $b_w\coloneqq \left(t_{j}+d_{ij}\right)\wedge\left(t_{h_w}+d_{ih_w}\right)$. By Definition~\ref{df:available_capa}, for every $h\in[j-1]$ and $t\in I_j$, we have that \[
    \alpha_{i,h\to t}=\alpha_{i,h-1\to t}-\frac{1}{c_i}
    \mathbb I\{\text{Algorithm~\ref{alg:main_d_integer_accurate} assign }h \text{ to } i\}
    \mathbb I\{t<t_h+d_{ih}\}.
\]
Since $\mathcal C_i(j)$ consists precisely of the jobs assigned to server $i$ before $j$ that are active at $t_j$, for $h\in[j-1],\ t\in I_j$, the recursion of $\alpha_{i,h\to t}$ over $h$ can be written as
\begin{equation}
\alpha_{i,h\to t}
=
\begin{cases}
\displaystyle
\alpha_{i,h-1\to t}-\frac{1}{c_i},
&
h=h_w\text{ for some }w\in[s],
\quad t\in[t_j,b_w),\\[6pt]
\alpha_{i,h-1\to t},
&
\text{otherwise}.
\end{cases}
\label{eq:lemma2-capacity-recursion}
\end{equation}
Recall that $\alpha_{i,0\to t}=1$ for any $t\ge 0$. Therefore, for every $t\in I_j$, we obtain that,
\begin{equation}
\sum_{w=1}^s \left[ \Psi(\alpha_{i,h_w\to t})-\Psi(\alpha_{i,h_w-1\to t})\right]
=\Psi(\alpha_{i,j-1\to t})-\Psi(1)=\Psi(\alpha_{i,j-1\to t}).
\label{eq:lemma2-potential-telescope}
\end{equation}

\noindent\underline{\bf Step 2: Charging each capacity increment to a ``worst'' reuse schedule.}
We use $\boldsymbol{\tau}^{\star}$ to denote a ``worst'' reuse schedule for $\mathcal{L}_{ij}^{\operatorname{TS}}$. Formally, let \[
\boldsymbol{\tau}^{\star}
\in
\underset{\boldsymbol{\tau}\in
\mathfrak{S}[t_j,t_j+d_{ij})}{\arg\max}
\sum_{\tau\in\boldsymbol{\tau}}
\hat r_{i,j,\tau}
\Psi(\alpha_{i,j-1\to\tau}).
\]

For each $w\in[s]$, let
\[
\boldsymbol{\tau}^{\,w}
:=
\left\{
\tau\in\boldsymbol{\tau}^{\star}:
\tau<b_w
\right\}.
\]
Then it directly holds that $\boldsymbol{\tau}^{\,w}\in\mathfrak{S}[t_j,b_w)$.

For every $\tau\in\boldsymbol{\tau}^{\,w}$, since
\[
[t_j,b_w)
\subseteq
[t_{h_w},t_{h_w}+d_{ih_w})
\subseteq
[t_{h_w},t_{h_w}+D),
\]
$\hat r_{i,h_w,\tau}$ is well defined. Furthermore, since $ \mathcal K_{i,h_w}(\tau) \subseteq \mathcal K_{i,j}(\tau)$, we have that
\begin{equation}
\hat r_{i,h_w,\tau}
\ge
\hat r_{i,j,\tau}.
\label{eq:lemma2-reward-comparison}
\end{equation}

Using $\boldsymbol{\tau}^{\,w}$ as a feasible path in Eq.~\eqref{eq:pf_df_delta_relate_increment_1} and applying
Eq.~\eqref{eq:lemma2-reward-comparison}, we obtain
\begin{align}
\Delta_{ih_w}[t_j,b_w)\ge
\sum_{\tau\in\boldsymbol{\tau}^{\,w}}
\hat r_{i,j,\tau}
\left[
\Psi(\alpha_{i,h_w\to\tau})
-
\Psi(\alpha_{i,h_w-1\to\tau})
\right].
\label{eq:lemma2-restricted-charge}
\end{align}

Meanwhile, if there exists $\tau\in\boldsymbol{\tau}^{\star}\setminus\boldsymbol{\tau}^{\,w}$, 
then for such $\tau$, it must holds that $\tau\ge b_w=t_{h_w}+d_{ih_w}$. Hence, by Eq.~\eqref{eq:lemma2-capacity-recursion}, we have that $ \alpha_{i,h_w\to\tau}=\alpha_{i,h_w-1\to\tau}$. Consequently,
Eq.~\eqref{eq:lemma2-restricted-charge} remains valid when
$\boldsymbol{\tau}^{\,w}$ is replaced by $\boldsymbol{\tau}^{\star}$:
\begin{align}
\Delta_{ih_w}[t_j,b_w)\ge
\sum_{\tau\in\boldsymbol{\tau}^{\star}}
\hat r_{i,j,\tau}
\left[
\Psi(\alpha_{i,h_w\to\tau})
-
\Psi(\alpha_{i,h_w-1\to\tau})
\right].
\label{eq:lemma2-full-path-charge}
\end{align}

\noindent\underline{\bf Step 3: Aggregating the charges.}
Summing Eq.~\eqref{eq:lemma2-full-path-charge} over
$w=1,\ldots,s$, changing the order of summation, and applying
Eq.~\eqref{eq:lemma2-potential-telescope}, we obtain
\begin{align*}
&~~~\sum_{h\in \mathcal C_i(j)}\Delta_{ih}[t_j,
\left(t_{j}+d_{ij}\right)\wedge\left(t_{h}+d_{ih}\right))\\
&=\sum_{w=1}^s
\Delta_{ih_w}[t_j,b_{w})\ge
\sum_{\tau\in\boldsymbol{\tau}^{\star}}
\hat r_{i,j,\tau}
\sum_{w=1}^s
\left[
\Psi(\alpha_{i,h_w\to\tau})
-
\Psi(\alpha_{i,h_w-1\to\tau})
\right]\\
&=
\sum_{\tau\in\boldsymbol{\tau}^{\star}}
\hat r_{i,j,\tau}
\left[
\Psi(\alpha_{i,j-1\to\tau})
-
\Psi(1)
\right]=
\sum_{\tau\in\boldsymbol{\tau}^{\star}}
\hat r_{i,j,\tau}
\Psi(\alpha_{i,j-1\to\tau})=\mathcal{L}_{ij}^{\operatorname{TS}},
\end{align*}
where the third equality follows from $\Psi(1)=0$. This completes
the proof. \Halmos

\subsection{Proof of Lemma~\ref{le:pf_dp_additive}}\label{se:app_pf_dp_additive}
Fix $i$ and a time interval satisfying $t_h \le a < b$, let \[
    g_h(\tau)\coloneqq \hat r_{i,h,\tau} \left[ \Psi\!(\alpha_{i,h\to\tau})-\Psi\!(\alpha_{i,h-1\to\tau})\right].
\]
Since $\alpha_{i,h\to\tau}\le\alpha_{i,h-1\to\tau}$ and $\Psi$ is decreasing, we have $g_h(\tau)\ge 0$. By Eq.~\eqref{eq:pf_df_delta_relate_increment_1}, we have that \[
    \Delta_{ih}[a,b)=\max_{\boldsymbol{\tau}\in\mathfrak{S}[a,b)} \sum_{\tau\in \boldsymbol{\tau}} g_h(\tau).
\]

For each $u\in[k]$, let \[
    \boldsymbol{\tau}_u\coloneqq \underset{\boldsymbol{\tau}\in\mathfrak{S}[\xi_{2u-1},\xi_{2u})}{\arg\max}\sum_{\tau\in\boldsymbol{\tau}} g_h(\tau)
\]
be {a ``worst'' reuse schedule} on $[\xi_{2u-1},\xi_{2u})$. Partition these schedules according to the parity of their indices, and define \[
\boldsymbol{\tau}^{\mathrm{odd}}\coloneqq\bigsqcup_{\substack{u\in[k]\\u\text{ odd}}} \boldsymbol{\tau}_u,
\qquad
\boldsymbol{\tau}^{\mathrm{even}}\coloneqq\bigsqcup_{\substack{u\in[k]\\u\text{ even}}} \boldsymbol{\tau}_u,
\]
where the elements in each union are listed in increasing order.

We first verify that both unions are reuse schedules over
$[t_h,t_h+d_{ih})$, i.e., $\boldsymbol{\tau}^{\mathrm{odd}}$ and $\boldsymbol{\tau}^{\mathrm{even}}$ are feasible elements that satisfy the conditions in the definition of $\mathfrak{S}[t_h,t_h+d_{ih})$. Firstly, feasibility within each $\boldsymbol{\tau}_u$ follows from its construction. Then we prove the feasibility over different $\boldsymbol{\tau}_u$. Consider two paths $\boldsymbol{\tau}_u$ and $\boldsymbol{\tau}_{u'}$ belonging to the same parity class. Without loss of generality, suppose that $\boldsymbol{\tau}_u$ and $\boldsymbol{\tau}_{u'}$ belong to $\boldsymbol{\tau}^{\mathrm{odd}}$ and $u'>u$. Hence, for any $\tau\in \boldsymbol{\tau}_u$ and $\tau'\in \boldsymbol{\tau}_{u'}$, we have that $\tau<\xi_{2u}, \tau'\ge \xi_{2u+3}$.
Moreover, since \[
    \xi_{2u+3}-\xi_{2u} \ge \xi_{2u+2}-\xi_{2u+1}\ge 1,
\]
where the last inequality follows from the hypothesis applied to the
interval $[\xi_{2u+1},\xi_{2u+2})$, we obtain that $\tau'-\tau\ge 1$. Therefore, we can conclude that $\boldsymbol{\tau}_u\sqcup\boldsymbol{\tau}_{u'}\subseteq \mathfrak{S}[t_h,t_h+d_{ih})$ for any $u\neq  u'$, $u,u'$ odd, which yields \begin{equation} \label{eq:pf_dp_additive_odd_part}
    \boldsymbol{\tau}^{\mathrm{odd}} \in \mathfrak{S}[t_h,t_h+d_{ih}).
\end{equation}
Following the same argument, we can also conclude that 
\begin{equation} \label{eq:pf_dp_additive_even_part}
    \boldsymbol{\tau}^{\mathrm{even}} \in \mathfrak{S}[t_h,t_h+d_{ih}).
\end{equation}

By Eq.~\eqref{eq:pf_dp_additive_odd_part} and Eq.~\eqref{eq:pf_dp_additive_even_part}, the optimality of $\Delta_{ih}[t_h,t_h+d_{ih})$ implies that
\[
\begin{aligned}
    \Delta_{ih}[t_h,t_h+d_{ih}) \ge \sum_{\substack{u\in[k]\\u\text{ odd}}}
    \sum_{\tau\in \boldsymbol{\tau}_u} g_h(\tau) = \sum_{\substack{u\in[k]\\u\text{ odd}}}
    \Delta_{ih}[\xi_{2u-1},\xi_{2u}),
\end{aligned}
\]
and, similarly,\[
    \Delta_{ih}[t_h,t_h+d_{ih})\ge\sum_{\substack{u\in[k]\\u\text{ even}}}
    \Delta_{ih}[\xi_{2u-1},\xi_{2u}).
\]
Adding the two inequalities yields\[
    2\Delta_{ih}[t_h,t_h+d_{ih})\ge
    \sum_{u=1}^{k}\Delta_{ih}[\xi_{2u-1},\xi_{2u}),
\]
which proves the lemma. \Halmos

\section{Omitted Proof(s) in Section~\ref{se:pf_main_tm_refine}}
\subsection{Proof of Lemma~\ref{le:capa_fea_lb_integral_refine}}\label{se:app_pf_capa_fea_lb_integral_refine}
For notational convenience, let $L_{i,j,\tau}^{\mathrm{GR}}\coloneqq \mathcal L_{ij}^{\mathrm{GR}}[\tau,\tau+1)$ for each $\tau\in\mathcal T[t_j,t_j+d_{ij})$.

Similar to the proof in Section~\ref{sec:app_pf_le_capa_fea_integral}, suppose that Algorithm~\ref{alg:main_d_integer_active} assigns server $i$ to job $j$. By the definition of $\mathcal L_{ij}^{\mathrm{GR}}$, we have that \begin{align*}
    \mathcal L_{ij}^{\mathrm{GR}}=\sum_{\tau\in\mathcal T[t_j,t_j+d_{ij})}L_{i,j,\tau}^{\mathrm{GR}}\ge L_{i,j,t_j}^{\mathrm{GR}}=\max_{\ell \in \{0\} \cup [|\tilde{\boldsymbol{r}}_{i,j,t_j}|]}\left\{\left(\tilde{r}_{i,j,t_j}^{(\ell)} \wedge r_{ij}\right)\Psi\!(1-\ell/c_i)\right\}.
\end{align*}
Recall that $\tilde{r}_{i,j,t_j}^{(0)}=+\infty$ and the elements in $\boldsymbol{\tilde r}_{i,j,t_j}$ are the reward rates of active jobs at time $t_j$. For every job $k$ active at time $t_j$, both $k$ and $j$ are compatible with server $i$, and $D\ge |t_j-t_k|$. By Assumption~\ref{assumption:local-bounded-heterogeneity}, it follows that $r_{ik}\ge r_{ij}/\delta$. Noting that $\delta\ge 1$, consequently, for any $\ell \in \{0\} \cup [|\tilde{\boldsymbol{r}}_{i,j,t_j}|]$, we have $\tilde{r}_{i,j,t_j}^{(\ell)}\ge r_{ij}/\delta$. Hence, we obtain that \begin{align*}
    &\max_{\ell \in \{0\} \cup [|\tilde{\boldsymbol{r}}_{i,j,t_j}|]}\left\{\left(\tilde{r}_{i,j,t_j}^{(\ell)} \wedge r_{ij}\right)\Psi\!(1-\ell/c_i)\right\}\\
    &\qquad\qquad\qquad\ge \max_{\ell \in \{0\} \cup [|\tilde{\boldsymbol{r}}_{i,j,t_j}|]}\left\{\frac{r_{ij}}{\delta}\Psi\!(1-\ell/c_i)\right\}=\frac{r_{ij}}{\delta}\Psi\!(\alpha_{i,j-1\to t_j}).
\end{align*}
Since whenever Algorithm~\ref{alg:main_d_integer_active} assigns $i$ to $j$, the corresponding adjusted reward is positive, we obtain that \[
    r_{ij}d_{ij}>\max_{\ell \in \{0\} \cup [|\tilde{\boldsymbol{r}}_{i,j,t_j}|]}\left\{\left(\tilde{r}_{i,j,t_j}^{(\ell)} \wedge r_{ij}\right)\Psi\!(1-\ell/c_i)\right\}\ge \frac{r_{ij}}{\delta}\Psi\!(\alpha_{i,j-1\to t_j}).
\]
Since $r_{ij}>0$, we can conclude that $
    \delta d_{ij}>\Psi\!(\alpha_{i,j-1\to t_j})$,
which proves the lemma. \Halmos

\subsection{Proof of Lemma~\ref{le:app_pf_dp_charging_refine}}\label{se:app_pf_dp_charging_refine}

By the definition of $\mathcal L_{ij}^{\mathrm{GR}}$ and $\tilde\Delta_{ih}[a,b)$, it is sufficient to prove that, for each $\tau\in\mathcal T[t_j,t_j+d_{ij})$, we have that, 
\begin{align}\label{eq:pf_dp_charging_refine_sufficient_form_1}
    \nonumber&\sum_{h\in\tilde{\mathcal{C}}_i(j)}\max_{\ell \in \{0\} \cup [|\tilde{\boldsymbol{r}}_{i,h,\tau}|]}\left\{\left(\tilde{r}_{i,h,\tau}^{(\ell)} \wedge r_{ih}\right)\left[\Psi\!\left(1-\frac{\ell}{c_i}-(\alpha_{i,h-1\to \tau}-\alpha_{i,h\to \tau})\right)-\Psi\!\left(1-\frac{\ell}{c_i}\right)\right]\right\}\\
    \ge & \max_{\ell \in \{0\} \cup [|\tilde{\boldsymbol{r}}_{i,j,\tau}|]}\left\{\left(\tilde{r}_{i,j,\tau}^{(\ell)} \wedge r_{ij}\right)\Psi\!\left(1-\frac{\ell}{c_i}\right)\right\}.
\end{align}

Fix $\tau\in\mathcal T[t_j,t_j+d_{ij})$. Define $\tilde{\mathcal C}_i(j,\tau)$ to be the set of jobs that arrive before job $j$, are assigned to server $i$ and \textbf{are active at time $\tau$}. Formally, let \[
    \tilde{\mathcal C}_i(j,\tau)\coloneqq\{h:\text{Algorithm~\ref{alg:main_d_integer_active} assign } h \text{ to }i, h<j,\tau<t_h+d_{ih}\}.
\]
By this definition, for any $\tilde{\mathcal{C}}_i(j)$, we have that \[
\alpha_{i,h-1\to\tau}-\alpha_{i,h\to \tau} = \left\{
\begin{array}{ll}
1/c_i, & h\in \tilde{\mathcal{C}}_i(j,\tau) \\
0, & h\in \tilde{\mathcal{C}}_i(j)\setminus\tilde{\mathcal{C}}_i(j,\tau).
\end{array}
\right.
\]
Since $\tilde{\mathcal{C}}_i(j,\tau)\subseteq\tilde{\mathcal{C}}_i(j)$, we obtain that
\begin{align}\label{eq:pf_dp_charging_refine_sufficient_form_2}
    \nonumber&\sum_{h\in\tilde{\mathcal{C}}_i(j)}\max_{\ell \in \{0\} \cup [|\tilde{\boldsymbol{r}}_{i,h,\tau}|]}\left\{\left(\tilde{r}_{i,h,\tau}^{(\ell)} \wedge r_{ih}\right)\left[\Psi\!\left(1-\frac{\ell}{c_i}-(\alpha_{i,h-1\to \tau}-\alpha_{i,h\to \tau})\right)-\Psi\!\left(1-\frac{\ell}{c_i}\right)\right]\right\}\\
    \ge&\sum_{h\in\tilde{\mathcal{C}}_i(j,\tau)}\max_{\ell \in \{0\} \cup [|\tilde{\boldsymbol{r}}_{i,h,\tau}|]}\left\{\left(\tilde{r}_{i,h,\tau}^{(\ell)} \wedge r_{ih}\right)\left[\Psi\!\left(1-\frac{\ell}{c_i}-\frac{1}{c_i}\right)-\Psi\!\left(1-\frac{\ell}{c_i}\right)\right]\right\}.
\end{align}

Combining Eq.~\eqref{eq:pf_dp_charging_refine_sufficient_form_1} with Eq.~\eqref{eq:pf_dp_charging_refine_sufficient_form_2}, it is sufficient to prove that, for any $\ell\in\{0\} \cup [|\tilde{\boldsymbol{r}}_{i,j,\tau}|]=\{0,1,\dots,c_i(1-\alpha_{i,j-1\to \tau})\}$, it holds that \begin{align}\label{eq:pf_dp_charging_refine_sufficient_form_3}
    \sum_{h\in\tilde{\mathcal{C}}_i(j,\tau)}\max_{q \in \{0\} \cup [|\tilde{\boldsymbol{r}}_{i,h,\tau}|]}\left\{\left(\tilde{r}_{i,h,\tau}^{(q)} \wedge r_{ih}\right)\left[\Psi\!\left(1-\frac{q}{c_i}-\frac{1}{c_i}\right)-\Psi\!\left(1-\frac{q}{c_i}\right)\right]\right\}
    \ge\left(\tilde{r}_{i,j,\tau}^{(\ell)} \wedge r_{ij}\right)\Psi\!\left(1-\frac{\ell}{c_i}\right).
\end{align}

Let $h_1<\dots<h_{\ell}$ denote arbitrary $\ell$ active jobs at time $\tau$ with reward rate no smaller than $\tilde{r}_{i,h,\tau}^{(\ell)}$. By the definition of $\tilde{r}_{i,j,\tau}^{(\ell)}$ and $\tilde{\mathcal C}_i(j,\tau)$, and the fact that $\ell\le |\tilde{\boldsymbol{r}}_{i,j,\tau}|$, we can claim that, among all jobs in $\tilde{\mathcal C}_i(j,\tau)$, there exists at least $\ell$ jobs with reward rates no smaller than $\tilde{r}_{i,j,\tau}^{(\ell)}$.

Since $h_1<\dots<h_{\ell}$, we can claim that, for each $q\in[\ell]$, jobs $h_1,\dots,h_{q-1}$ arrive before job $h_q$ and are active at time $\tau$. This yields that $\alpha_{i,h_q-1\to \tau}\le 1-(q-1)/c_i$ for any $q\in[\ell]$, which means $q-1\in\{0,1,\dots,c_i(1-\alpha_{i,h_q-1\to \tau})\}=\{0\} \cup [|\tilde{\boldsymbol{r}}_{i,h_q,\tau}|]$. Hence, we obtain that \begin{align}\label{eq:pf_dp_charging_refine_midstep1}
    \nonumber&\sum_{h\in\tilde{\mathcal{C}}_i(j,\tau)}\max_{q \in \{0\} \cup [|\tilde{\boldsymbol{r}}_{i,h,\tau}|]}\left\{\left(\tilde{r}_{i,h,\tau}^{(q)} \wedge r_{ih}\right)\left[\Psi\!\left(1-\frac{q}{c_i}-\frac{1}{c_i}\right)-\Psi\!\left(1-\frac{q}{c_i}\right)\right]\right\} \\
   \ge& \sum_{q=1}^{\ell} 
    \left(\tilde{r}_{i,h_q,\tau}^{(q-1)} \wedge r_{ih_q}\right)\left[\Psi\!\left(\frac{c_i-(q-1)}{c_i}-\frac{1}{c_i}\right)-\Psi\!\left(\frac{c_i-(q-1)}{c_i}\right)\right].
\end{align}
For each $q\in[\ell]$, noting that $r_{ih_1},\dots,r_{ih_{q-1}}\ge \tilde{r}_{i,j,\tau}^{(\ell)}$, we can claim that, among all the jobs arrive before $h_q$, there are at least $q-1$ jobs that are active at time $\tau$ and have reward rate at least $\tilde{r}_{i,j,\tau}^{(\ell)}$. Moreover, noting that $r_{ih_q}\ge \tilde{r}_{i,j,\tau}^{(\ell)}$ by its definition. Hence, we obtain that 
\begin{equation}\label{eq:pf_dp_charging_refine_midstep2}
    \left(\tilde{r}_{i,h_q,\tau}^{(q-1)} \wedge r_{ih_q}\right)\ge \tilde{r}_{i,j,\tau}^{(\ell)}, \qquad \forall q\in[\ell].
\end{equation}

Combining Eq.~\eqref{eq:pf_dp_charging_refine_midstep1} and Eq.~\eqref{eq:pf_dp_charging_refine_midstep2}, we can conclude that \begin{align*}
    &\sum_{h\in\tilde{\mathcal{C}}_i(j,\tau)}\max_{q \in \{0\} \cup [|\tilde{\boldsymbol{r}}_{i,h,\tau}|]}\left\{\left(\tilde{r}_{i,h,\tau}^{(q)} \wedge r_{ih}\right)\left[\Psi\!\left(1-\frac{q}{c_i}-\frac{1}{c_i}\right)-\Psi\!\left(1-\frac{q}{c_i}\right)\right]\right\} \\
    \ge&\sum_{q=1}^{\ell} 
    \left(\tilde{r}_{i,h_q,\tau}^{(q-1)} \wedge r_{ih_q}\right)\left[\Psi\!\left(\frac{c_i-(q-1)}{c_i}-\frac{1}{c_i}\right)-\Psi\!\left(\frac{c_i-(q-1)}{c_i}\right)\right]\\
    \ge&\sum_{q=1}^{\ell} \tilde{r}_{i,j,\tau}^{(\ell)}
    \left[\Psi\!\left(\frac{c_i-(q-1)}{c_i}-\frac{1}{c_i}\right)-\Psi\!\left(\frac{c_i-(q-1)}{c_i}\right)\right]\\
    =&\tilde{r}_{i,j,\tau}^{(\ell)}\Psi\!\left(1-\frac{\ell}{c_i}\right)\ge \left(\tilde{r}_{i,j,\tau}^{(\ell)} \wedge r_{ij}\right)\Psi\!\left(1-\frac{\ell}{c_i}\right),
\end{align*}
where the second inequality is because $\Psi$ is decreasing. Therefore, we prove Eq.~\eqref{eq:pf_dp_charging_refine_sufficient_form_3} and finish the proof of Lemma~\ref{le:app_pf_dp_charging_refine}. \Halmos

\subsection{Proof of Lemma~\ref{le:pf_dp_additive_refine}}\label{se:app_pf_dp_additive_refine}
We first present a decreasing property of $\tilde\Delta_{ih}[a,b)$. Its proof is deferred to Section~\ref{se:app_pf_dp_additive_refine_delta_de}.
\begin{lemma}\label{le:pf_dp_additive_refine_delta_de}
    Fix any server $i$, job $h$ and a time interval length $d$. For any time interval satisfying $t_2\ge t_1\ge t_h$, we have that \[
        \tilde\Delta_{ih}[t_1,t_1+d)\ge \tilde\Delta_{ih}[t_2,t_2+d).
    \]
\end{lemma}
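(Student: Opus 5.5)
The plan is a termwise comparison. Both sums in Definition~\ref{def:pathoc-increment_refine} run over grids of the same size, and each summand is pointwise nonincreasing in the grid time $\tau$. If $(i,h)\notin E$ both sides are zero, so assume $(i,h)\in E$. For $\tau\ge t_h$ write
\[
f_h(\tau)\coloneqq\max_{\ell \in \{0\} \cup [|\tilde{\boldsymbol{r}}_{i,h,\tau}|]}\Bigl\{\bigl(\tilde{r}_{i,h,\tau}^{(\ell)} \wedge r_{ih}\bigr)\bigl[\Psi\bigl(1-\tfrac{\ell}{c_i}-\epsilon_h(\tau)\bigr)-\Psi\bigl(1-\tfrac{\ell}{c_i}\bigr)\bigr]\Bigr\},
\]
where $\epsilon_h(\tau)\coloneqq\alpha_{i,h-1\to\tau}-\alpha_{i,h\to\tau}$. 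Then $\tilde\Delta_{ih}[T_1,T_2)=\sum_{\tau\in\mathcal T[T_1,T_2)}f_h(\tau)$.

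\textbf{Step 1 (matching the grids).} By definition, $\mathcal T[t_1,t_1+d)=\{t_1+k:0\le k\le\lceil d-1\rceil\}$ and $\mathcal T[t_2,t_2+d)=\{t_2+k:0\le k\le\lceil d-1\rceil\}$. These grids have the same cardinality, and the map $t_1+k\mapsto t_2+k$ is a bijection between them. Since $t_1\le t_2$, each pair satisfies $t_h\le t_1+k\le t_2+k$. It therefore suffices to show that $f_h(\tau)\ge f_h(\tau')$ whenever $t_h\le\tau\le\tau'$.

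\textbf{Step 2 (monotonicity of the ingredients).} There are three facts to check.
\begin{enumerate}
\item[(a)] The multiset $\tilde{\boldsymbol r}_{i,h,\tau'}$ is a sub-multiset of $\tilde{\boldsymbol r}_{i,h,\tau}$. Indeed, a job $k<h$ assigned to $i$ that is active at $\tau'$ satisfies $t_k\le t_h\le\tau\le\tau'<t_k+d_{ik}$, so it is also active at $\tau$. Consequently $|\tilde{\boldsymbol r}_{i,h,\tau'}|\le|\tilde{\boldsymbol r}_{i,h,\tau}|$, and $\tilde r^{(\ell)}_{i,h,\tau'}\le \tilde r^{(\ell)}_{i,h,\tau}$ for every admissible $\ell$ (with $\tilde r^{(0)}=+\infty$ on both sides).
\item[(b)] We have $\epsilon_h(\tau)=\frac{1}{c_i}\mathbb I[h\text{ assigned to }i,\ \tau<t_h+d_{ih}]$, which is nonincreasing in $\tau$. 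Hence $\epsilon_h(\tau)\ge\epsilon_h(\tau')\ge0$.
\item[(c)] Because $\Psi$ is decreasing, the gap $\Psi(x-\epsilon)-\Psi(x)$ is nonnegative and nondecreasing in $\epsilon\ge0$ for each fixed $x$. Capacity feasibility (Proposition~\ref{prop:capa_fea_refine}) keeps all arguments inside $[0,1]$ whenever $\epsilon>0$. If $\epsilon_h(\tau')=0$, then $f_h(\tau')=0$ and there is nothing to prove.
\end{enumerate}

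\textbf{Step 3 (conclusion).} Let $\ell^\star$ attain the maximum in $f_h(\tau')$. By (a), $\ell^\star$ is also an admissible index at $\tau$, and its reward factor satisfies $\tilde r^{(\ell^\star)}_{i,h,\tau}\wedge r_{ih}\ge \tilde r^{(\ell^\star)}_{i,h,\tau'}\wedge r_{ih}$. By (b) and (c), its $\Psi$-gap at $\tau$ is at least its gap at $\tau'$. Both factors are nonnegative, so the product at $\tau$ dominates the product at $\tau'$. Hence $f_h(\tau)\ge f_h(\tau')$, and summing over the bijection from Step~1 proves the lemma.

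I expect no serious obstacle. The only points needing care are the sub-multiset claim in (a), which is exactly where the hypothesis $t_1\ge t_h$ is used (every candidate job arrives no later than $\tau$), and the boundary convention $\tilde r^{(0)}=+\infty$ in the index comparison.
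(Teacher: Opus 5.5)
Your proposal is correct and follows essentially the same route as the paper. Both compare each grid point $\tau$ termwise with its shift $\tau+t_2-t_1$, using the inclusion of the active-job sets (hence the order-statistic comparison), the monotonicity of $\alpha_{i,h-1\to\tau}-\alpha_{i,h\to\tau}$ in $\tau$, and the fact that $\Psi$ is decreasing; the only cosmetic differences are that you make the grid bijection explicit and dispose of the trivial case via $\epsilon_h(\tau')=0$ rather than the paper's case $\tilde\tau\ge t_h+d_{ih}$.
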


For simplicity, denote
\begin{align*}
    &\Delta_{i,h,\tau}^{\operatorname{Re}}\coloneqq \tilde\Delta_{ih}[\tau,\tau+1)=\\
    &\max_{l \in \{0\} \cup [|\tilde{\boldsymbol{r}}_{i,h,\tau}|]}\left\{\left(\tilde{r}_{i,h,\tau}^{(\ell)} \wedge r_{ih}\right)\left[\Psi\!\left(1-\frac{\ell}{c_i}-(\alpha_{i,h-1\to \tau}-\alpha_{i,h\to \tau})\right)-\Psi\!\left(1-\frac{\ell}{c_i}\right)\right]\right\},
\end{align*}
then for any $T_1,T_2$ such that $t_h\le T_1<T_2\le t_h+d_{ih}$, we have \begin{align*}
    \tilde\Delta_{ih}[T_1,T_2)=\sum_{\tau\in\mathcal{T}[T_1,T_2)}\Delta_{i,h,\tau}^{\operatorname{Re}}.
\end{align*}

Let $\rho_v=t_h+\lfloor \xi_v-t_h\rfloor$ for any $v\in[2k-1]$, let $\rho_{2k}=\rho_{2k-1}+\xi_{2k}-\xi_{2k-1}$. Since for $u\in[k-1]$, $\xi_{2u}-\xi_{2u-1}$ are positive integers, we have that $\rho_{2u}-\rho_{2u-1}=\xi_{2u}-\xi_{2u-1}$. Noting that $\rho_{2u}\le \xi_{2u} \le t_h+d_{ih}$ and $t_h\le \rho_{2u-1}\le \xi_{2u-1}$, by Lemma~\ref{le:pf_dp_additive_refine_delta_de}, it follows that 
\begin{align}\label{eq:pf_dp_additive_refine_4}
    \sum_{u=1}^{k}\tilde\Delta_{ih}[\xi_{2u-1},\xi_{2u})
    \le \sum_{u=1}^{k}\tilde\Delta_{ih}[\rho_{2u-1},\rho_{2u})= \sum_{u=1}^k \sum_{\tau\in\mathcal T[\rho_{2u-1},\rho_{2u})}\Delta_{i,h,\tau}^{\operatorname{Re}}.
\end{align}
Since $\rho_{2u}-\rho_{2u-1}\in\mathbb N$ for any $u\in[k-1]$, we have that
\begin{equation}\label{eq:pf_dp_additive_refine_1}
    \mathcal T[\rho_{2u-1},\rho_{2u})=\{\rho_{2u-1},\rho_{2u-1}+1,\dots,\rho_{2u}-1\},\qquad \forall u\in[k-1].
\end{equation}
Since $\rho_{2u}\le\rho_{2u+1}$, we have that
\begin{equation}\label{eq:pf_dp_additive_refine_2}
   \mathcal T[\rho_{2u-1},\rho_{2u}) \cap \mathcal T[\rho_{2u^{\prime}-1},\rho_{2u^{\prime}})=\emptyset,\qquad \forall u\neq u^{\prime},\qquad u,u^{\prime}\in[k].
\end{equation}

By the definition of $\rho_{u}$, we have $\rho_{2u-1}\in\mathcal T[t_h,t_h+d_{ih}),\rho_{2u}\le t_h+d_{ih}$ for any $u\in [k]$, which yields that 
\begin{equation}\label{eq:pf_dp_additive_refine_3}
    \mathcal T[\rho_{2u-1},\rho_{2u})\subseteq\mathcal T[t_h,t_h+d_{ih}),\qquad \forall u\in[k].
\end{equation}
Recall that $\rho_{2u+1}\ge\rho_{2u}$, combining Eq.~\eqref{eq:pf_dp_additive_refine_1}, Eq.~\eqref{eq:pf_dp_additive_refine_2}, and Eq.~\eqref{eq:pf_dp_additive_refine_3}, we obtain that \begin{align*}
    \underset{u\in[k]}{\bigsqcup} \mathcal T[\rho_{2u-1},\rho_{2u})\subseteq \mathcal T[t_h,t_h+d_{ih}).
\end{align*}
Since $\Delta_{i,h,\tau}^{\operatorname{Re}}\ge 0$ for any $\tau\in\mathcal T[t_h,t_h+d_{ih})$, combining this result with Eq.~\eqref{eq:pf_dp_additive_refine_4}, we can conclude that \begin{align*}
    \sum_{u=1}^{k}\tilde\Delta_{ih}[\xi_{2u-1},\xi_{2u})
    \le& \sum_{u=1}^k \sum_{\tau\in\mathcal T[\rho_{2u-1},\rho_{2u})}\Delta_{i,h,\tau}^{\operatorname{Re}}=\underset{\tau\in \underset{u\in[k]}{\bigsqcup} \mathcal T[\rho_{2u-1},\rho_{2u})}{\sum}\Delta_{i,h,\tau}^{\operatorname{Re}}\\
    \le& \underset{\tau\in\mathcal T[t_h,t_h+d_{ih})}{\sum}\Delta_{i,h,\tau}^{\operatorname{Re}}=\tilde\Delta_{ih}[t_h,t_h+d_{ih}),
\end{align*}
which finishes the proof of Lemma~\ref{le:pf_dp_additive_refine}. \Halmos

\subsection{Proof of Lemma~\ref{le:pf_dp_additive_refine_delta_de}}\label{se:app_pf_dp_additive_refine_delta_de}
Fix $t_1,t_2,d$ such that $t_2\ge t_1\ge t_h$. By the definition of $\tilde\Delta_{ih}[a,b)$, it is sufficient to prove that, for each $\tau\in\mathcal{T}[t_1,t_1+d)$, it holds that \begin{align}\label{eq:pf_dp_additive_refine_delta_de_4}
    \nonumber&\max_{\ell \in \{0\} \cup [|\tilde{\boldsymbol{r}}_{i,h,\tau}|]}\left\{\left(\tilde{r}_{i,h,\tau}^{(\ell)} \wedge r_{ih}\right)\left[\Psi\!\left(1-\frac{\ell}{c_i}-(\alpha_{i,h-1\to \tau}-\alpha_{i,h\to \tau})\right)-\Psi\!\left(1-\frac{\ell}{c_i}\right)\right]\right\} \\
    \ge & \max_{\ell \in \{0\} \cup [|\tilde{\boldsymbol{r}}_{i,h,\tilde\tau}|]}\left\{\left(\tilde{r}_{i,h,\tilde\tau}^{(\ell)} \wedge r_{ih}\right)\left[\Psi\!\left(1-\frac{\ell}{c_i}-(\alpha_{i,h-1\to \tilde\tau}-\alpha_{i,h\to \tilde\tau})\right)-\Psi\!\left(1-\frac{\ell}{c_i}\right)\right]\right\},
\end{align}
where $\tilde \tau=\tau+t_2-t_1$. Noting that when $\tilde\tau\ge t_h+d_{ih}$, we have $\alpha_{i,h-1\to\tilde\tau}=\alpha_{i,h\to\tilde\tau}$, so Eq.~\eqref{eq:pf_dp_additive_refine_delta_de_4} holds directly. Hence, in the following part, we suppose $\tilde\tau< t_h+d_{ih}$ and prove Eq.~\eqref{eq:pf_dp_additive_refine_delta_de_4} in this case.

For simplicity, we expand the definition of $\tilde{\mathcal C}_i(h,\tau)$ in Section~\ref{le:app_pf_dp_charging_refine}. Formally, fix server $i$ and job $h$, for each $t\ge t_h$, we define \[
    \tilde{\mathcal C}_i(h,t)\coloneqq\{\tilde h:\text{Algorithm~\ref{alg:main_d_integer_active} assign } \tilde h \text{ to }i, \tilde h\le h-1,t<t_{\tilde h}+d_{i\tilde h}\}.
\]
Since $\tilde \tau\ge \tau \ge t_h$, we have that \begin{equation}\label{eq:pf_dp_additive_refine_delta_de_1}
    \tilde {\mathcal C}_i(h,\tilde\tau)\subseteq \tilde {\mathcal C}_i(h,\tau) .
\end{equation}
Recall that \[
    \tilde r_{i,h,t}^{(1)}\ge \tilde r_{i,h,t}^{(2)}\ge\dots\ge \tilde r_{i,h,t}^{(|\tilde{\boldsymbol{r}}_{i,h,t}|)}
\] 
are the reward rates of jobs in $\tilde{\mathcal C}_i(h,t)$ ranked in order. Since $\alpha_{i,h-1\to\tau}\le \alpha_{i,h-1\to\tilde\tau}$, we have \(|\tilde{\boldsymbol{r}}_{i,h,\tilde\tau}|=c_i(1-\alpha_{i,h-1\to\tilde\tau})\le c_i(1-\alpha_{i,h-1\to\tau})=|\tilde{\boldsymbol{r}}_{i,h,\tau}|\), thus Eq.~\eqref{eq:pf_dp_additive_refine_delta_de_1} implies that \begin{equation}\label{eq:pf_dp_additive_refine_delta_de_2}
    \tilde r_{i,h,\tau}^{(\ell)}\ge \tilde r_{i,h,\tilde\tau}^{(\ell)},\qquad \forall \ell\in[|\tilde{\boldsymbol{r}}_{i,h,\tilde\tau}|].
\end{equation}

Noting that for any $t\ge t_h$, we have that \[
\alpha_{i,h-1\to t}-\alpha_{i,h\to t} = \left\{
\begin{array}{ll}
1/c_i, & \text{Algorithm~\ref{alg:main_d_integer_active} assigns } h \text{ to }i,\text{ and } t<t_h+d_{ih} \\
0, & \text{otherwise}.
\end{array}
\right.
\]
Since $t_h\le \tau\le \tilde\tau$, this equality yields that $\alpha_{i,h-1\to\tau}-\alpha_{i,h\to\tau} \ge \alpha_{i,h-1\to\tilde\tau}-\alpha_{i,h\to\tilde\tau}$. Since $\Psi$ is decreasing, we obtain that \begin{align}\label{eq:pf_dp_additive_refine_delta_de_3}
    \Psi\!\left(1-\frac{\ell}{c_i}-(\alpha_{i,h-1\to \tau}-\alpha_{i,h\to \tau})\right)\ge \Psi\!\left(1-\frac{\ell}{c_i}-(\alpha_{i,h-1\to \tilde\tau}-\alpha_{i,h\to \tilde\tau})\right),\ \forall \ell\in\{0\} \cup[|\tilde{\boldsymbol{r}}_{i,h,\tilde\tau}|].
\end{align}
Noting that $\tilde{r}_{i,h,\tau}^{(0)} \wedge r_{ih}=r_{ih}=\tilde{r}_{i,h,\tilde\tau}^{(0)} \wedge r_{ih}$, combining Eq.~\eqref{eq:pf_dp_additive_refine_delta_de_2} and Eq.~\eqref{eq:pf_dp_additive_refine_delta_de_3}, we can conclude that \begin{align*}
    &\max_{\ell \in \{0\} \cup [|\tilde{\boldsymbol{r}}_{i,h,\tau}|]}\left\{\left(\tilde{r}_{i,h,\tau}^{(\ell)} \wedge r_{ih}\right)\left[\Psi\!\left(1-\frac{\ell}{c_i}-(\alpha_{i,h-1\to \tau}-\alpha_{i,h\to \tau})\right)-\Psi\!\left(1-\frac{\ell}{c_i}\right)\right]\right\} \\
    \ge & \max_{\ell \in \{0\} \cup [|\tilde{\boldsymbol{r}}_{i,h,\tilde\tau}|]}\left\{\left(\tilde{r}_{i,h,\tilde\tau}^{(\ell)} \wedge r_{ih}\right)\left[\Psi\!\left(1-\frac{\ell}{c_i}-(\alpha_{i,h-1\to \tilde\tau}-\alpha_{i,h\to \tilde\tau})\right)-\Psi\!\left(1-\frac{\ell}{c_i}\right)\right]\right\},
\end{align*}
which proves Lemma~\ref{le:pf_dp_additive_refine_delta_de}.

\section{Lower Bounds}\label{sec:lower-bound}
This main proof idea in this part follows from \cite{feng2025online}. We first isolate the distributional argument shared by the lower-bound constructions. This argument will be essential in the analysis of the lower bound.
\begin{lemma}[Prefix-instance distribution]
\label{lem:prefix-instance-distribution}
Consider a single server with capacity \(c\) and batches
\(B_0,B_1,\ldots,B_L\), each containing \(c\) identical jobs.
Suppose that jobs from different batches have overlapping
processing intervals and that each job in batch \(B_\ell\) has
total value \(v_\ell\), where
\(0<v_0\le v_1\le\dots\le v_L\).
For each \(k=0,\ldots,L\), let \(I_k\) be the instance in which
batches \(B_0,\ldots,B_k\) are compatible with the server and all
subsequent batches are incompatible. There exists a probability
distribution over \(I_0,\ldots,I_L\) such that every deterministic
fractional online algorithm satisfies
\[
    \frac{\mathbb{E}[\mathrm{OPT}]}
         {\mathbb{E}[\mathrm{ALG}]}
    \ge
    1+\sum_{\ell=0}^{L-1}
    \left(1-\frac{v_\ell}{v_{\ell+1}}\right).
\]
Consequently, the same expression lower-bounds the competitive
ratio of every randomized online algorithm on this family of
instances.
\end{lemma}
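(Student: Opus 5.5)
We use Yao's principle: it suffices to exhibit one distribution over $I_0,\ldots,I_L$ and show that every deterministic fractional online algorithm satisfies the displayed ratio. The randomized statement then follows, since a randomized algorithm is a distribution over deterministic ones and $\mathbb{E}[\mathrm{OPT}]/\mathbb{E}[\mathrm{ALG}]$ is a ratio of expectations.

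\textbf{Reduction to fractions.} Since the instances $I_0,\ldots,I_L$ share the common prefix $B_0,\ldots,B_k$, a deterministic online algorithm cannot distinguish $I_k$ from $I_{k'}$, $k'\ge k$, until batch $B_{k+1}$ is revealed. Hence its behavior is described by fractions $x_0,\ldots,x_L\ge 0$, where $c\,x_\ell$ is the amount of capacity it assigns to batch $B_\ell$, and these do not depend on which instance is realized (batch $B_\ell$ is simply incompatible in $I_k$ for $\ell>k$). All processing intervals pairwise overlap, so capacity gives $\sum_\ell x_\ell\le 1$. On $I_k$, $\mathrm{ALG}=c\sum_{\ell\le k}x_\ell v_\ell$ and $\mathrm{OPT}=c\,v_k$, because OPT fills the whole capacity with the most valuable available batch.

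\textbf{Choice of distribution.} Let $I_k$ have probability $p_k$, and set the target $\Gamma=1+\sum_{\ell<L}(1-v_\ell/v_{\ell+1})$. We want
\[
\sum_{k}p_k\sum_{\ell\le k}x_\ell v_\ell=\sum_\ell x_\ell v_\ell P_\ell \le \frac{1}{\Gamma}\sum_k p_k v_k ,
\]
where $P_\ell=\sum_{k\ge \ell}p_k$. Since the left side is linear in $x$ on the simplex, its maximum equals $\max_\ell v_\ell P_\ell$. We therefore equalize $v_\ell P_\ell=C$ for all $\ell$, i.e.\ $P_\ell=C/v_\ell$; this is decreasing in $\ell$ as required, and $P_0=1$ gives $C=v_0$. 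Then $p_k=P_k-P_{k+1}=v_0(1/v_k-1/v_{k+1})$ for $k<L$, and $p_L=v_0/v_L$. Summing,
\[
\mathbb{E}[\mathrm{OPT}]/c=\sum_k p_kv_k=v_0\Bigl[\sum_{k<L}\bigl(1-v_k/v_{k+1}\bigr)+1\bigr]=v_0\Gamma,
\]
while $\mathbb{E}[\mathrm{ALG}]/c\le v_0$. The ratio is therefore at least $\Gamma$.

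\textbf{Main obstacle.} The only delicate point is justifying that the algorithm's allocation to $B_\ell$ is the same across all $I_k$ with $k\ge\ell$. This is the prefix-indistinguishability argument, and it requires revealing incompatibility only at each later batch's arrival. A smaller point is handling ties $v_\ell=v_{\ell+1}$, which give $p_\ell=0$ and are harmless.
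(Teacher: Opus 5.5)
Your proposal is correct and follows essentially the same argument as the paper: the same distribution $p_k=v_0(1/v_k-1/v_{k+1})$ for $k<L$ and $p_L=v_0/v_L$, the same prefix-indistinguishability definition of per-batch allocations, the same bound $\mathbb{E}[\mathrm{ALG}]=c\,v_0\sum_\ell x_\ell\le c\,v_0$, and Yao's principle for the randomized case. The only difference is that you derive the distribution by equalizing $v_\ell P_\ell$, whereas the paper simply states it.
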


\proof{Proof.}
Define probabilities
\[
    p_k=\frac{v_0}{v_k}-\frac{v_0}{v_{k+1}},
    \quad k=0,\ldots,L-1,
    \qquad
    p_L=\frac{v_0}{v_L}.
\]
Because \(v_0\le\dots\le v_L\), these probabilities are
nonnegative and sum to one.

Fix a deterministic fractional online algorithm. Let \(x_\ell\)
denote the total amount of capacity assigned to batch \(B_\ell\)
whenever that batch is compatible. This quantity is well defined
because all instances \(I_k\) with \(k\ge\ell\) have the same
revealed history through batch \(B_\ell\). Since jobs from
different compatible batches overlap, feasibility implies
\(\sum_{\ell=0}^Lx_\ell\le c\).

The probability that batch \(B_\ell\) is compatible is
\[
    \sum_{k=\ell}^Lp_k=\frac{v_0}{v_\ell}.
\]
It follows that
\[
    \mathbb{E}[\mathrm{ALG}]
    =
    \sum_{\ell=0}^L
    x_\ell v_\ell
    \sum_{k=\ell}^Lp_k
    =
    v_0\sum_{\ell=0}^Lx_\ell
    \le cv_0.
\]

On instance \(I_k\), the offline optimum assigns all \(c\) units
of capacity to the last compatible batch, so
\(\mathrm{OPT}(I_k)=cv_k\). Therefore,
\[
    \mathbb{E}[\mathrm{OPT}]
    =
    cv_0
    \left[
        1+\sum_{\ell=0}^{L-1}
        \left(1-\frac{v_\ell}{v_{\ell+1}}\right)
    \right].
\]
The stated bound follows. The randomized lower bound then follows
from Yao's minimax principle \citep{yao1977probabilistic,borodin1998online}.
\Halmos\endproof

\proof{Proof of Theorem~\ref{tm:lb_main}.}
Fix \(M\in\mathbb{N}\), and let \(L=M+\lfloor D\rfloor-1\). Consider one server
with capacity \(c\). At time \(t_\ell=\ell/(L+1)\), batch
\(B_\ell\) of \(c\) identical jobs arrives.

For \(\ell=0,\ldots,M\), each job in \(B_\ell\) has reward rate
\(r_\ell=\delta^{\ell/M}\) and duration \(d_\ell=1\).
For \(k=2,\ldots,\lfloor D\rfloor\), each job in batch \(B_{M+k-1}\) has reward
rate \(\delta\) and duration \(k\). Thus, the corresponding job
values are
\[
    v_\ell=\delta^{\ell/M},
    \quad \ell=0,\ldots,M,
    \qquad
    v_{M+k-1}=\delta k,
    \quad k=2,\ldots,\lfloor D\rfloor.
\]

All durations are integers in \(\{1,\ldots,\lfloor D\rfloor\}\). Moreover, all batches arrive before time one and every job has duration at least
one. Hence jobs from distinct batches overlap. The reward rates
lie in \([1,\delta]\), and all arrivals occur within an interval
of length less than \(1\le D\). Therefore, Assumption~\ref{assumption:local-bounded-heterogeneity} is
satisfied.

Form the prefix instances described in
Lemma~\ref{lem:prefix-instance-distribution}. Applying that lemma
gives
\[
\begin{aligned}
    \frac{\mathbb{E}[\mathrm{OPT}]}
         {\mathbb{E}[\mathrm{ALG}]}
    \ge
    1+M\left(1-\delta^{-1/M}\right)
    +\sum_{k=1}^{\lfloor D\rfloor-1}
    \left(1-\frac{k}{k+1}\right) =
    H_D+M\left(1-\delta^{-1/M}\right).
\end{aligned}
\]
Letting \(M\to\infty\) yields \(H_D+\ln\delta\). Finally,
\(H_D\ge\ln D\), so the bound is at least \(\ln(\delta D)\).
\Halmos\endproof

Theorem~\ref{tm:lb_main} allows the reward
rate to change by a factor \(\delta\) over a short time interval.
We next show that logarithmic lower bounds persist when the
logarithm of the reward rate is Lipschitz-continuous in time.

\begin{theorem}\label{tm:lb_lipschitz} 
Let \(D\in\mathbb{N}\). Suppose Assumption~1 is replaced by the
following restricted condition:
\[
    1\le d_{ij}\le D,
    \qquad
    r_{ij}>0,
\]
and, for every server \(i\) and every pair of jobs \(j_1,j_2\)
compatible with \(i\),
\[
    \max\left\{
        \frac{r_{ij_1}}{r_{ij_2}},
        \frac{r_{ij_2}}{r_{ij_1}}
    \right\}
    \le
    \exp\left(
        \frac{\ln\delta}{D}
        \left|t_{j_1}-t_{j_2}\right|
    \right).
\]
Then every randomized online algorithm has competitive ratio at least
\[
    \max\{1+\ln\delta,H_D\},
\]
even if fractional assignments are allowed. This lower bound
holds using only integer-valued processing durations.
\end{theorem}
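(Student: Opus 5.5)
We prove the two terms of $\max\{1+\ln\delta,H_D\}$ with two separate single-server constructions, each reduced to Lemma~\ref{lem:prefix-instance-distribution}. The condition in Theorem~\ref{tm:lb_lipschitz} is a Lipschitz bound on $\ln r$: two compatible jobs whose arrival times differ by $s$ may have reward ratio at most $\delta^{s/D}$. The lemma requires only that jobs from different batches overlap and that batch values are nondecreasing. Hence in each construction it suffices to verify three things: the pairwise overlap of processing intervals, the monotonicity of the values $v_\ell$, and the Lipschitz reward condition for every pair of batches. Randomized algorithms are then handled by the Yao argument already contained in the lemma.

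\emph{The $H_D$ term.} Keep all reward rates equal to $1$, so the Lipschitz condition holds trivially. Batches $B_0,\ldots,B_{D-1}$ arrive at times $t_\ell=\ell/D<1$, and jobs in $B_\ell$ have duration $\ell+1\in\{1,\ldots,D\}$. Every job starts before time $1$ and lasts at least one unit, so any two jobs overlap at time $t_{\max}<1$. The values are $v_\ell=\ell+1$, which are nondecreasing. The lemma gives
\[
1+\sum_{\ell=0}^{D-2}\Bigl(1-\tfrac{\ell+1}{\ell+2}\Bigr)=1+\sum_{k=2}^{D}\tfrac1k=H_D .
\]

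\emph{The $1+\ln\delta$ term.} This is the step where the Lipschitz constraint actually binds. Raising the reward by a factor $\delta$ now takes $D$ time units, yet all batches must pairwise overlap, and durations are capped at $D$. The plan is as follows. Fix $M$ and let batch $B_\ell$, $\ell=0,\ldots,M$, arrive at $t_\ell=\ell D/M\cdot(1-\varepsilon)$, so that all arrivals lie in $[0,D)$. Give it reward rate $r_\ell=\delta^{t_\ell/D}$, which is Lipschitz with slope exactly $\ln\delta/D$. Give it duration chosen so that the job is still active at time $D$. Using duration exactly $D$ for every batch ensures every job is active on $[t_M,D)$, a nonempty interval, so all batches pairwise overlap. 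The values are then $v_\ell=D\,\delta^{(1-\varepsilon)\ell/M}$, which are increasing. Since durations must be integers, I take every duration equal to the integer $D$; this is allowed because $D\in\mathbb N$. The lemma yields
\[
1+M\bigl(1-\delta^{-(1-\varepsilon)/M}\bigr)\;\longrightarrow\;1+(1-\varepsilon)\ln\delta
\]
as $M\to\infty$, and then we let $\varepsilon\to0$.

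One subtlety deserves a check. Unlike in Theorem~\ref{tm:lb_main}, the value differences here come purely from reward rates, and the ``$+1$'' is the lemma's additive constant. Since all durations are equal, nothing more is needed. Taking the better of the two constructions gives the maximum, and both use only integer durations, with fractional algorithms covered by the lemma.
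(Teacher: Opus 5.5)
Your proposal is correct and follows essentially the same route as the paper. It uses two single-server prefix-instance constructions fed into Lemma~\ref{lem:prefix-instance-distribution}. The first has unit rewards and integer durations $1,\ldots,D$, with all arrivals before time $1$, which gives $H_D$. The second uses all durations equal to $D$, arrivals spread over $[0,D)$, and rewards growing at exactly the Lipschitz slope $\ln\delta/D$, which gives $1+\ln\delta$ in the limit. The only differences from the paper are cosmetic: how the arrival spacing and the $\varepsilon$ slack are parametrized.
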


\proof{Proof.}
We give two single-server constructions and apply Lemma~\ref{lem:prefix-instance-distribution} to each. The first
construction varies reward rates over time while keeping all
durations fixed at \(D\), and yields the lower bound
\(1+\ln\delta\). The second construction keeps reward rates fixed
and varies integer durations from \(1\) to \(D\), yielding the lower
bound \(H_D\).

\noindent\underline{\bf Reward variation.}
Let \(a=\ln\delta/D\). Fix \(K\ge2\) and
\(\varepsilon\in(0,D)\), and define
\(\Delta=(D-\varepsilon)/(K-1)\). For
\(\ell=0,\ldots,K-1\), batch \(B_\ell\) of \(c\) identical jobs
arrives at time \(t_\ell=\ell\Delta\). Every job has integer
duration \(D\), reward rate \(r_\ell=e^{at_\ell}\), and value
\(v_\ell=Dr_\ell\).

All batches arrive before time \(D\), so jobs from distinct
batches overlap. Moreover,
\[
    \max\left\{
        \frac{r_\ell}{r_m},
        \frac{r_m}{r_\ell}
    \right\}
    =
    e^{a|t_\ell-t_m|},
\]
and hence the Lipschitz condition holds. By
Lemma~\ref{lem:prefix-instance-distribution},
\[
    \frac{\mathbb{E}[\mathrm{OPT}]}
         {\mathbb{E}[\mathrm{ALG}]}
    \ge
    1+(K-1)\left(1-e^{-a\Delta}\right).
\]
Letting \(K\to\infty\) and then \(\varepsilon\downarrow0\)
gives the lower bound \(1+\ln\delta\).

\noindent\underline{\bf Duration variation.}
The claim is immediate when \(D<2\), so suppose \(D\ge2\).
Fix \(\varepsilon\in(0,1)\). For \(k=1,\ldots,\lfloor D\rfloor\), batch
\(B_{k-1}\) of \(c\) identical jobs arrives at time
\[
    t_{k-1}
    =
    \frac{(k-1)\varepsilon}{\lfloor D\rfloor-1}.
\]
Every job in this batch has reward rate one, integer duration
\(k\), and value \(v_{k-1}=k\).

All batches arrive before time one, while every duration is at
least one, so jobs from distinct batches overlap. Since all reward
rates are equal, the Lipschitz condition holds trivially. Applying
Lemma~\ref{lem:prefix-instance-distribution} gives
\[
    \frac{\mathbb{E}[\mathrm{OPT}]}
         {\mathbb{E}[\mathrm{ALG}]}
    \ge
    1+\sum_{k=1}^{\lfloor D\rfloor-1}
    \left(1-\frac{k}{k+1}\right)
    =
    H_D.
\]

Combining the two constructions proves the result.
\Halmos\endproof

\section{A Counterexample for a Natural Pricing Rule}\label{app:failed_candidates}
In this part, we will show that, the candidate algorithm in Remark~\ref{rem:comparison_feng}, which adopts the assignment-specific loss $\mathcal L_{ij}=\sum_{\tau\in\{t_j+k:k\in\mathbb{N},k<d_{ij}\}}\hat r_{i,j,\tau}\Psi\!(\alpha_{i,j-1\to\tau})$, can only achieve a competitive ratio of $\Omega(\delta)$. For simplicity, we refer to this algorithm as ``M-FLB'' (a modification of FLB in \cite{feng2025online}).

\noindent\underline{\bf Construction of the counterexample.} Consider one server with capacity $c$ and suppose $\delta\gg D$. Take $0<\epsilon<1$. At time $0$, an anchor job with reward rate $1/\delta$ and duration $D$ arrives. At time $\epsilon$, $c$ low-reward jobs with reward rate $1$ and duration $D$ arrive. At time $D+\epsilon/2$, $c$ high-reward jobs with reward rate $\delta$ and duration $1$ arrive.

\noindent\underline{\bf Analysis of the competitive ratio of the constructed hard instance.} 
We next show that the M-FLB only has $\Omega(\delta)$ competitive ratio under our constructed counterexample. Suppose that $1\ll c$. Firstly, the M-FLB will accept the anchor job. Then, the M-FLB will accept $\alpha c$ low-reward jobs, where $\alpha$ is determined by solving the following equation \begin{equation}\label{eq:conterexample_low_reward}
    D=\frac{1}{\delta}D\Psi\!\left(1-\alpha-\frac{1}{c}\right).
\end{equation}
Finally, when facing the high-reward job, suppose that M-FLB accept $a$ such jobs. Since there are $\alpha c$ jobs are still active at time $D+\epsilon/2$, by the design of M-FLB, we have \begin{equation}\label{eq:conterexample_high_reward}
    \delta = \Psi\!\left(1-\alpha-a/c\right).
\end{equation}
Combining Eq.~\eqref{eq:conterexample_low_reward} and Eq.~\eqref{eq:conterexample_high_reward}, we obtain that $a=1$. Therefore, the total reward obtained by applying M-FLB is \[
    \frac{D}{\delta}+\alpha cD+a\delta=\frac{D}{\delta}+\delta+\alpha cD\le \frac{D}{\delta}+\delta+cD,
\]
where the last inequality is due to $0\le \alpha\le 1$. Recall that $D\ll \delta$, noting that the optimal policy chooses all of the high-reward jobs, the optimal revenue is at least $\delta c$. Therefore, the competitive ratio of M-FLB is at least $\Omega(\delta/(2D))$ when $c>2\max\{D,\delta\}$, and $\Omega(\delta/(2D))=\Omega(\delta/2)$ by taking $D=1$.

\section{Real-Valued Durations}
\label{app:real-duration}

This appendix extends Algorithm~\ref{alg:main_d_integer_active} and Theorem~\ref{tm:main_theorem_refine} to the setting with real-valued processing durations by following a similar technique as that in \cite{feng2025online}. The algorithm is unchanged except that the inspection-time defined in Section~\ref{subsec:GR-BAL} and the loss in Eq.~\eqref{eq:def-greedy-relaxation-loss} are refined. We present these changes below.

\begin{definition}[A $\gamma$-grid inspection time]\label{df:inspection_time_refine_change}
    Fix two time points $T_1<T_2$, define \[
        \mathcal{T}_{\gamma}[T_1,T_2)\coloneqq \{T_1+l/\gamma:l\in\mathbb{Z}_{\ge 0},T_1+l/\gamma<T_2\}=\{T_1,T_1+1/\gamma,\dots,T_1+\lceil (T_2-T_1)\gamma-1\rceil/\gamma\}.
    \]
\end{definition}

We then define a $\gamma$-grid greedy-relaxation loss on $[T_1,T_2)$ to be \begin{align*}  
\mathcal L_{ij}^{\mathrm{GR},\gamma}[T_1,T_2)
\coloneqq\sum_{\tau\in\mathcal T_{\gamma}(T_1,T_2)}\max_{\ell \in \{0\} \cup [|\tilde{\boldsymbol{r}}_{i,j,\tau}|]}\left\{\left(\tilde r_{i,j,\tau}^{(\ell)}\wedge r_{ij}\right)\Psi\!(1-\ell/c_i)\right\},
\end{align*}
and for a candidate assignment of job $j$ to server $i$, we define the corresponding loss to be \[
    \mathcal L_{ij}^{\mathrm{GR},\gamma}\coloneqq \mathcal L_{ij}^{\mathrm{GR},\gamma}[t_j,t_j+d_{ij}).
\]
We formally define our $\gamma$-grid refined greedy-relaxation BALANCE algorithm ($\gamma$GR-BAL for short), as follows.

\LinesNotNumbered
\begin{algorithm}[H]
\caption{The $\gamma$-grid Greedy-Relaxation BALance Algorithm  ($\gamma$GR-BAL)}\label{alg:main_d_realvalue}
Instantiate the BALANCE framework (Algorithm~\ref{alg:BALANCE}) with
\[
\mathcal{L}_{ij} = \mathcal L_{ij}^{\mathrm{GR},\gamma}\coloneqq \mathcal L_{ij}^{\mathrm{GR},\gamma}[t_j,t_j+d_{ij}) .
\]
\end{algorithm}

\begin{theorem}[Competitive ratio for real-valued durations]\label{thm:real-duration}
Suppose Assumption~\ref{assumption:local-bounded-heterogeneity} holds and processing durations are real-valued. There exists a inspection-frequency scalar $\gamma^*\ge 2$ and penalty parameters
$(\eta^*,\beta^*)$ such that, as $c_{\min}:=\min_i c_i\to\infty$, Algorithm~\ref{alg:main_d_realvalue} is
\[
\ln(\delta D)+2\ln\ln(\delta\vee D)+\mathcal O(1)
\]
competitive. When $\delta\vee D$ is bounded by some constant, the bound is interpreted as $\mathcal O(1)$.
\end{theorem}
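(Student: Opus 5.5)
I will mirror the proof of Theorem~\ref{tm:main_theorem_refine}, running its three steps with $\mathcal T$ replaced by the $\gamma$-grid $\mathcal T_\gamma$.

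\textbf{Step 1: capacity feasibility.} I would enlarge the admissible set to
\[
\Theta_\gamma=\{(\beta,\eta):\delta D\gamma\le\eta(\beta-1)\}\quad\text{or}\quad\Theta_\gamma=\{(\beta,\eta):\delta D\le\eta(\beta-1)\}.
\]
The analogue of Lemma~\ref{le:capa_fea_lb_integral_refine} only uses the single grid point $t_j\in\mathcal T_\gamma[t_j,t_j+d_{ij})$, which gives $\delta d_{ij}>\Psi(\alpha_{i,j-1\to t_j})$. Hence the original $\Theta$ already suffices, and the induction of Proposition~\ref{prop:capa_fea} carries over verbatim.

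\textbf{Step 2: objective bound.} I would build duals $\tilde\lambda_j$ and $\tilde\vartheta_{ij}=\tilde\Delta^\gamma_{ij}[t_j,t_j+d_{ij})$, with $\tilde\Delta^\gamma$ as in Definition~\ref{def:pathoc-increment_refine} on the $\gamma$-grid, and possibly a normalization $\tilde\vartheta_{ij}=\kappa\tilde\Delta^\gamma_{ij}$ fixed in Step 3. In Step 1 of the objective bound, $|\mathcal T_\gamma[t_j,t_j+d_{ij})|\le\gamma d_{ij}+1$ replaces $d_{ij}$. The residual term is then at most $\eta\kappa\beta^{1/c_i}\ln\beta\, r_{ij}(\gamma d_{ij}+1)$, while the main term remains $\kappa\beta^{1/c_i}\ln\beta\,\mathcal L^{\mathrm{GR},\gamma}_{ij}\le\kappa\beta^{1/c_i}\ln\beta\,r_{ij}d_{ij}$. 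So the per-arrival ratio becomes
\[
1+\kappa(1+\eta(\gamma+1))\beta^{1/c_{\min}}\ln\beta .
\]

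\textbf{Step 3: dual feasibility (main obstacle).} Local charging, the analogue of Lemma~\ref{le:app_pf_dp_charging_refine}, is pointwise in $\tau$ and transfers unchanged. The problem is sub-additivity, Lemma~\ref{le:pf_dp_additive_refine}. For real durations, the offline configuration intervals $[t_{j_u},b_u)$ are not aligned to the $\gamma$-grid rooted at $t_h$, and the grid of each $j_u$ is rooted at $t_{j_u}$. After shifting each subinterval back to the $t_h$-grid via the monotonicity of Lemma~\ref{le:pf_dp_additive_refine_delta_de}, consecutive shifted pieces can collide in at most one grid point. A subinterval of length $d\ge1$ contains at least $\gamma d$ grid points. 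I would therefore compare $\mathcal T_\gamma[\xi_{2u-1},\xi_{2u})$ with its floor-rounded image $\rho$ on the $t_h$-grid. The number of collisions is at most one per piece, i.e. a fraction about $1/\gamma$ of the points. Since $\tilde\Delta$ is nonincreasing along the grid (Lemma~\ref{le:pf_dp_additive_refine_delta_de}), I would drop the last point of each rounded piece and rescale. This should yield
\[
\sum_u\tilde\Delta^\gamma_{ih}[\xi_{2u-1},\xi_{2u})\le(1+\mathcal O(1/\gamma))\,\tilde\Delta^\gamma_{ih}[t_h,t_h+d_{ih}),
\]
so $\kappa=1+\mathcal O(1/\gamma)$. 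Making the charging of the extra boundary point rigorous is the step I expect to be hardest. As a first attempt, I would assign each piece's lost last point to its first point, using monotonicity and the fact that each piece has at least $\gamma$ points.

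\textbf{Step 4: parameters.} I would take $\gamma^*=\Theta(\ln(\delta\vee D))$, $\eta^*=1/(\gamma^*\ln(\delta\vee D))$, and $\beta^*=1+\delta D\gamma^*\ln(\delta\vee D)$, which lies in $\Theta$. The resulting bound is
\[
(1+\mathcal O(1/\gamma))(1+\mathcal O(1/\ln))\ln\beta=\ln(\delta D)+2\ln\ln(\delta\vee D)+\mathcal O(1),
\]
since $\ln\beta$ picks up $\ln\gamma^*+\ln\ln(\delta\vee D)$. Letting $c_{\min}\to\infty$ completes the argument, and the bounded-$(\delta\vee D)$ case is handled with constant parameters as before.
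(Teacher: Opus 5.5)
Your proposal is correct and follows essentially the paper's route: the same admissible set $\Theta$, $\gamma$-grid duals scaled by $\kappa=\gamma/(\gamma-1)$, pointwise local charging, and shifting each piece to the $t_h$-grid via the monotonicity lemma, with $\gamma\asymp\ln(\delta\vee D)$ and $\eta\asymp 1/\ln^2(\delta\vee D)$. The only difference is ordering: the paper truncates each piece to a length in $(1/\gamma)\mathbb{Z}$ before shifting, whereas you remove the colliding point after shifting; in either version, bound the dropped last point by the \emph{average} of the at least $\gamma-1$ remaining points of its piece, each of which dominates it by monotonicity, since charging it only to the piece's first point would yield a factor of $2$ rather than $\gamma/(\gamma-1)$.
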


The proof of Theorem~\ref{thm:real-duration} follows a very similar streaming line as that in Section~\ref{se:pf_main_tm_refine}, except for Step~2 in Section~\ref{subse:pf_pf_value_LP_2_refine}.

To prove Theorem~\ref{thm:real-duration}, similar to Section~\ref{se:pf_main_tm_refine}, we first give the following capacity-feasibility result, whose proof is the same as that in Section~\ref{se:pf_capa_fea_refine}.

\begin{proposition}[Feasibility of Algorithm~\ref{alg:main_d_realvalue}]\label{prop:capa_fea_realvalue}
Recall that $\Theta$ is the admissible parameter set defined in Definition~\ref{df:capa_fea}. Fix $\eta$ and $\beta$, if $(\beta,\eta)\in\Theta$, then Algorithm~\ref{alg:main_d_realvalue} never violates the server-capacity constraints.
\end{proposition}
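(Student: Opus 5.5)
The proof will reuse the induction over arrivals from Proposition~\ref{prop:capa_fea} verbatim. As in Section~\ref{se:pf_capa_fea_refine}, the only ingredient that changes is the analogue of Lemma~\ref{le:capa_fea_lb_integral}. So the plan is first to establish the following claim: whenever Algorithm~\ref{alg:main_d_realvalue} assigns job $j$ to server $i$, we have $\delta d_{ij}>\Psi(\alpha_{i,j-1\to t_j})$. Given this, we suppose for contradiction that job $j$ is assigned to a server $i$ with $\alpha_{i,j-1\to t_j}=0$, while all earlier decisions were feasible. Then $\delta D\ge\delta d_{ij}>\Psi(0)=\eta(\beta-1)$, which contradicts $(\beta,\eta)\in\Theta$. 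Hence the induction closes.

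To prove the claim, lower bound the $\gamma$-grid loss by its first grid term. Since every term in $\mathcal L_{ij}^{\mathrm{GR},\gamma}$ is nonnegative and $t_j\in\mathcal T_\gamma[t_j,t_j+d_{ij})$, we have
\[
\mathcal L_{ij}^{\mathrm{GR},\gamma}\ge \max_{\ell\in\{0\}\cup[|\tilde{\boldsymbol r}_{i,j,t_j}|]}\left(\tilde r^{(\ell)}_{i,j,t_j}\wedge r_{ij}\right)\Psi(1-\ell/c_i).
\]
Every job whose reward appears in $\tilde{\boldsymbol r}_{i,j,t_j}$ is compatible with $i$ and active at $t_j$, so it arrived within $D$ of $t_j$. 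Assumption~\ref{assumption:local-bounded-heterogeneity} therefore gives $\tilde r^{(\ell)}_{i,j,t_j}\ge r_{ij}/\delta$ for each such $\ell$, and for $\ell=0$ the value is $+\infty$. Because $\delta\ge1$, this yields $\tilde r^{(\ell)}_{i,j,t_j}\wedge r_{ij}\ge r_{ij}/\delta$ for all admissible $\ell$. Now take $\ell=|\tilde{\boldsymbol r}_{i,j,t_j}|=c_i(1-\alpha_{i,j-1\to t_j})$. This gives $\mathcal L_{ij}^{\mathrm{GR},\gamma}\ge (r_{ij}/\delta)\Psi(\alpha_{i,j-1\to t_j})$. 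The assignment rule requires $r_{ij}d_{ij}>\mathcal L_{ij}^{\mathrm{GR},\gamma}$, and dividing by $r_{ij}>0$ gives the claim.

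There is no real obstacle. The only point to check is that the finer grid $\mathcal T_\gamma$ still contains $t_j$ and that all summands are nonnegative, which holds because $\Psi\ge0$ on $[0,1]$. Both facts are immediate from Definition~\ref{df:inspection_time_refine_change}. Notably, the argument does not use integrality of durations anywhere, which is consistent with the statement allowing real-valued $d_{ij}$.
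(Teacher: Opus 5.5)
Your proposal is correct and follows the paper's own argument: the paper proves this proposition by repeating the induction of Proposition~\ref{prop:capa_fea}. The key inequality $\delta d_{ij}>\Psi(\alpha_{i,j-1\to t_j})$ comes, exactly as in Lemma~\ref{le:capa_fea_lb_integral_refine}, from bounding the loss below by its grid term at $t_j$, using $\tilde r^{(\ell)}_{i,j,t_j}\wedge r_{ij}\ge r_{ij}/\delta$, and taking $\ell=|\tilde{\boldsymbol r}_{i,j,t_j}|$.
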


In the remainder of Section~\ref{app:real-duration}, we will proceed the proof of Theorem~\ref{thm:real-duration} in two steps:  Section~\ref{se:pf_value_LP_realvalue} derives a competitive-ratio bound for fixed admissible parameters, and Section~\ref{se:pf_choice_beta_eta_realvalue} chooses $(\gamma,\beta,\eta)$ and completes the proof.

\subsection{A Primal-Dual Bound for Algorithm~\ref{alg:main_d_realvalue} under Fixed Admissible Parameters}\label{se:pf_value_LP_realvalue}
In this subsection, we aim to prove the following performance guarantee for Algorithm~\ref{alg:main_d_realvalue}.
\begin{proposition}\label{prop:pf_value_LP_realvallue}
    Suppose that $(\beta,\eta)\in\Theta$ and $\gamma\ge 2$. Then the competitive ratio of Algorithm~\ref{alg:main_d_realvalue} is upper bounded by\[
        1+\frac{\gamma}{\gamma-1}(1+\eta(\gamma+1))\beta^{1/c_{\min}}\ln \beta.
    \]
\end{proposition}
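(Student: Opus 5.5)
The plan mirrors Section~\ref{se:pf_value_LP_refine}. We define dual variables $\lambda^\gamma_j\coloneqq\max(0,\max_{i}\{r_{ij}d_{ij}-\mathcal L^{\mathrm{GR},\gamma}_{ij}\})$ and $\theta^\gamma_i\coloneqq\kappa\sum_j\tilde\Delta^\gamma_{ij}[t_j,t_j+d_{ij})$. Here $\tilde\Delta^\gamma$ is the analogue of Definition~\ref{def:pathoc-increment_refine} with $\mathcal T$ replaced by $\mathcal T_\gamma$, and $\kappa\coloneqq\gamma/(\gamma-1)$ is a scaling constant. This constant absorbs the misalignment between the $\gamma$-grid of $h$ and the starting points $t_j$ of offline jobs. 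Proposition~\ref{prop:capa_fea_realvalue} gives $\alpha\in[0,1]$, so Eq.~\eqref{eq:pf_psi_increment_bound} applies. Weak duality and Claim~\ref{claim:benchmark} then reduce the result to two facts: a per-arrival objective bound and dual feasibility.

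\textbf{Objective bound.} We repeat Step~1 of Section~\ref{subse:pf_pf_value_LP_1_refine}. Convexity gives
\[
c_i\tilde\Delta^\gamma_{ij}[t_j,t_j+d_{ij})\le\beta^{1/c_i}\ln\beta\Bigl(\mathcal L^{\mathrm{GR},\gamma}_{ij}+\eta\, r_{ij}\,|\mathcal T_\gamma[t_j,t_j+d_{ij})|\Bigr).
\]
Since $|\mathcal T_\gamma[t_j,t_j+d_{ij})|\le\gamma d_{ij}+1\le(\gamma+1)d_{ij}$, the residual is at most $\eta(\gamma+1)r_{ij}d_{ij}$. There is a normalization subtlety: the loss now counts about $\gamma d_{ij}$ grid points, each carrying weight $\Psi$. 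So $\Psi$ must be interpreted per unit of grid. Otherwise $\mathcal L^{\mathrm{GR},\gamma}$ scales by $\gamma$, and the condition $r_{ij}d_{ij}\ge\mathcal L^{\mathrm{GR},\gamma}_{ij}$ still caps it, so no issue arises in this step. Multiplying by $\kappa$ and using $\lambda^\gamma_j\le r_{ij}d_{ij}$ gives $\mathrm{Dual}_j\le[1+\kappa(1+\eta(\gamma+1))\beta^{1/c_{\min}}\ln\beta]\mathrm{ALG}_j$. This is the claimed constant.

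\textbf{Dual feasibility.} It suffices to show $\theta^\gamma_i\ge\sum_{j\in S}\mathcal L^{\mathrm{GR},\gamma}_{ij}$ for every $S\in\mathcal S_i$. Local charging follows Lemma~\ref{le:app_pf_dp_charging_refine} verbatim, since that argument is pointwise in $\tau$. It gives $\sum_{h\in\tilde{\mathcal C}_i(j)}\tilde\Delta^\gamma_{ih}[t_j,b)\ge\mathcal L^{\mathrm{GR},\gamma}_{ij}$, where $b=(t_j+d_{ij})\wedge(t_h+d_{ih})$ and the grid is anchored at $t_j$.

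The main obstacle is aggregation, the replacement for Lemma~\ref{le:pf_dp_additive_refine}. The offline intervals $[\xi_{2u-1},\xi_{2u})$ now have real lengths $\ge1$, so shifting them onto $h$'s grid no longer preserves lengths. My plan is as follows:
\begin{itemize}
\item Extend Lemma~\ref{le:pf_dp_additive_refine_delta_de} to grid-anchored windows. It shows that each $\tilde\Delta^\gamma_{ih}[\xi_{2u-1},\xi_{2u})$ is at most the sum of $\Delta^{\mathrm{Re}}$ over the $h$-grid points in $[\rho_{2u-1},\rho_{2u-1}+\lceil\gamma(\xi_{2u}-\xi_{2u-1})\rceil/\gamma)$, where $\rho$ rounds down to the $h$-grid.
\item Observe that these rounded windows may overlap their successors in at most one grid point each.
\item Note that each window has at least $\gamma$ grid points because its length is $\ge1$.
\item Conclude that every $h$-grid point is covered at most a fraction $1+1/(\gamma-1)=\gamma/(\gamma-1)$ times on average.
\end{itemize}
The average is not pointwise, so I would reassign the single doubly-counted point of each window to that window's own interior. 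Monotonicity in time (Lemma~\ref{le:pf_dp_additive_refine_delta_de}) makes an earlier point dominate a later one, which makes the reassignment valid. This gives $\sum_u\tilde\Delta^\gamma_{ih}[\xi_{2u-1},\xi_{2u})\le\kappa\,\tilde\Delta^\gamma_{ih}[t_h,t_h+d_{ih})$. Summing over $h$ completes feasibility.

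If the reassignment fails, my fallback is to enlarge $\kappa$ to $(\gamma+1)/\gamma$-type constants. These yield the same asymptotics once $\gamma\asymp\ln(\delta\vee D)$ is chosen, which is what produces the extra $\ln\ln$ term in Theorem~\ref{thm:real-duration}.
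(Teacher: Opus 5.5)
Your proposal is correct and follows the paper's proof. It uses the same dual variables scaled by $\gamma/(\gamma-1)$, the same per-arrival bound via $|\mathcal T_\gamma[t_j,t_j+d_{ij})|\le(\gamma+1)d_{ij}$, and the same pointwise local charging. Your aggregation rests on the same two facts: per-grid-point increments are monotone in time, and dropping at most one grid point from a window of at least $\gamma$ points costs a factor $\gamma/(\gamma-1)$. The only difference is the order of operations. The paper first truncates each window to a length in $(1/\gamma)\mathbb{Z}$ (Lemma~\ref{le:app_realval_duration_proof_feasibility_smalllemma}) and then shifts it onto $h$'s grid, where the windows are automatically disjoint (Lemma~\ref{le:pf_dp_additive_refine_realval_1}). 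You shift first and then remove the single point shared with the next window; this is the same bookkeeping, and only windows $u\le k-1$, where the length-$\ge 1$ hypothesis holds, need it.
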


Following the similar lines in Section~\ref{se:pf_value_LP_refine}, we carry out a primal-dual analysis in three steps. First, we construct dual variables $\{\bar\lambda_j,\bar\theta_i\}$ from the trajectory of Algorithm~\ref{alg:main_d_realvalue}. Second, in Proposition~\ref{prop:pf_value_LP_relate_objectivedual_to_alg_refine_realval}, we compare the dual objective value induced by $\{\bar\lambda_j,\bar\theta_i\}$ to the total reward collected by the algorithm. Finally, in Proposition~\ref{prop:pf_value_LP_dual_fea_refine_realval}, we prove that the constructed dual variables $\{\bar\lambda_j,\bar\theta_i\}$ are feasible; weak duality then yields the desired competitive-ratio bound.

\noindent\underline{\bf Construction of dual variables.} 
The dual variables $\{\bar\lambda_j,\bar\theta_i\}$ are constructed in a manner similar to those in Section~\ref{se:pf_value_LP_refine}, with several components modified to reflect the refined loss. For each job $j$ and each server $i$, we set
\[\bar\lambda_j:=\max\left(0,\max_{i\in\mathcal N(j)}
    \left\{
        r_{ij}d_{ij}
        -
       \mathcal L_{ij}^{\mathrm{GR},\gamma}\right\}\right), \qquad \text{and} \qquad 
\bar{\theta}_i:=\sum_{j=1}^{m}\bar\vartheta_{ij},
\]
where $\bar\vartheta_{ij}$ is the incremental component corresponding to the duration when the algorithm processes job $j$. To specify these incremental components, we now introduce the following definition.
\begin{definition}[A Changed Capacity-Induced Refined-Loss Increment]
\label{def:pathoc-increment_refine_realval}
Fix a compatible pair $(i,j)\in E$ and a time interval satisfying $ t_j\le T_1<T_2$, define $\bar\Delta_{ij}[T_1,T_2)\coloneqq$
\begin{align*}
\sum_{\tau\in\mathcal T_{\gamma}[T_1,T_2)}\max_{\{0\} \cup[|\boldsymbol{\tilde r}_{i,j,\tau}|]}\Biggl\{\left(\tilde{r}_{i,j,\tau}^{(\ell)}\wedge r_{ij}\right) \times\Biggl[\Psi\!\left(1-\frac{\ell}{c_i}-\bigl(\alpha_{i,j-1\to\tau}-\alpha_{i,j\to\tau}\bigr)\right)-\Psi\!\left(1-\frac{\ell}{c_i}\right)\Biggr]\Biggr\}.
\end{align*}
For $(i,j)\notin E$, define $\bar\Delta_{ij}[T_1,T_2)=0$ for any $T_1<T_2$.
\end{definition}

The remaining part is the same as that in Section~\ref{se:pf_value_LP_refine}. We then define the incremental components $\bar\vartheta_{ij}$ based on the capacity-induced refined-loss increment:
\[
    \bar\vartheta_{ij}\coloneqq \frac{\gamma}{\gamma-1}\bar\Delta_{ij}[t_j,t_j+d_{ij})
        \cdot \mathbb{I}[\text{Algorithm~\ref{alg:main_d_realvalue} assigns job $j$ to server $i$}]=\frac{\gamma}{\gamma-1}\bar\Delta_{ij}[t_j,t_j+d_{ij}).
\]
Since $\alpha_{i,j\to t}\le \alpha_{i,j-1\to t}$ and $\Psi$ is decreasing, every $\bar\vartheta_{ij}$ is nonnegative. Hence,
$\bar{\lambda}_j\ge 0$ and $\bar{\theta}_i\ge 0$ for all $j$ and $i$.
The resulting variables $\{\bar{\lambda}_j,\bar{\theta}_i\}$ form the candidate dual solution induced by the trajectory of Algorithm~\ref{alg:main_d_realvalue}.

\noindent\underline{\bf Objective bound and feasibility of $\{\bar\lambda_j,\bar\theta_i\}$.}
Let $\widebar{\mathrm{ALG}}$ denote the total reward collected by Algorithm~\ref{alg:main_d_realvalue}, and let $\mathrm{Dual}(\boldsymbol{\bar\lambda},\boldsymbol{\bar\theta})$ denote the dual objective evaluated at the constructed variables, where $\boldsymbol{\bar \lambda}=\{\bar \lambda_j\}_j$ and $\boldsymbol{\bar \theta}=\{\bar \theta_i\}_i$. The following Proposition~\ref{prop:pf_value_LP_relate_objectivedual_to_alg_refine_realval} bounds $\mathrm{Dual}(\boldsymbol{\bar\lambda},\boldsymbol{\bar\theta})$ in terms of $\widebar{\mathrm{ALG}}$, and Proposition~\ref{prop:pf_value_LP_dual_fea_refine_realval} establishes the feasibility for $\{\bar\lambda_j,\bar\theta_i\}$. Their proofs are provided in Sections~\ref{subse:pf_pf_value_LP_1_refine_realval} and Section~\ref{subse:pf_pf_value_LP_2_refine_realval}, respectively.

\begin{proposition}[Objective value]
\label{prop:pf_value_LP_relate_objectivedual_to_alg_refine_realval}
$\mathrm{Dual}(\boldsymbol{\bar{\lambda}},\boldsymbol{\bar{\theta}})
\le
\left[ 1+\frac{\gamma}{\gamma-1}(1+\eta(\gamma+1))\beta^{1/c_{\min}}\ln \beta\right]\cdot\widebar{\mathrm{ALG}}$.
\end{proposition}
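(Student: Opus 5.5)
We follow the per-arrival template of Section~\ref{subse:pf_pf_value_LP_1_refine}. Let $\bar\Gamma\coloneqq 1+\frac{\gamma}{\gamma-1}(1+\eta(\gamma+1))\beta^{1/c_{\min}}\ln\beta$, $\widebar{\operatorname{ALG}}_j$ the reward collected at arrival $j$, and $\widebar{\operatorname{Dual}}_j\coloneqq\bar\lambda_j+\sum_i c_i\bar\vartheta_{ij}$. Since the dual objective and $\widebar{\mathrm{ALG}}$ both decompose over arrivals, it suffices to prove $\widebar{\operatorname{Dual}}_j\le\bar\Gamma\cdot\widebar{\operatorname{ALG}}_j$ for every $j$. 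If $j$ is rejected, then $\bar\lambda_j=0$, and every $\bar\vartheta_{ij}=0$ because $\alpha_{i,j\to\cdot}=\alpha_{i,j-1\to\cdot}$; hence both sides vanish. If $j$ is assigned to $i$, then $\bar\lambda_j\le r_{ij}d_{ij}$, and only $c_i\bar\vartheta_{ij}$ remains to be bounded.

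\textbf{Step 1.} Assignment of $j$ to $i$ gives $\alpha_{i,j\to\tau}=\alpha_{i,j-1\to\tau}-1/c_i$ on $[t_j,t_j+d_{ij})$. Proposition~\ref{prop:capa_fea_realvalue} ensures every argument stays in $[0,1]$. For each grid point $\tau\in\mathcal T_\gamma[t_j,t_j+d_{ij})$ and each admissible $\ell$, apply Eq.~\eqref{eq:pf_psi_increment_bound} with $\alpha=1-\ell/c_i$:
\[
c_i\left[\Psi\!\left(1-\tfrac{\ell+1}{c_i}\right)-\Psi\!\left(1-\tfrac{\ell}{c_i}\right)\right]\le\beta^{1/c_i}\ln\beta\left[\Psi\!\left(1-\tfrac{\ell}{c_i}\right)+\eta\right].
\]
Multiply by $(\tilde r^{(\ell)}_{i,j,\tau}\wedge r_{ij})$, take the max over $\ell$, and sum over the grid. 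This yields
\[
c_i\bar\Delta_{ij}[t_j,t_j+d_{ij})\le\beta^{1/c_i}\ln\beta\left(\mathcal L^{\mathrm{GR},\gamma}_{ij}+\eta\, r_{ij}\,|\mathcal T_\gamma[t_j,t_j+d_{ij})|\right),
\]
where we used $\tilde r^{(\ell)}\wedge r_{ij}\le r_{ij}$.

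\textbf{Step 2.} This is the one new ingredient relative to the integer case, where the grid had exactly $d_{ij}$ points. Here the grid has at most $\lceil\gamma d_{ij}\rceil\le\gamma d_{ij}+1\le(\gamma+1)d_{ij}$ points, using $d_{ij}\ge1$. So the residual term is at most $\eta(\gamma+1)r_{ij}d_{ij}$.

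\textbf{Step 3.} The part that needs care is the loss term. The assignment rule gives only $\mathcal L^{\mathrm{GR},\gamma}_{ij}<r_{ij}d_{ij}$. Combining with Steps 1--2 and the factor $\frac{\gamma}{\gamma-1}$ in $\bar\vartheta_{ij}$,
\[
c_i\bar\vartheta_{ij}\le\tfrac{\gamma}{\gamma-1}\beta^{1/c_i}\ln\beta\,(1+\eta(\gamma+1))\,r_{ij}d_{ij}.
\]
Adding $\bar\lambda_j\le r_{ij}d_{ij}$ and using $c_i\ge c_{\min}$ and $\beta>1$ (so $\beta^{1/c_i}\le\beta^{1/c_{\min}}$) gives $\widebar{\operatorname{Dual}}_j\le\bar\Gamma\, r_{ij}d_{ij}$. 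Summing over $j$ proves the proposition.

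Steps 1 and 3 carry over essentially verbatim from the integer case. The only places requiring attention are the grid cardinality in Step 2 and checking that the capacity arguments remain in $[0,1]$. The $\frac{\gamma}{\gamma-1}$ factor is simply carried along here; it is needed later for dual feasibility, not in this bound.
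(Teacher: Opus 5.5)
Your proposal is correct and follows the paper's own proof. It uses the same per-arrival reduction, applies the convexity bound for $\Psi$ at each grid point and each admissible $\ell$, bounds the grid size by $|\mathcal T_\gamma[t_j,t_j+d_{ij})|\le\lceil\gamma d_{ij}\rceil\le(\gamma+1)d_{ij}$, and uses $\mathcal L^{\mathrm{GR},\gamma}_{ij}\le r_{ij}d_{ij}$ from the assignment rule, carrying the factor $\frac{\gamma}{\gamma-1}$ along unchanged. One detail is left implicit, as it is in the paper: the convexity bound needs $1-\ell/c_i\ge 1/c_i$. This holds because feasibility and monotonicity of $\alpha_{i,j-1\to\tau}$ in $\tau$ give $|\tilde{\boldsymbol r}_{i,j,\tau}|=c_i(1-\alpha_{i,j-1\to\tau})\le c_i-1$ for all $\tau\ge t_j$.
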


\begin{proposition}[Dual feasibility]\label{prop:pf_value_LP_dual_fea_refine_realval}
When $\gamma\ge 2$, the constructed dual variables $\{\bar\lambda_j,\bar\theta_i\}$ are feasible.
\end{proposition}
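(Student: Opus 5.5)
The plan mirrors the proof of Proposition~\ref{prop:pf_value_LP_dual_fea_refine}. By the definition of $\bar\lambda_j$ we have $\bar\lambda_j\ge r_{ij}d_{ij}-\mathcal L^{\mathrm{GR},\gamma}_{ij}$ for every $i\in\mathcal N(j)$. Hence it suffices to show, for each server $i$ and each $S\in\mathcal S_i$, that $\bar\theta_i\ge\sum_{j\in S}\mathcal L^{\mathrm{GR},\gamma}_{ij}$. The argument has the same two steps as before: local charging, then aggregation.

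\textbf{Local charging.} I would first prove the analogue of Lemma~\ref{le:app_pf_dp_charging_refine}:
\[
\sum_{h\in\tilde{\mathcal C}_i(j)}\bar\Delta_{ih}\bigl[t_j,(t_j+d_{ij})\wedge(t_h+d_{ih})\bigr)\ge\mathcal L^{\mathrm{GR},\gamma}_{ij}.
\]
Both sides now sum over the same grid $\mathcal T_\gamma[t_j,\cdot)$, anchored at $t_j$. The proof in Section~\ref{se:app_pf_dp_charging_refine} works pointwise at each fixed $\tau$, so it transfers verbatim: order the $\ell$ top-reward active jobs $h_1<\dots<h_\ell$, and use the telescoping sum of $\Psi$-increments together with the order-statistic comparison \eqref{eq:pf_dp_charging_refine_midstep2}. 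Summing over $j\in S$ reduces the claim to showing, for each $h$ assigned to $i$,
\[
\sum_{u=1}^{w}\bar\Delta_{ih}[\xi_{2u-1},\xi_{2u})\le\frac{\gamma}{\gamma-1}\,\bar\Delta_{ih}[t_h,t_h+d_{ih}),
\]
where $\xi_{2u-1}=t_{j_u}$ and $\xi_{2u}=b_u$. These intervals are disjoint and increasing, and each has length $\ge1$ except possibly the last.

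\textbf{Aggregation (main obstacle).} Durations are now real, so the grids anchored at the $t_{j_u}$ are misaligned with $h$'s grid $\mathcal T_\gamma[t_h,t_h+d_{ih})$. After realignment, adjacent intervals can share a grid point, which is exactly where the factor $\frac{\gamma}{\gamma-1}$ comes from.

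First I would establish the $\gamma$-grid analogue of Lemma~\ref{le:pf_dp_additive_refine_delta_de}: $\bar\Delta_{ih}[t_1,t_1+d)\ge\bar\Delta_{ih}[t_2,t_2+d)$ for $t_h\le t_1\le t_2$. The same proof applies, since the active sets shrink and the $\alpha$-increment is weakly larger earlier. In particular, writing $g(\tau)$ for the single-point summand of $\bar\Delta_{ih}$, $g$ is nonincreasing on $h$'s grid.

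Next, round each left endpoint down to $h$'s grid, $\rho_{2u-1}=t_h+\lfloor\gamma(\xi_{2u-1}-t_h)\rfloor/\gamma$, keeping each length fixed. By monotonicity each term $\bar\Delta_{ih}[\xi_{2u-1},\xi_{2u})$ is bounded by the sum of $g$ over a grid set $G_u\subseteq\mathcal T_\gamma[t_h,t_h+d_{ih})$ of $\lceil\gamma(\xi_{2u}-\xi_{2u-1})\rceil$ consecutive points. Consecutive sets $G_u,G_{u+1}$ overlap in at most one point, namely the last point of $G_u$. Each $G_u$ with $u<w$ has length $\ge1$, so it contains at least $\gamma\ge2$ points, and hence at least $\gamma-1$ points preceding that shared point. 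Because $g$ is nonincreasing, the duplicated value is at most the average of those preceding values. It can therefore be absorbed at multiplicative cost $1+\frac1{\gamma-1}=\frac{\gamma}{\gamma-1}$ against the points of $G_u$, and each such point is charged at most once.

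Summing over $u$ then gives $\sum_u\sum_{G_u}g\le\frac{\gamma}{\gamma-1}\sum_{\tau\in\bigcup_u G_u}g(\tau)\le\frac{\gamma}{\gamma-1}\bar\Delta_{ih}[t_h,t_h+d_{ih})=\bar\vartheta_{ih}$, which closes dual feasibility. The delicate points are checking the integer-count bookkeeping for $\lceil\cdot\rceil$ with non-integer lengths, and confirming that the last interval, which may be short, never needs absorption.
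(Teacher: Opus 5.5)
Your proposal is correct and follows the paper's route. Local charging transfers to the $\gamma$-grid, and aggregation uses the shift-monotonicity of $\bar\Delta_{ih}$, rounds left endpoints down to $h$'s grid, and pays $\frac{\gamma}{\gamma-1}$ for at most one extra grid point per non-final interval, which contains at least $\gamma$ points because its length is at least $1$. The difference is only the order of steps: the paper first truncates each non-final interval to a length in $(1/\gamma)\mathbb{Z}$ (Lemma~\ref{le:app_realval_duration_proof_feasibility_smalllemma}) and then packs the aligned grids disjointly (Lemma~\ref{le:pf_dp_additive_refine_realval_1}), whereas you align first and absorb the single point shared by consecutive $G_u, G_{u+1}$; also, the charging set should be $\bar{\mathcal C}_i(j)$, not $\tilde{\mathcal C}_i(j)$.
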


Combining Claim~\ref{claim:benchmark}, Propositions~\ref{prop:pf_value_LP_relate_objectivedual_to_alg_refine_realval} and ~\ref{prop:pf_value_LP_dual_fea_refine_realval}, we establish the desired competitive-ratio bound: \begin{align*}
    \left[ 1+\frac{\gamma}{\gamma-1}(1+\eta(\gamma+1))\beta^{1/c_{\min}}\ln \beta\right]\cdot \widebar{\operatorname{ALG}}\ge\mathrm{Dual}(\boldsymbol{\bar{\lambda}},\boldsymbol{\bar{\theta}})\ge P_{\mathrm{OPT}}\ge \operatorname{OPT}.
\end{align*}
This finishes the proof of Proposition~\ref{prop:pf_value_LP_realvallue}.

\subsubsection{Proof of Proposition~\ref{prop:pf_value_LP_relate_objectivedual_to_alg_refine_realval}} \label{subse:pf_pf_value_LP_1_refine_realval}

The proof of Proposition~\ref{prop:pf_value_LP_relate_objectivedual_to_alg_refine_realval} follows the similar lines in Section~\ref{subse:pf_pf_value_LP_1_refine}. Specifically, let $\widebar{\operatorname{ALG}}_j=\sum_{i\in \mathcal N(j)}r_{ij}d_{ij}\cdot\mathbb{I}[\text{Algorithm~\ref{alg:main_d_realvalue} assigns $j$ to $i$}]$, $\widebar{\operatorname{Dual}}_j=\bar\lambda_j+\sum_ic_i\bar\vartheta_{ij}$, and $\widebar\Gamma=1+\frac{\gamma}{\gamma-1}(1+\eta(\gamma+1))\beta^{1/c_{\min}}\ln \beta$. It suffices to show that, for every arriving job $j$, we have
\begin{equation}\label{eq:pf_competitive_ratio_dual_alg_refine_realval}
    \widebar{\operatorname{Dual}}_j\le \widebar\Gamma\cdot \widebar{\operatorname{ALG}}_j.
\end{equation}

Following the same streaming line as that in Section~\ref{subse:pf_pf_value_LP_1_refine}, we still only need to consider the case when Algorithm~\ref{alg:main_d_realvalue} assigns job $j$ to some server $i$. In this case, we obtain that \begin{align}
    \nonumber c_i\bar{\vartheta}_{ij}&  \le \frac{\gamma}{\gamma-1}\left(\beta^{1/c_i}\ln\beta\cdot\mathcal L_{ij}^{\mathrm{GR},\gamma}+\eta\beta^{1/c_i}\ln\beta \sum_{\tau\in \mathcal{T}_{\gamma}[t_j,t_j+d_{ij})}\max_{\{0\} \cup[|\boldsymbol{\tilde r}_{i,j,\tau}|]}\left\{\left(\tilde{r}_{i,j,\tau}^{(\ell)}\wedge r_{ij}\right)\right\}\right)\\
        & \le \frac{\gamma}{\gamma-1}\left(\beta^{1/c_i}\ln\beta\cdot\mathcal L_{ij}^{\mathrm{GR},\gamma}+\eta(\gamma+1)\beta^{1/c_i}\ln\beta r_{ij}d_{ij}\right),
\end{align}
where the last inequality due to the fact that $|\mathcal{T}_{\gamma}[t_j,t_j+d_{ij})|\le (\gamma+1)d_{ij}$.

Moreover, since $\mathcal L_{ij}^{\mathrm{GR},\gamma}\le r_{ij}d_{ij}$ when Algorithm~\ref{alg:main_d_realvalue} assigns job $j$ to some server $i$, we can conclude that \begin{align*}
    \widebar{\operatorname{Dual}}_j=\bar{\lambda}_j+c_i\bar{\vartheta}_{ij} &\le
    r_{ij}d_{ij}+\frac{\gamma}{\gamma-1}\beta^{1/c_i}\ln\beta\left(\mathcal L_{ij}^{\mathrm{GR},\gamma}+\eta(\gamma+1) r_{ij}d_{ij}\right)\\
    &\le \left[1+\frac{\gamma}{\gamma-1}(1+\eta(\gamma+1))\beta^{1/c_{\min}}\ln \beta\right]r_{ij}d_{ij}=\widebar\Gamma r_{ij}d_{ij}=\widebar\Gamma\cdot\widebar{\operatorname{ALG}}_j,
\end{align*}
which completes the proof of Proposition~\ref{prop:pf_value_LP_relate_objectivedual_to_alg_refine_realval}. \Halmos

\subsubsection{Proof of Proposition~\ref{prop:pf_value_LP_dual_fea_refine_realval}} \label{subse:pf_pf_value_LP_2_refine_realval}
Following the similar discussions in Section~\ref{subse:pf_pf_value_LP_2_refine}, to establish the dual feasibility of $\{\bar\lambda_j,\bar\theta_i\}$, it suffices to prove that, for every server $i$ and every feasible configuration $S\in\mathcal S_i$, it holds that \begin{equation}\label{eq:dual_fea_thetai_lb_refine_realval}
    \bar\theta_i\geq \sum_{j\in S}\mathcal L_{ij}^{\mathrm{GR},\gamma}.
\end{equation}

Fix a server $i$ and a feasible configuration $S\in\mathcal S_i$, we prove Eq.~\eqref{eq:dual_fea_thetai_lb_refine_realval}. The proof follows a similar two-step argument as in  Section~\ref{subse:pf_pf_value_LP_2_refine}.

\noindent\underline{\bf Step 1: Local charging.}
Following Section~\ref{subse:pf_pf_value_LP_2_refine}, we first define a charging set \[
    \bar{\mathcal C}_i(j)\coloneqq \{h: \text{Algorithm~\ref{alg:main_d_realvalue} assigns } h \text{ to } i,h< j, t_j<t_h+d_{ih} \}.
\]
Then, we obtain the following lemma.

\begin{lemma}[Local charging]\label{le:app_pf_dp_charging_refine_realval}
    Fix a server $i$ and a job $j\in S\subseteq \mathcal{S}_i$, we have that \begin{align*}
        \sum_{h\in \bar{\mathcal C}_i(j)}\bar \Delta_{ih}[t_j,\left(t_j+d_{ij}\right)\wedge\left(t_{h}+d_{ih}\right)\}) \ge \mathcal L_{ij}^{\mathrm{GR},\gamma}.
    \end{align*}
\end{lemma}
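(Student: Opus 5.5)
The argument is the proof of Lemma~\ref{le:app_pf_dp_charging_refine} with the unit grid $\mathcal T[\cdot)$ replaced by the $\gamma$-grid $\mathcal T_\gamma[\cdot)$. Both $\mathcal L_{ij}^{\mathrm{GR},\gamma}$ and $\bar\Delta_{ih}$ are sums over grid points of a per-point maximum, so the plan is to reduce the statement to a pointwise inequality at each $\tau\in\mathcal T_\gamma[t_j,t_j+d_{ij})$ and then reuse the order-statistic telescoping argument verbatim.

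\textbf{Step 1: reduction to grid points.} Fix $h\in\bar{\mathcal C}_i(j)$. The interval $[t_j,(t_j+d_{ij})\wedge(t_h+d_{ih}))$ starts at $t_j$, so its $\gamma$-grid is exactly the set of points of $\mathcal T_\gamma[t_j,t_j+d_{ij})$ lying before $t_h+d_{ih}$. For grid points $\tau\ge t_h+d_{ih}$ we have $\alpha_{i,h-1\to\tau}=\alpha_{i,h\to\tau}$, so the corresponding summand of $\bar\Delta_{ih}$ would be zero anyway. Hence each $\bar\Delta_{ih}[t_j,\cdot)$ may be written as a sum over the whole of $\mathcal T_\gamma[t_j,t_j+d_{ij})$. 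Exchanging the sums over $h$ and $\tau$, it suffices to prove the analogue of Eq.~\eqref{eq:pf_dp_charging_refine_sufficient_form_1} for each $\tau\in\mathcal T_\gamma[t_j,t_j+d_{ij})$: the sum over $h\in\bar{\mathcal C}_i(j)$ of the $h$-th per-point increment dominates $\max_\ell(\tilde r^{(\ell)}_{i,j,\tau}\wedge r_{ij})\Psi(1-\ell/c_i)$.

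\textbf{Step 2: restrict to jobs active at $\tau$.} Define $\bar{\mathcal C}_i(j,\tau)$ as the set of $h<j$ assigned to $i$ with $\tau<t_h+d_{ih}$. This set is contained in $\bar{\mathcal C}_i(j)$, since $\tau\ge t_j$. For these jobs $\alpha_{i,h-1\to\tau}-\alpha_{i,h\to\tau}=1/c_i$, and the remaining summands are nonnegative and can be dropped.

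\textbf{Step 3: telescoping.} Fix $\ell\le|\tilde{\boldsymbol r}_{i,j,\tau}|$ and pick the $\ell$ active jobs $h_1<\dots<h_\ell$ whose rewards are at least $\tilde r^{(\ell)}_{i,j,\tau}$. For job $h_q$, choose index $q-1$ in its maximum. This is admissible because $h_1,\dots,h_{q-1}$ arrived earlier and are active at $\tau$, which gives $q-1\le|\tilde{\boldsymbol r}_{i,h_q,\tau}|$. The same fact gives $\tilde r^{(q-1)}_{i,h_q,\tau}\wedge r_{ih_q}\ge\tilde r^{(\ell)}_{i,j,\tau}$. Summing the increments $\Psi(1-q/c_i)-\Psi(1-(q-1)/c_i)$ over $q=1,\dots,\ell$ telescopes to $\Psi(1-\ell/c_i)$, using $\Psi(1)=0$. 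Finally, $\tilde r^{(\ell)}_{i,j,\tau}\ge\tilde r^{(\ell)}_{i,j,\tau}\wedge r_{ij}$.

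\textbf{Main obstacle.} The only genuinely new point is Step 1: checking that the $\gamma$-grids anchored at $t_j$ for the truncated intervals are sub-grids of $\mathcal T_\gamma[t_j,t_j+d_{ij})$, and handling the endpoint truncation. No integrality of durations is used in this lemma. The real-valued difficulty, namely misaligned grids and the factor $\gamma/(\gamma-1)$, arises only in the subsequent sub-additivity step, not here.
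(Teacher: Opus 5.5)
Your proposal is correct and follows the paper's route: the paper proves this lemma by rerunning the proof of Lemma~\ref{le:app_pf_dp_charging_refine} with $\mathcal T$ replaced by $\mathcal T_\gamma$, which is the same pointwise reduction, restriction to jobs active at $\tau$, and order-statistic telescoping that you describe. Your Step~1 spells out bookkeeping the paper leaves implicit: both grids are anchored at $t_j$, and the summands vanish for $\tau\ge t_h+d_{ih}$.
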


The proof of Lemma~\ref{le:app_pf_dp_charging_refine_realval} is the same as that of Lemma~\ref{le:app_pf_dp_charging_refine} by changing $\mathcal T$ to $\mathcal T_{\gamma}$, so we omit it here. Applying Lemma~\ref{le:app_pf_dp_charging_refine_realval} to every $j\in S$ and summing over $j\in S$ yields 
\begin{align}
    \sum_{j\in S}\mathcal L_{ij}^{\mathrm{GR},\gamma}&\le\sum_{j\in S}\sum_{h\in\bar{\mathcal C}_i(j)}\bar\Delta_{ih}\left[t_j,\left(t_j+d_{ij}\right)\wedge\left(t_{h}+d_{ih}\right)\right)\nonumber \\
    &\le\sum_{h=1}^m\sum_{j\in S:\bar{\mathcal C}_i(j)\ni h} \bar\Delta_{ih}\left[t_j,\left(t_j+d_{ij}\right)\wedge\left(t_{h}+d_{ih}\right)\right). \label{eq:local-charging-sum_refine_realval}
\end{align}

\noindent\underline{\bf Step 2: Aggregation via sub-additivity.}
As discussed in Step~2 in Section~\ref{subse:pf_pf_value_LP_2_refine}, given Eq.~\eqref{eq:local-charging-sum_refine_realval}, the dual-feasibility constraints follow once we establish that
\begin{equation}\label{eq:pf_dp_additive_111_refine_realval}
    \sum_{j\in S:\bar{\mathcal C}_i(j)\ni h} \bar\Delta_{ih}\left[t_j,\left(t_j+d_{ij}\right)\wedge\left(t_{h}+d_{ih}\right)\right)\le \bar\vartheta_{ih},\qquad \forall h\in[m].
\end{equation}

It therefore remains to establish Eq.~\eqref{eq:pf_dp_additive_111_refine_realval}, for which we introduce the following weak sub-additivity property. Its proof is provided in Section~\ref{se:app_pf_dp_additive_refine_realval}.
\begin{lemma}[Weak sub-additivity of refined-loss increments]\label{le:pf_dp_additive_refine_realval}
    Fix job $h$ and any sequence \[
        t_h\le \xi_1<\xi_2\le \xi_3<\xi_4\le\dots\le\xi_{2k-1}<\xi_{2k}\le t_h+d_{ih}.
    \]
    Suppose that $\gamma\ge 2$ and $\xi_{2u}-\xi_{2u-1}\ge 1$ for any $u\in[k-1]$, we have that \begin{align*}
        \bar\vartheta_{ih}=\frac{\gamma}{\gamma-1}\bar\Delta_{ih}[t_h,t_h+d_{ih}) \ge \sum_{u=1}^{k}\bar\Delta_{ih}[\xi_{2u-1},\xi_{2u}).
    \end{align*}
\end{lemma}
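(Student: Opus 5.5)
The plan mirrors the proof of Lemma~\ref{le:pf_dp_additive_refine}, with one extra step to control the overlaps created by the finer $\gamma$-grid. Write
\[
\bar\Delta^{\operatorname{Re}}_{i,h,\tau}\coloneqq\bar\Delta_{ih}[\tau,\tau+1/\gamma),
\]
which is the single-point summand in Definition~\ref{def:pathoc-increment_refine_realval}. Then for every $t_h\le T_1<T_2$,
\[
\bar\Delta_{ih}[T_1,T_2)=\sum_{\tau\in\mathcal T_\gamma[T_1,T_2)}\bar\Delta^{\operatorname{Re}}_{i,h,\tau}.
\]
The first ingredient is pointwise monotonicity: $\bar\Delta^{\operatorname{Re}}_{i,h,\tau}\ge\bar\Delta^{\operatorname{Re}}_{i,h,\tilde\tau}$ whenever $t_h\le\tau\le\tilde\tau$. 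This is exactly inequality~\eqref{eq:pf_dp_additive_refine_delta_de_4}, whose proof in Lemma~\ref{le:pf_dp_additive_refine_delta_de} is pointwise in $(\tau,\tilde\tau)$ and does not depend on the grid. Summing over a translated grid gives the $\gamma$-analogue of Lemma~\ref{le:pf_dp_additive_refine_delta_de}: for every fixed length $d$, the quantity $\bar\Delta_{ih}[t,t+d)$ is nonincreasing in $t\ge t_h$.

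\emph{Step 1 (snap to the lattice of $h$).} For each $u\in[k]$ set $L_u\coloneqq\xi_{2u}-\xi_{2u-1}$ and
\[
\rho_{2u-1}\coloneqq t_h+\lfloor\gamma(\xi_{2u-1}-t_h)\rfloor/\gamma,\qquad \rho_{2u}\coloneqq\rho_{2u-1}+L_u .
\]
By the monotonicity above, $\bar\Delta_{ih}[\xi_{2u-1},\xi_{2u})\le\bar\Delta_{ih}[\rho_{2u-1},\rho_{2u})$. Each shifted grid $\mathcal T_\gamma[\rho_{2u-1},\rho_{2u})$ consists of points of the lattice $t_h+\mathbb Z_{\ge0}/\gamma$, and all of them lie in $\mathcal T_\gamma[t_h,t_h+d_{ih})$, since $\rho_{2u}\le\xi_{2u}\le t_h+d_{ih}$.

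\emph{Step 2 (bound the overlaps).} Since $\rho_{2u}\le\xi_{2u}\le\xi_{2u+1}<\rho_{2u+1}+1/\gamma$, two consecutive shifted grids share at most one lattice point. That point is the last grid point of interval $u$ and the first of interval $u+1$. Non-consecutive grids are disjoint, because every interval except possibly the last has length $L_u\ge1$.

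For $u\le k-1$, interval $u$ has length at least $1$ and therefore contains at least $\gamma$ grid points. Its last point $p_u$ lies to the right of all of them, so by monotonicity
\[
\bar\Delta^{\operatorname{Re}}_{i,h,p_u}\le \frac1\gamma\,S_u,\qquad\text{where } S_u\coloneqq\sum_{\tau\in\mathcal T_\gamma[\rho_{2u-1},\rho_{2u})}\bar\Delta^{\operatorname{Re}}_{i,h,\tau}.
\]

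\emph{Step 3 (combine).} Let $X\coloneqq\sum_u S_u$. Every lattice point is counted at most twice, and the second counts occur only at the shared points $p_u$. Using nonnegativity of every term,
\[
X\le \bar\Delta_{ih}[t_h,t_h+d_{ih})+\sum_{u<k}\bar\Delta^{\operatorname{Re}}_{i,h,p_u}\le \bar\Delta_{ih}[t_h,t_h+d_{ih})+X/\gamma .
\]
For $\gamma\ge2$ this rearranges to $X\le\frac{\gamma}{\gamma-1}\bar\Delta_{ih}[t_h,t_h+d_{ih})=\bar\vartheta_{ih}$. Combined with Step~1, this gives the lemma.

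The main obstacle is Step~2. The overlap count requires care because $\gamma$ need not be an integer, and the claim that interval $u$ contains at least $\gamma$ points (namely $\lceil\gamma L_u\rceil\ge\gamma$) must be checked precisely. The key idea there is to charge each duplicated point to its own interval via monotonicity, rather than to compare it with the whole interval $[t_h,t_h+d_{ih})$.
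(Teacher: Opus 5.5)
Your proof is correct and uses the same ingredients as the paper's. These are pointwise left-shift monotonicity of the single-point increments, snapping each interval to the lattice $t_h+\mathbb Z_{\ge0}/\gamma$, and bounding at most one extra grid point per non-final interval by the average of that interval's points, which produces the factor $\gamma/(\gamma-1)$. The only difference is the order of operations: the paper first truncates each of the first $k-1$ intervals to a length in $(1/\gamma)\mathbb Z_{\ge1}$ (Lemma~\ref{le:app_realval_duration_proof_feasibility_smalllemma}) so that the snapped grids are disjoint (Lemma~\ref{le:pf_dp_additive_refine_realval_1}), whereas you snap without truncating and absorb the single shared points through $X\le\bar\Delta_{ih}[t_h,t_h+d_{ih})+X/\gamma$.
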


Next, we apply Lemma~\ref{le:pf_dp_additive_refine_realval} to prove Eq.~\eqref{eq:pf_dp_additive_111_refine_realval}. For notational convenience, we fix $h\in[m]$, and write \[
        \{j_1,\dots,j_w\}=\{j\in S:h\in\bar{\mathcal C}_i(j)\},\qquad j_1<j_2<\dots<j_w.
\]
Noting that when $w=0$, Eq.~\eqref{eq:pf_dp_additive_111_refine_realval} holds directly. Suppose that $w\ge 1$, we define $b_u\coloneqq \left(t_{j_u}+d_{ij_u}\right)\wedge\left(t_h+d_{ih}\right)$ for $u\in[w]$, and show that $\{t_{j_1},b_{j_1},\dots,t_{j_w},b_{j_w}\}$ satisfies the conditions in Lemma~\ref{le:pf_dp_additive_refine_realval}.
As in Step~2 in Section~\ref{subse:pf_pf_value_LP_2}, it holds that $b_{j_u}=t_{j_u}+d_{i{j_u}}$ for any $u\in[w-1]$, and we have that $
    t_h\le t_{j_1}<b_{j_1}\le\dots\le t_{j_w}<b_{j_w}\le t_h+d_{ih}$.
Since $d_{ij}\ge 1$ for any $i,j$, for each $u\in[w-1]$, it holds that $    b_{j_u}-t_{j_u}=t_{j_u}+d_{ij_{u}}-t_{j_u}=d_{ij_{u}}\ge 1$.
Therefore, we can apply Lemma~\ref{le:pf_dp_additive_refine_realval} to the time points $\{t_{j_1},b_{j_1},\dots,t_{j_w},b_{j_w}\}$,
which yields that \begin{align*}
    \bar\vartheta_{ih}=\frac{\gamma}{\gamma-1}\bar\Delta_{ih}[t_h,t_h+d_{ih})\ge \sum_{j\in S:\bar{\mathcal C}_i(j)\ni h}\bar\Delta_{ih}[t_j,\left(t_j+d_{ij}\right)\wedge\left(t_{h}+d_{ih}\right)).
\end{align*}
This proves Eq.~\eqref{eq:pf_dp_additive_111_refine_realval}, and we finish the proof of Proposition~\ref{prop:pf_value_LP_dual_fea_refine_realval}. \Halmos

\subsection{Putting Things Together}\label{se:pf_choice_beta_eta_realvalue}
We now complete the proof of Theorem~\ref{thm:real-duration}. When $\ln (\max\{\delta,D\})\ge e-1$, we define $\bar L\coloneqq 1+\ln(\delta\vee D)$ and take
\[
    \gamma=\gamma^*= \bar L,\qquad
    \eta=\eta^*=\bar L^{-2},\qquad
    \beta=\beta^*=1+\frac{\delta D}{\eta^*},
\]
so that \((\beta,\eta)\in\Theta\) and \(\gamma\ge 2\). Hence, Proposition~\ref{prop:capa_fea_realvalue} guarantees that Algorithm~\ref{alg:main_d_realvalue} is capacity feasible. Since \[
    \ln\beta=\ln(\delta D)+2\ln\bar L+\mathcal O(1), \qquad \text{and} \qquad \frac{\gamma}{\gamma-1}\left(1+\eta(\gamma+1)\right)
    =\frac{\bar L}{\bar L-1}+\frac{\bar L+1}{\bar L(\bar L-1)}
    =1+\mathcal O\left(\frac1{\bar L}\right),
\]
by Proposition~\ref{prop:pf_value_LP_realvallue}, as $c_{\min}\to\infty$, the competitive ratio of Algorithm~\ref{alg:main_d_realvalue} is upper bounded by \[
    1+\frac{\gamma}{\gamma-1}(1+\eta(\gamma+1))\beta^{1/c_{\min}}\ln \beta
    \le\ln(\delta D)+2\ln\bar L+\mathcal O(1)=\ln(\delta D)+2\ln\ln(\delta\vee D)+\mathcal O(1),
\]
where the first inequality due to $\ln(\delta D)\le 2\bar L$.

We next consider the remaining case where $\bar L=1+\ln(\delta\vee D)<e$. In this regime, we use a constant-regime parameter choice. We set
\[ \gamma=2,\qquad
\beta=\beta^*=2(\delta D+1),
\qquad \text{and}\qquad 
\eta=\eta^*=1,
\]
so that $(\beta,\eta)\in\Theta$.
By Proposition~\ref{prop:pf_value_LP_realvallue}, the competitive ratio of Algorithm~\ref{alg:main_d_realvalue} is upper bounded by
$1+2(1+3\eta)\beta^{1/c_{\min}}\ln\beta$.
Letting $c_{\min}\to\infty$, since $\ln(\delta\vee D)<e-1$, we can conclude that Algorithm~\ref{alg:main_d_realvalue} is asymptotically $\mathcal O(1)$-competitive in this case.
\Halmos\endproof

\subsection{Proof of Lemma~\ref{le:pf_dp_additive_refine_realval}}\label{se:app_pf_dp_additive_refine_realval}
To proof of Lemma~\ref{le:pf_dp_additive_refine_realval} can be derived by combining Lemma~\ref{le:pf_dp_additive_refine_realval_1} and Lemma~\ref{le:app_realval_duration_proof_feasibility_smalllemma}, which we present in the following part.
\begin{lemma}\label{le:app_realval_duration_proof_feasibility_smalllemma}
    For server $i$, job $h$, and any time interval satisfying $[a,b)\subseteq[t_h,t_h+d_{ih})$ and $b-a\ge 1$, we have \[
        \bar\Delta_{ih}[a,b)\le \frac{\gamma}{\gamma-1}\bar\Delta_{ih}[a,a+\lfloor (b-a)\gamma\rfloor/\gamma)
    \]
\end{lemma}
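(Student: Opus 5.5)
The plan is to reduce the inequality to a pointwise monotonicity property of the summands of $\bar\Delta_{ih}$ along the $\gamma$-grid, followed by a counting argument.

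\textbf{Pointwise form and counting.} For $\tau\ge t_h$, let $g(\tau)$ denote the $\tau$-summand in Definition~\ref{def:pathoc-increment_refine_realval}, so that $\bar\Delta_{ih}[T_1,T_2)=\sum_{\tau\in\mathcal T_\gamma[T_1,T_2)}g(\tau)$ with every $g(\tau)\ge 0$. Set $N\coloneqq|\mathcal T_\gamma[a,b)|=\lceil (b-a)\gamma\rceil$ and $M\coloneqq\lfloor (b-a)\gamma\rfloor$. Both grids start at $a$ with step $1/\gamma$, so $\mathcal T_\gamma[a,a+M/\gamma)$ consists exactly of the first $M$ points $a,a+1/\gamma,\dots,a+(M-1)/\gamma$ of $\mathcal T_\gamma[a,b)$. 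Since $N\le M+1$, at most one extra point remains, namely $\tau^\dagger=a+M/\gamma$, and it is present only when $(b-a)\gamma\notin\mathbb Z$. If it is absent, the two sides differ only by the factor $\gamma/(\gamma-1)\ge1$ and there is nothing to prove. So the task is to show $g(\tau^\dagger)\le \frac1{\gamma-1}\sum_{l=0}^{M-1}g(a+l/\gamma)$.

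\textbf{Monotonicity of the summands.} I will show that $g$ is nonincreasing on $[t_h,\infty)$: for $t_h\le\tau\le\tilde\tau$ we have $g(\tau)\ge g(\tilde\tau)$. This is exactly the per-point inequality~\eqref{eq:pf_dp_additive_refine_delta_de_4} established inside the proof of Lemma~\ref{le:pf_dp_additive_refine_delta_de}. That argument never used the grid $\mathcal T$; it used only three facts:
\begin{itemize}
\item[(i)] the active set shrinks in time, $\tilde{\mathcal C}_i(h,\tilde\tau)\subseteq\tilde{\mathcal C}_i(h,\tau)$, so each order statistic satisfies $\tilde r^{(\ell)}_{i,h,\tau}\ge\tilde r^{(\ell)}_{i,h,\tilde\tau}$ and the range of admissible $\ell$ only grows at the earlier time;
\item[(ii)] the capacity drop $\alpha_{i,h-1\to\cdot}-\alpha_{i,h\to\cdot}$ is $1/c_i$ before $t_h+d_{ih}$ and $0$ afterwards, hence nonincreasing in time;
\item[(iii)] $\Psi$ is decreasing.
\end{itemize}
So I will simply invoke that argument verbatim for arbitrary real $\tau\le\tilde\tau$. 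This is the only step needing care, and I expect it to go through unchanged because real-valued durations play no role in it.

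\textbf{Averaging and conclusion.} Because $\tau^\dagger$ exceeds every point of the truncated grid, monotonicity gives $g(\tau^\dagger)\le g(a+l/\gamma)$ for each $l<M$. Averaging over these $M$ points yields
\[
g(\tau^\dagger)\le\frac1M\sum_{l=0}^{M-1}g(a+l/\gamma).
\]
It then suffices to check $M\ge\gamma-1$. Since $b-a\ge1$, we have $M=\lfloor(b-a)\gamma\rfloor\ge\lfloor\gamma\rfloor>\gamma-1$, and $M\ge1$ holds whenever $\gamma\ge1$. Therefore
\[
\bar\Delta_{ih}[a,b)\le\Bigl(1+\frac1M\Bigr)\bar\Delta_{ih}[a,a+M/\gamma)\le\frac{\gamma}{\gamma-1}\bar\Delta_{ih}\bigl[a,a+\lfloor(b-a)\gamma\rfloor/\gamma\bigr),
\]
which is the claim.
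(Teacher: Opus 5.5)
Your proposal is correct and follows essentially the same route as the paper: pointwise monotonicity of the grid summands, which the paper obtains from Lemma~\ref{le:pf_dp_additive_refine_delta_de_realval} with $d=1/\gamma$ and you take directly from the per-point inequality in the proof of Lemma~\ref{le:pf_dp_additive_refine_delta_de}. The rest also matches: you bound the single leftover grid point $a+\lfloor (b-a)\gamma\rfloor/\gamma$ by the average of the first $M$ summands and use $M\ge\gamma-1$, and your count $|\mathcal T_\gamma[a,b)|=\lceil (b-a)\gamma\rceil$ just makes the paper's inclusion $\mathcal T_\gamma[a,b)\subseteq\mathcal T_\gamma[a,a+M/\gamma)\cup\{a+M/\gamma\}$ explicit.
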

\proof{Proof.}
    We first present a decreasing property of $\bar\Delta_{ih}[a,b)$. Since its proof is the same as that in Section~\ref{se:app_pf_dp_additive_refine_delta_de}, we omit it here.
    \begin{lemma}\label{le:pf_dp_additive_refine_delta_de_realval}
        Fix any server $i$, job $h$ and a time interval length $d$. For any time interval satisfying $t_2\ge t_1\ge t_h$, we have that $
        \bar\Delta_{ih}[t_1,t_1+d)\ge \bar\Delta_{ih}[t_2,t_2+d)$.
    \end{lemma}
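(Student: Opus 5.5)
The plan is to mirror the proof of Lemma~\ref{le:pf_dp_additive_refine_delta_de} term by term, with the unit grid $\mathcal T$ replaced by the $\gamma$-grid $\mathcal T_\gamma$. The first step is to pair grid points. Set $s\coloneqq t_2-t_1\ge 0$. By Definition~\ref{df:inspection_time_refine_change}, $\mathcal T_\gamma[t_1,t_1+d)=\{t_1+l/\gamma: 0\le l/\gamma<d\}$ and $\mathcal T_\gamma[t_2,t_2+d)=\{t_2+l/\gamma:0\le l/\gamma<d\}$ use the same index set of $l$. Hence $\tau\mapsto\tilde\tau\coloneqq\tau+s$ is a bijection between the two grids.

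Because every summand of $\bar\Delta_{ih}$ (Definition~\ref{def:pathoc-increment_refine_realval}) is nonnegative, it then suffices to prove the pointwise inequality~\eqref{eq:pf_dp_additive_refine_delta_de_4} for each pair $(\tau,\tilde\tau)$ with $t_h\le\tau\le\tilde\tau$. This pointwise inequality involves no grid structure, so the argument of Section~\ref{se:app_pf_dp_additive_refine_delta_de} should transfer verbatim. It has three ingredients.
\begin{enumerate}
\item If $\tilde\tau\ge t_h+d_{ih}$, then $\alpha_{i,h-1\to\tilde\tau}=\alpha_{i,h\to\tilde\tau}$, so the right-hand term is $0$ and the inequality is trivial.
\item Otherwise, the set of jobs assigned to $i$ before $h$ that are still active at $\tilde\tau$ is contained in the corresponding set at $\tau$, since a job active at the later time is active at the earlier one. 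This containment gives $|\tilde{\boldsymbol r}_{i,h,\tilde\tau}|\le|\tilde{\boldsymbol r}_{i,h,\tau}|$, and it gives $\tilde r^{(\ell)}_{i,h,\tau}\ge\tilde r^{(\ell)}_{i,h,\tilde\tau}$ for every admissible $\ell$. The case $\ell=0$ is consistent because both sides equal $r_{ih}$ after taking the minimum with $r_{ih}$.
\item The capacity drop $\alpha_{i,h-1\to t}-\alpha_{i,h\to t}$ equals $1/c_i$ exactly when $h$ is assigned to $i$ and $t<t_h+d_{ih}$, and is $0$ otherwise. It is therefore nonincreasing in $t$. Since $\Psi$ is decreasing, the bracketed difference $\Psi(1-\ell/c_i-\cdot)-\Psi(1-\ell/c_i)$ at $\tau$ dominates the one at $\tilde\tau$.
\end{enumerate}

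Any $\ell$ that is feasible at $\tilde\tau$ is also feasible at $\tau$, and for each such $\ell$ the product of the two nonnegative factors is at least as large at $\tau$. Taking maxima then yields~\eqref{eq:pf_dp_additive_refine_delta_de_4}, and summing over the paired grid points gives $\bar\Delta_{ih}[t_1,t_1+d)\ge\bar\Delta_{ih}[t_2,t_2+d)$.

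The only step that needs care, and the one I expect to be the main obstacle, is the bijection of grids. One has to confirm that $\mathcal T_\gamma$ is anchored at the left endpoint and not at a global lattice, so that shifting the interval shifts every inspection point by the same amount $s$ and the two grids have equal cardinality. Definition~\ref{df:inspection_time_refine_change} anchors at $T_1$, so this holds.
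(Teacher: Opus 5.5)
Your proposal is correct and follows the paper's route. The paper proves this lemma by deferring to the argument for Lemma~\ref{le:pf_dp_additive_refine_delta_de}: pair each $\tau$ with $\tilde\tau=\tau+t_2-t_1$, then prove the pointwise inequality via the vanishing case $\tilde\tau\ge t_h+d_{ih}$, the containment of active sets (which gives order-statistic monotonicity), and the monotonicity of the capacity drop, exactly as you do; your explicit check that $\mathcal T_\gamma$ is anchored at the left endpoint, so that both grids share the same index set, is a detail the paper leaves implicit, and you handle it correctly.
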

    For simplicity, denote $\bar\Delta_{i,h,\tau}^{\operatorname{Re}}\coloneqq \bar\Delta_{ih}[\tau,\tau+1/\gamma)=$
    \begin{align*}
        \max_{l \in \{0\} \cup [|\tilde{\boldsymbol{r}}_{i,h,\tau}|]}\left\{\left(\tilde{r}_{i,h,\tau}^{(\ell)} \wedge r_{ih}\right)\left[\Psi\!\left(1-\frac{\ell}{c_i}-(\alpha_{i,h-1\to \tau}-\alpha_{i,h\to \tau})\right)-\Psi\!\left(1-\frac{\ell}{c_i}\right)\right]\right\}.
    \end{align*}
    Then, for every $T_2\ge T_1\ge t_h$, set $d=1/\gamma$, we obtain the following corollary:\begin{equation}\label{eq:le:app_realval_duration_proof_feasibility_smalllemma_1}
        \bar\Delta_{i,h,T_1}^{\operatorname{Re}}=\bar\Delta_{ih}[T_1,T_1+1/\gamma)\ge \bar\Delta_{ih}[T_2,T_2+1/\gamma)=\bar\Delta_{i,h,T_2}^{\operatorname{Re}}.
    \end{equation}
    Combining Eq.~\eqref{eq:le:app_realval_duration_proof_feasibility_smalllemma_1} with the fact that $\mathcal T_{\gamma}[a,b)\subseteq\mathcal T_{\gamma}[a,a+\lfloor (b-a)\gamma\rfloor/\gamma)\cup\{a+\lfloor (b-a)\gamma\rfloor/\gamma\}$, we obtain that \begin{align*}
        \bar\Delta_{ih}[a,b)&=\sum_{\tau\in\mathcal{T}_{\gamma}[a,b)} \bar\Delta_{i,h,\tau}^{\operatorname{Re}}\le\sum_{\tau\in\mathcal{T}_{\gamma}[a,a+\lfloor (b-a)\gamma\rfloor/\gamma)} \bar\Delta_{i,h,\tau}^{\operatorname{Re}}+\bar\Delta_{i,h,a+\lfloor (b-a)\gamma\rfloor/\gamma}^{\operatorname{Re}}\\
        &\le  \sum_{\tau\in\mathcal{T}_{\gamma}[a,a+\lfloor (b-a)\gamma\rfloor/\gamma)} \bar\Delta_{i,h,\tau}^{\operatorname{Re}}+\frac{1}{\lfloor (b-a)\gamma\rfloor}\sum_{\tau\in\mathcal{T}_{\gamma}[a,a+\lfloor (b-a)\gamma\rfloor/\gamma)} \bar\Delta_{i,h,\tau}^{\operatorname{Re}} \\
        &\le \frac{\lfloor (b-a)\gamma\rfloor+1}{\lfloor (b-a)\gamma\rfloor}\sum_{\tau\in\mathcal{T}_{\gamma}[a,a+\lfloor (b-a)\gamma\rfloor/\gamma)} \bar\Delta_{i,h,\tau}^{\operatorname{Re}} \\
        &\le \frac{\gamma}{\gamma-1}\bar\Delta_{ih}[a,a+\lfloor (b-a)\gamma\rfloor/\gamma),
    \end{align*}
    where the second inequality due to Eq.~\eqref{eq:le:app_realval_duration_proof_feasibility_smalllemma_1} and $\tau\le a+\lfloor (b-a)\gamma\rfloor/\gamma$ for any $\tau\in\mathcal{T}_{\gamma}[a,a+\lfloor (b-a)\gamma\rfloor/\gamma)$, while the last inequality due to $\lfloor (b-a)\gamma\rfloor\ge \gamma-1$ and $\gamma\ge 2$.
\Halmos\endproof
We next present a technical lemma.
\begin{lemma}[Sub-additivity of refined-loss increments under regular conditions]\label{le:pf_dp_additive_refine_realval_1}
    Fix job $h$ and any sequence \[
        t_h\le \zeta_1<\zeta_2\le \zeta_3<\zeta_4\le\dots\le\zeta_{2k-1}<\zeta_{2k}\le t_h+d_{ih}.
    \]
    Suppose that $\zeta_{2u}-\zeta_{2u-1}\in (1/\gamma) \mathbb Z_{\ge 1}$ for any $u\in[k-1]$, we have that \begin{align*}
        \bar\Delta_{ih}[t_h,t_h+d_{ih}) \ge \sum_{u=1}^{k}\bar\Delta_{ih}[\zeta_{2u-1},\zeta_{2u}).
    \end{align*}
\end{lemma}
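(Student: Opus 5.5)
The plan mirrors the proof of Lemma~\ref{le:pf_dp_additive_refine}, with the unit grid replaced by the $(1/\gamma)$-grid. Write $\bar\Delta^{\operatorname{Re}}_{i,h,\tau}\coloneqq\bar\Delta_{ih}[\tau,\tau+1/\gamma)$, as in the proof of Lemma~\ref{le:app_realval_duration_proof_feasibility_smalllemma}. Then for every $t_h\le T_1<T_2$ we have
\[
\bar\Delta_{ih}[T_1,T_2)=\sum_{\tau\in\mathcal T_\gamma[T_1,T_2)}\bar\Delta^{\operatorname{Re}}_{i,h,\tau},
\]
and every summand is nonnegative because $\alpha_{i,h\to\tau}\le\alpha_{i,h-1\to\tau}$ and $\Psi$ is decreasing. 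Moreover, $\tau\mapsto\bar\Delta^{\operatorname{Re}}_{i,h,\tau}$ is nonincreasing on $[t_h,\infty)$ by Lemma~\ref{le:pf_dp_additive_refine_delta_de_realval}, applied with $d=1/\gamma$ (Eq.~\eqref{eq:le:app_realval_duration_proof_feasibility_smalllemma_1}).

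\textbf{Step 1: round the left endpoints down to the grid.} For $v\in[2k-1]$, set
\[
\rho_v\coloneqq t_h+\lfloor\gamma(\zeta_v-t_h)\rfloor/\gamma,
\]
and set $\rho_{2k}\coloneqq\rho_{2k-1}+(\zeta_{2k}-\zeta_{2k-1})$. Since $\zeta_{2u}-\zeta_{2u-1}\in(1/\gamma)\mathbb Z_{\ge1}$ for $u\le k-1$, the shift $\zeta_{2u-1}\mapsto\rho_{2u-1}$ moves the whole interval $[\zeta_{2u-1},\zeta_{2u})$ left by the same amount. The same holds for the last interval by definition of $\rho_{2k}$. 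Hence each interval $[\rho_{2u-1},\rho_{2u})$ has the same length as $[\zeta_{2u-1},\zeta_{2u})$ and satisfies $t_h\le\rho_{2u-1}\le\zeta_{2u-1}$. Lemma~\ref{le:pf_dp_additive_refine_delta_de_realval} then gives
\[
\bar\Delta_{ih}[\zeta_{2u-1},\zeta_{2u})\le\bar\Delta_{ih}[\rho_{2u-1},\rho_{2u})\qquad\text{for every }u\in[k].
\]

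\textbf{Step 2: show the rounded grids are disjoint and nested in the full grid.} For $u\le k-1$, the endpoint $\rho_{2u}$ lies on the grid $t_h+(1/\gamma)\mathbb Z$, so
\[
\mathcal T_\gamma[\rho_{2u-1},\rho_{2u})=\{\rho_{2u-1},\rho_{2u-1}+1/\gamma,\dots,\rho_{2u}-1/\gamma\}.
\]
The rounded points stay ordered: $\rho_{2u}\le\zeta_{2u}\le\zeta_{2u+1}$, and $\rho_{2u}$ is a grid point, so $\rho_{2u}\le\lfloor\cdot\rfloor$-rounding of $\zeta_{2u+1}$, i.e.\ $\rho_{2u}\le\rho_{2u+1}$. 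Hence the grids $\mathcal T_\gamma[\rho_{2u-1},\rho_{2u})$ are pairwise disjoint.

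Every such grid is contained in $\mathcal T_\gamma[t_h,t_h+d_{ih})$: its points lie on $t_h+(1/\gamma)\mathbb Z_{\ge0}$ and are strictly below $\rho_{2u}\le\zeta_{2u}\le t_h+d_{ih}$. For the last interval, the points are $\rho_{2k-1}+l/\gamma<\rho_{2k}\le\zeta_{2k}\le t_h+d_{ih}$, so the same containment holds.

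\textbf{Step 3: conclude.} Combining Steps 1 and 2 with the nonnegativity of the summands,
\[
\sum_{u=1}^{k}\bar\Delta_{ih}[\zeta_{2u-1},\zeta_{2u})\le\sum_{u=1}^{k}\sum_{\tau\in\mathcal T_\gamma[\rho_{2u-1},\rho_{2u})}\bar\Delta^{\operatorname{Re}}_{i,h,\tau}\le\sum_{\tau\in\mathcal T_\gamma[t_h,t_h+d_{ih})}\bar\Delta^{\operatorname{Re}}_{i,h,\tau}=\bar\Delta_{ih}[t_h,t_h+d_{ih}),
\]
which is the claim.

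The main care point is the bookkeeping in Step 2: checking that rounding down preserves the order $\rho_{2u}\le\rho_{2u+1}$, and that the last interval, whose length need not be a multiple of $1/\gamma$, still produces grid points inside $[t_h,t_h+d_{ih})$. Both follow because each rounded left endpoint sits on the common lattice $t_h+(1/\gamma)\mathbb Z$ and each right endpoint is only shifted left.
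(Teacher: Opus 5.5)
Your proposal is correct and follows essentially the same argument as the paper. You round each left endpoint down to the lattice $t_h+(1/\gamma)\mathbb{Z}$ and use the shift-monotonicity of $\bar\Delta_{ih}$ (Lemma~\ref{le:pf_dp_additive_refine_delta_de_realval}) to move each interval left; you then observe that the resulting grids are pairwise disjoint subsets of $\mathcal T_\gamma[t_h,t_h+d_{ih})$ with nonnegative summands, and you spell out the ordering $\rho_{2u}\le\rho_{2u+1}$ and the last-interval containment that the paper only asserts.
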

The proof of Lemma~\ref{le:pf_dp_additive_refine_realval_1} is deferred to Section~\ref{subse:pf_dp_additive_refine_realval_1}.

Next we apply Lemma~\ref{le:pf_dp_additive_refine_realval_1} and Lemma~\ref{le:app_realval_duration_proof_feasibility_smalllemma} to prove Lemma~\ref{le:pf_dp_additive_refine_realval}. Take $[\zeta_{2u-1},\zeta_{2u})=[\xi_{2u-1},\xi_{2u-1}+\lfloor (\xi_{2u}-\xi_{2u-1})\gamma\rfloor/\gamma)$ for each $u\in[k-1]$ and take $[\zeta_{2k-1},\zeta_{2k})=[\xi_{2k-1},\xi_{2k})$, by Lemma~\ref{le:pf_dp_additive_refine_realval_1}, we obtain that \begin{equation}\label{eq:pf_dp_additive_refine_realval_1}
    \bar\Delta_{ih}[t_h,t_h+d_{ih})\ge \sum_{u=1}^{k-1}\bar\Delta_{ih}[\xi_{2u-1},\xi_{2u-1}+\lfloor (\xi_{2u}-\xi_{2u-1})\gamma\rfloor/\gamma)+ \bar\Delta_{ih}[\xi_{2k-1},\xi_{2k}).
\end{equation}
By Lemma~\ref{le:app_realval_duration_proof_feasibility_smalllemma}, for each $u\in[k-1]$, we have that \begin{equation}\label{eq:pf_dp_additive_refine_realval_2}
    \bar\Delta_{ih}[\xi_{2u-1},\xi_{2u-1}+\lfloor (\xi_{2u}-\xi_{2u-1})\gamma\rfloor/\gamma)\ge \frac{\gamma-1}{\gamma}\bar\Delta_{ih}[\xi_{2u-1},\xi_{2u}).
\end{equation}
Combining Eq.~\eqref{eq:pf_dp_additive_refine_realval_1} and Eq.~\eqref{eq:pf_dp_additive_refine_realval_2}, we can conclude that  \begin{align*}
    \frac{\gamma}{\gamma-1}\bar\Delta_{ih}[t_h,t_h+d_{ih})&\ge \frac{\gamma}{\gamma-1}\sum_{u=1}^{k-1}\bar\Delta_{ih}[\xi_{2u-1},\xi_{2u-1}+\lfloor (\xi_{2u}-\xi_{2u-1})\gamma\rfloor/\gamma)+ \bar\Delta_{ih}[\xi_{2k-1},\xi_{2k})\\
    &\ge \sum_{u=1}^{k}\bar\Delta_{ih}[\xi_{2u-1},\xi_{2u}),
\end{align*}
which proves Lemma~\ref{le:pf_dp_additive_refine_realval}. \Halmos

\subsection{Proof of Lemma~\ref{le:pf_dp_additive_refine_realval_1}}\label{subse:pf_dp_additive_refine_realval_1}

The proof of Lemma~\ref{le:pf_dp_additive_refine_realval_1} follows the same stream line as that in Section~\ref{se:app_pf_dp_additive_refine} except for some small details. We present it here.

Recall that in the proof of Lemma~\ref{le:app_realval_duration_proof_feasibility_smalllemma}, we define
\begin{align*}
    \bar\Delta_{i,h,\tau}^{\operatorname{Re}}&= \bar\Delta_{ih}[\tau,\tau+1/\gamma)\\
    &=\max_{l \in \{0\} \cup [|\tilde{\boldsymbol{r}}_{i,h,\tau}|]}\left\{\left(\tilde{r}_{i,h,\tau}^{(\ell)} \wedge r_{ih}\right)\left[\Psi\!\left(1-\frac{\ell}{c_i}-(\alpha_{i,h-1\to \tau}-\alpha_{i,h\to \tau})\right)-\Psi\!\left(1-\frac{\ell}{c_i}\right)\right]\right\},
\end{align*}
then for any $T_1,T_2$ such that $t_h\le T_1< T_2$, we have \begin{align*}
    \bar\Delta_{ih}[T_1,T_2)=\sum_{\tau\in\mathcal{T}_{\gamma}[T_1,T_2)}\bar\Delta_{i,h,\tau}^{\operatorname{Re}}.
\end{align*}

Let $\rho_v=t_h+\lfloor (\zeta_v-t_h)\gamma\rfloor/\gamma$ for any $v\in[2k-1]$, let $\rho_{2k}=\rho_{2k-1}+\zeta_{2k}-\zeta_{2k-1}$. Since for $u\in[k-1]$, $\gamma(\zeta_{2u}-\zeta_{2u-1})$ are positive integers, we have that $\rho_{2u}-\rho_{2u-1}=\zeta_{2u}-\zeta_{2u-1}$. Noting that $t_h\le \rho_{2u-1}\le \zeta_{2u-1}$, by Lemma~\ref{le:pf_dp_additive_refine_delta_de_realval}, it follows that 
\begin{align}\label{eq:pf_dp_additive_refine_4_realval}
    \sum_{u=1}^{k}\bar\Delta_{ih}[\zeta_{2u-1},\zeta_{2u})
    \le \sum_{u=1}^{k}\bar\Delta_{ih}[\rho_{2u-1},\rho_{2u})= \sum_{u=1}^k \sum_{\tau\in\mathcal T_{\gamma}[\rho_{2u-1},\rho_{2u})}\bar\Delta_{i,h,\tau}^{\operatorname{Re}}.
\end{align}
Since $\rho_{2u}-\rho_{2u-1}=\zeta_{2u}-\zeta_{2u-1}\in(1/\gamma)\mathbb Z_{\ge 1}$ for any $u\in[k-1]$, we have that
\begin{equation}\label{eq:pf_dp_additive_refine_1_realval}
    \mathcal T_{\gamma}[\rho_{2u-1},\rho_{2u})=\{\rho_{2u-1},\rho_{2u-1}+1/\gamma,\dots,\rho_{2u}-1/\gamma\},\qquad \forall u\in[k-1].
\end{equation}
Since $\rho_{2u}\le\rho_{2u+1}$, we have that
\begin{equation}\label{eq:pf_dp_additive_refine_2_realval}
   \mathcal T_{\gamma}[\rho_{2u-1},\rho_{2u}) \cap \mathcal T_{\gamma}[\rho_{2u^{\prime}-1},\rho_{2u^{\prime}})=\emptyset,\qquad \forall u\neq u^{\prime},\qquad u,u^{\prime}\in[k].
\end{equation}

By the definition of $\rho_{u}$, we have $\rho_{2u-1}\in\mathcal T_{\gamma}[t_h,t_h+d_{ih}),\rho_{2u}\le t_h+d_{ih}$ for any $u\in [k]$, which yields that 
\begin{equation}\label{eq:pf_dp_additive_refine_3_realval}
    \mathcal T_{\gamma}[\rho_{2u-1},\rho_{2u})\subseteq\mathcal T_{\gamma}[t_h,t_h+d_{ih}),\qquad \forall u\in[k].
\end{equation}
Recall that $\rho_{2u+1}\ge\rho_{2u}$, combining Eq.~\eqref{eq:pf_dp_additive_refine_1_realval}, Eq.~\eqref{eq:pf_dp_additive_refine_2_realval}, and Eq.~\eqref{eq:pf_dp_additive_refine_3_realval}, we obtain that \begin{align*}
    \underset{u\in[k]}{\bigsqcup} \mathcal T_{\gamma}[\rho_{2u-1},\rho_{2u})\subseteq \mathcal T_{\gamma}[t_h,t_h+d_{ih}).
\end{align*}
Since $\bar\Delta_{i,h,\tau}^{\operatorname{Re}}\ge 0$ for any $\tau\in\mathcal T_{\gamma}[t_h,t_h+d_{ih})$, combining this result with Eq.~\eqref{eq:pf_dp_additive_refine_4_realval}, we can conclude that \begin{align*}
    \sum_{u=1}^{k}\bar\Delta_{ih}[\zeta_{2u-1},\zeta_{2u})
    \le& \sum_{u=1}^k \sum_{\tau\in\mathcal T_{\gamma}[\rho_{2u-1},\rho_{2u})}\bar\Delta_{i,h,\tau}^{\operatorname{Re}}=\underset{\tau\in \underset{u\in[k]}{\bigsqcup} \mathcal T_{\gamma}[\rho_{2u-1},\rho_{2u})}{\sum}\bar\Delta_{i,h,\tau}^{\operatorname{Re}}\\
    \le& \underset{\tau\in\mathcal T_{\gamma}[t_h,t_h+d_{ih})}{\sum}\bar\Delta_{i,h,\tau}^{\operatorname{Re}}=\bar\Delta_{ih}[t_h,t_h+d_{ih}),
\end{align*}
which finishes the proof of Lemma~\ref{le:pf_dp_additive_refine_realval_1}. \Halmos

\section{Detailed Experimental Settings and Results}\label{app:experiments-details}

We conduct three sets of numerical experiments to evaluate the empirical performance of TS-BAL and GR-BAL. Section~\ref{subse:NE_hard_ins} considers two deliberately challenging families of instances: one targets FLB and Greedy, whereas the other is motivated by the lower-bound construction in Section~\ref{sec:lower-bound} and targets TS-BAL and GR-BAL. Section~\ref{subse:NE_random} evaluates the algorithms in randomly generated non-stationary environments. Section~\ref{subse:NE_smallcapacity} examines their performance in the small-capacity regime.

Throughout this section, performance is measured by the instance-wise competitive ratio $\mathrm{OPT}/\mathrm{ALG}$, where $\mathrm{OPT}$ is the offline optimal reward and $\mathrm{ALG}$ is the reward obtained by the corresponding online algorithm. Thus, a smaller ratio indicates better performance. We compare the following four algorithms:
\begin{enumerate}[nolistsep]
    \item TS-BAL: This algorithm is Algorithm~\ref{alg:main_d_integer_accurate} in our paper.
    \item GR-BAL:
    This algorithm is Algorithm~\ref{alg:main_d_integer_active} in our paper.
    \item FLB  (Forward-Looking Balance) in \cite{feng2025online}: As discussed in Remark~\ref{rem:comparison_feng}, we adapt the algorithm to locally bounded rewards by setting the assignment-specific loss to be \(\mathcal{L}^{\mathrm{FLB}}_{ij}=\sum_{\tau\in\{t_j+k:k\in\mathbb N,\ k<d_{ij}\}}\Psi\!\left(\alpha_{i,j-1\to\tau}\right)\). 
    \item Greedy: This algorithm chooses \(i_j^* \in \underset{i\in\mathcal N(j),~\text{$i$ is available}}{\arg\max}r_{ij}d_{ij}\).
\end{enumerate}

For TS-BAL and GR-BAL, we use the parameters specified in Sections~\ref{se:pf_choice_beta_eta} and~\ref{se:pf_choice_beta_eta_refine}, respectively. For FLB, we use the parameter calibration prescribed in \cite{feng2025online}. We use $\mathcal{N}_{[a,b]}(\mu,\sigma)$ to denote a normal distribution with mean $\mu$ and standard deviation $\sigma$, truncated to $[a,b]$.

\subsection{Artificially Designed Hard Instances}\label{subse:NE_hard_ins}
\noindent\underline{\bf Settings.} We first consider two families of hard instances artificially designed against the candidate algorithms. The first family is designed to expose the limitations of FLB and Greedy under globally drifting rewards. The second family is motivated by the lower-bound construction in Section~\ref{sec:lower-bound} and is designed to challenge TS-BAL and GR-BAL.

\begin{enumerate}[label=(\alph*),nolistsep]
    \item Consider a single server with capacity $c=5000$ and set $D=1$. For each tested value of $\delta$, let $M=\lfloor\delta^{8}\rfloor$. Each batch $m\in[ M ]$ contains $c$ low-reward jobs with reward rate $1$ and $c$ high-reward jobs with reward rate $\delta$. The low-reward jobs
    arrive at time $2m$, and the high-reward jobs arrive at time $2m+1/2$. All jobs in these batches have duration one. After all $M$ batches, one additional job with reward rate $\delta^{8}$ and duration one arrives at time $3M$.
    \item Consider a single server and $M=1000$ batches. Batch $k$ contains $c=200$ identical jobs, all arriving at $t_k=\frac{k-1}{M}$,  with reward rate $r_k=\delta^{t_k}$ and duration $d_k=\left\lfloor D^{t_k}\right\rfloor$. We set $\delta=D=10$. These batches generate a family of prefix instances: instance $m$ consists of batches $1,\dots,m$. We evaluate each algorithm on every prefix and report its competitive ratio as a function of $m$.
\end{enumerate}

\noindent\underline{\bf Results.} Figure~\ref{fig:Com_rab_hard_ins} shows that our algorithms are robust across both stress tests. In instance family (a), the competitive ratios of FLB and Greedy deteriorate rapidly as the reward scale increases, whereas TS-BAL and GR-BAL remain substantially more stable. In instance family (b), TS-BAL and GR-BAL remain comparable to FLB on the family constructed to challenge the proposed algorithms, while Greedy deteriorates sharply as the number of batches increases. Thus, the gains of TS-BAL and GR-BAL on the first hard family do not come at the cost of poor performance on the second.

\begin{figure}[t]
    \begin{center}
        \includegraphics[width =0.48\textwidth]{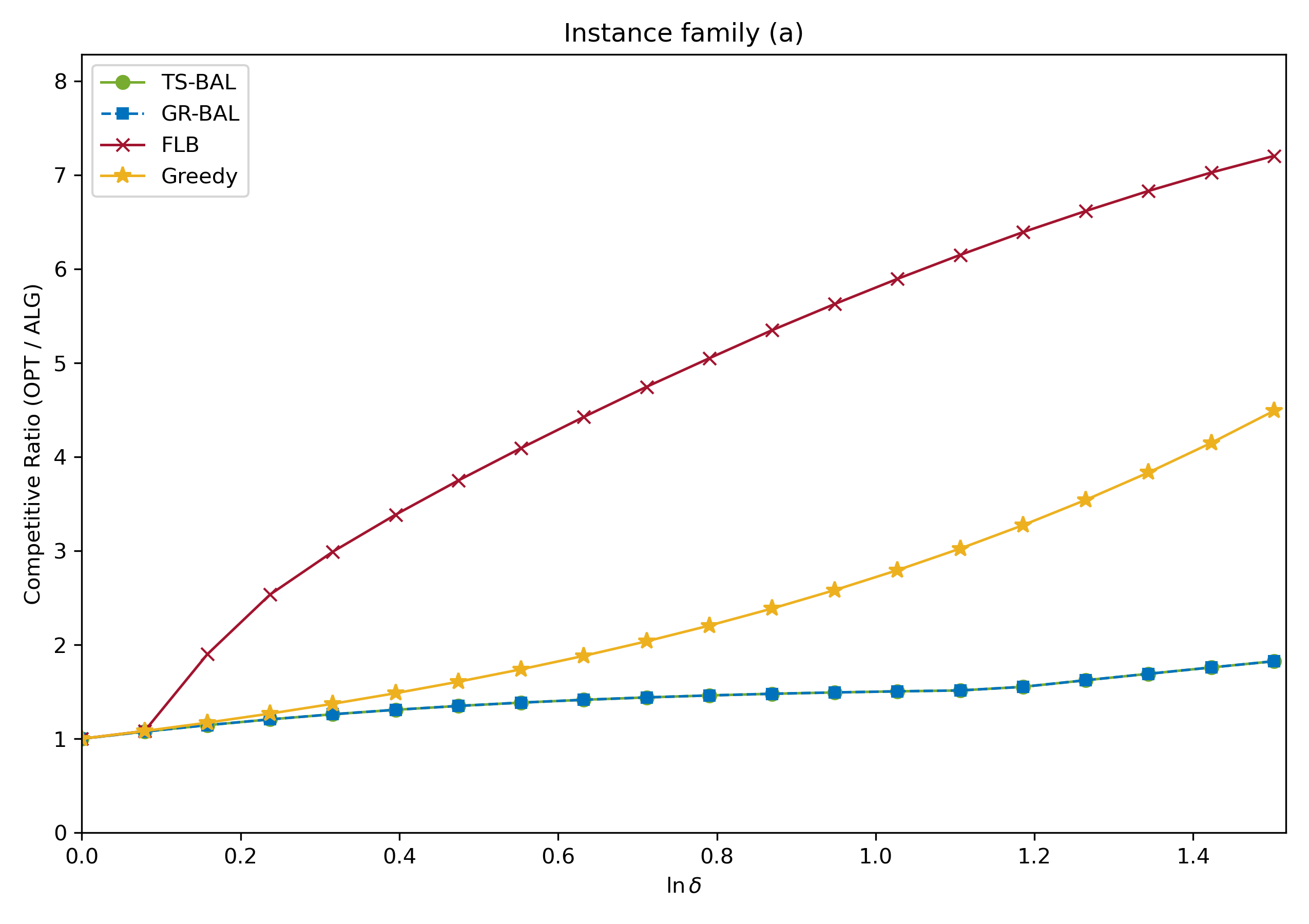}
        \includegraphics[width =0.48\textwidth]{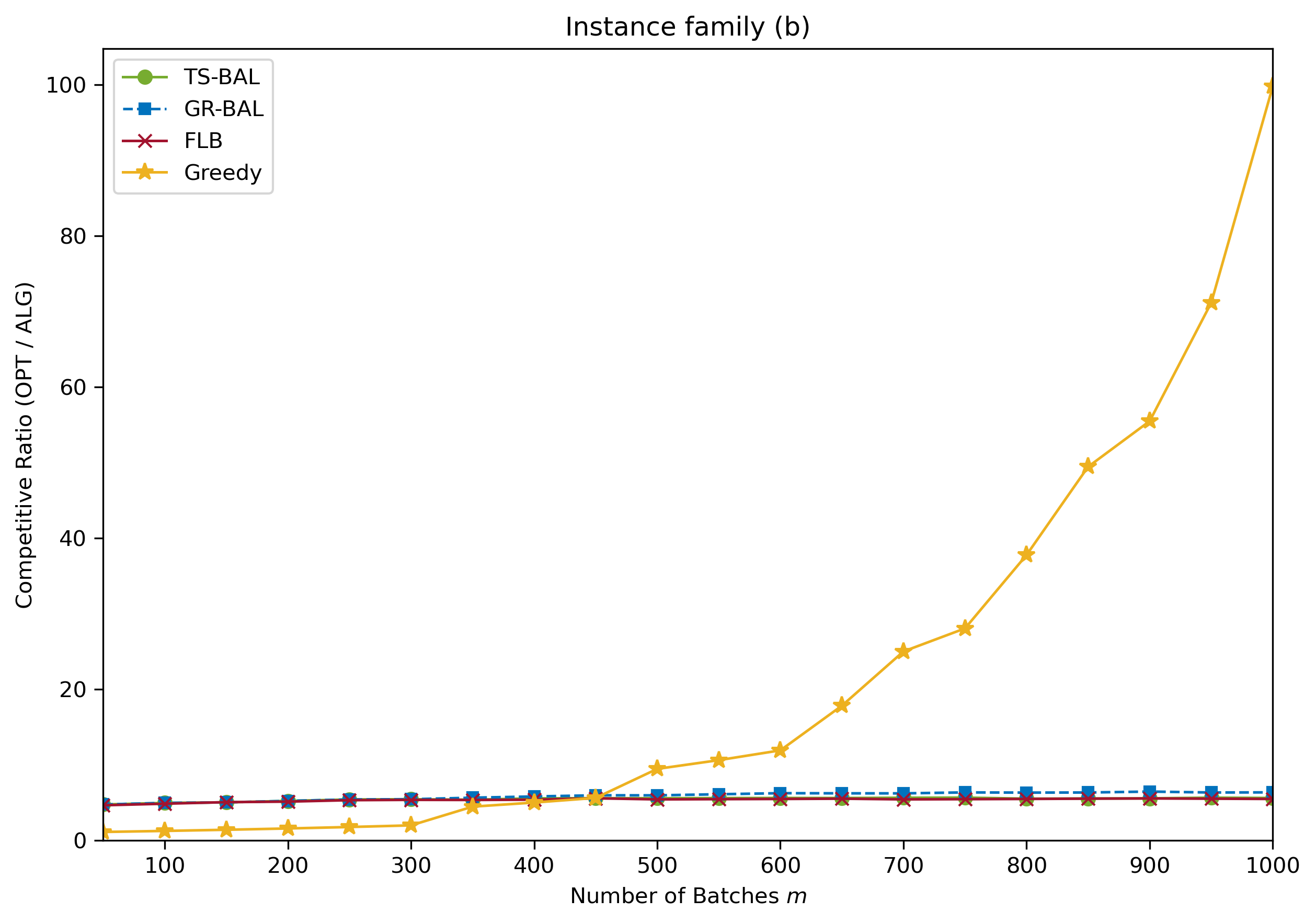}
    \end{center}
    \caption{Competitive-ratio comparisons on artificially designed hard instances. TS-BAL performs almost identically to GR-BAL, and its curves are therefore obscured by the GR-BAL curves.}\label{fig:Com_rab_hard_ins}
\end{figure}

\subsection{Randomly Generated Environments}\label{subse:NE_random}
We next compare TS-BAL, GR-BAL, FLB, and Greedy in randomly generated non-stationary environments. Similar with the design of the numerical experiments in \cite{feng2025online}, in this part, arrival times, reward rates, and assignment-specific processing durations are generated stochastically, while the compatibility graph is constructed to create local competition for reusable capacity.

\noindent\underline{\bf Setting.} We consider $n=24$ servers, each with capacity $c_i \equiv c\in\{40,60\}$. For each $T\in\{50,100,\ldots,1000\}$ and $\delta\in\{2,3\}$,
we generate $100$ independent instances and report the average instance-wise competitive ratio $\mathrm{OPT}/\mathrm{ALG}$. For each realization, $\mathrm{OPT}$ is computed by solving the exact mixed-integer programming formulation of the offline assignment problem.

Reward rates are generated from a non-stationary stochastic process on the
uniform time grid \(t_k=k\Delta\), where \(\Delta=0.05\). Let \(W_k\) denote
the reward level at grid point \(t_k\). We initialize \(W_0=1\). For each
\(k\ge 1\), we first generate the unconstrained candidate
\[
    Y_k
    =
    \max\left\{
        1,\,
        W_{k-1}+0.7\sqrt{\Delta}\,Z_k
    \right\},
    \qquad
    Z_k
    \overset{\mathrm{i.i.d.}}{\sim}
    \mathcal{N}(0.15,1.2),
\]
where \(1.2\) is the standard deviation. We then project \(Y_k\) onto an
interval that enforces the local reward condition over the preceding
\(D=10\) time units. Specifically, define
\[
    \mathcal{H}_k
    =
    \left\{
        \ell<k:
        t_\ell\ge \max\{0,t_k-D\}
    \right\},
\]
and let
\[
    L_k
    =
    \max\left\{
        1,\,
        \frac{1}{\delta}
        \max_{\ell\in\mathcal{H}_k}W_\ell
    \right\},
    \qquad
    U_k
    =
    \delta
    \min_{\ell\in\mathcal{H}_k}W_\ell.
\]
The projected reward level is $W_k=\min\left\{U_k,\,\max\{L_k,Y_k\}\right\}$.
Thus, when \(t_k<D\), the history window is truncated at time zero, and no
pre-zero reward history is used. By induction, the projection interval is
nonempty, \(W_k\ge 1\), and the grid-point reward levels satisfy
\[
    \frac{1}{\delta}
    \le
    \frac{W_k}{W_\ell}
    \le
    \delta
    \qquad
    \text{whenever }
    |t_k-t_\ell|\le D.
\]

We extend the grid-point process to continuous time by linear interpolation.
For \(t\in[t_k,t_{k+1}]\), define
\[
    W(t)
    =
    (1-\theta)W_k+\theta W_{k+1},
    \qquad
    \theta
    =
    \frac{t-t_k}{\Delta}.
\]
Because \(D/\Delta=200\) is an integer, a direct endpoint argument shows that
the grid-point local reward condition is preserved under linear
interpolation. In particular,
\[
    \frac{1}{\delta}
    \le
    \frac{W(s)}{W(t)}
    \le
    \delta
    \qquad
    \text{for all }s,t\ge 0
    \text{ such that }
    |s-t|\le D.
\]
Consequently, the generated continuous-time reward trajectory satisfies
Assumption~\ref{assumption:local-bounded-heterogeneity}.

Jobs arrive according to a reward-dependent non-homogeneous Poisson process
with intensity $\lambda(t)=\frac{292.5}{W(t)^2}$. We simulate this process using midpoint discretization with interval length \(h=0.05\). For each interval $I_m=[mh,(m+1)h)$, let $\bar{t}_m=\left(m+\frac{1}{2}\right)h$ denote its midpoint. We sample the number of arrivals in \(I_m\) according to
$N_m\sim\operatorname{Poisson}\left(h\lambda(\bar{t}_m)\right)$.
Conditional on \(N_m\), the \(N_m\) arrival times are sampled independently
and uniformly from \(I_m\). The arrival times generated across all intervals
are then pooled and sorted. Hence, although the reward process is initially
constructed on a discrete grid, job arrival times are continuous. For each job $j$ arriving at time $t_j$, its reward rate is identical across all servers and is given by $r_{ij} \equiv r_j = W(t_j)$. The processing durations are generated independently across jobs according to $d_{ij}=d_j=\left\lceil X_{j}\right\rceil$ with $X_{j}\sim\mathcal{N}_{[1,10]}(4.5,2)$ independently.

To construct compatible edges, for each $i\in[n]$, we assign server $i$ an ability level $g_i=0.25+0.65(1-x_i)^2$, where $x_i=\frac{i-1}{n-1} = \frac{i-1}{23}$, so that server abilities decrease from $0.90$ to $0.25$. We define the baseline job-difficulty level at time $t$ by the periodic function $s(t)=0.25 + 0.65 \times (1-\cos(2\pi (t-3\lfloor t/3\rfloor)/(2P)))/2$, where the period parameter is $P=3$. Figure~\ref{fig:s-t-plot} illustrates this function. Each job $j$ arriving at time $t_j$ is assigned a requirement level
$q_j=s(t_j)+\varepsilon_j$, where $\varepsilon_j\sim\mathcal{N}(0,0.12)$.
A server-job pair $(i,j)$ is compatible if and only if $g_i\ge q_j$.

\begin{figure}[h]
    \begin{center}
        \includegraphics[width =0.5\textwidth]{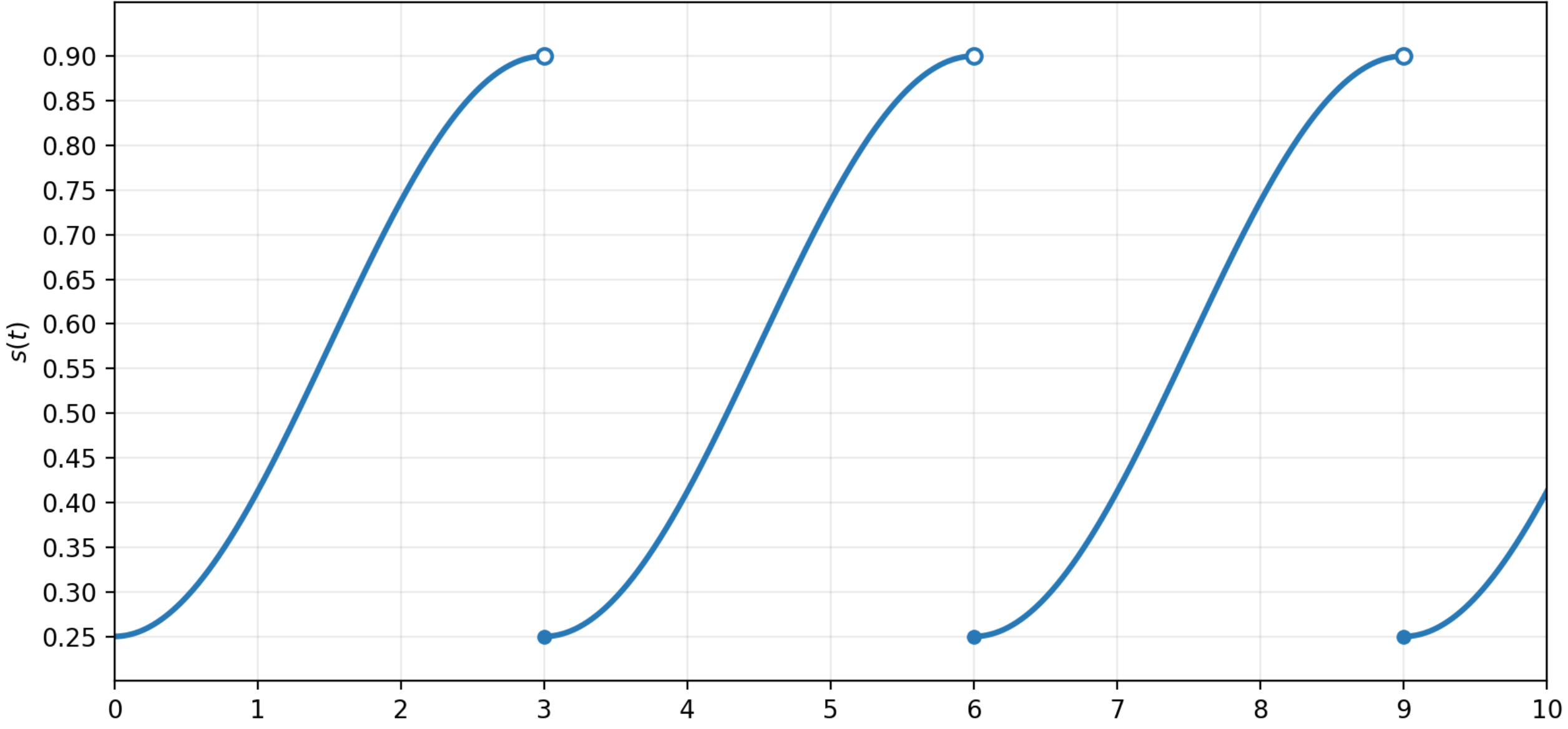}
    \end{center}
    \caption{The baseline job-difficulty level as a function of time $t$.}\label{fig:s-t-plot}
\end{figure}

\noindent\underline{\bf Results.} Figure~\ref{fig:Com_rab_random} shows that TS-BAL and GR-BAL consistently attain the two lowest average competitive ratios across the tested horizons, capacities, and values of $\delta$. Their performance also remains stable as the horizon grows. In contrast, FLB and Greedy generally incur larger ratios. We also note that, although the current analysis yields a competitive-ratio bound for TS-BAL whose leading term is twice that of GR-BAL, TS-BAL performs slightly better than GR-BAL in our experiments. This observation further underscores the question raised at the end of Section~\ref{sec:algorithm_main_theorem}: whether the factor of $2$ in the leading term of TS-BAL’s competitive-ratio bound can be eliminated.

\begin{figure}[h]
    \begin{center}
        \includegraphics[width =0.48\textwidth]{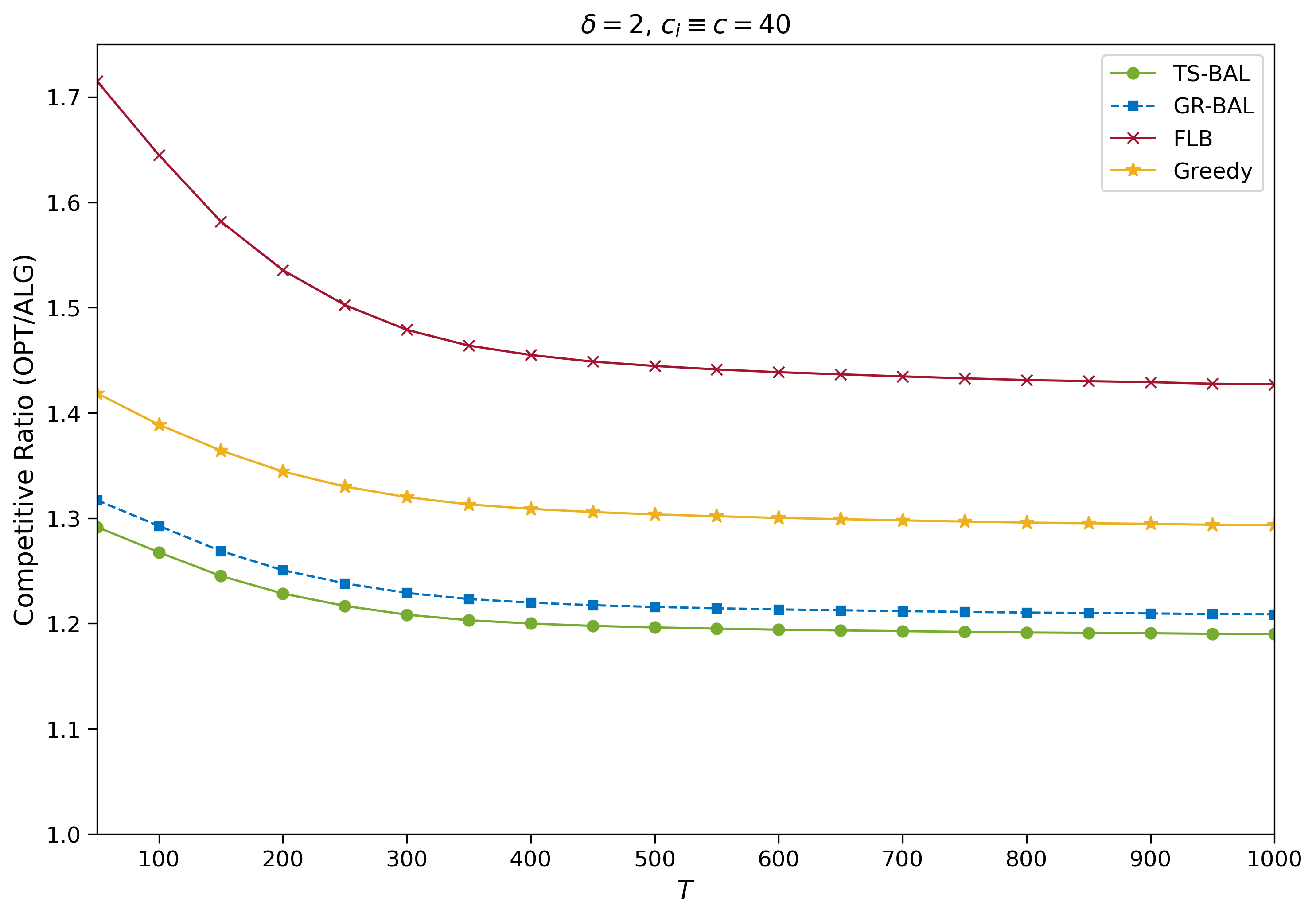}
        \includegraphics[width =0.48\textwidth]{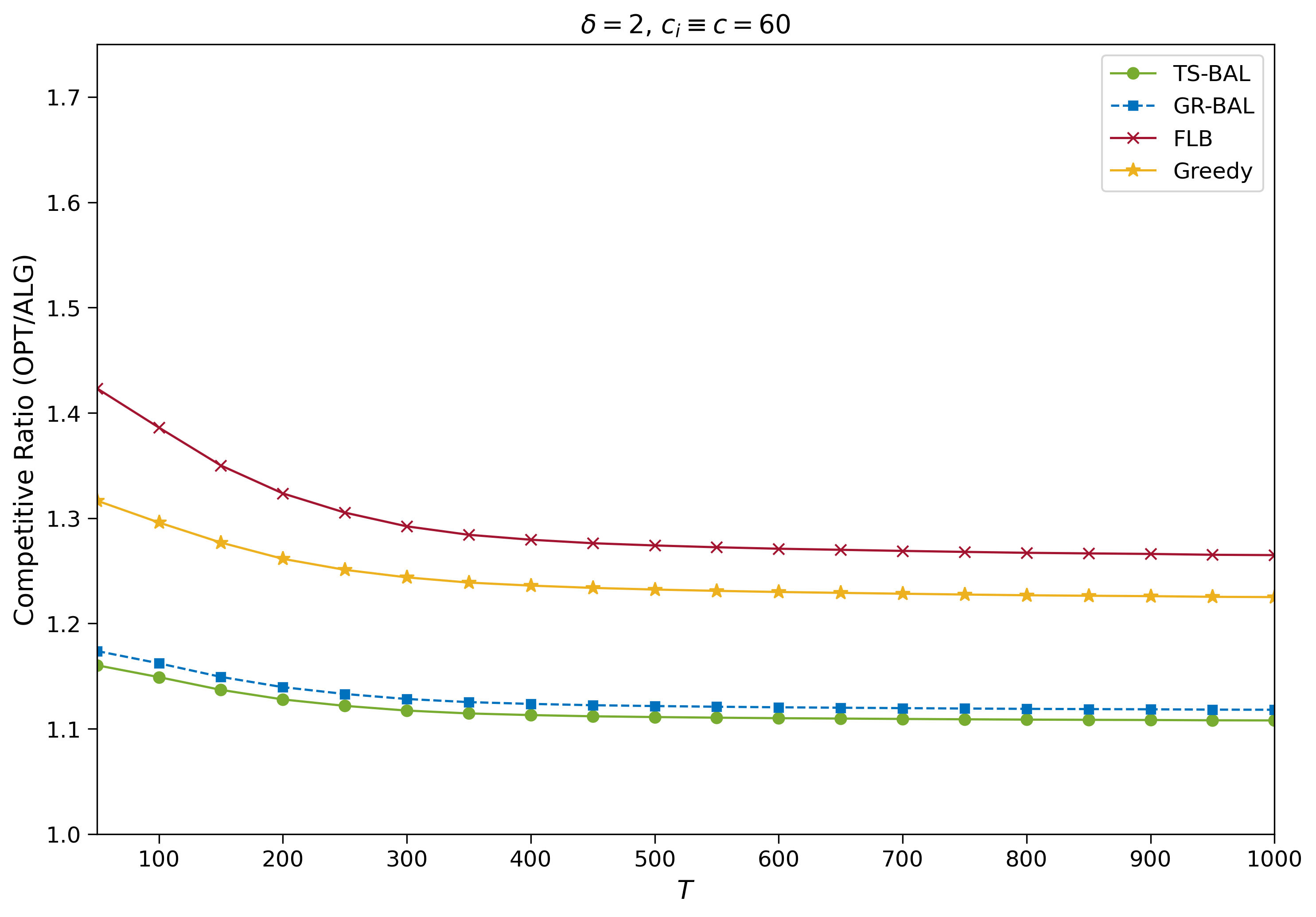}
        \includegraphics[width =0.48\textwidth]{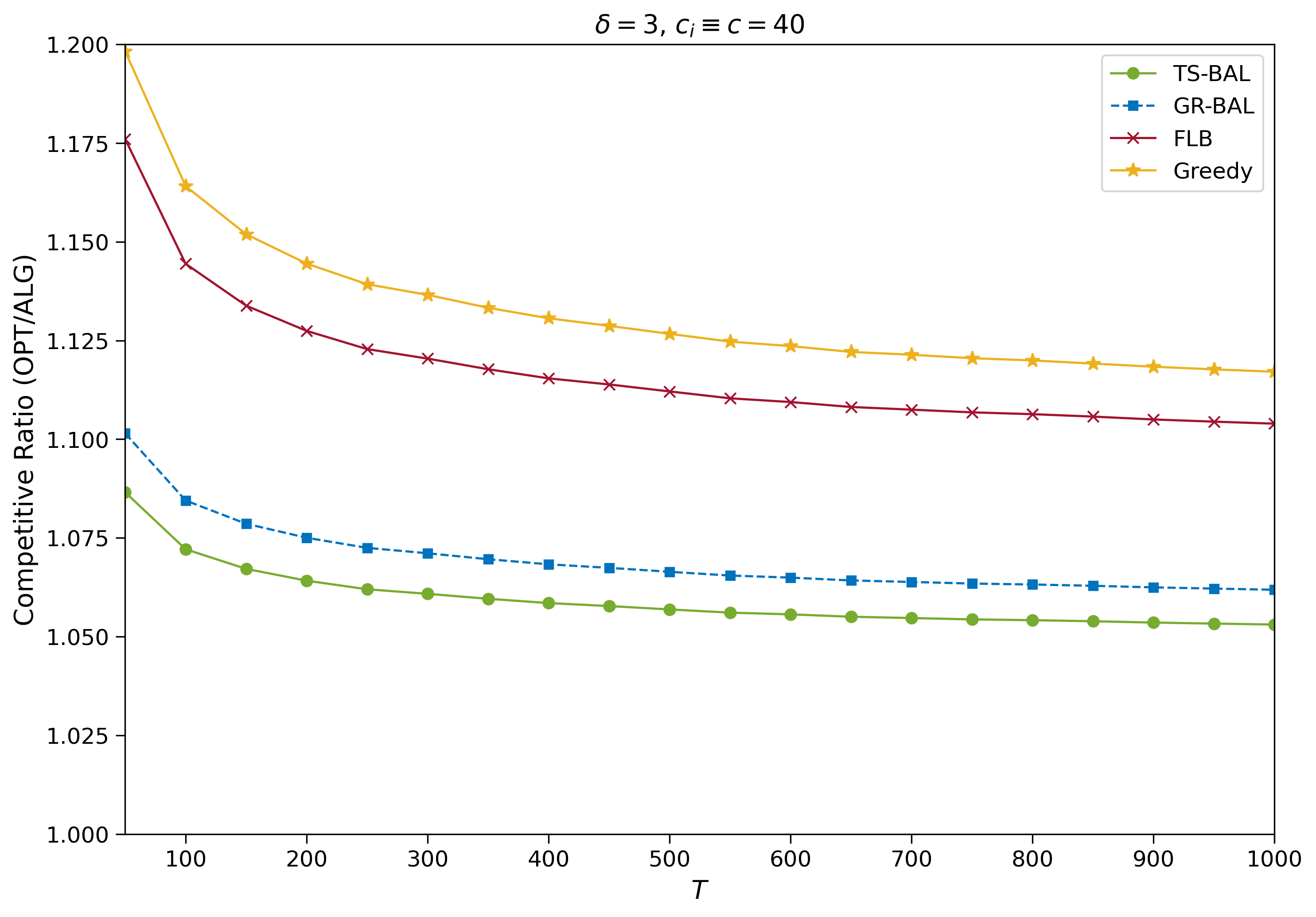}
        \includegraphics[width =0.48\textwidth]{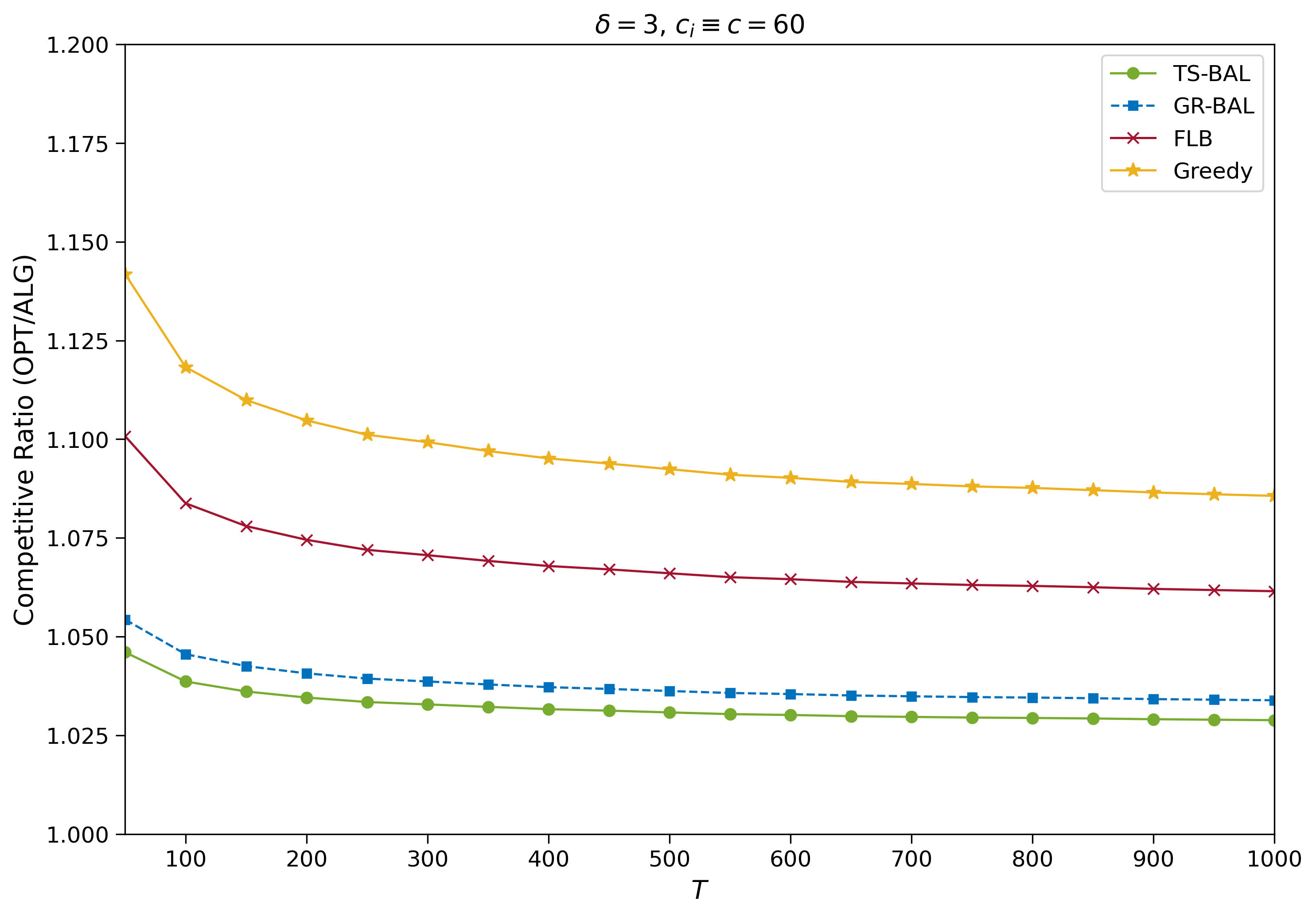}
    \end{center}
    \caption{Competitive-ratio comparisons in randomly generated environments.}
    \label{fig:Com_rab_random}
\end{figure}

\subsection{The Small-Capacity Regime}\label{subse:NE_smallcapacity}
In this part, we examine whether the proposed algorithms remain effective when server capacity is small. We use the same instance generator as in Section~\ref{subse:NE_random}, fix $T=500$, and vary the capacity of each server over $c\in\{2,4,6,\ldots,40\}$.
We conduct the experiment separately for $\delta=2$ and $\delta=3$, again reporting the average instance-wise competitive ratio over $100$ independent instances for each capacity level.

\noindent\underline{\bf Results.} Figure~\ref{fig:Com_rab_random_small_capa} reveals a clear capacity effect. At the smallest capacities, the relative performance of the algorithms depends on $\delta$: Greedy is particularly competitive when $\delta=2$, whereas TS-BAL and GR-BAL already perform favorably when $\delta=3$. For both values of $\delta$, after a brief initial non-monotonic phase, the competitive ratios of TS-BAL and GR-BAL decrease steadily as capacity increases. The proposed algorithms eventually outperform Greedy for both values of $\delta$ and substantially outperform FLB over the tested range. These results suggest that TS-BAL and GR-BAL retain meaningful finite-capacity performance, while their advantage becomes more pronounced as capacity grows, consistent with the large-capacity focus of our theoretical analysis.

\begin{figure}[t]
    \begin{center}
        \includegraphics[width =0.48\textwidth]{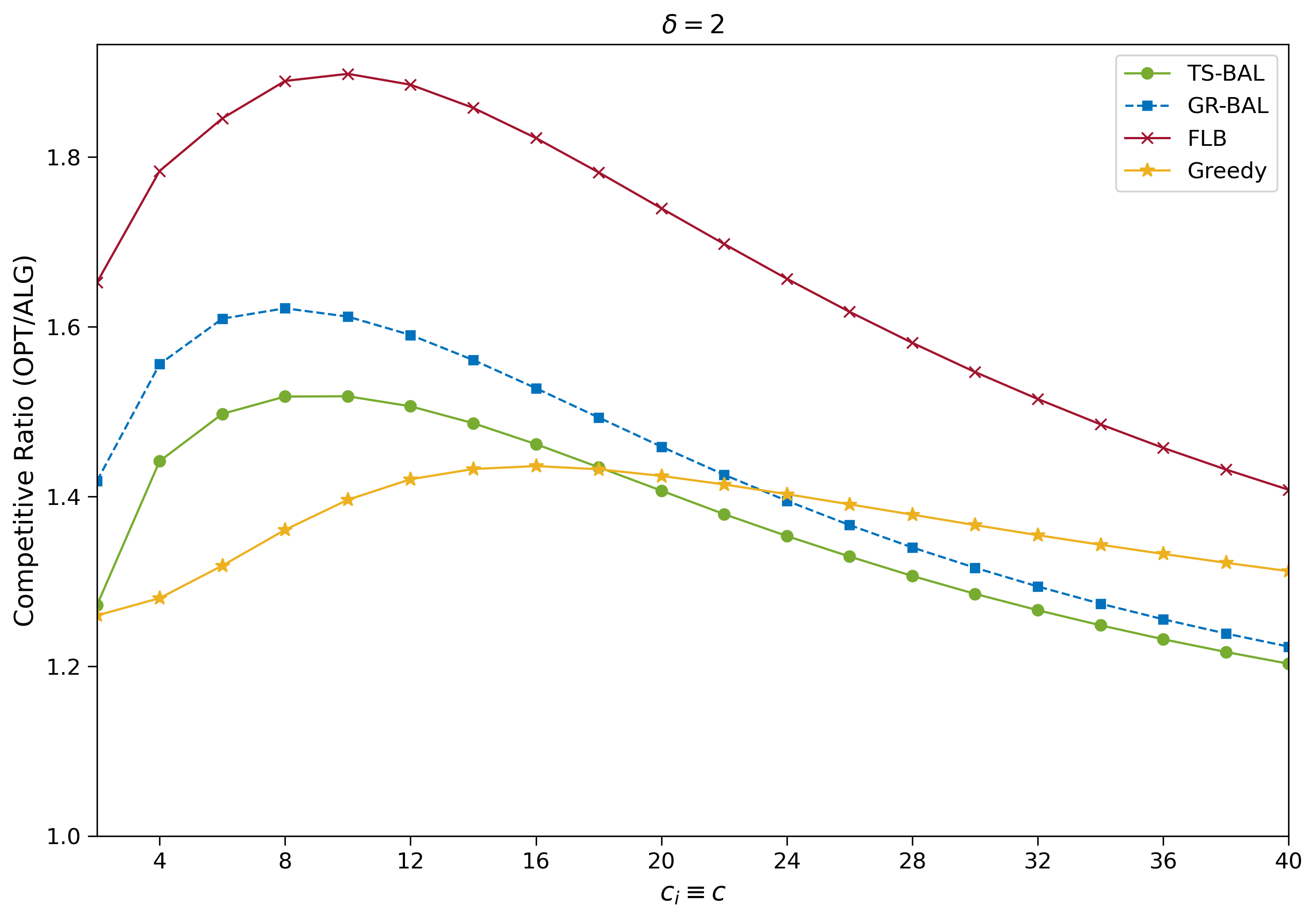}
        \includegraphics[width =0.48\textwidth]{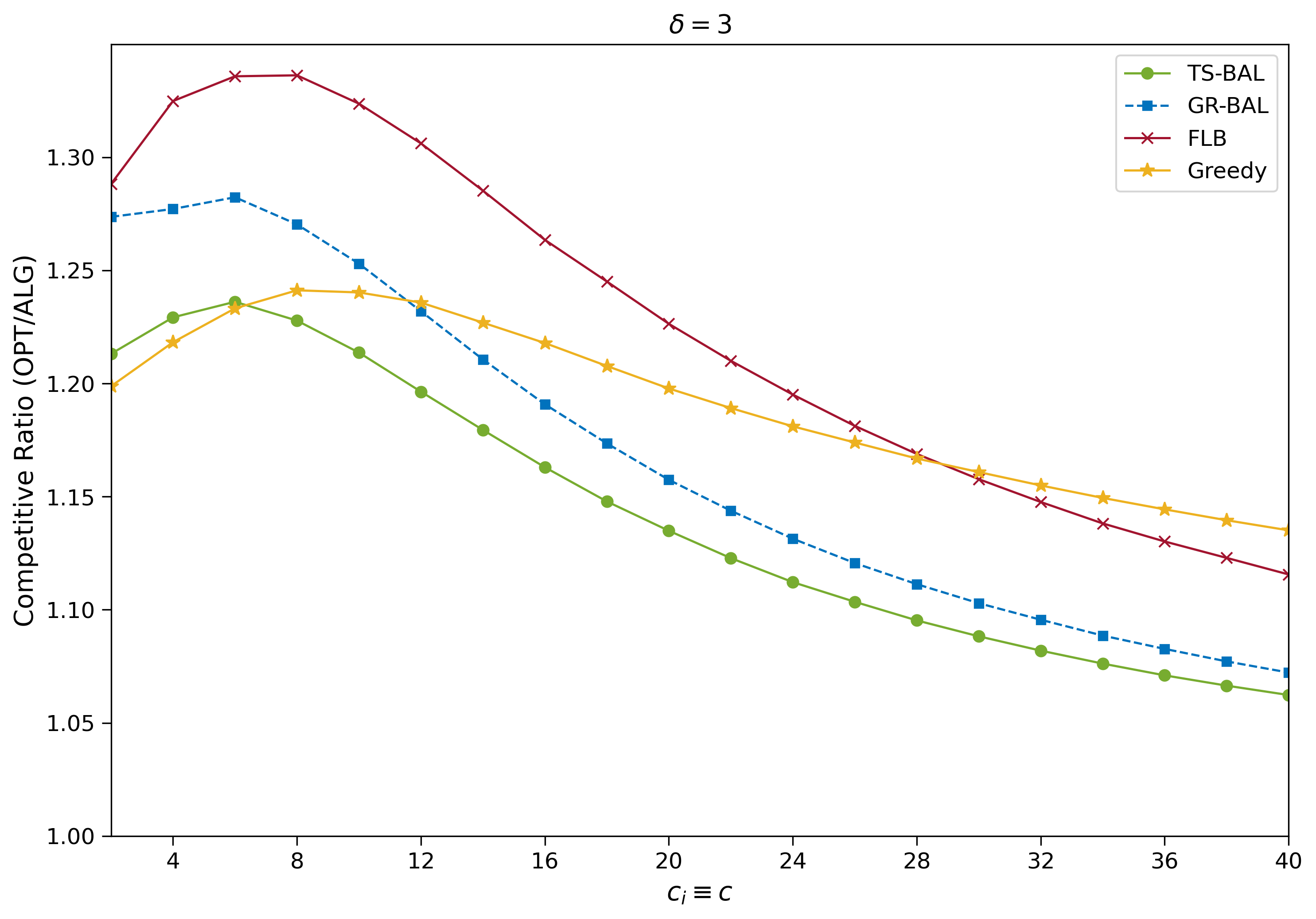}
    \end{center}
    \caption{Competitive-ratio comparisons when varying the minimum capacity.}\label{fig:Com_rab_random_small_capa}
\end{figure}

\end{APPENDICES}



\end{document}